\documentclass[12pt, reqno]{amsart}

\usepackage{array,amssymb,amsxtra,latexsym,graphicx,psfrag,epsfig,ifthen,mathtools,mathrsfs}
\usepackage[all]{xy}
\SelectTips {cm}{}
\usepackage{bbm,bm}
\usepackage{ulem}
\usepackage{rotating}
\usepackage{float}
\usepackage{hyperref}
\usepackage{cleveref}

\usepackage[backend=biber]{biblatex}
\AtEveryBibitem{\normalem}
\AtBeginBibliography{\sloppy}\usepackage{microtype}

\usepackage{rotating}
\usepackage{tikz,graphicx}
\usetikzlibrary{math} 
\usepackage{subcaption}
\usepackage{tikz-3dplot}
\usetikzlibrary{matrix,shapes,arrows,fit,calc,math,shadings}
\usetikzlibrary{decorations.markings,fpu}
\usetikzlibrary{arrows.meta}

\usepackage{todonotes}

\newcommand{\defn}[1]{{\color{blue!50!black}\textbf{\emph{#1}}}}

\usepackage{xcolor}

\definecolor{blue}{rgb}{0, 0.445, 0.695}
\definecolor{bluishgreen}{rgb}{0, 0.626, 0.456}
\definecolor{red}{rgb}{0.896, 0.395, 0}
\definecolor{purple}{rgb}{0.783, 0.464, 0.640}
\definecolor{skyblue}{rgb}{0.359, 0.752, 0.973}
\definecolor{orange}{rgb}{0.999, 0.706, 0.0}
\definecolor{yellow}{rgb}{0.937, 0.890, 0.258}

\definecolor{olive}{RGB}{116,141,19}
\definecolor{green}{RGB}{108,208,48}
\definecolor{teal}{RGB}{47,77,62}
\definecolor{turquoise}{RGB}{86,235,211}
\definecolor{lightblue}{RGB}{150,178,153}
\definecolor{blue2}{RGB}{25,50,191}
\definecolor{indigo}{RGB}{142,128,251}
\definecolor{indigo2}{RGB}{114,32,246}
\definecolor{lightpurple}{RGB}{243,197,250}
\definecolor{purple2}{RGB}{105,66,131}
\definecolor{magenta}{RGB}{206,43,188}
\definecolor{brown}{RGB}{110,57,13} 
\definecolor{darkblue}{rgb}{0.0, 0.0, 0.7}

\definecolor{mygreen}{rgb}{0,.4,0}
\definecolor{myblue}{rgb}{0,0,.5}
\definecolor{mymagenta}{cmyk}{0,.6,0,0}

\newcommand{\set}[1]{\left\lbrace#1\right\rbrace}
\newcommand{\Top}{\operatorname{Top}}
\newcommand{\Bot}{\operatorname{Bot}}
\newcommand{\h}{\operatorname{h}}
\newcommand{\asc}{\operatorname{asc}}
\newcommand{\desc}{\operatorname{desc}}
\newcommand{\Asc}{\operatorname{Asc}}
\newcommand{\Desc}{\operatorname{Desc}}

\usepackage[newitem,newenum,neverdecrease]{paralist}
\makeatletter
\if@plflushright
  
\else
  
\fi
\renewcommand{\@asparaenum@}{%
  \expandafter\list\csname label\@enumctr\endcsname{%
    \usecounter{\@enumctr}%
    \labelwidth\z@
    \labelsep.5em
    \leftmargin\z@
    \parsep\parskip
    \itemsep\z@
    \topsep\z@
    \partopsep\parskip
    \itemindent\parindent
    \advance\itemindent\labelsep
    \def\makelabel##1{\upshape ##1}}}
\makeatother
\usepackage{psfrag}
\usepackage{longtable}
\usepackage{shuffle}

\theoremstyle{plain}
\newtheorem{theorem}{Theorem}[section]
\newtheorem{proposition}[theorem]{Proposition}
\newtheorem{lemma}[theorem]{Lemma}
\newtheorem{corollary}[theorem]{Corollary}

\newtheorem*{theorem*}{Theorem}

\theoremstyle{definition}
\newtheorem{definition}[theorem]{Definition}

\newtheorem{example}[theorem]{Example}
\newtheorem{remark}[theorem]{Remark}

\numberwithin{equation}{section}

\newcommand\bbR{\mathbb{R}}

\newcommand\calC{\mathcal{C}}

\newcommand\calF{\mathcal{F}}

\newcommand\scrI{\mathscr{I}}
\newcommand\scrL{\mathscr{L}}

\newcommand\scrO{\mathscr{O}}

\newcommand\In{\mathrm{In}}
\newcommand\Out{\mathrm{Out}}
\newcommand\precccwrot{\prec_{\mathrm{rot}}^{\mathrm{ccw}}}
\newcommand\leqccwrot{\leq_{\mathrm{rot}}^{\mathrm{ccw}}}
\newcommand\cw{\mathrm{cw}}
\newcommand\ccw{\mathrm{ccw}}

\newcommand{\oru}[1]{\mathrm{oru}(#1)}

\begin{document}
\title{Framingtopes}

\author[Fernandez de soto]{Sergio Alejandro Fernandez de Soto Guerrero}
\address[S. Fernandez de soto]{Institute of Discrete Mathematics, Technische Universität Graz, Austria, Graz, Austria}
\email{sergio.fernandez@tugraz.at}

\author[Ceballos]{Cesar Ceballos}
\address[C. Ceballos]{Technische Universit\"at Graz}
\email{cesar.ceballos@tugraz.at}

\author[von Bell]{Matias von Bell}
\address[M. von Bell]{Indiana University Southeast}
\email{mvonbell@iu.edu}

\subjclass[2020]{52B05, 52B11, 06A07}
\date{\today} 

\begin{abstract}


Framing lattices arise from the dual graphs of framed (or DKK) triangulations of flow polytopes and provide a common framework encompassing classical lattices such as the Boolean, Tamari, and weak-order lattices, as well as $\tau$-tilting posets of certain gentle algebras. In this paper, we introduce the \emph{framingtope}, a polytopal complex that provides a geometric counterpart to a framing lattice: its edge graph is the Hasse diagram of the framing lattice. We prove that the framingtope admits three equivalent descriptions, in terms of interior faces of the framed triangulation, sets of pairwise coherent routes covering the graph, and pure intervals of the framing lattice. We further construct a tropical realization of the framingtope as the bounded-cell complex of an arrangement of tropical hypersurfaces associated with an admissible height function. This construction yields explicit vertex coordinates for broad classes of framed graphs, including plane framed graphs and multioruga graphs. In the multioruga case, these coordinates give tropical realizations of weak orders on multipermutations and, in the ordinary oruga case, recover the classical permutahedron.
\end{abstract}
\maketitle
\tableofcontents

\section{Introduction}

Flow polytopes are fundamental objects in combinatorial optimization and discrete geometry, providing a rich interplay between graph theory and polyhedral geometry. Given a directed acyclic graph $G$ and an integer vector of net-flows, the flow polytope~$\mathcal{F}_G$ consists of all nonnegative flows on $G$ satisfying the prescribed net-flow conditions. Among the many geometric and combinatorial structures associated with flow polytopes, their triangulations have attracted particular attention. Danilov, Karzanov, and Koshevoy~\cite{dkktriangulation} introduced a distinguished family of regular unimodular triangulations, known as \emph{DKK triangulations} or \emph{framed triangulations}. These triangulations are determined by a \emph{framing}, namely, a choice of ordering of the incoming and outgoing edges at each vertex of $G$.

A striking feature of framed triangulations is that their dual graphs give rise to a remarkably broad family of lattice structures. Classical examples include the Boolean lattice, the Tamari lattice, and the weak order on permutations. More precisely, the Hasse diagram of each of these lattices occurs as the dual graph of a framed triangulation of a suitable flow polytope. Their geometric realizations, however, have traditionally been studied separately. The Tamari lattice is realized by the associahedron, the weak order on permutations by the permutahedron, and analogous constructions arise for other families of lattices. These examples suggest that the lattice structures arising from framed triangulations should admit a common geometric framework.

A unifying lattice-theoretic framework for these examples was introduced by von Bell and Ceballos \cite{vonBell_framing_2024}. For any directed graph $G$ equipped with a framing $F$, they defined a lattice $\mathscr{L}_{G,F}$, called the \emph{framing lattice}, whose Hasse diagram is the dual graph of the corresponding framed triangulation. Framing lattices are polygonal, semidistributive, and congruence uniform, and encompass many classical and remarkable lattices, including Boolean, Cambrian, Tamari-type, weak-order-type lattices, and $\tau$-tilting posets of certain gentle algebras~\cite{vonBell_framing_2024,BGMY23,BBBHPSY24,d2023realizing}. Thus, framing lattices are not merely a common generalization of several classical combinatorial lattices, but also provide a framework connecting phenomena from polyhedral geometry, lattice theory, and representation theory. This raises a natural geometric question: if framing lattices provide a common combinatorial framework for these examples, what is the corresponding common geometric object?

For the classical examples, the answer is a polytope: the permutahedron realizes the weak order on permutations, while the associahedron realizes the Tamari lattice. In general, however, a single convex polytope is not sufficient. The Hasse diagram of a framing lattice need not be the edge graph of a single convex polytope. Instead, it is naturally realized as the $1$-skeleton of a \emph{polytopal complex}. This leads to the main object of this paper, which may be viewed as the geometric counterpart of a framing lattice: the \emph{framingtope}.

The framingtope is obtained by dualizing all interior faces of a framed triangulation. Its vertices correspond to maximal simplices, and its edges record their adjacencies. Since the dual graph of the triangulation is the Hasse diagram of the framing lattice~$\mathscr{L}_{G,F}$, the edge graph of the framingtope $\mathscr{P}_{G,F}$ is precisely this Hasse diagram. Thus, the framingtope plays the role for framing lattices that the permutahedron and associahedron respectively play for the weak order and Tamari lattices, with the distinction that the framingtope is generally a polytopal complex rather than a polytope. See \Cref{framinglattice_framintope} for an example.

\begin{figure}[ht]
\centering
\includegraphics[]{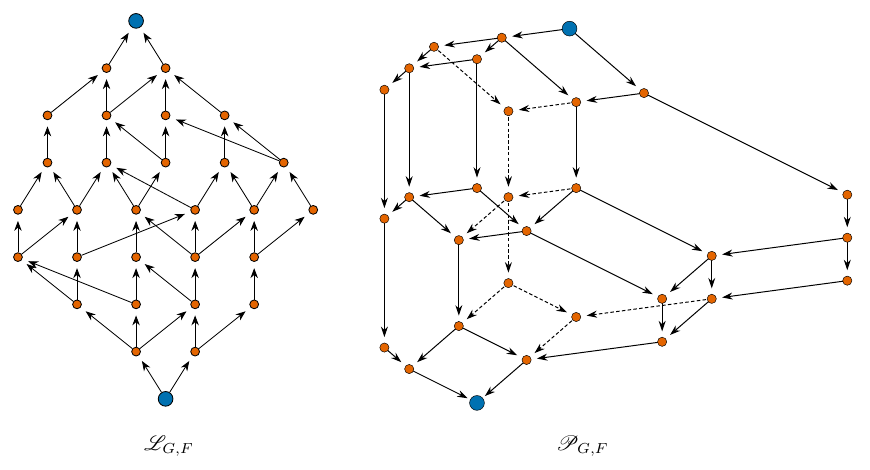}
\caption{A framing lattice and its framingtope.}
\label{framinglattice_framintope}
\end{figure}

The framingtope admits three equivalent descriptions, each emphasizing a different aspect of the construction: its polyhedral origin, its intrinsic route combinatorics, and its lattice-theoretic structure.

\begin{theorem*}[Theorem~\ref{thm.three_equivalent_definitions}]
Let $(G,F)$ be a framed graph. The following three posets are isomorphic and define the combinatorial framingtope $\mathscr{P}_{G,F}$:
\begin{enumerate}
\item the complex of interior faces of the framed triangulation, ordered by reverse inclusion;
\item the collection of sets of pairwise coherent routes covering $G$, ordered by reverse inclusion;
\item the collection of pure intervals of the framing lattice, ordered by inclusion.
\end{enumerate}
\end{theorem*}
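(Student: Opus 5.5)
The plan is to prove two isomorphisms, (1)$\cong$(2) and (2)$\cong$(3), each as an explicit order-preserving bijection with an order-preserving inverse. Both rely on the standard facts from the DKK theory of framed triangulations. The simplices of the framed triangulation of $\mathcal{F}_G$ are exactly the sets of pairwise coherent routes, so its faces are the pairwise coherent subsets of routes. The maximal simplices are the maximal cliques of coherent routes, and these are the elements of $\mathscr{L}_{G,F}$. Two maximal cliques are adjacent (share a facet) exactly when they differ by a single route, obtained by a flip at a twisted pair; this adjacency gives the cover relations of $\mathscr{L}_{G,F}$, as in \cite{vonBell_framing_2024}.

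\emph{Step 1: (1)$\cong$(2).} A face $\sigma$, viewed as a set $R$ of coherent routes, spans a face of the triangulation. I would show that $\sigma$ is interior, i.e.\ not contained in the boundary of $\mathcal{F}_G$, if and only if $R$ covers every edge of $G$. The facets of $\mathcal{F}_G$ are $\{x_e=0\}$, and these are faces of the polytope because $\mathcal{F}_G$ lies in the nonnegative orthant. The vertices of $\sigma$ are the indicator vectors of the routes in $R$. The relative interior of $\sigma$ meets the facet $\{x_e=0\}$ precisely when every route in $R$ avoids $e$. Since the triangulation refines $\mathcal{F}_G$, $\sigma$ is a boundary face iff it lies in some facet. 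The identity map on route sets then gives the bijection, and reverse inclusion is preserved on both sides tautologically.

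\emph{Step 2: (2)$\cong$(3).} Let $R$ be a coherent covering set. Its star in the triangulation, i.e.\ the maximal cliques containing $R$, forms a subset $I_R\subseteq\mathscr{L}_{G,F}$. I would define a pure interval to be an interval $[x,y]$ such that every atom-like cover $x\lessdot z\le y$ is generated by flips and the interval equals the set of elements whose clique contains $\bigcap_{w\in[x,y]} w$. The paper's definition presumably says something like this, and I would use whatever it states. I would then show three things:
\begin{enumerate}
\item[(a)] $I_R$ is an interval, with minimum and maximum given by performing all available flips in the downward and upward direction;
\item[(b)] $I_R$ is pure;
\item[(c)] $R=\bigcap I_R$, and conversely every pure interval arises this way, with $R$ taken to be the common routes of all its elements.
\end{enumerate}
For (a), I would use that the link of the interior face $R$ is the framed triangulation of a product of smaller flow polytopes. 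This product is obtained by contracting along $R$, and it decomposes $G$ into pieces between consecutive shared vertices of the routes. Consequently $I_R$ is isomorphic to a product of framing lattices of smaller framed graphs, and a product of bounded lattices is an interval with a unique bottom and top. Once the correspondence is established, order preservation is immediate: $R\subseteq R'$ if and only if $I_{R'}\subseteq I_R$.

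\emph{Main obstacle.} The hardest step is (a): proving that the set of maximal cliques containing $R$ is an interval of $\mathscr{L}_{G,F}$, and specifically that it is order-convex in the lattice and not merely connected. My first attempt would be the local product decomposition of the link via "cutting" $G$ at the routes of $R$. Failing that, I would fall back on a direct argument using the route-order characterization of the framing lattice: $x\le y$ iff some inversion-type set increases. I would then show that the elements containing $R$ are exactly those whose inversion set lies between two fixed sets, which yields convexity.
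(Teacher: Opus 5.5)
Your Step 1 is fine. You read off interiority from the inequalities $x_e\ge 0$ rather than from Hille's description of the faces of $\mathcal{F}_G$, which works because every edge lies on some route, so $\{x_e=0\}\cap\mathcal{F}_G$ is always a proper face.

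The gap is in Step 2, and it starts with the definition you left open. In the paper a pure interval is $[C,C+A]$ with $A\subseteq\Asc(C)$ and $C+A=\bigvee_{a\in A}(C+a)$. Your surrogate ("an interval equal to the set of cliques containing its common routes") is not equivalent, and with it your claim (c) is false. Take the multioruga $\oru{(2,1)}$ with the plane framing: three parallel edges into a middle vertex, then two into the sink. Its lattice is the chain $001<010<100$. The exceptional routes are only the bottom and top routes, and these miss the middle source edge. The whole chain equals the set of cliques containing them, so it satisfies your condition, yet it comes from no interior face. There are exactly five interior faces, matching the five intervals $[C,C]$ and $[C,C+a]$.

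With the correct definition, the real content of (2)$\cong$(3) is twofold. First, (i): $C\smallsetminus A$ still covers $G$ for every $A\subseteq\Asc(C)$. Second, (ii): $\bigvee_{a\in A}(C+a)=C_{\max}(C\smallsetminus A)$, so $[C,C+A]$ is all of $I_{C\smallsetminus A}$ rather than a proper subinterval. Your plan has no argument for either. The paper proves (i) only by equinumeration. Linear extensions of $\mathscr{L}_{G,F}$, or of its opposite, are shellings whose restriction faces are the descent (respectively ascent) sets, so $h(t)=\sum_C t^{\asc(C)}$. A Dehn--Sommerville relation for balls gives $h(t+1)=\sum_i f^{int}_{d-i}t^i$. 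Hence the injection $S\mapsto(C_{\min}(S),C_{\min}(S)\smallsetminus S)$ onto ascent pairs is a bijection. For (ii) the paper proves the Join-$C_{\max}$ Theorem. It combines the $n$-regularity of the Hasse diagram of $[C_{\min}(S),C_{\max}(S)]$ (the link of a codimension-$n$ interior face is a sphere) with McConville's crosscut-simplicial property of meet-semidistributive lattices. Together these force $\bigvee C_i$ to have $n$ lower covers in $[C,\bigvee C_i]$, so it is the unique sink.

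Conversely, your ``main obstacle'' (a), that the cliques containing a coherent set form an interval, is simply quoted in the paper from von Bell--Ceballos, and the argument you sketch for it fails. As stated it cannot hold: the link of an interior face is a sphere, not a triangulated polytope. The natural repair also fails, because a series decomposition of $G$ makes $\mathcal{F}_G$ a product without making the framed triangulation or the framing lattice a product. In $\oru{3}$ with the top-to-bottom framing, the two exceptional routes share every vertex, so cutting yields three copies of $\oru{1}$, each with a one-element lattice. Yet every maximal clique contains both routes, and the corresponding interval is the whole hexagon. Your description of the top as reached by ``performing all available flips'' is exactly statement (ii), so it does not come for free from (a).

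Finally, injectivity and order-reflection ($I_{S'}\subseteq I_S\Rightarrow S\subseteq S'$) need $S=\bigcap_{C\in I_S}C$ for interior $S$. This holds because a non-flippable route $R\in C\smallsetminus S$ would make $C\smallsetminus\{R\}$ a boundary face containing $S$. Surjectivity needs $C_{\min}(S)\smallsetminus S\subseteq\Asc(C_{\min}(S))$. Both are short, but in your plan they are only asserted.
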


The first description connects the framingtope directly to the geometry of the framed triangulation. The second provides an intrinsic description in terms of the route structure of the graph, without reference to the ambient flow polytope. The third reveals the lattice-theoretic content of the construction: the elements (or faces) of the framingtope are naturally indexed by pure intervals of the framing lattice. Each such interval can be realized by a polytope, and these polytopes fit together along their common faces to form the framingtope as a polytopal complex. Thus, the three descriptions provide complementary ways of understanding the same object, linking the geometry of flow polytopes, the combinatorics of routes, and the structure of framing lattices.

The polyhedral construction above interacts naturally with tropical geometry, which provides an explicit geometric realization of the framingtope. The key ingredient is the regularity of framed triangulations. A regular triangulation can be encoded by a height function, and the tropical Cayley trick transforms it into an arrangement of tropical hypersurfaces. Tropical duality then identifies the bounded cells of this arrangement with the interior faces of the corresponding regular triangulation. Consequently, the framingtope can be realized as the polytopal complex of bounded cells in a suitable arrangement of tropical hypersurfaces.

\begin{theorem*}[cf.~\Cref{thm_framingtope_geometric_realization}]
Let $(G,F)$ be a framed graph. The framingtope $\mathscr{P}_{G,F}$ admits a tropical realization as the polytopal complex of bounded cells of an arrangement of tropical hypersurfaces.
\end{theorem*}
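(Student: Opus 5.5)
The plan is to combine three ingredients: the regularity of framed triangulations, the tropical Cayley trick, and the tropical duality between regular subdivisions and arrangements of tropical hypersurfaces.

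\emph{Step 1: a height function.} Danilov, Karzanov and Koshevoy showed that the framed triangulation of $\mathcal{F}_G$ is regular. I will therefore fix an admissible height function $h$. It assigns a real number $h(R)$ to each route $R$ of $G$, and its lower convex hull induces exactly the framed triangulation. The vertices of $\mathcal{F}_G$ are the indicator vectors of routes. Concretely, $h$ should be strictly convex across every pair of incoherent routes: if $R,R'$ cross at a common subpath, then $h(R)+h(R')>h(S)+h(S')$, where $S,S'$ are obtained by swapping the tails. These are exactly the local folding conditions. I will cite the existence of such $h$ rather than reprove it.

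\emph{Step 2: Cayley trick.} I will write the route polytope as a Minkowski-type/Cayley embedding. The natural choice is to index by the inner vertices $v$ of $G$. To each $v$ I associate the tropical polynomial
\[
f_v(x)=\bigoplus_{R \ni v} \bigl(h(R)\odot x^{e_R}\bigr),
\]
or a suitable product form. The tropical Cayley trick (Develin--Sturmfels, Herrmann--Joswig--Speyer) then identifies the regular subdivision induced by $h$ with the polyhedral decomposition of $\bbR^{d}$ given by the common refinement of the tropical hypersurfaces $V(f_v)$. This identification is inclusion-reversing: a cell of the arrangement corresponds to the face of the triangulation consisting of the routes attaining the minimum in each $f_v$. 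An alternative is to view the subdivision directly as the regular subdivision of the point configuration of routes and take the dual tropical hypersurface of $\bigoplus_R h(R)\odot x^{e_R}$. I will pick whichever matches the paper's later definitions, since the conclusion is the same.

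\emph{Step 3: bounded cells are interior faces.} This is the main obstacle. I will show that a cell of the arrangement is bounded if and only if the dual face of the triangulation is interior, meaning it is not contained in the boundary of $\mathcal{F}_G$. The standard argument runs as follows. The recession cone of the cell dual to a face $\sigma$ is the normal cone of the face of $\mathcal{F}_G$ containing $\sigma$ in its relative interior, taken modulo the lineality space coming from the flow equations. Hence the cell is bounded exactly when that face is all of $\mathcal{F}_G$, that is, when $\sigma$ meets the interior. The care needed here is to quotient correctly by the lineality space, namely the affine span of $\mathcal{F}_G$ and the net-flow constraints. I would handle this by choosing coordinates $x$ on the edge space modulo the cut space of $G$.

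\emph{Step 4: conclusion.} The bounded cells form a polytopal complex whose face poset, ordered by inclusion, is anti-isomorphic to the interior faces ordered by inclusion. It is therefore isomorphic to description (1) of Theorem~\ref{thm.three_equivalent_definitions}, and hence to $\mathscr{P}_{G,F}$.
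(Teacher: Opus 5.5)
Your primary construction in Step~2 fails. The tropical Cayley trick needs the vertex set of $\mathcal{F}_G$ to be \emph{partitioned} into classes $A_1,\dots,A_k$ with $\mathcal{F}_G\cong C(\operatorname{conv}A_1,\dots,\operatorname{conv}A_k)$. In other words, there must be an affine map sending every route to a vertex of a $(k-1)$-simplex.

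The families $\{R : v\in R\}$ indexed by inner vertices are not such a partition. A route typically passes through several inner vertices, and some routes pass through none. Take vertices $1,2,3$ and edges $1\to 2$, $2\to 3$, $1\to 3$:
\begin{itemize}
\item The two routes share no inner vertex, so they are coherent and cover $G$.
\item $\mathcal{F}_G$ is a segment whose only interior face is the segment itself, so $\mathscr{P}_{G,F}$ is a point.
\item The only inner vertex is $2$, so $f_2$ is a single monomial in which the route $1\to 3$ never appears.
\item The tropical hypersurface of $f_2$ is empty, so the only cell is the whole one-dimensional space and there is no bounded cell.
\end{itemize}
So your claim that ``the conclusion is the same'' for both options is false.

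A valid Cayley decomposition needs a set of edges that every route uses exactly once. The paper uses the source edges $e_{0,0},\dots,e_{0,j_0}$ and writes $\mathcal{F}_G=C(\mathcal{F}_{G_0},\dots,\mathcal{F}_{G_{j_0}})$, with one hypersurface $T_i^h$ per source edge. It then applies the projection of \Cref{prop: projection_flowpolytopes} to make the Minkowski sum full-dimensional. This is what lets \Cref{cor_inteorior_vs_tropBounded} identify bounded cells with interior faces.

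Your fallback, by contrast, is sound. If you commit to it, you get a correct proof of the statement, using an arrangement with a single hypersurface.
\begin{itemize}
\item Differences of routes span the cycle space of $G$, which has dimension $|E(G)|-|V(G)|+1=\dim\mathcal{F}_G$. So the lineality space $V_{\mathcal{F}_G}^{\perp}$ is exactly the cut space, as you say.
\item Your Step~3 is precisely the argument of \Cref{thm_trop_cells}. The recession cone of the cell dual to $\sigma$ is the inner normal cone of the smallest face of $\mathcal{F}_G$ containing $\sigma$. This cone is trivial modulo lineality if and only if $\sigma$ is interior.
\end{itemize}
This route avoids the Cayley trick and uses only single-hypersurface duality (\Cref{pro: tropical_map}). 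The cost is working in dimension $\dim\mathcal{F}_G$ instead of $\dim\mathcal{F}_G-j_0$.

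The two realizations are closely related. Represent $\mathbb{R}^{E(G)}$ modulo the cut space by setting to zero the coordinates of the spanning tree $e_{0,0},e_{1,0},\dots,e_{n-2,0}$. Then your single tropical polynomial becomes $\min_i\bigl(x_{0,i}+T_i^h(\mathbf{x})\bigr)$ with $x_{0,0}=0$. So the source-edge coordinates play the role of the Cayley coordinates $y_i$ in the proof of \Cref{thm_Cayley_tropicalArrangement}.

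Interior faces use every source edge, so on bounded cells all $j_0+1$ terms attain the minimum and the $x_{0,i}$ are determined by the other coordinates. Hence forgetting the $x_{0,i}$ maps your bounded complex bijectively onto the paper's $\mathscr{P}^h_{G,F}$, affinely on each cell. The paper's lower-dimensional coordinates are what make its explicit vertex formulas for plane framed graphs and multiorugas convenient.
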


This tropical realization is particularly useful, both because it provides a uniform construction for a broad class of examples and because it leads to explicit vertex coordinates for framingtopes. We focus in particular on \emph{plane framed graphs} and on a family of framed graphs that we call \emph{ABBE graphs}. For these graphs, we introduce a simple height function that differs from the one typically used to construct DKK triangulations. We then show that the resulting tropical framingtopes admit explicit coordinate formulas. These formulas are expressed directly in terms of the routes and the planar structure of the graph, providing concrete geometric realizations of the corresponding framingtopes.

As a particularly rich family of examples, we study \emph{multioruga graphs}. Their framing lattices are weak orders on multipermutations, and our construction gives explicit coordinates for the corresponding framingtopes. In the special case of the ordinary oruga graph, the framing lattice is the weak order on permutations, and the resulting framingtope gives a tropical realization of the permutahedron. More generally, multioruga graphs yield tropical realizations of weak orders on multipermutations, extending the classical permutahedral picture to a broader family of lattice structures.

An alternative realization of framing lattices in terms of cubical coordinates was announced in the extended abstract~\cite{berggren_canonical_2026}. However, the authors focus only on the Hasse diagram and do not consider the full dual complex of interior faces of the framed triangulation.

\textbf{Acknowledgements}. The authors were partially supported by the Austrian Science Fund FWF, grants 10.55776/P33278 and 10.55776/I5788.

\section{Framing Lattices}\label{section: framinglat}
Framing lattices were introduced by von Bell and Ceballos in~\cite{vonBell_framing_2024} as a class of posets whose Hasse diagrams are dual to framed triangulations of flow polytopes. In this section, we recall the concepts of flow polytopes, framed triangulations, and framing lattices.

\subsection{Flow polytopes}
Let us start by defining a special kind of polytope associated to a flow graph. A \defn{flow graph} $G$ is a directed acyclic graph on vertex set $V(G) = [n]$ and edge multiset $E(G)$ such that all edges are directed from smaller to larger vertices, and $G$ has a unique source $s=1$ and sink $t=n$. If $e=(i,j)$, we call $h(e)=j$ the \defn{head} of $e$ and $t(e)=i$ the \defn{tail}. A path from the source to the sink is called a \defn{route}.

A \defn{unit flow} on $G$ is a tuple $(x_e)_{e\in E(G)} \in \bbR^{|E(G)|}_{\geq0}$ satisfying
$$\sum_{e \in \Out(j)} x_e - \sum_{e \in \In(j)}  x_e = u_j,$$
where $u_1 = 1$, $u_n = -1$, and $u_j = 0$ for $1 < j < n$; for vertex $j$ of $G$, we denote by $\In(j)$ and $\Out(j)$ the incoming edges and the outgoing edges respectively and $x_e$ the flow associated to the edge $e$. The \defn{flow polytope} of $G$ is the set~$\calF_G$ of unit flows on $G$. An alternative description of the flow polytope $\calF_G$ can be given in terms of its vertices, which can be characterized as the unit flows on $G$ taking value one on the edges of a route and value zero on the remaining edges. Thus,~$\calF_G$ is the convex hull of the indicator vectors of the routes of $G$. The dimension of a flow polytope $\calF_{G}$ is known to be $\dim(\calF_G)=|E(G)| - |V(G)| +1$.

\begin{example}[Running example]\label{exam: oruga3}
    Consider the flow graph $G$ illustrated in~\Cref{fig_oruga3}. It has four vertices $\{1,2,3,4\}$ and six edges
    \[
    e_1=(1,2),\ e_2=(1,2),\ e_3=(2,3),\ e_4=(2,3),\ e_5=(3,4),\ e_6=(3,4)
    \]
    which are oriented from the smaller to the larger vertex. Note that we omit edge orientations in our drawings (depicting them as undirected edges rather than arrows) because the direction is implicitly assumed to be from left to right. See \Cref{fig_oruga3}.

   \begin{figure}[ht]
    \centering
    \includegraphics{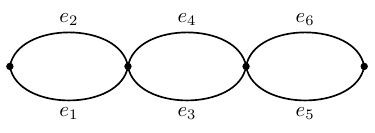}
        \caption{The Oruga graph $G=\oru{3}$.}
        \label{fig_oruga3}
\end{figure}

This graph has eight different routes, whose corresponding indicator vectors are:

    \begin{center}
    \begin{tabular}{cc}
      $\set{e_1,e_3,e_5} \to (1,0,1,0,1,0)$,   & $\set{e_1,e_3,e_6}\to(1,0,1,0,0,1)$ \\
       $\set{e_1,e_4,e_5}\to(1,0,0,1,1,0)$,  & $\set{e_1,e_4,e_6}\to(1,0,0,1,0,1)$  \\
        $\set{e_2,e_3,e_5}\to(0,1,1,0,1,0)$, & $\set{e_2,e_3,e_6}\to(0,1,1,0,0,1)$  \\
       $\set{e_2,e_4,e_5}\to(0,1,0,1,1,0)$,  &  $\set{e_2,e_4,e_6}\to(0,1,0,1,0,1)$ 
    \end{tabular}       
    \end{center}

    The flow polytope $\calF_G\subseteq \mathbb{R}^6$ associated to $G$ is the convex hull of these eight indicator vectors. Although the ambient space is 6-dimensional, the result is a 3-dimensional cube, as illustrated in~\Cref{fig_cube_oruga3}.

\begin{figure}[ht]
    \centering
    \includegraphics{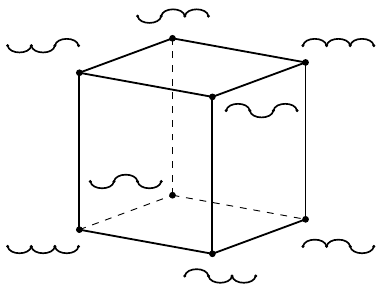}
    \caption{The flow polytope $\calF_G$ of the oruga graph $G=\oru{3}$.}
    \label{fig_cube_oruga3}
\end{figure}

\end{example}
   
\begin{example}[The oruga graph and the cube]  \label{exam_oruga_n}  
    Example~\ref{exam: oruga3} can be generalized as follows. The \defn{oruga graph} $G_n=\oru{n}$ is the oriented graph on the vertex set~$[n+1]$, containing two edges between $i$ and $i+1$ for $i \in [n]$. These two edges are oriented from the smaller vertex to the larger one, and are labeled by $e_{2i-1}$ and $e_{2i}$. Example~\ref{exam: oruga3} corresponds to the case $n=3$. The flow polytope $\calF_{G_n}$ can be shown to be an $n$-dimensional cube in~$\mathbb{R}^{2n}$. Some examples of the oruga graph and its corresponding flow polytopes for small $n$ are illustrated in~\Cref{fig_oruga_graph}. The name ``oruga" was given in \cite{d2023realizing}, meaning caterpillar in Spanish. We will explore this example and a generalization of it, called the multi-oruga graph, in greater detail in~\Cref{sec_multipermutations}.

    \begin{figure}[ht]
    \centering
    \includegraphics{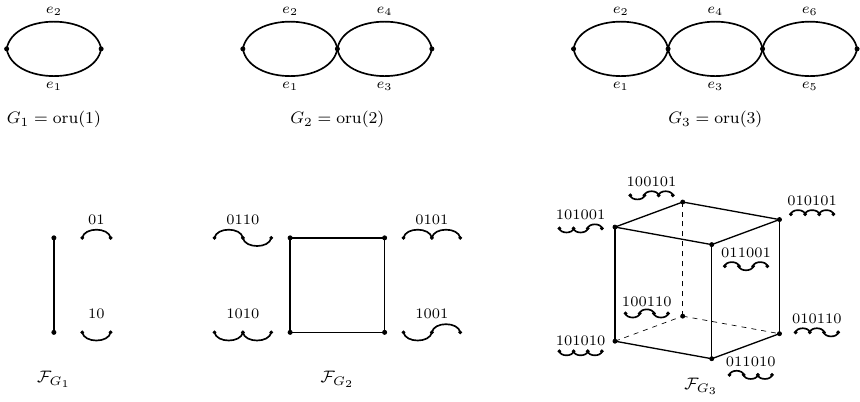}

    \caption{Some examples of the oruga graph and their flow polytopes.}
    \label{fig_oruga_graph}
\end{figure}

\end{example}

\subsection{Framed triangulations}
Given a flow graph $G$, Danilov, Karzanov, and Koshevoy~\cite{dkktriangulation} introduced a family of triangulations of the flow polytope $\calF_G$ which are commonly referred to as \emph{DKK triangulations} or \emph{framed triangulations}. Their main ingredient is the concept of a framing, which we now recall.

For a vertex $v$ of $G$, we denote by $\In(v)$ and $\Out(v)$ the (possibly empty) sets of incoming and outgoing edges at $v$, respectively. A \defn{framing} at the vertex $v$ is a pair of linear orders $(\leq_{\In(v)}, \leq_{\Out(v)})$ on the incoming and outgoing edges at $v$. A \defn{framed graph} $(G,F)$ is a flow graph $G$ with a framing $F$ at every vertex.

\begin{example}[Running example continued]\label{exam: oruga3_framings}
    Let $G=\oru{3}$ be the flow graph in our running Example~\ref{exam: oruga3}. Consider the two different framings $F_1$ and $F_2$ of $G$ illustrated in~\Cref{fig_running_framings}, where the labels indicate the order of the incoming and outgoing edges at every vertex.

\begin{figure}[ht]
    \centering
        \includegraphics{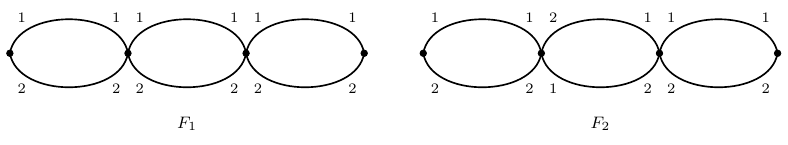}
        \caption{Two different framings $F_1$ and $F_2$ of $G=\oru{3}$.}
        \label{fig_running_framings}
\end{figure}

\begin{figure}[ht]
    \centering
    \includegraphics{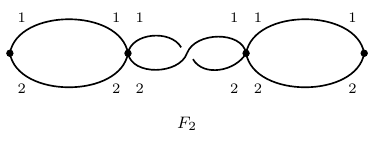}
        \caption{The framing $F_2$ drawn such that the framing is increasing from top to bottom.}
        \label{fig_running_framings_F2}
\end{figure}

\end{example}

For a path $P$ containing a vertex $v$, let $Pv$ (resp. $vP$) denote the maximal subpath of~$P$ ending (resp. beginning) at $v$. Furthermore, let $\scrI(v)$ (resp. $\scrO(v)$) denote the set of paths in~$G$ ending (resp. beginning) at $v$. We define the \defn{relations $\leq_{\scrI(v)}$} and \defn{$\leq_{\scrO(v)}$} on $\scrI(v)$ and~$\scrO(v)$ as follows.

Given paths $Pv, Qv\in\scrI(v)$, let $w\leq v$ be the smallest vertex after which $Pv$ and $Qv$ coincide. If $w$ is the first vertex of $Pv$ or $Qv$, we say that $Pv=_{\scrI(v)} Qv$. Otherwise, let $e_P$ be the edge of $P$ entering $w$ and let $e_Q$ be the edge of $Q$ entering $w$. Then $Pv <_{\scrI(v)} Qv$ if and only if~$e_P <_{\In(w)} e_Q$.

Similarly, for $vP, vQ\in\scrO(v)$, let $w'\geq v$ be the largest vertex before which $vP$ and $vQ$ coincide. If $w'$ is the largest vertex of $vP$ or $vQ$, then $vP =_{\scrO(v)} vQ$. Otherwise, let $e_P'$ be the edge of $P$ leaving $w'$ and let~$e_Q'$ be the edge of $Q$ leaving $w'$. Then $vP <_{\scrO(v)} vQ$ if and only if $e_P' <_{\Out(w')} e_Q'$.

We say that a vertex $v$ of a path $P$ is an \defn{inner vertex} if $v$ is not the first or last vertex of the path. If $v$ is an inner vertex of paths $P$ and $Q$, we say that $P$ and~$Q$ are \defn{incoherent at $v$} if $Pv <_{\scrI(v)} Qv$ and $vQ <_{\scrO(v)} vP$, or if $Qv <_{\scrI(v)} Pv$ and $vP <_{\scrO(v)} vQ$. We say that $P$ and $Q$ are \defn{coherent at $v$} otherwise. Paths $P$ and $Q$ are then said to be \defn{coherent} if they are coherent at each common inner vertex, and they are \defn{incoherent} otherwise. When the graph $G$ is drawn such that the framing of the incoming and outgoing edges increases from top to bottom, the notion of coherence becomes very intuitive. In this case, two paths are coherent if they do not ``cross''.

If a route $R$ is coherent with all the other routes, we say that $R$ is an \defn{exceptional} route. A set of pairwise coherent routes is called a \defn{clique}, and a \defn{maximal clique} is a clique that is maximal under inclusion. We denote by \defn{$\calC$} the \defn{collection of maximal cliques}, and by \defn{$\Delta_C$} the convex hull $\Delta_C$ of the indicator vectors of the routes in~$C$.

\begin{example}[Running example continued]
    Let $G=\oru{3}$ and $F=F_1$ be the first framing in~\Cref{exam: oruga3_framings}; two incoherent routes at vertex $v=2$ are $\set{e_2,e_3,e_6}$ and $\set{e_1,e_4,e_6}$, which are shown as the black and blue routes in~\Cref{fig_Two_incoherent_routes}.

\begin{figure}[ht]
    \centering
    \includegraphics{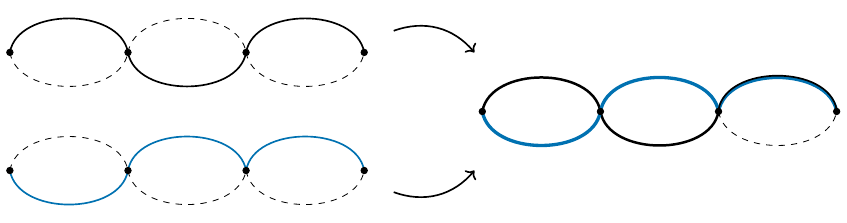}
    \caption{Two incoherent routes for the framing $F_1$ of $G=\oru{3}$.}
    \label{fig_Two_incoherent_routes}
\end{figure}

 With the same framing,~\Cref{fig_max_clique_example} shows an example of a maximal clique $C$ consisting of the routes $\set{e_1,e_3,e_5}$, $\set{e_1,e_4,e_5}$, $\set{e_1,e_4,e_6}$, and $\set{e_2,e_4,e_6}$. The figure also shows the convex hull $\Delta_C$ of their indicator vectors.

\begin{figure}[ht]
    \centering
    \includegraphics{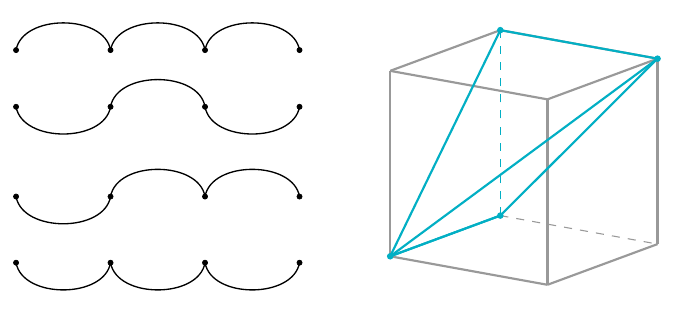}
    \caption{A maximal clique $C$ for the framing $F_1$ of $G=\oru{3}$, and the convex hull $\Delta_C$ of its indicator vectors.}
    \label{fig_max_clique_example}
\end{figure}

\end{example}

Danilov, Karzanov, and Koshevoy showed in~\cite{dkktriangulation} that $|C| = \dim (\mathcal{F}_G) + 1$ for every maximal clique $C$, and that $\Delta_C$ forms a simplex inside the flow polytope $\calF_G$. Putting all these simplices together gives a triangulation of $\calF_G$.

\begin{proposition}[{Danilov, Karzanov and Koshevoy~\cite{dkktriangulation}}]
Let $(G,F)$ be a framed graph. The set $\{\Delta_C \mid C \in \calC\}$ is the set of the top-dimensional simplices in a regular unimodular triangulation of the flow polytope $\calF_G$.
\end{proposition}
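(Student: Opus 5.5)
We prove the Danilov--Karzanov--Koshevoy theorem in three stages: each $\Delta_C$ is a unimodular simplex, the simplices cover $\calF_G$ with disjoint interiors, and the subdivision is induced by an explicit height function.

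\emph{Simplices and unimodularity.} Order the routes of a clique $C$ linearly. Coherence means that, with the framing drawn increasing from top to bottom, two routes of $C$ never cross. Hence at every vertex the routes of $C$ passing through it are totally ordered, compatibly with $\leq_{\scrI(v)}$ and $\leq_{\scrO(v)}$. From this order we show that a maximal clique has exactly $|E(G)|-|V(G)|+2$ routes, by induction on the number of edges. Contract an edge $e$ with $h(e)$ of in-degree one, or split a vertex, and check that maximal cliques correspond under the operation. Equivalently, walking through $C$ in order, each new route differs from the previous one by a detour that introduces exactly one edge not yet used. This ``one new edge at a time'' structure makes the indicator vectors affinely independent. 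Their edge-incidence matrix is then triangular with unit diagonal after choosing the new edges as coordinates, so $\Delta_C$ has normalized volume $1$.

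\emph{Covering with disjoint interiors.} We take each unit flow $x\in\calF_G$ and decompose it canonically into a clique. Consider all routes at once as ``strands'' of total mass $1$, and at each vertex match incoming to outgoing mass by the framing orders. This is the non-crossing decomposition: it is like the ``top-to-bottom'' greedy path decomposition of a planar flow. It yields a unique expression of $x$ as a convex combination of pairwise coherent routes, which gives covering. Uniqueness of the coherent decomposition gives disjointness of the relative interiors. To see it, suppose two different cliques represented $x$ with positive coefficients. The first vertex where their strand orders disagree produces a crossing, which contradicts coherence. Proper intersection along common faces follows similarly, since the faces of $\Delta_C$ are $\Delta_{C'}$ for $C'\subseteq C$.

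\emph{Regularity.} We need a height function $h$ on routes whose lower envelope induces this subdivision. Our candidate is $h(R)=\sum$ over pairs of consecutive edges $(e,e')$ at an inner vertex $v$ of $R$ of a convex penalty. It depends on the ranks of $e$ in $\In(v)$ and of $e'$ in $\Out(v)$, for instance $-(\mathrm{rank}_{\In}(e)-\mathrm{rank}_{\Out}(e'))^2$. The point is to reward pairing equal ranks, so that uncrossing two incoherent routes strictly decreases total height. The key check is then that for incoherent $P,Q$ meeting at $v$ the swapped pair $P',Q'$ satisfies $\mathbf 1_P+\mathbf 1_Q=\mathbf 1_{P'}+\mathbf 1_{Q'}$ and $h(P')+h(Q')<h(P)+h(Q)$. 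This is a strict submodularity-type inequality, and it implies that every lifted cell projects to a coherent set.

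\textbf{Main obstacle.} Regularity is the hardest step. A local pairwise swap inequality does not automatically yield that the lower faces are exactly the $\Delta_C$; one needs the swap to propagate through vertices where $P$ and $Q$ share longer segments. If the quadratic rank penalty fails, I would instead try a generic weighted sum over all pairs of incoming and outgoing paths, $\sum_v \phi_v(\mathrm{rank}_{\scrI(v)}(Rv),\mathrm{rank}_{\scrO(v)}(vR))$ with $\phi_v$ strictly supermodular. This makes the uncrossing inequality hold at the level of the path orders $\leq_{\scrI(v)},\leq_{\scrO(v)}$ directly.
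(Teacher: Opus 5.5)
\textbf{The paper does not prove this proposition; it imports it from \cite{dkktriangulation}.} Judged on its own, your Stages 1--2 are sound in outline, but Stage 3 (regularity) has a genuine gap. You never exhibit a height function satisfying the strict swap inequality for all incoherent pairs, and both of your candidates fail.

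Your quadratic candidate already has the wrong sign for your own lower-envelope convention. The function $-(a-b)^2$ is concave. For a crossing at a single vertex with in-ranks $a_P<a_Q$ and out-ranks $b_P>b_Q$, uncrossing \emph{increases} the total height, by $2(a_Q-a_P)(b_P-b_Q)$.

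More seriously, no height of the form $\sum_v\phi_v(\text{in-edge},\text{out-edge})$ can work, whatever the $\phi_v$. Suppose $P$ and $Q$ are incoherent along a path containing a shared edge. Perform the swap at a vertex where they share the outgoing edge. Then every vertex sees the same multiset of (in-edge, out-edge) pairs before and after, so $h(P')+h(Q')=h(P)+h(Q)$. Concretely, take $\oru{3}$ with the framing of \Cref{Example:Dual_Cube}. The routes $\{e_1,e_3,e_6\}$ and $\{e_2,e_3,e_5\}$ are incoherent. On the square facet $x_{e_4}=0$, any such $h$ equals $f(\text{edge into }2)+g(\text{edge out of }3)$, which is affine, so this facet is not subdivided at all.

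Your fallback with path ranks and supermodular $\phi_v$ does not repair this. Swapping at $v$ changes the outgoing path $uP$ at every vertex $u$ of $P$ before $v$ that is not on $Q$, and symmetrically the incoming paths after $v$. Supermodularity does not control these one-sided differences. For example, take vertices $1,2,3,4$, edges $a=(1,2)$, $b=(2,3)$, $c=(1,3)$, and $f_1<f_2$ from $3$ to $4$, with $b<_{\In(3)}c$. Then $P=abf_2$ and $Q=cf_1$ are incoherent at $3$. With $\phi_v(x,y)=xy$ at both inner vertices, $h(P)+h(Q)=6=h(P')+h(Q')$, so the square $\calF_G$ is again not cut.

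Two further remarks. First, the obstacle you name is not the real one. You already observe that a strict swap inequality keeps incoherent pairs out of a common lower cell, since $\tfrac12(\mathbf 1_P+\mathbf 1_Q)=\tfrac12(\mathbf 1_{P'}+\mathbf 1_{Q'})$. Together with Stage 2, this forces every lower cell to be a clique, and hence the regular triangulation equals $\Delta_{G,F}$. By \Cref{lemma_admisible_heigth_function} (stated there for upper envelopes, so with the inequality reversed), it even suffices to treat pairs incoherent along exactly one path. Those pairs, however, include crossings along shared segments, which is exactly where local heights break down. The missing ingredient is a genuinely global admissible height. The paper produces one only for ABBE graphs (the route-counting function of \Cref{prop_ABBE_height_admissible}) and relies on \cite{dkktriangulation} for arbitrary framings.

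Second, in Stage 1 a new route need not add exactly one new edge: a detour through a new vertex adds two. Your unit-triangular minor only needs at least one new edge per route, together with $|C|=\dim\calF_G+1$. The latter follows from the uniqueness in Stage 2, since it forces every maximal clique to be full-dimensional. So the sketched contraction/splitting induction can be dropped.
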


The triangulation in this proposition is called the \defn{framed triangulation} of $\calF_G$, and is denoted by \defn{$\Delta_{G,F}$}. Thus maximal cliques of \((G,F)\) are in bijection with facets of $\Delta_{G,F}$. Note that the maximal cliques are determined by the framing. So modifying the framing produces a change in the triangulation.

\begin{example}[Running example continued]\label{ex_running}
    Consider the framings $F_1$ and $F_2$ of $G=\oru{3}$ as in~\Cref{exam: oruga3_framings}. Their corresponding framed triangulations are shown in~\Cref{fig_cubes_Triang_oruga3}.

\begin{figure}[ht]
    \centering
    \includegraphics{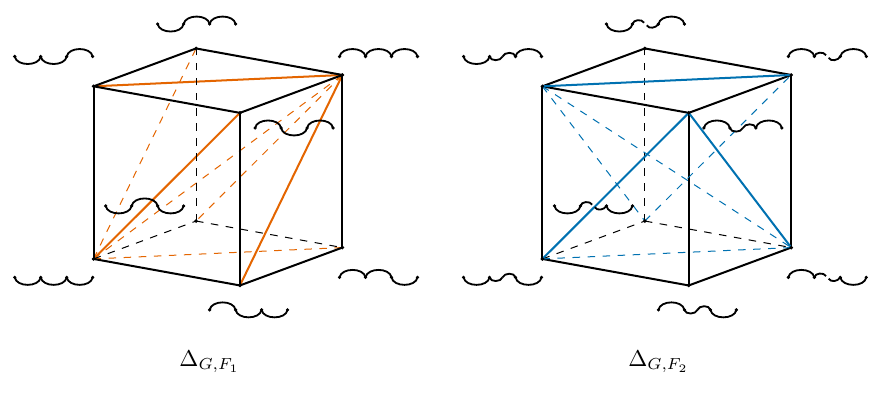}
    \caption{Two framed triangulations of $\calF_G$, for $G=\oru{3}$.}
    \label{fig_cubes_Triang_oruga3}
\end{figure}

\end{example}

\subsection{Framing lattices}

The framing lattice of a framed graph $(G,F)$ was introduced in~\cite{vonBell_framing_2024} as a certain poset whose Hasse diagram is the dual graph of the framed triangulation $\Delta_{G,F}$. In this section, we briefly recall the definition.

Let $C\neq C'$ be two maximal cliques in $(G,F)$ such that $C'=(C\setminus R)\cup R'$. Their corresponding simplices $\Delta_C$ and $\Delta_{C'}$ are adjacent in the framed triangulation because they share the codimension $1$ face $\Delta_{C\cap C'}$. The routes $R$ and $R'$ must be incoherent at some vertex $v$. We say that \defn{$R$ is clockwise from $R'$ at vertex $v$} when $Rv \leq_{\scrI(v)} R'v$ and $vR' \leq_{\scrO(v)} vR$, and denote this by $R <_v^{\cw} R'$. In this case we say that $R'$ is obtained from $R$ by a \defn{ccw rotation at $v$}, and also that $C'$ is obtained from $C$ by a \defn{ccw rotation}. Define the cover relation $C \precccwrot C'$ if $C'$ is obtainable from $C$ by a ccw rotation. An example is shown in~\Cref{fig_ccwrot}. The \defn{framing poset} $\scrL_{G,F} = (\calC, \leqccwrot)$ is the poset of ccw rotations of maximal cliques induced by the transitive closure of the cover relation~$\precccwrot$. We often just write $\leq$ and $\prec$ for simplicity, when the order is clear from the context.

\begin{figure}[ht]
    \centering
    \includegraphics[scale=0.7]{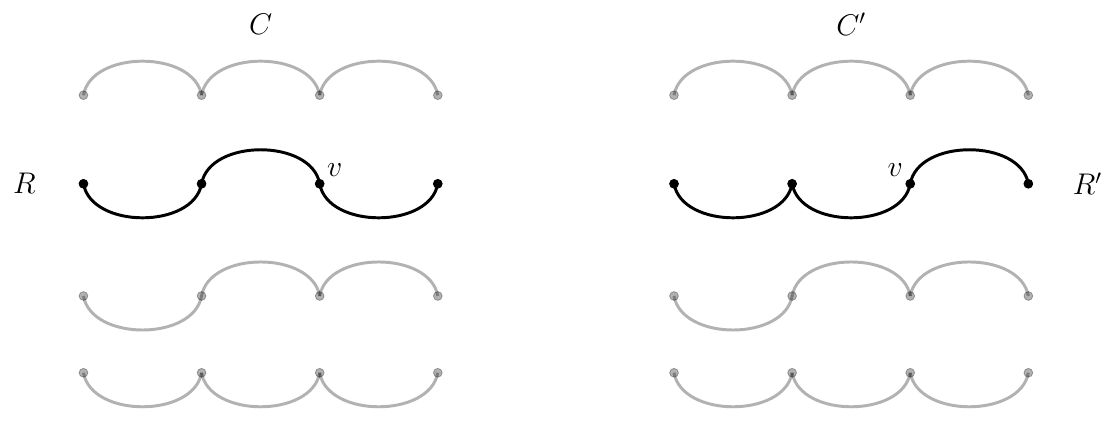}
    \caption{Two maximal cliques $C\precccwrot C'$ with $C'=(C\setminus R)\cup R'$ and~$R <_v^{\cw} R'$.}
    \label{fig_ccwrot}
\end{figure}

\begin{theorem}[{von Bell and Ceballos~\cite{vonBell_framing_2024}}]
    The framing poset $\scrL_{G,F}$ is:
    \begin{enumerate}
        \item A polygonal lattice whose polygons consist of squares, pentagons, or hexagons.
        \item A semidistributive lattice.
        \item An $HH$-lattice, and hence congruence uniform.
    \end{enumerate}
    Furthermore, its Hasse diagram is dual to the framed triangulation $\Delta_{G,F}$ of $\calF_G$.
\end{theorem}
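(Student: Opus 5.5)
This theorem is quoted from~\cite{vonBell_framing_2024}, so the plan below reconstructs the argument I expect there.

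\emph{Set-up and the total order on ends.} I would first fix the local structure of a ccw rotation. Suppose $C'=(C\setminus R)\cup R'$ with $R$ and $R'$ incoherent at $v$. I would show that $R$ and $R'$ then agree outside a single maximal common subpath through $v$, and that they are swapped there. A key tool is that the orders $\leq_{\scrI(v)}$ and $\leq_{\scrO(v)}$ become total orders on the ends once we restrict to routes of a fixed maximal clique $C$. This is because coherent routes do not cross, so at each vertex the routes of $C$ are linearly ordered, as in the picture with the framing increasing from top to bottom.

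\emph{Lattice property.} I would prove that $\scrL_{G,F}$ is a lattice by embedding it into a known lattice. For each vertex $v$, a maximal clique $C$ determines a "mixing'' of its $\scrI(v)$-prefixes with its $\scrO(v)$-suffixes. I would encode $C$ by the set of pairs (prefix, suffix) that are glued in $C$, or equivalently by an inversion-type set of incoherent pairs that are resolved clockwise. The claim is that $C\le C'$ if and only if the inversion set of $C$ is contained in that of $C'$. Rotations only add inversions, which gives one direction. The other direction comes from the fact that if $C\neq C'$ and $\mathrm{inv}(C)\subset\mathrm{inv}(C')$, then some ccw rotation from $C$ stays below $C'$. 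With this in hand, I would show that the family of inversion sets is closed under a suitable closure or intersection operation, giving meets, and dually joins. Finiteness and a bounded poset then yield a lattice.

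\emph{Polygonality.} Any two covers $C\prec C_1$ and $C\prec C_2$ replace distinct routes $R_1$ and $R_2$, and the rotations sit in a $2$-dimensional face of the triangulation interval. I would classify the interval by how the two rotation vertices and subpaths interact: disjoint, nested, or overlapping. This yields squares, pentagons, and hexagons, and the polygonality axiom follows.

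\emph{Semidistributivity and the HH property.} I would derive semidistributivity from a labelling of cover relations by the rotated "mixing'' (the subpath at $v$ together with its in- and out-edges). Labels are preserved along opposite sides of polygons, so this is a CU-type labelling. Semidistributivity then follows from the existence of such an edge labelling with the standard properties, and congruence uniformity follows via the HH-lattice criterion (Caspard), which I would check on each polygon type.

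\emph{Duality.} Finally, the Hasse diagram's duality with $\Delta_{G,F}$ is nearly built into the definition. Adjacent simplices differ by exactly one route, and every such pair is incoherent at some vertex, so it is oriented exactly one way.

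\emph{Main obstacle.} I expect the hardest step to be showing that the order is exactly inclusion of inversion sets, in particular the existence of a rotation descending toward $C'$. I would attack this by choosing a minimal vertex where $C$ and $C'$ differ.
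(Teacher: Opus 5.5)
The paper gives no argument to compare with. The theorem is quoted from \cite{vonBell_framing_2024}, and the paper only imports consequences of it, such as the interval property in \Cref{lem_Cmin_Cmax} and the clockwise comparability criterion used in the proof of \Cref{prop_shelling}. Judged on its own, your plan has genuine gaps at each substantive step.

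\emph{Lattice property.} The equivalence $C\le C'\iff\mathrm{inv}(C)\subseteq\mathrm{inv}(C')$, with $\mathrm{inv}$ never actually defined, is not the crux. Every finite poset order-embeds into a Boolean lattice, so such an embedding says nothing about meets. Everything rests on the unspecified ``suitable closure or intersection operation'', and intersection alone is wrong. If your inversion sets specialize to the classical ones for the oruga graph, where $\scrL_{G,F}$ is the weak order (\Cref{ex_weak_order}), then in $\mathfrak{S}_3$ one has $\mathrm{inv}(231)\cap\mathrm{inv}(312)=\{(1,3)\}$, which is the inversion set of no permutation. Exhibiting an operator that always returns the inversion set of a maximal clique \emph{is} the lattice property, and you do not supply one. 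Likewise, ``rotations only add inversions'' does not show that each rotation is a cover relation of the transitive closure, which the duality statement needs.

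\emph{Polygonality.} A local case analysis of two interacting rotations can produce the cycle of cliques containing $C\smallsetminus\{R_1,R_2\}$. You also need that the top of this cycle equals $C_1\vee C_2$ and that $[C,C_1\vee C_2]$ contains nothing else. That is global information. It follows, for instance, from the interval property of \Cref{lem_Cmin_Cmax}, which your plan never establishes.

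\emph{Semidistributivity.} Labels that agree on opposite sides of polygons cannot suffice, even together with every other polygon-local requirement. Take the lattice whose Hasse diagram consists of the chains $\hat{0}\lessdot a\lessdot b\lessdot\hat{1}$, $a\lessdot m\lessdot d\lessdot\hat{1}$ and $\hat{0}\lessdot c\lessdot d$. It is polygonal, and its only polygons are the pentagons $[\hat{0},d]$ and $[a,\hat{1}]$. Use the following labelling:
\begin{itemize}
\item $A$ on $\hat{0}\lessdot a$ and $c\lessdot d$;
\item $C$ on $\hat{0}\lessdot c$ and $m\lessdot d$;
\item $B$ on $a\lessdot b$ and $d\lessdot\hat{1}$;
\item $M$ on $a\lessdot m$ and $b\lessdot\hat{1}$.
\end{itemize}
This labelling matches opposite sides in both pentagons. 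It gives distinct labels to the upper covers and to the lower covers of every element, and it puts a new label on the remaining edge of each pentagon. Nevertheless $b\vee c=b\vee m=\hat{1}$ while $b\vee(c\wedge m)=b$, so the lattice is not semidistributive. You therefore have to verify genuinely global conditions, namely those of an SD- or CU-labelling, or prove semidistributivity directly (for instance from the interval structure of \Cref{lem_Cmin_Cmax}). Your plan does neither.

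The same example shows that the HH property cannot be certified by checking each polygon type in isolation. HH-lattices are congruence uniform, hence semidistributive, whereas this lattice has only pentagons and is not semidistributive.
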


\begin{example}[Running example continued]
    Consider the two framings $F_1$ and $F_2$ of~$G=\oru{3}$ as in~\Cref{exam: oruga3_framings}. Their corresponding framing lattices are shown in~\Cref{fig_framing_examples}.
    
    \begin{figure}[ht]
    
    \centering
    \includegraphics[scale=0.95]{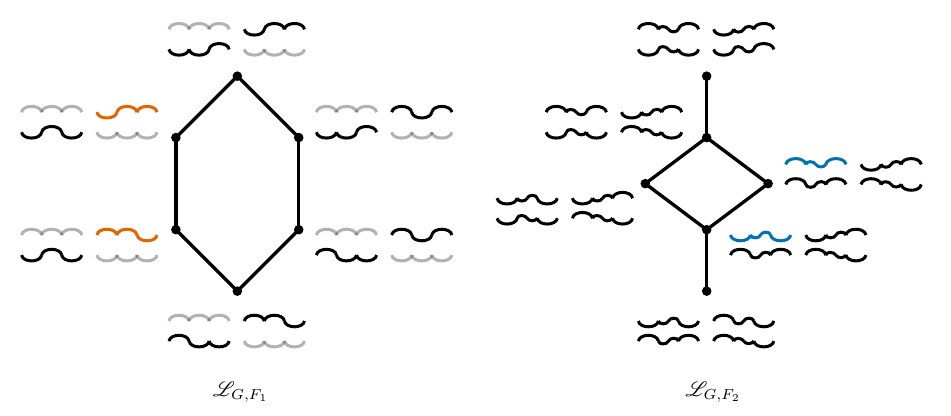}
    \caption{Two framing lattices for $G=\oru{3}$.}
    \label{fig_framing_examples}
\end{figure}

\end{example}

\begin{example}[The weak order on permutations~\cite{vonBell_framing_2024}]
\label{ex_weak_order}
Let $G_n=\oru{n}$ be the oruga graph from~\Cref{exam_oruga_n}, and let $F$ be the framing that orders the incoming and outgoing edges of $G_n$ from top to bottom. The maximal cliques of $(G,F)$ are in correspondence with permutations of $[n]$ as follows.

Given a permutation $[i_1,\dots,i_n]$ of $[n]$, construct a maximal clique consisting of $n+1$ routes $R_0,\dots,R_n$, where $R_k$ is the route with top edges outgoing at the vertices $i_1,\dots, i_k$ and bottom edges outgoing at the vertices $i_{k+1},\dots, i_n$. The resulting set of routes is a maximal clique, and every maximal clique is obtained this way. In other words, the maximal simplices of the framed triangulation of $\calF_{G_n}$ induced by the framing $F$ are in correspondence with permutations of $[n]$. Moreover, two maximal simplices are adjacent when the corresponding permutations can be obtained from each other by swapping two consecutive numbers. As a consequence, the framing lattice $\scrL_{G,F}$ is the classical weak order of permutations of $[n]$.
\end{example}
\section{Framingtopes: three equivalent definitions}
The goal of this section is to gain a deeper understanding of the underlying geometric structure of framing lattices. The Hasse diagram of the framing lattice is the edge graph of a higher-dimensional polytopal complex that we call the framingtope. Before presenting this geometric polytopal complex, we provide three different combinatorial definitions of framingtopes. Our main objective is to show that these definitions all yield the same object. The following theorem summarizes this result. The definitions in~\cref{def_framingtope_one,def_framingtope_two,def_framingtope_three} and their equivalence is presented in~\Cref{sec_framiingtope_interior,sec_framiingtope_covering,sec_framiingtope_pure}, while the geometric description in~\Cref{thm_framingtope_geometric_realization} is presented separately in~\Cref{sec_tropical_framingtope}, following some preliminaries on tropical geometry.

\begin{theorem}
    \label{thm.three_equivalent_definitions}
    Let $(G,F)$ be a framed graph, $\Delta_{G,F}$ be the framed triangulation of the flow polytope $\mathcal{F}_G$, and $\mathscr{L}_{G,F}$ be the corresponding framing lattice. The following three posets are isomorphic:
    \begin{enumerate}
        \item The complex of interior faces of the framed triangulation ordered by reverse inclusion. \label{def_framingtope_one}
        \item The collection of sets of pairwise coherent routes covering $G$ ordered by reverse inclusion. \label{def_framingtope_two}
        \item The collection of pure intervals of the framing lattice ordered by inclusion. \label{def_framingtope_three}
        
    \end{enumerate}
    We call any of these three posets the \defn{framingtope} $\mathscr{P}_{G,F}$.
\end{theorem}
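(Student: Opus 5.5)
We will establish the theorem through two explicit bijections, (1)$\leftrightarrow$(2) and (1)$\leftrightarrow$(3), and check that each respects the stated orders.

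\textbf{Step 1: (1)$\leftrightarrow$(2).} Faces of the framed triangulation $\Delta_{G,F}$ are the simplices $\Delta_S$ for cliques $S$, i.e.\ subsets of maximal cliques. By the Danilov--Karzanov--Koshevoy proposition, any set of pairwise coherent routes extends to a maximal clique. So faces correspond to cliques, and inclusion of faces is inclusion of cliques. It remains to show that $\Delta_S$ is an interior face, meaning not contained in the boundary of $\calF_G$, if and only if $S$ covers every edge of $G$.

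The facets of $\calF_G$ are cut out by the inequalities $x_e\ge 0$. Hence $\Delta_S$ lies in the boundary exactly when some $x_e$ vanishes on all routes of $S$, that is, when no route of $S$ uses $e$. One caveat: if $\calF_G$ is not full-dimensional in its affine hull, some $x_e\ge0$ may fail to define a facet. We first reduce to graphs in which every edge lies on a route, which holds by the definition of flow graph with a unique source and sink. Then $x_e\geq 0$ either defines a face of $\calF_G$ or is implied by the others. In either case, the relative interior of $\calF_G$ is $\{x_e>0 \ \forall e\}$, so the criterion still holds. Reversing inclusion on both sides gives the isomorphism (1)$\cong$(2).

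\textbf{Step 2: (1)$\leftrightarrow$(3).} To an interior face $\Delta_S$ we associate the set $I_S=\{C\in\calC : S\subseteq C\}$ of maximal cliques containing $S$. These are the maximal simplices in the star of $\Delta_S$. Reverse inclusion of faces becomes inclusion of these sets, and the map is injective because $S=\bigcap_{C\in I_S}C$. The work is to show two things:
\begin{itemize}
\item[(a)] $I_S$ is an interval $[C_{\min},C_{\max}]$ of $\scrL_{G,F}$, and it is pure;
\item[(b)] every pure interval arises this way.
\end{itemize}
Here we take ``pure'' to be the paper's notion. We expect it to mean an interval $[x,y]$ whose top $y$ is the join of the atoms of the interval, or equivalently whose bottom is the meet of its coatoms. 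In a semidistributive lattice this is the standard notion used for facial intervals.

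\textbf{Step 3: the link of $\Delta_S$.} For (a), we study the link of $\Delta_S$. Since $S$ covers all edges, the routes compatible with $S$ decompose along the ``gaps'' between consecutive routes of $S$. We would argue that the link is itself the framed triangulation of a product of smaller framed flow polytopes, obtained by cutting $G$ along the routes of $S$. The restriction of $\leqccwrot$ to $I_S$ is then the product of the corresponding framing lattices. As a product of lattices, it has a unique bottom and a unique top. We must also show that $I_S$ is an interval (convex) in $\scrL_{G,F}$, not just a subposet. For this we use the standard fact that ccw rotations never reintroduce a route once it has been rotated away in a polygonal/HH lattice. Concretely, a maximal chain from $C_{\min}$ to $C_{\max}$ can be chosen inside $I_S$, and the semidistributivity and polygonality from the von Bell--Ceballos theorem give convexity.

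For (b), take a pure interval $[x,y]$ and set $S=\bigcap_{C\in[x,y]}C$. Purity implies that the covers of $x$ inside the interval are exactly the rotations of the routes in $x\setminus S$. We then show that $[x,y]=I_S$ and that $S$ covers $G$. The covering property holds because $x$ and $y$ are the minimal and maximal elements of the star of a face. That face is interior because the star is a full polytope-dual neighborhood; by contrast, boundary faces yield non-pure intervals, since some route cannot be rotated.

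\textbf{Main obstacle.} The main obstacle is Step 3, specifically:
\begin{itemize}
\item identifying the link of an interior face with a product of smaller framed triangulations, and
\item proving that purity exactly characterizes these stars.
\end{itemize}
We would first try induction on $|S|$, removing one route at a time and using that the facets of $I_S$ containing one more route are the rotations at a single vertex.
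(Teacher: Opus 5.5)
Your Step~1 is correct. It is a little more elementary than the paper's argument, which uses Hille's description of the faces of $\calF_G$: since every edge lies on a route, the relative interior of $\calF_G$ is where all $x_e>0$, and the barycenter of $\Delta_S$ lies there exactly when $S$ covers $G$. Your guess for ``pure'' agrees with the paper's definition $[C,C+A]$; the dual ``meet of coatoms'' version is nontrivial but not needed.

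Injectivity of $S\mapsto I_S$ via $S=\bigcap_{C\in I_S}C$ is fine: a route lying in every facet of the star would be non-rotatable, making $C\setminus\{R\}$ a boundary facet containing $S$. The interval property of $I_S$ should simply be cited from von Bell--Ceballos, not derived from an unproved ``no route is reintroduced'' principle.

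The proof of (1)$\cong$(3) has two genuine gaps.

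\textbf{First gap: purity of $I_S$.} You never show that $I_S$ is pure, i.e.\ that $C_{\max}(S)=\bigvee_{a\in A}(C_{\min}(S)+a)$ with $A=C_{\min}(S)\setminus S$. That each $a\in A$ is an ascent is easy. But a priori the join of these atoms could lie strictly below $C_{\max}(S)$, and having a unique bottom and top says nothing about this. Your product decomposition does not help, for three reasons:
\begin{enumerate}
\item As stated it is wrong: the link of an interior face of a triangulated ball is a sphere, not a framed triangulation of a polytope.
\item For non-plane framings there is no global ``above/below'' of a route. A route coherent with $R\in S$ can meet $R$ from below at one common vertex and from above at another, so ``cutting $G$ along $S$'' is not evident.
\item Even granting $I_S\cong\prod_j L_j$, purity just becomes the same statement for each factor. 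This is not automatic for framing lattices (the framing lattice of $\oru{(2,1)}$ is a three-element chain) and needs exactly the input that interiority provides.
\end{enumerate}
The paper closes this with the Join-$C_{\max}$ Theorem. Because the link of $S$ is an $(n-1)$-sphere, the Hasse diagram of $I_S$ is $n$-regular with unique source and sink. McConville's crosscut-simplicial property of meet-semidistributive lattices then forces $\bigvee_i(C+a_i)$ to have $n$ lower covers inside $[C,\bigvee_i(C+a_i)]$, so it is the sink $C_{\max}(S)$.

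\textbf{Second gap: interiority of $C\setminus A$.} Surjectivity requires that $C\setminus A$ is an interior face for \emph{every} $A\subseteq\Asc(C)$. This is forced, since any $S$ with $I_S=[C,C+A]$ must equal $\bigcap_{D\in[C,C+A]}D=C\setminus A$. Your justification, that boundary faces give non-pure intervals because ``some route cannot be rotated'', yields no contradiction. The interval $[C,C+A]$ is pure by definition, every route of $A$ \emph{is} rotatable in $C$, and nothing you say rules out the link of $C\setminus A$ being a ball.

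The paper obtains this only indirectly, and remarks that it knows no direct proof:
\begin{enumerate}
\item Linear extensions of $\scrL_{G,F}$ are shellings, so $h_k$ counts maximal cliques with $k$ ascents.
\item The Dehn--Sommerville relation for balls, $h(t+1)=\sum_i f^{int}_{d-i}t^i$, then makes interior faces of codimension $i$ equinumerous with ascent pairs $(C,A)$ with $|A|=i$.
\item The injective map $S\mapsto(C_{\min}(S),C_{\min}(S)\setminus S)$ is therefore a bijection.
\end{enumerate}
Combined with the Join-$C_{\max}$ Theorem, this gives $[C,C+A]=I_{C\setminus A}$. Without substitutes for these two ingredients, your Step~3 is a plan rather than a proof.
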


The framingtope $\mathscr{P}_{G,F}$ can be geometrically realized as a polytopal complex of bounded cells of an arrangement of tropical hypersurfaces. This arrangement depends on an admissible height function $\h$, and we call the corresponding polytopal complex the \defn{tropical framingtope}~$\mathscr{P}_{G,F}^{\h}$.

\begin{theorem}\label{thm_framingtope_geometric_realization}
    The tropical framingtope~$\mathscr{P}_{G,F}^{\h}$ realizes the combinatorial framingtope~$\mathscr{P}_{G,F}$
\end{theorem}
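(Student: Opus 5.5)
The plan is to derive the statement from the regularity of $\Delta_{G,F}$, the tropical Cayley trick, and the duality between regular subdivisions and arrangements of tropical hypersurfaces. Then we compose with \Cref{thm.three_equivalent_definitions}, description~\eqref{def_framingtope_one}.

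\emph{Step 1: encode $\Delta_{G,F}$ as a Cayley polytope.} We will first fix an admissible height function $\h$, meaning a function on routes (equivalently on lattice points of $\calF_G$) whose induced regular subdivision is exactly $\Delta_{G,F}$. Such a function exists by the regularity part of the Danilov--Karzanov--Koshevoy proposition. Next we write $\calF_G$, up to a unimodular affine isomorphism, as a Cayley polytope $\mathrm{Cay}(Q_1,\dots,Q_k)$ of smaller polytopes. One natural choice is to project to the coordinates of the outgoing edges of the source: the routes using a fixed source edge $e\in\Out(1)$ form a face $Q_e$, and $\calF_G$ is the convex hull of the $Q_e\times\{\mathbf{e}_e\}$. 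If a generic graph does not decompose well this way, the fallback is to treat $\calF_G$ as a single point configuration. In that case we use the tropical hyperplane/hypersurface arrangement whose dual is the regular subdivision of the configuration of route indicator vectors lifted by $\h$.

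\emph{Step 2: apply the tropical Cayley trick and duality.} The tropical polynomials $f_e(x)=\bigoplus_{R\ni e}\h(R)\odot x^{R}$ define an arrangement of tropical hypersurfaces $T(f_e)$. The cells of this arrangement, ordered by inclusion, are in inclusion-reversing bijection with the cells of the regular mixed subdivision of $\sum Q_e$ induced by $\h$. Via the Cayley trick, these are in bijection with the faces of the regular subdivision $\Delta_{G,F}$ that meet every Cayley fiber. The key claim is that the bounded cells correspond exactly to the \emph{interior} faces of $\Delta_{G,F}$. The reasoning is that a cell is unbounded iff its dual face lies in a proper face of $\calF_G$, since unbounded directions correspond to linear functionals maximized on a boundary face. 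Dimensions are complementary. So the bounded-cell complex is anti-isomorphic, as a poset, to the complex of interior faces ordered by inclusion. It is therefore isomorphic to description~\eqref{def_framingtope_one}, which is ordered by reverse inclusion.

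\emph{Step 3: conclude.} Composing with \Cref{thm.three_equivalent_definitions} shows that the face poset of $\mathscr{P}^{\h}_{G,F}$ is isomorphic to $\mathscr{P}_{G,F}$. It remains to check that the bounded cells form a polytopal complex whose vertices are the maximal cliques and whose edges are the flips. Both facts are automatic from the duality, since maximal simplices $\Delta_C$ are full-dimensional and hence interior.

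The main obstacle will be the characterization in Step~2 of bounded cells as exactly those dual to interior faces, together with setting up the correct ambient space. The arrangement lives in a space of dimension $\dim\calF_G$, and the lineality coming from the flow equations has to be quotiented out. Without that quotient every cell would be unbounded. I would first handle this by restricting to a transversal affine subspace that kills the lineality space, namely the span of the vertex potentials. Then I would prove the criterion: a cell $\sigma$ is bounded iff its recession cone is trivial, iff no nonzero functional $w$ is constant-optimal along $\sigma$, iff the dual face is not contained in any face $\mathrm{face}_w(\calF_G)$, iff the dual face is interior.
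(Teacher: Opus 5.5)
Your proposal is correct and is essentially the paper's argument: fix an admissible $\h$, write $\calF_G=C(\calF_{G_0},\ldots,\calF_{G_{j_0}})$ via the source edges, apply the tropical Cayley trick, identify bounded cells with interior faces through trivial recession cones (\Cref{thm_trop_realization_cells} and \Cref{cor_inteorior_vs_tropBounded}), and then compose with the first description of $\mathscr{P}_{G,F}$. The differences are cosmetic. The paper removes the lineality by the explicit coordinate projection of \Cref{prop: projection_flowpolytopes}, deleting the coordinates of the edges $e_{i,0}$; this gives a transversal to your space of vertex potentials, in an ambient space of dimension $\dim\calF_G-j_0$ rather than $\dim\calF_G$. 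Also, your fallback is never needed, since every route uses exactly one source edge.
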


Its edge graph provides a geometric realization of the Hasse diagram of the corresponding framing lattice.

\begin{corollary}
    The Hasse diagram of the framing lattice $\mathscr{L}_{G,F}$ is the edge graph of the tropical framingtope $\mathscr{P}_{G,F}^{\h}$.
\end{corollary}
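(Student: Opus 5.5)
The plan is to read off the edge graph from the combinatorial framingtope, using \Cref{thm_framingtope_geometric_realization}, and then identify it with the dual graph of $\Delta_{G,F}$. By \Cref{thm_framingtope_geometric_realization}, the face poset of the polytopal complex $\mathscr{P}_{G,F}^{\h}$ is isomorphic to $\mathscr{P}_{G,F}$. So the edge graph of $\mathscr{P}_{G,F}^{\h}$ is determined by two pieces of data in $\mathscr{P}_{G,F}$: its rank-$0$ elements (vertices), its rank-$1$ elements (edges), and the incidences between them. I will use model~(\ref{def_framingtope_one}) of \Cref{thm.three_equivalent_definitions}, the interior faces of $\Delta_{G,F}$ ordered by reverse inclusion.

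\textbf{Step 1: vertices and edges as simplices.} In this model, cells of dimension $k$ of the complex correspond to interior faces of $\Delta_{G,F}$ of codimension $k$ in $\calF_G$. The minimal elements under reverse inclusion are the full-dimensional simplices $\Delta_C$ with $C\in\calC$, which are automatically interior. Hence vertices correspond to maximal cliques, that is, to elements of $\scrL_{G,F}$. The rank-$1$ elements are the interior ridges, i.e.\ the codimension-$1$ faces of $\Delta_{G,F}$ not contained in $\partial\calF_G$. A vertex lies on an edge exactly when the corresponding facet contains the corresponding ridge.

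\textbf{Step 2: each interior ridge lies in exactly two facets.} This uses the standard pseudomanifold property of a triangulation of a convex polytope. Take a point $p$ in the relative interior of an interior ridge $\sigma$ and move slightly off $\sigma$ in the two normal directions. Both nearby points lie in $\calF_G$, because $\sigma\not\subset\partial\calF_G$ and the ridge is full-dimensional in the affine hull of $\calF_G$. Each of these points lies in a facet containing $\sigma$, and the two facets are distinct since they lie on opposite sides of $\mathrm{aff}(\sigma)$. No third facet can contain $\sigma$, since two facets on the same side would overlap in their interiors. Conversely, a ridge $\Delta_{C\cap C'}$ shared by two facets $\Delta_C,\Delta_{C'}$ is not on the boundary, because the facets lie on both sides of it.

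\textbf{Step 3: conclusion.} By Steps 1 and 2, each edge of $\mathscr{P}_{G,F}^{\h}$ joins exactly two vertices $C,C'$, and these are precisely the pairs of facets sharing a ridge. So the edge graph of $\mathscr{P}_{G,F}^{\h}$ is the dual graph of $\Delta_{G,F}$. By the theorem of von Bell and Ceballos, this dual graph is the Hasse diagram of $\scrL_{G,F}$: adjacent maximal cliques $C'=(C\setminus R)\cup R'$ are exactly the cover relations $C\precccwrot C'$ or $C'\precccwrot C$.

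As a cross-check, model~(\ref{def_framingtope_three}) gives the same picture directly. Vertices are the one-element pure intervals $[C,C]$, and edges are the two-element intervals $[C,C']$ with $C\prec C'$, which should be pure. The only point needing care in the whole argument is Step 2, the boundary behaviour of ridges. It is routine, but it is where the word \emph{interior} in the definition of the framingtope is essential: boundary ridges lie in only one facet and give no edge.
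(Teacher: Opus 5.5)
Your proof is correct and takes the same route as the paper, which states this corollary without a separate proof as an immediate consequence of the tropical realization. That realization identifies vertices of $\mathscr{P}_{G,F}^{\h}$ with maximal cliques and edges with interior codimension-one faces of $\Delta_{G,F}$, and the von Bell--Ceballos theorem identifies the resulting dual graph with the Hasse diagram of $\scrL_{G,F}$. Your Step~2 just makes explicit the pseudomanifold property the paper takes for granted; there, the phrase \emph{full-dimensional in the affine hull} should instead say that $\operatorname{relint}\sigma\subseteq\operatorname{relint}\calF_G$, which follows from a supporting-hyperplane argument.
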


\subsection{Framingtopes in terms of interior faces}
\label{sec_framiingtope_interior}

We start by introducing the combinatorial definition of the framingtope in terms of interior faces of the framed triangulation.

\begin{definition}[Framingtopes: First Definition]\label{def_framingtope_asDual}
    Let $(G,F)$ be a framed graph and $\Delta_{G,F}$ its framed triangulation. A face in $\Delta_{G,F}$ is called a \defn{boundary face} if it is contained in the boundary of the flow polytope $\mathcal{F}_G$. All other faces in~$\Delta_{G,F}$ are called \defn{interior faces}. The \defn{framingtope} $\mathscr{P}_{G,F}$ is the complex of interior faces of the framed triangulation~$\Delta_{G,F}$ of $\mathcal{F}_G$, ordered by reverse inclusion. Here and throughout, by the dual complex of the interior faces we mean the poset of interior faces ordered by reverse inclusion.
\end{definition}

\begin{example}[Running example]
\label{Example:Dual_Cube}
Let $G$ be the oruga graph $\oru{3}$ with the framing~$F$ where the edge $e_i$ is smaller than $e_{i+1}$ in both the incoming and outgoing orders for $i=1,3,5$.

\Cref{fig_framingtope_def1} shows all the interior faces of the framed triangulation: six interior tetrahedra, six interior triangles, and one interior edge. All other faces lie in the boundary of the cube. As we can see in the figure, the complex of interior faces ordered by reverse inclusion (its dual complex) can be identified with the complex of faces of a hexagon.

\begin{figure}[ht]
    \centering
    \includegraphics[scale=0.98]{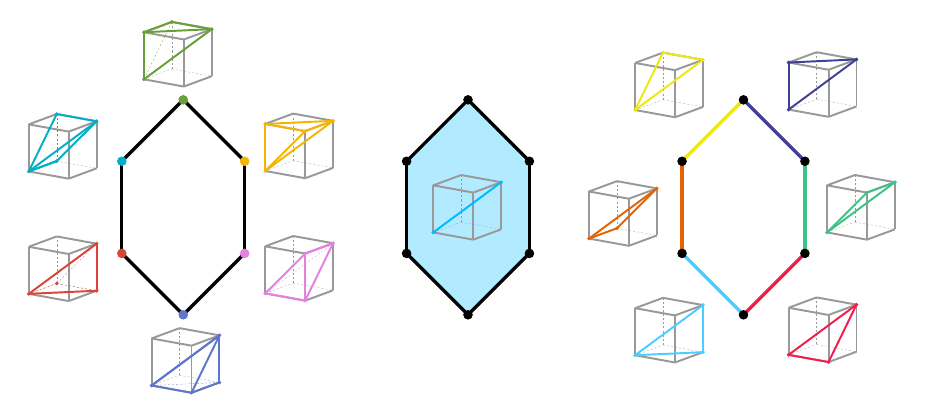}
    \caption{Framingtope $\mathscr{P}_{G,F}$ as the complex of interior faces of a framed triangulation ordered by reverse inclusion (First Definition).}
    \label{fig_framingtope_def1}
\end{figure}

\end{example}

\subsection{Framingtopes in terms of covering routes} 
\label{sec_framiingtope_covering}

As introduced above, the framingtope is the dual of the complex of interior faces of the framed triangulation. Our next step is to give a combinatorial characterization of such interior faces in terms of sets of coherent routes covering the graph $G$.

\begin{definition}
    We say that a set of routes $S$ \defn{covers} $G$ if for every edge $e\in E(G)$ there is a route $R\in S$ containing $e$.
\end{definition}

In order to show our characterization we need to understand the boundary of a flow polytope. In~\cite[Theorem~3.2]{hille_quivers_2003}, Hille gave a complete characterization of the faces of flow polytopes with arbitrary net flow. The following is a specialization of their result for the case of flow polytopes of unit flows considered in this paper. See also~\cite{dugan_fvectors_2024} for a similar formulation.

\begin{proposition}[{\cite[Theorem~3.2]{hille_quivers_2003}}]\label{prop_faces_hille}
    Let $\mathcal{F}_G$ be the flow polytope of unit flows of $G$. The faces of $\mathcal{F}_G$ are of the form $\mathcal{F}_{H}$ for subgraphs $H\subseteq G$ induced by a set of routes of~$G$. The dimension of the face $\mathcal{F}_{H}$ is $|E(H)|-|V(H)|+1$.
\end{proposition}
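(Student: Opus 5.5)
Since $\calF_G$ is the intersection of the affine subspace cut out by the flow-conservation equations with the nonnegative orthant $\bbR^{|E(G)|}_{\geq 0}$, the plan is to use the standard description of faces of such polyhedra. Every face of $\calF_G$ has the form
\[
\calF_G \cap \{x_e = 0 \text{ for all } e \in Z\}
\]
for some set of edges $Z \subseteq E(G)$, and each such set is a face (possibly empty). So the proof reduces to identifying these coordinate faces with flow polytopes of route-induced subgraphs, and then computing their dimension.

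First, fix $Z$ and let $H$ be the subgraph formed by the union of all routes of $G$ that avoid $Z$. I claim the face $\{x \in \calF_G : x_e = 0,\ e \in Z\}$ equals $\calF_H$, viewed inside $\bbR^{|E(G)|}$ by padding with zeros.
\begin{itemize}
\item One inclusion is immediate: any unit flow on $H$ extended by zero is a unit flow on $G$ vanishing on $Z$.
\item For the other inclusion, use the standard path decomposition of a unit flow on an acyclic graph. A unit flow supported on $E(G)\setminus Z$ is a convex combination of indicator vectors of routes. Each of these routes avoids $Z$, so it lies in $H$.
\end{itemize}
Conversely, let $H$ be the subgraph induced by a set of routes. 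Take $Z = E(G)\setminus E(H)$. The routes avoiding $Z$ are exactly the routes contained in $H$, and their union is $H$ itself because $H$ is a union of routes. Hence $\calF_H$ is a face. This establishes the first assertion.

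For the dimension, note that $H$ is itself a flow graph. It is acyclic, every vertex lies on a route from $s$ to $t$, and so $s$ and $t$ are its unique source and sink. It is also connected, since all its routes pass through $s$. The affine hull of $\calF_H$ is contained in a translate of the kernel of the incidence map of $H$, which is the cycle space of $H$ and has dimension $|E(H)|-|V(H)|+1$ because $H$ is connected.

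The main obstacle is the reverse inequality, namely that $\calF_H$ is full-dimensional in this affine space. To handle it, I will average the indicator vectors of a set of routes covering $H$. This produces a unit flow $x^0$ with $x^0_e>0$ for every $e\in E(H)$. For any circulation $c$ on $H$, the flow $x^0+\varepsilon c$ is still nonnegative, and hence lies in $\calF_H$, for small $\varepsilon>0$. Therefore $\calF_H$ contains a relatively open neighborhood of $x^0$ in the affine space $x^0+\ker$. This gives $\dim \calF_H = |E(H)|-|V(H)|+1$, consistent with the formula $\dim(\calF_G)=|E(G)|-|V(G)|+1$ recalled earlier.
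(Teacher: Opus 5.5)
Your proof is correct. The paper does not prove this statement; it only specializes Hille's description of the faces of flow polytopes with arbitrary net flow \cite[Theorem~3.2]{hille_quivers_2003}.

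You instead give a self-contained proof of the unit-flow case in three elementary steps. First, the faces of a polyhedron of the form $\{x : Ax=b,\ x\geq 0\}$ are exactly its coordinate faces. Second, the path decomposition of a unit flow on an acyclic graph identifies the coordinate face cut out by $Z$ with $\mathcal{F}_H$, where $H$ is the union of the routes avoiding $Z$. Third, the dimension follows from the cycle space of the connected graph $H$ together with a strictly positive flow obtained by averaging covering routes.

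The citation buys generality. For arbitrary net flows the support of a flow need not be connected, so the dimension count has to account for connected components. Your route-based argument is elementary and tailored to unit flows. It also makes both directions explicit: every face is some $\mathcal{F}_H$, and every route-induced $H$ gives a face. That is precisely what the proof of \Cref{thm_interior_equal_covering} uses.

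Two small points. The dimension formula should be read for nonempty $H$: for the empty face, which corresponds to the empty subgraph, it would give $1$. Your argument tacitly assumes this. Also, ``circulation'' must mean an arbitrary signed element of the kernel of the incidence map, since an acyclic graph admits no nonzero nonnegative circulations. Your identification of this kernel with the cycle space already makes that clear.
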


    The vertices of $\mathcal{F}_G$ correspond to routes of $G$, the empty face corresponds to the empty subgraph, and the full polytope corresponds to $H=G$.
    \begin{example}
        Let us take the graph $G$ as in \Cref{fig_faces_hille}. Its flow polytope is a 3-dimensional pyramid whose faces correspond to the subgraphs obtained by overlapping the routes giving the vertices of that face. For \Cref{fig_exterior_faces} let $\oru{3}$ be the graph with the framing $F_1$ from \Cref{fig_cubes_Triang_oruga3}. These figures show explicit examples of \Cref{prop_faces_hille}.
    \end{example}

\begin{figure}[ht]
    \centering
    \includegraphics[scale=0.8]{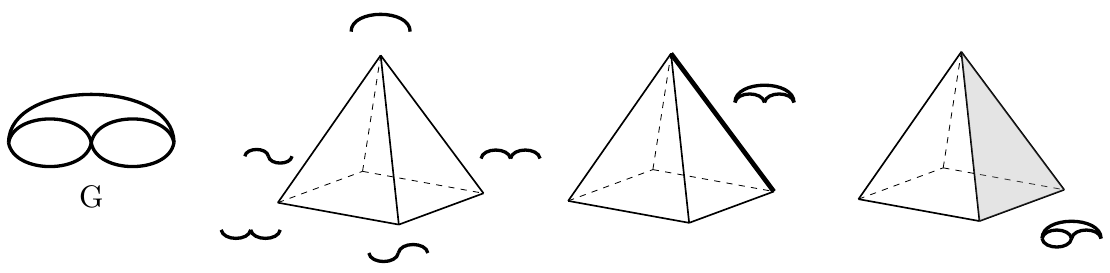}
    \caption{Faces of a flow polytope in terms of subgraphs induced by sets of routes.}
    \label{fig_faces_hille}
\end{figure}    

\begin{figure}[ht]
    \centering
    \includegraphics[scale=0.8]{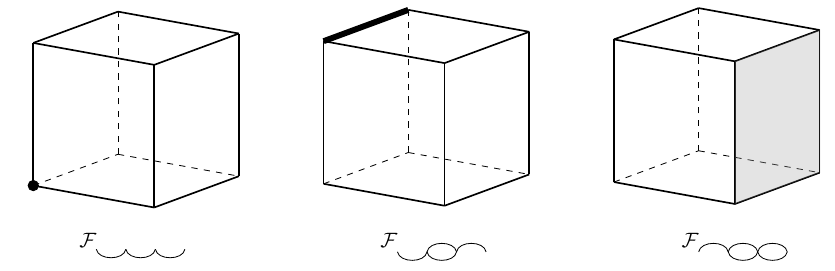}
    \caption{Some faces of the flow polytope of the graph $\oru{3}$.}
    \label{fig_exterior_faces}
\end{figure}

\begin{theorem}\label{thm_interior_equal_covering}    
    Let $(G,F)$ be a framed graph. Then $S$ is an interior face of the framed triangulation of $\mathcal{F}_G$ if and only if $S$ is a set of pairwise coherent routes that covers $G$.
\end{theorem}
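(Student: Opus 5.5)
The plan is to identify faces of the framed triangulation $\Delta_{G,F}$ with cliques of $(G,F)$, and then use \Cref{prop_faces_hille} to recognize when such a face lies in the boundary of $\mathcal{F}_G$.

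First I record that the faces of $\Delta_{G,F}$ are exactly the simplices $\Delta_S$ for cliques $S$, that is, sets of pairwise coherent routes. Every face of some $\Delta_C$ with $C\in\calC$ is spanned by a subset of $C$, hence by a clique. Conversely, every clique extends to a maximal clique, so $\Delta_S$ is a face of the corresponding $\Delta_C$; here I use that $\Delta_C$ is a simplex, as shown by Danilov, Karzanov and Koshevoy. So it suffices to prove the following: for a clique $S$, the simplex $\Delta_S$ lies in $\partial\mathcal{F}_G$ if and only if $S$ does not cover $G$.

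For the direction ``not covering implies boundary'', suppose some edge $e$ is used by no route of $S$. Then every indicator vector of a route in $S$ lies in the hyperplane $x_e=0$. Since $x_e\ge 0$ is valid on $\mathcal{F}_G$, the set $\{x_e=0\}\cap\mathcal{F}_G$ is a face of $\mathcal{F}_G$, and hence $\Delta_S$ lies in that face. This face is proper, because $G$ is assumed to have a route through every edge (implicitly, otherwise the edge can be deleted). Equivalently, by \Cref{prop_faces_hille} this face is $\mathcal{F}_H$, where $H$ is the subgraph induced by the routes avoiding $e$, and $H\neq G$. Therefore $\Delta_S$ is a boundary face.

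For the converse, suppose $S$ covers $G$. Consider the barycenter $x=\frac1{|S|}\sum_{R\in S}\mathbf 1_R$ of $\Delta_S$. Covering means $x_e>0$ for every edge $e$. A point of $\mathcal{F}_G$ with all coordinates positive cannot lie in any proper face: by \Cref{prop_faces_hille}, each proper face is $\mathcal{F}_H$ with $H\subsetneq G$, and $H$ is induced by routes, so some edge $e\notin E(H)$ has $x_e=0$ on all of $\mathcal{F}_H$. Hence the relative interior of $\Delta_S$ meets the interior of $\mathcal{F}_G$, and $\Delta_S$ is not contained in the boundary. In other words, it is an interior face.

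The main subtlety I expect is a well-definedness issue in this last step, namely that ``proper face $\mathcal{F}_H$'' really forces a missing edge. If $H$ were induced by routes but had $E(H)=E(G)$, then $H=G$ and the face would be the whole polytope, so a proper face must miss an edge. This requires reading \Cref{prop_faces_hille} as a bijection between faces and route-induced subgraphs. I would state that reading explicitly, and note that distinct route-induced subgraphs give distinct faces because $\mathcal{F}_H$ is the convex hull of the routes contained in $H$. The rest of the argument is routine convexity.
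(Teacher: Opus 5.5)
Your proposal is correct and follows essentially the paper's argument: both directions rest on Hille's description of the faces of $\mathcal{F}_G$ (equivalently, on the inequalities $x_e\ge 0$). Your ``covering $\Rightarrow$ interior'' direction is the contrapositive of the paper's ($\Rightarrow$), made more explicit by the barycenter, which avoids the unstated step that a simplex contained in $\partial\mathcal{F}_G$ lies in a single proper face. One small correction: you need not assume that every edge lies on a route, since, as the paper notes, this follows from $G$ being acyclic with a unique source and a unique sink.
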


\begin{proof}
Let $S= \{R_1,\ldots, R_k\}$ be a set of coherent routes. This set corresponds to a face~$\widetilde{S}=\text{conv}\set{\textbf{V}(R_1),\ldots,\textbf{V}(R_k)}$, which is the convex hull of the indicator vectors of the routes in $S$. We use the notation $\widetilde{S}$ to avoid confusion.

We need to show that $\widetilde{S}$ is an interior face of the framed triangulation if and only if~$S$ covers $G$. Equivalently, $\widetilde{S}$ is in the boundary of $\mathcal{F}_G$ if and only if $S$ does not cover~$G$.

$(\Rightarrow)$ Assume that $\widetilde{S}$ is in the boundary of $\mathcal{F}_G$. By~\Cref{prop_faces_hille}, we have that $\widetilde{S}\subseteq \mathcal{F}_H$ for some proper subgraph $H\subseteq G$ induced by a set of routes of $G$. Thus, there is an edge $e\in G\setminus H$ such that none of the routes $R_i\in S$ contain $e$ as an edge. Therefore,~$S$ does not cover $G$.

$(\Leftarrow)$ Assume that $S$ does not cover $G$, and let $e$ be an uncovered edge. Consider the subgraph $H\subseteq G$ induced by the routes in $S$. There exists at least one route in~$G$ containing $e$ (since $G$ has a unique source and sink, $e$ can be extended to a route), and its indicator vector is a vertex of $\mathcal{F}_G$ that is not in $\mathcal{F}_H$. Therefore, $\mathcal{F}_H$ is a boundary face of $\mathcal{F}_G$. Since $\widetilde{S}\subseteq \mathcal{F}_H$, then $\widetilde{S}$ is in the boundary of $\mathcal{F}_G$.
\end{proof}

This gives a correspondence between the interior faces of the framed triangulation and the collection of pairwise coherent routes covering the graph. This provides the following alternative definition of framingtopes.

\begin{definition}[Framingtopes: Second Definition]\label{def_framingtope_coveringRoutes}
    Let $(G,F)$ be a framed graph. The \defn{framingtope} $\mathscr{P}_{G,F}$ is the collection of sets $S$ of pairwise coherent routes covering~$G$, ordered by reverse inclusion.
\end{definition}

\begin{corollary}
    The first and second definitions of framingtopes (\Cref{def_framingtope_asDual,def_framingtope_coveringRoutes}) are equivalent.
\end{corollary}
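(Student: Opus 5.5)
The corollary follows almost immediately from \Cref{thm_interior_equal_covering}. The plan is to exhibit an explicit bijection between the two underlying sets and check that it respects the two orders.

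First, set up the correspondence between faces and cliques. Every face of $\Delta_{G,F}$ is a face of some maximal simplex $\Delta_C$. Since $\Delta_C$ is a simplex whose vertices are the indicator vectors $\textbf{V}(R)$, $R\in C$, its faces are exactly the convex hulls $\widetilde{S}=\mathrm{conv}\{\textbf{V}(R)\mid R\in S\}$ for subsets $S\subseteq C$. Such an $S$ is a clique, because it is a subset of a clique. Conversely, every clique $S$ extends to a maximal clique $C\in\calC$, so $\widetilde{S}$ is a face of $\Delta_C$ and hence of $\Delta_{G,F}$. The indicator vectors of distinct routes are distinct and affinely independent inside each simplex, so $S$ is recovered from $\widetilde{S}$ as its vertex set. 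Therefore $S\mapsto \widetilde{S}$ is a bijection from cliques of $(G,F)$ onto the faces of $\Delta_{G,F}$.

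Next, restrict this bijection. By \Cref{thm_interior_equal_covering}, $\widetilde{S}$ is an interior face exactly when $S$ covers $G$. So the map restricts to a bijection between the sets of pairwise coherent routes covering $G$ and the interior faces of $\Delta_{G,F}$.

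Finally, check the orders. For cliques $S,S'$ we have $\widetilde{S}\subseteq\widetilde{S'}$ if and only if $S\subseteq S'$. The forward direction holds because the vertices of $\widetilde{S}$ are vertices of the simplex containing $\widetilde{S'}$, and a vertex of a simplex lying in one of its faces is a vertex of that face. The reverse direction is clear. Hence reverse inclusion of faces corresponds exactly to reverse inclusion of route sets, and the two posets of \Cref{def_framingtope_asDual,def_framingtope_coveringRoutes} are isomorphic.

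The only delicate point is the claim that every face of the triangulation has the form $\widetilde{S}$ for a clique $S$, and that $S$ is uniquely determined. This rests on the DKK result that the maximal cliques give the maximal simplices of a triangulation, so that the faces of $\Delta_{G,F}$ are precisely the faces of the simplices $\Delta_C$. Uniqueness then comes from affine independence inside a single simplex, together with the fact that a face determines its own vertex set.
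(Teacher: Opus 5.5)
Your proposal is correct and takes the same approach as the paper, whose proof is just an appeal to \Cref{thm_interior_equal_covering}; you additionally spell out the clique--face bijection $S\mapsto\widetilde{S}$ and its compatibility with inclusion, which the paper leaves implicit. For the forward inclusion step, the cleanest justification is that each $\textbf{V}(R)$ is a $0/1$ vector, hence an extreme point of $\calF_G$, and therefore a vertex of any face of $\Delta_{G,F}$ that contains it.
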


\begin{proof}
    This follows directly from the characterization of interior faces of the framed triangulation in~\Cref{thm_interior_equal_covering}.
\end{proof}

\begin{example}[\Cref{Example:Dual_Cube} continued]
    Let $(G,F)$ be as in our running~\Cref{Example:Dual_Cube}. Figure~\ref{fig_framingtope_second_definition} shows the framingtope~$\mathscr{P}_{G,F}$ obtained as the dual of the complex of interior faces of the framed triangulation indexed by sets of coherent routes covering $G$. The maximal cliques (consisting of four routes) correspond to the six vertices, the cliques of size 3 correspond to the six edges, and the clique consisting of the two exceptional routes is the 2-dimensional face of the framingtope. The other cliques not shown in the figure do not cover the graph $G$ and correspond to boundary faces of the triangulation.

\begin{figure}[ht]
    \centering
    \includegraphics[]{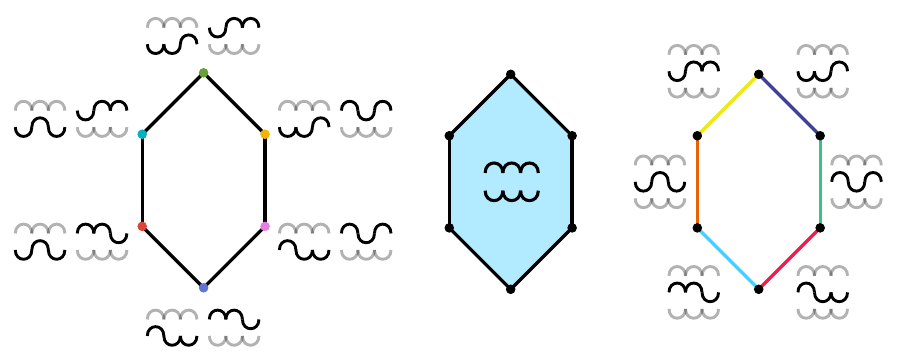}
    \caption{Framingtope $\mathscr{P}_{G,F}$ as the complex of pairwise coherent routes covering $G$, ordered by reverse inclusion (Second Definition).}
    \label{fig_framingtope_second_definition}
\end{figure}

\end{example}

\subsection{Framingtopes in terms of pure intervals}
\label{sec_framiingtope_pure}
With the description of interior faces in terms of covering sets of coherent routes established, we now present a characterization of the framingtope purely in terms of the framing lattice. To this end, we introduce the notion of pure intervals, which can be thought of as the ``small polytopal intervals'' of the lattice.

\begin{definition}[Descents and ascents]
    For a maximal clique $C$ in the framing lattice~$\scrL_{G,F}$, the \defn{descent set} and \defn{ascent set} of $C$ are defined as
    \begin{align*}
        \Desc(C) &= \{R\in C : R \text{ can be rotated downwards in } C \} \\
        \Asc(C) &= \{R\in C : R \text{ can be rotated upwards in } C \} 
    \end{align*}
    The routes in $\Desc(C)$ are called the \defn{descents of $C$}, and the routes in $\Asc(C)$ are called the \defn{ascents of $C$}. In other words, an element \(R \in C\) is called a descent (respectively, an ascent) if there exists a route \(R'\) such that \(R' <^{\textbf{cw}}_{v} R\) (respectively, \(R <^{\textbf{cw}}_{v} R'\)) for some \(v \in R \cap R'\), and the set \(C \setminus \{R\} \cup \{R'\}\) is again a maximal clique. In this case, we say that \(R'\) is the \defn{decreasing flip} (respectively, \defn{increasing flip}) of \(R\).
    
    We also define the \defn{descent} and \defn{ascent} statistics of $C$ as
    \begin{align*}
        \desc(C) &=|\Desc(C)|= \text{number of descents of } C \\
        \asc(C) &=|\Asc(C)|= \text{number of ascents of } C. 
    \end{align*}
\end{definition}

\begin{definition}[Pure intervals]
    Let $A$ be a subset of ascents of a maximal clique $C$. We define the maximal clique $C+A$ as the join
    \[C+A:=\bigvee_{a\in A}(C+a)\] 
    where $C+a:=C\setminus\set{a}\cup\set{a'}$ is the maximal clique covering $C$ corresponding to the increasing flip $a'$ of $a$. The interval $[C,C+A]$ is called a \defn{pure interval} of the framing lattice $\mathscr{L}_{G,F}$.
\end{definition}

\begin{example}[\Cref{Example:Dual_Cube} continued]
    Let $(G,F)$ be as in our running~\Cref{Example:Dual_Cube}. Figure~\ref{fig_framingtope_pureIntervals} shows all pure intervals $[C, C + A]$ of the framing lattice $\mathscr{L}_{G,F}$. The exceptional routes are shown in gray, while the ascent routes in $A$ are highlighted in colors other than black or gray. There are six pure intervals with an empty subset of ascents, $|A| = 0$; these correspond to the vertices labeled by maximal cliques on the left of the figure. There are six pure intervals with $|A| = 1$, which label the six edges of the hexagon on the right of the figure. Finally, there is one pure interval with $|A| = 2$, corresponding to the hexagon in the center.

    In particular, we observe that the collection of pure intervals, ordered by containment, provides another interpretation of the framingtope $\mathscr{P}_{G,F}$ in this example. We will show that this holds in general.
\end{example}

\begin{figure}[ht]
    \centering
    \includegraphics[scale=0.95]{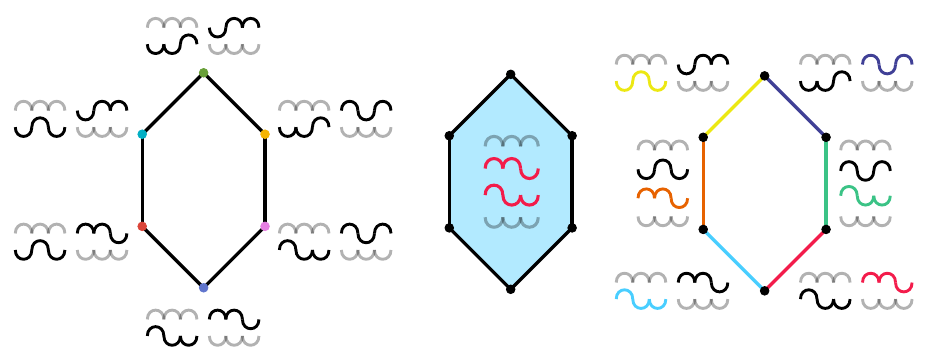}
    \caption{Framingtope $\mathscr{P}_{G,F}$ as the collection of pure intervals ordered by containment (Third Definition).}
    \label{fig_framingtope_pureIntervals}
\end{figure}

\begin{definition}[Framingtopes: Third Definition]\label{def_framingtope_pureIntervals}
    Let $(G,F)$ be a framed graph. The \defn{framingtope} $\mathscr{P}_{G,F}$ is the collection of pure intervals of the framing lattice $\mathscr{L}_{G,F}$, ordered by containment.
\end{definition}

One of our main goals is to show that this definition yields an alternative, equivalent interpretation of the complex of interior faces of the framed triangulation, ordered by reverse inclusion:

\begin{corollary}\label{cor_def_framingtopes_one_three}
    The first and third definitions of framingtopes (\Cref{def_framingtope_asDual,def_framingtope_pureIntervals}) are equivalent.
\end{corollary}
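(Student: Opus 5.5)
We will build an explicit order isomorphism between the set of interior faces (equivalently, by \Cref{thm_interior_equal_covering}, covering cliques $S$) and the set of pure intervals. The map is
\[
\Phi(S) = \{C \in \calC : S \subseteq C\},
\]
the set of maximal simplices containing the face $\widetilde S$, that is, the star of $\widetilde S$. It is clearly order-reversing: $S\subseteq S'$ implies $\Phi(S')\subseteq\Phi(S)$. The proof therefore reduces to three claims:
\begin{enumerate}
\item $\Phi(S)$ is a pure interval whenever $S$ covers $G$;
\item every pure interval arises as $\Phi(S)$ for a covering clique $S$;
\item $\Phi$ is injective, and $\Phi^{-1}$ is also order-reversing.
\end{enumerate}

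\textbf{Step 1 (the star is an interval).} We first show that the maximal cliques containing $S$ form an interval $[C_{\min},C_{\max}]$. The natural approach is local. Each route $R\notin S$ compatible with $S$ lives in the ``regions'' between consecutive routes of $S$. More precisely, the link of $\widetilde S$ in $\Delta_{G,F}$ is a join of framed triangulations of smaller framed graphs: the pieces obtained by cutting $G$ along the routes of $S$. Coverage of $G$ guarantees that these pieces are genuine flow graphs.

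Under this decomposition, $\Phi(S)$ becomes a product of framing lattices of the pieces. A product of lattices is a lattice, so $\Phi(S)$ has a bottom element $C$ and a top element. To justify this we will check two things. First, ccw rotations inside $\Phi(S)$ never move a route of $S$. Second, $\Phi(S)$ is convex in $\scrL_{G,F}$; for this we use that rotations are compatible with the ``crossing'' order, via the characterization in \cite{vonBell_framing_2024} that $C\le C'$ exactly when no route of $C'$ is clockwise from a route of $C$ in a certain sense.

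\textbf{Step 2 (the interval is pure).} Let $C$ be the bottom of $\Phi(S)$ and set $A = C\setminus S$. We claim that every $a\in A$ is an ascent of $C$ whose flip keeps $S$, and conversely that each ascent of $C$ not in $S$ stays inside the star. Here interiority is essential. Since $S$ covers $G$, no route of $C\setminus S$ is exceptional, so each such route can be flipped somewhere, and minimality of $C$ forces every flip to be upward.

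Next we show $\bigvee_{a\in A}(C+a)$ equals the top of $\Phi(S)$. By convexity the join lies in $\Phi(S)$. It cannot lie strictly below the top, since otherwise some route of $C\setminus S$ would survive at the top, contradicting the product structure. In the product, the top is the join of the atoms of each factor only if each factor is ``full''. The cleaner way to say this is that a maximal clique in $\Phi(S)$ containing $C\setminus\{a\}$ fails to exist once all $a$ have been flipped. We expect this to be the main obstacle.

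\textbf{Step 3 (surjectivity and injectivity).} Conversely, given a pure interval $[C,C+A]$, set $S = C\setminus A$. We show that every route of $S$ survives in every clique of the interval. This holds because $C+A$ is a join of cliques each containing $S$, and the routes of $S$ are fixed along the rotations involved. Then we show $S$ covers $G$. Otherwise $\widetilde S$ would be a boundary face, and the star of a boundary face has a route that cannot be flipped, so it is not of this pure form. This gives $[C,C+A]=\Phi(S)$.

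Injectivity follows because $S=\bigcap\Phi(S)$: any route in all maximal cliques containing an interior face must already lie in that face, since it is interior and of the right codimension. The same identity shows that $\Phi^{-1}$ reverses inclusion. Combining with \Cref{thm_interior_equal_covering} then yields the corollary.
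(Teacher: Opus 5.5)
Your skeleton matches the paper's: the star of $S$ is $[C_{\min}(S),C_{\max}(S)]$, $A=C_{\min}(S)\smallsetminus S$, and injectivity and order-reversal come from $S=\bigcap_{C\in I_S}C$. However, the two steps that carry the weight are not proved. In Step~2 you need $\bigvee_{a\in A}(C+a)=C_{\max}(S)$, which you yourself call the main obstacle. The reason offered (``otherwise some route of $C\smallsetminus S$ would survive at the top, contradicting the product structure'') does not follow. The product decomposition of the star into framing lattices of pieces of $G$ is asserted without proof, and it is not needed for the interval property, which is just \Cref{lem_Cmin_Cmax}. Even granting it, it only moves the same question into each factor, and in a general lattice the join of the atoms of an interval need not be its top (think of a chain). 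The missing input is the Join-$C_{\max}$ Theorem (\Cref{thm_join_max}). Since $S$ is interior, its link is a sphere, so the Hasse diagram of the star is $|A|$-regular with unique source and sink (\Cref{lem_nRegular}). McConville's crosscut-simplicial property of semidistributive lattices then forces $\bigvee(C+a)$ to have $|A|$ lower covers inside $[C,\bigvee(C+a)]$. By regularity it has no upper cover in the star, so it is the sink. A smaller point: ``no route of $C\smallsetminus S$ is exceptional'' is not why those routes can be flipped in $C$. The actual reason is that $C\smallsetminus\{R\}\supseteq S$ still covers $G$, hence is an interior ridge lying in two facets.

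In Step~3 you must show that $C\smallsetminus A$ is interior for every $A\subseteq\Asc(C)$, and the argument given is circular. The star of a boundary face does contain cliques with an unflippable route outside $S$, but nothing forces this to happen in $C$, where every route of $A$ flips upward by hypothesis. Concluding that $[C,C+A]$ is the whole star requires Step~2, which you only have for interior $S$. This is exactly \Cref{cor_interior_faces_as_clique_minus_ascents}, for which the paper has no direct proof. It first shows via linear-extension shellings that $h_k=\#\{C:\asc(C)=k\}$. It then uses the Dehn--Sommerville identity $h(t+1)=\sum_i f^{int}_{d-i}t^i$ to get that interior faces of codimension $i$ are equinumerous with ascent pairs of size $i$ (\Cref{cor_enumeration_interior_faces}). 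Injectivity of $S\mapsto(C_{\min}(S),C_{\min}(S)\smallsetminus S)$ turns this into a bijection. A localized version of the same idea should also close your gap. A linear extension of the opposite of the star shells the link of $S$, by the argument of \Cref{prop_shelling}. In that shelling $C$ would be a facet with full restriction face, so the top entry $h_{|A|}$ of the link would be nonzero. That is impossible when $S$ is a boundary face, since then the link is acyclic.
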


The proof of this result is more involved and we need to develop several useful tools. In Section~\ref{sec_enumerative}, we present various enumerative properties of framed triangulations and, in particular, show that the number of interior faces is equal to the number of pairs $(C, A)$, where $C$ is a maximal clique and $A \subseteq C$ is a subset of ascents (\Cref{cor_enumeration_interior_faces}). The full correspondence between interior faces and pure intervals leading to the proof of~\Cref{cor_def_framingtopes_one_three} is then established in~\Cref{sec_interior_faces_pure_intervals}.

\begin{remark}
We remark that pure intervals could equivalently be defined in terms of descents rather than ascents. This follows from~\Cref{cor_pureintervals_equivalence} in~\Cref{sec_interior_faces_pure_intervals}.
\end{remark}
\section{Enumerative properties}\label{sec_enumerative}

\subsection{\texorpdfstring{$f$- and $h$-vectors}{f- and h-vectors} of framed triangulations}

Let $\Delta$ be a finite $d$-dimensional simplicial complex. The \defn{f-vector} of $\Delta$ is the sequence \( f(\Delta) = (f_{-1}, f_0, f_1, \dots, f_d), \) where $f_i$ denotes the number of $i$-dimensional faces of $\Delta$, and $f_{-1} = 1$ for the empty face. The \defn{h-vector} $(h_0, h_1, \dots, h_{d+1})$ is defined by the polynomial identity
\begin{equation}\label{eq1_f_h}
\sum_{i=0}^{d+1} h_i t^i
=
\sum_{i=0}^{d+1} f_{i-1}\, t^i (1 - t)^{d+1-i}.  
\end{equation}
Or equivalently,
\begin{equation}\label{eq2_f_h}
\sum_{i=0}^{d+1} h_i t^{d+1-i}
=
\sum_{i=0}^{d+1} f_{i-1}\, (t - 1)^{d+1-i}.    
\end{equation}

The polynomial
\[
h_\Delta(t) = \sum_{i=0}^{d+1} h_i t^i
\]
is called the \defn{h-polynomial} of $\Delta$.

\begin{example}\label{ex_f_h_vector_triangulated_cube}
    Let $\Delta_1=\Delta_{G,F_1}$ and $\Delta_2=\Delta_{G,F_2}$ be the two framed triangulations of the 3-dimensional cube in our running example,~\Cref{ex_running}. Both triangulations have $8$ vertices, $19$ edges, $18$ triangles, and $6$ tetrahedra. Their $f$-vectors coincide and are equal to
    \[
    f=(1,8,19,18,6).\]

We can compute the $h$-vector using the defining relations with $d=3$. The result is
\[
h = (1, 4, 1, 0, 0).
\]
Equivalently, the $h$-vector can be computed using a Pascal-like triangle method (Stanley's trick) for visualizing and calculating the coefficients recursively in a triangular array:
Place the $f$-vector values on the rightmost edge of the triangle, and fill the leftmost edge with $1$'s. For every other position, compute the value by taking the value diagonally above it to the right and subtracting the value diagonally above it to the left. The $h$-vector appears along the bottom edge of the triangle as illustrated in \Cref{fig_pascal}.

\begin{figure}[ht]
    \centering
    \includegraphics[]{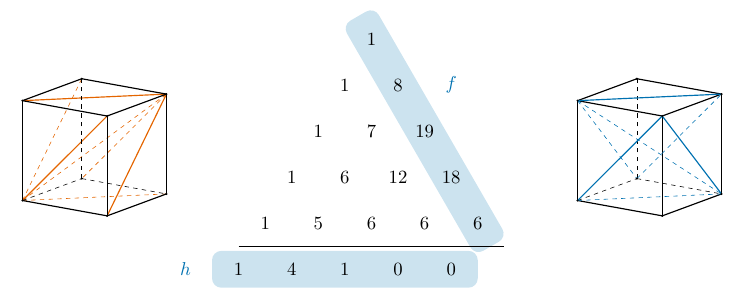}
    \caption{Pascal-like triangle of two framed triangulations, with the same $f$- and $h$-vector.}
    \label{fig_pascal}

\end{figure}

The $h$-polynomial for both triangulations is
\[
h_{\Delta_i}(t) = 1 + 4t + t^2.
\]
\end{example}

\medskip

The $h$-vector contains all the information about the face numbers of $\Delta$ and it is very often simpler to compute. In particular, when $\Delta$ is a unimodular triangulation of a lattice polytope, it is well known that the $h$-polynomial will coincide with the $h^*$-polynomial from Ehrhart theory, as we will recall in~\Cref{sec_Ehrhart}. As a consequence, one can deduce that the $f$-vector and $h$-vector of a framed triangulation is independent of the framing (\Cref{cor_fvector_hvector_framingIndependent}).

Before doing that, we present an explicit way of computing the $h$-vector of framed triangulations in terms of framing lattices (\Cref{thm_h_vector}), using shellings. As a straightforward consequence one obtains a direct combinatorial description of the $h^*$-polynomials of flow polytopes and their Ehrhart series (\Cref{cor_hStar_polynomial}).

\subsection{Shellings and framing Narayana numbers}


Let $\Delta$ be a pure $d$-dimensional simplicial complex. A \defn{shelling} of $\Delta$ is an ordering $C_1, \ldots, C_m$ of its maximal simplices (facets) such that for each $1< j \leq m$, if we set $ K_{j} = \bigcup_{i=1}^{j} C_i, $ then the intersection of~$C_j$ with the previous facets
\[
C_{j} \cap K_{j-1}
\]
is a pure $(d-1)$-dimensional subcomplex of $K_{j-1}$. In other words, for every $i<j$ there exists some $\ell<j$ such that the intersection $C_i\cap C_j$ is contained in $C_\ell \cap C_j$, and such that $C_\ell \cap C_j$ is a facet of $C_j$. We refer to~\cite{ziegler_lectures_1995} for references on shellings.

The following result provides a family of natural shellings of a framed triangulation, indexed by the linear extensions of the framing lattice.

\begin{proposition}\label{prop_shelling}
Any ordering $\mathscr{O}=(C_1,\ldots, C_m)$ of the maximal cliques that is a linear extension of $\mathscr{L}_{G,F}$ or of its opposite lattice is a shelling order of the framed triangulation $\Delta_{G,F}$ of $\mathcal{F}_G$.
\end{proposition}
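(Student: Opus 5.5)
We use the standard criterion for shellability of a triangulation of a convex body via a ``line shelling''-type argument, but phrased combinatorially. Fix a linear extension $C_1,\ldots,C_m$ of $\mathscr{L}_{G,F}$ (the opposite case follows by symmetry: reversing the framing at every vertex reverses all orders $\leq_{\scrI(v)},\leq_{\scrO(v)}$, preserves coherence, hence gives the same triangulation with opposite lattice). For $j>1$ we must show that $\Delta_{C_j}\cap K_{j-1}$ is pure of codimension one. Our plan is to identify it explicitly as the union of the facets $\Delta_{C_j\setminus\{R\}}$ for $R\in\Desc(C_j)$. Each such facet is shared with the clique $C_j\setminus\{R\}\cup\{R'\}$ obtained by the decreasing flip, which is covered by $C_j$ in the lattice, hence appears earlier in any linear extension. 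So these facets do lie in $K_{j-1}$, and $\Desc(C_j)\neq\emptyset$ for $j>1$ since $C_j$ is not the bottom element.

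The key step is the converse. For any $i<j$ we need to show that $C_i\cap C_j\subseteq C_j\setminus\{R\}$ for some descent $R$ of $C_j$. Equivalently, we must show that $C_i\cap C_j$ does not contain all of $\Desc(C_j)$. We would prove the contrapositive: if a clique $C$ contains every descent of $C_j$, then $C\geq C_j$ in $\mathscr{L}_{G,F}$, so $C$ cannot precede $C_j$. The natural route uses the lattice structure from von Bell--Ceballos. In a semidistributive (indeed congruence uniform) lattice, each element $C_j$ is the join of the join-irreducibles attached to its lower covers, and these are labelled canonically. We would use the fact, which we expect to hold in the framing setting, that the set of maximal cliques containing a fixed route $R$ is an interval, or at least an upward-closed set above the clique in which $R$ first appears. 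Concretely, we aim to show: if $R\in\Desc(C_j)$ and $R\in C$, then $C\geq c_R$, where $c_R$ is a join-irreducible-type clique determined by $R$. Then $C\geq\bigvee_{R\in\Desc(C_j)}c_R=C_j$.

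An alternative, more geometric route avoids lattice theory. Choose a generic linear functional $\ell$ on the flow polytope whose restriction orders the simplices compatibly with ccw rotations. That is, each ccw flip $R\to R'$ should increase $\ell$ across the shared facet; this amounts to checking that $\mathbf V(R')-\mathbf V(R)$ pairs positively with $\ell$ for clockwise-to-counterclockwise moves, which we would realize via a weighting of edges by their position in the framing. Then the Bruggesser--Mani style argument applies: the facets of $\Delta_{C_j}$ shared with earlier simplices are exactly its ``lower'' facets with respect to $\ell$. Hence $\Delta_{C_j}\cap K_{j-1}$ is the union of lower facets, which is pure. However, this only directly handles linear extensions of the order induced by $\ell$, not all linear extensions. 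So we would instead combine it with acyclicity: show that for each $i<j$ the intersection $\Delta_{C_i}\cap\Delta_{C_j}$ is contained in a lower facet of $\Delta_{C_j}$. We would prove this using a point of the common face, pushed slightly in the direction $-\ell$.

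The main obstacle is exactly this containment for arbitrary earlier $C_i$, not just neighbours. I would first try the lattice argument, establishing that $\{C : R\in C\}$ is an interval of $\mathscr{L}_{G,F}$ whose minimum is below $C_j$ with the descent flip of $R$ leaving it. This uses the explicit description of when a route can be rotated in a clique, namely incoherence at a vertex together with maximality. Failing a clean proof, I would fall back on the geometric functional argument, verifying that the regular height function of the DKK triangulation is compatible with the rotation order.
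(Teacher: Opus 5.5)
Your main lattice-theoretic route is correct, and it differs from the paper's. The fact you say you ``expect'' is already available. It is Lemma~\ref{lem_Cmin_Cmax} (von Bell--Ceballos, Cor.~1.2.20 and~1.2.22), and it holds for any coherent set $S$, not only a single route.

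Applied to $S=\Desc(C_j)$, it gives your contrapositive directly, with no canonical join representations needed:
\begin{enumerate}
\item Every lower cover of $C_j$ is $C_j-R$ for some $R\in S$, so no lower cover of $C_j$ contains $S$.
\item The cliques containing $S$ form the interval $[C_{\min}(S),C_{\max}(S)]$. If $C_{\min}(S)<C_j$, the last step of a maximal chain in $[C_{\min}(S),C_j]$ would be a lower cover of $C_j$ containing $S$, contradicting the first point.
\item Hence $C_j=C_{\min}(\Desc(C_j))$. Any earlier $C_i$ therefore misses some descent $R$, so $C_i\cap C_j\subseteq C_j\setminus\{R\}$, which is the facet shared with the earlier clique $C_j-R$.
\end{enumerate}

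If you keep the singleton version with $c_R=C_{\min}(\{R\})$, the lower-bound property alone does not give $\bigvee c_R=C_j$. You need convexity to see that $c_R\not\le C_j-R$. Then $\bigvee c_R\le C_j$ lies below no lower cover of $C_j$, so it equals $C_j$.

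The paper argues pair by pair instead. It uses the meet description $C_i\cap C_j\subseteq C_i\wedge C_j$ and takes a saturated chain from $C_i\wedge C_j$ up to $C_j$. Convexity then shows that the route rotated in the last step is not in the meet.

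Your version gives a little more. It identifies $\Delta_{C_j}\cap K_{j-1}$ exactly as the union of the descent facets, i.e.\ the restriction face $R(C_j)=\Desc(C_j)$, which the paper establishes separately in Theorem~\ref{thm_h_vector}. The paper's version does not need this global characterization of $C_j$, but it relies on the additional meet formula.

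Your symmetry argument for the opposite lattice is valid: reversing every framing preserves coherence and swaps cw with ccw. The geometric fallback is unnecessary, and as you noted it would not by itself handle arbitrary linear extensions.
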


\begin{proof}
    We treat the case where $\mathscr{O}=(C_1,\ldots, C_m)$ is a linear extension of the framing lattice $\mathscr{L}_{G,F}$; a linear extension of the opposite lattice is analogous. We need to verify the shelling condition: for every $i<j$, there exists $\ell<j$ such that
    \[
        C_i\cap C_j \subseteq C_\ell\cap C_j = C_j\setminus \set{R}
    \]
    for some route $R\in C_j$.

    Let $C_{k} = C_i \wedge C_j$, and set $S = C_i \cap C_j$. Since $C_{k} < C_j$, we have $k < j$. By the description of the meet operation in~\cite[Proposition~1.3.7 and Theorem~1.3.8]{vonBell_framing_2024},
    \[
        S = C_i \cap C_j \subseteq C_i \wedge C_j = C_{k},
    \]
    and hence $C_i \cap C_j \subseteq C_{k} \cap C_j$.

    Consider a saturated chain
    \[
        C_{k} = D_1 \prec D_2 \prec \cdots \prec D_r \prec D_{r+1} = C_j,
    \]
    and let $D_r = C_\ell$. Then $\ell < j$. By construction, $C_\ell$ is obtained from $C_j$ by rotating a route $R \in C_j$ downward (clockwise rotation), and thus
    
    \[
        C_\ell \cap C_j = C_j \setminus \set{R}.
    \]

    Moreover, there exists a route $R' \in C_\ell$ such that $C_j \setminus \set{R} = C_\ell \setminus \set{R'}$ and $R' <^{\mathrm{cw}}_v R$ for some $v \in R' \cap R$. In particular, this implies that $R \notin C_k$; otherwise, $C_k$ would not be smaller than or equal to $C_\ell$ (see~\cite[Theorem~1.2.15]{vonBell_framing_2024}).

    Therefore,
    \[
        C_i \cap C_j \subseteq C_{k} \cap C_j \subseteq C_\ell \cap C_j = C_j \setminus \set{R},
    \]
    as required.
\end{proof}

Thus the framing lattice does not merely encode the adjacency graph of the triangulation; its order structure also determines natural shelling orders of the triangulation.

Shellable simplicial complexes have the advantage that their $h$-vector admits a direct combinatorial interpretation in terms of the restriction faces of a shelling. For each facet $C_j$ in a shelling, its \defn{restriction face} $R(C_j)$ is the set of all vertices $v \in C_j$ such that $C_j\setminus v$ is contained in one of the earlier facets:
\[
R(C_j)=\{
v\in C_j:\ C_j\setminus v \subseteq C_i \text{ for some } i<j 
\}.
\]
Equivalently~\cite[Section~8.3]{ziegler_lectures_1995}, the restriction face $R(C_j)$ is the unique minimal face of~$C_j$ that is not contained in $K_{j-1} = \bigcup_{i=1}^{j-1} C_i$ or, in other words, the smallest face of~$C_j$ that is ``new'' when $C_j$ is added to the complex.


The \defn{h-vector} $(h_0, h_1, \dots, h_d)$ of $\Delta$ can then be computed directly from the shelling: for each $k=0, \dots, d$,
\[
h_k = \#\{ j \mid |R(C_j)| = k \},
\]
where $|R(C_j)|$ denotes the number of vertices in the restriction face $R(C_j)$. In light of~\Cref{prop_shelling} we get the following result.

\begin{theorem}\label{thm_h_vector}
    Let $(G,F)$ be a framed graph and $d=\dim(\calF_G)$ be the dimension of the flow polytope $\calF_G$. The $h$-vector $h=(h_0,h_1,\dots,h_{d+1})$ of the framed triangulation $\Delta_{G,F}$ of $\calF_G$ can be described combinatorially as follows:
    \begin{enumerate}
        \item $h_k$ equals the number of elements in the framing lattice $\scrL_{G,F}$ that have exactly~$k$ descents (equivalently, $k$ down covers).
        \item $h_k$ equals the number of elements in the framing lattice $\scrL_{G,F}$ that have exactly~$k$ ascents (equivalently, $k$ up covers).
    \end{enumerate}
    In particular, the last coefficient $h_{d+1} = 0$.
    
    The numbers $h_k$ are referred to as the \defn{framing Narayana numbers}.
\end{theorem}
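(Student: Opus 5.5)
The plan is to apply the restriction-face formula for the $h$-vector to the shellings supplied by \Cref{prop_shelling}. For part (1), fix a linear extension $\mathscr{O}=(C_1,\ldots,C_m)$ of $\scrL_{G,F}$. This is a shelling of $\Delta_{G,F}$. The key claim is that for every maximal clique $C_j$,
\[
R(C_j)=\Desc(C_j).
\]
Once this holds, the identity $h_k=\#\{j : |R(C_j)|=k\}$ gives exactly the count of elements of $\scrL_{G,F}$ with $k$ descents.

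The claim is proved by comparing the two sides route by route. Take $R\in C_j$.
\begin{itemize}
\item First suppose $R\in\Desc(C_j)$. Then there is a decreasing flip $R'$ with $C'=(C_j\setminus\{R\})\cup\{R'\}$ a maximal clique and $C'\prec C_j$. In the linear extension $C'=C_i$ for some $i<j$, and $C_j\setminus\{R\}\subseteq C_i$. Hence $R\in R(C_j)$.
\item Conversely, suppose $C_j\setminus\{R\}\subseteq C_i$ with $i<j$. Since $C_j\setminus\{R\}$ is a codimension-one face and $i\ne j$, the clique $C_i$ is the unique other facet containing it. Thus $C_i=(C_j\setminus\{R\})\cup\{R'\}$, and this pair is adjacent in the dual graph, which is the Hasse diagram of $\scrL_{G,F}$. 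So $C_i$ and $C_j$ are comparable by a cover relation. Because $i<j$ in a linear extension, we cannot have $C_j\prec C_i$, so $C_i\prec C_j$. This says $R$ is rotated downward, i.e. $R\in\Desc(C_j)$.
\end{itemize}
The fact that descents coincide with down covers is immediate: each descent yields a distinct lower cover, and each lower cover arises from a unique route of $C_j$ being flipped.

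Part (2) follows by the same argument applied to a linear extension of the opposite lattice, which \Cref{prop_shelling} also allows. There the earlier neighbours of $C_j$ are its upper covers, so $R(C_j)=\Asc(C_j)$.

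For the final assertion $h_{d+1}=0$, note that a maximal clique has $d+1$ routes. If all of them were descents, the restriction face would be the whole simplex $C_j$. But the first facet $C_1$ of any shelling has empty restriction face, and only $h_0=1$ counts it. A cleaner route is to use exceptional routes. Consider the minimal and maximal routes under the framing: taking the least edge out of each vertex starting from $s$ gives a route coherent with every other route, so it lies in every maximal clique. Such a route can never be flipped, so it is never a descent. Therefore $\desc(C)\le d$ for every $C$, which gives $h_{d+1}=0$.

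The main obstacle is this last point: I must verify that the "top" route (always choosing the minimal outgoing edge) is genuinely coherent with every route. If one route is both minimal in $\leq_{\scrI(v)}$ and minimal in $\leq_{\scrO(v)}$ at each of its vertices, then no incoherence is possible. A pleasant fallback avoids this check entirely. The bottom element $\hat 0$ of the lattice has $0$ descents, so $h_0=1$ is the total count. The Dehn–Sommerville-type vanishing of $h_{d+1}$ for a triangulated ball can then be used: for a shellable ball, $h_{d+1}$ equals the number of facets whose restriction face is the entire facet, and this is $0$ because the complex is not a sphere.
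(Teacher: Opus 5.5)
Your proof of (1) and (2) is the paper's: you shell by a linear extension of $\scrL_{G,F}$ (respectively of its opposite) and identify $R(C_j)=\Desc(C_j)$ (respectively $\Asc(C_j)$). You do this using that a ridge lies in at most two facets and that adjacent facets are related by a cover. The gap is your main argument for $h_{d+1}=0$, because exceptional routes need not exist. Choosing the least outgoing edge at each vertex controls the comparisons in $\leq_{\scrO(v)}$. It says nothing about $\leq_{\scrI(v)}$, which is governed by the independent orders $\In(w)$.

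Here is a concrete counterexample. In $\oru{3}$, take edges $a_1,a_2$ from $1$ to $2$, $b_1,b_2$ from $2$ to $3$, and $c_1,c_2$ from $3$ to $4$. Frame them by $a_1<a_2$ in $\In(2)$, $b_1<b_2$ in $\Out(2)$, $b_2<b_1$ in $\In(3)$, and $c_1<c_2$ in $\Out(3)$. At vertex $2$, only $a_1b_1c_1$ and $a_2b_2c_2$ are coherent with every route, and your greedy route is $a_1b_1c_1$. At vertex $3$, however, $a_1b_1c_1$ is incoherent with $a_1b_2c_2$, and $a_2b_2c_2$ is incoherent with $a_1b_1c_1$. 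So this framed graph has no exceptional route at all. The bound $\desc(C)\le d$ therefore cannot come from a route lying in every maximal clique. Your first remark, that $C_1$ has empty restriction face, also does not bear on $h_{d+1}$.

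What does work is your fallback, which uses the same topological input as the paper. The paper writes $h_{d+1}=(-1)^d\bigl(\chi(\Delta_{G,F})-1\bigr)$ and uses $\chi=1$ for a triangulated ball. If you phrase it via the shelling, state the reason as contractibility rather than ``not a sphere''. Take a facet $C_j$ whose restriction face is all of $C_j$. Then $C_j\cap K_{j-1}=\partial C_j$, so adding $C_j$ creates a new class in top-dimensional homology. Such a class can never be killed later, since a $d$-dimensional complex has no $(d+1)$-cells. Hence $h_{d+1}=\operatorname{rank}\widetilde H_d(\Delta_{G,F})$, and this is $0$ because $\Delta_{G,F}$ triangulates the contractible polytope $\calF_G$. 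With this justification in place of the exceptional-route paragraph, your proof is complete.
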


\begin{proof}
Fix a linear extension $\mathscr O=(C_1,\ldots,C_m)$ of the framing lattice $\scrL_{G,F}$. By~\Cref{prop_shelling}, this is a shelling of $\Delta_{G,F}$. We claim that the restriction face of $C_j$ is precisely the set of its descents: 
\[
R(C_j)=\Desc(C_j).
\]

If $R\in \Desc(C_j)$ is a descent of $C_j$ then it can be rotated downwards to produce a maximal clique $C_i$, with $C_j\setminus R = C_i\setminus R'\subseteq C_i$ for some route $R'$. Since $\mathscr O$ is a linear extension then $i<j$ and so $R\in R(C_j)$. On the other hand, if $R \in R(C_j)$ then there exist a maximal clique $C_i$ with $i<j$ such that $C_j\setminus R \subseteq C_i$. Therefore, $C_i$ and $C_j$ share a codimension 1 face $C_i\cap C_j=C_j\setminus R$, and are adjacent facets in the framed triangulation. Thus, they are related by a rotation. More precisely, the maximal clique~$C_i$ is obtained by rotating the route $R$ in $C_j$. Since $i<j$ then this rotation must be a downward rotation and so $R\in \Desc(C_j)$ as wanted.

As a consequence,
\[
h_k=\#\set{j:|R(C_j)|= |\Desc(C_j)|=k}=\#\set{C\in\scrL_{G,F}: C\text{ has $k$ descents}},
\]
which proves~(1).

For~(2), consider instead a linear extension of the opposite lattice $\scrL_{G,F}^{\mathrm{op}}$. Again by~\Cref{prop_shelling}, this gives a shelling of $\Delta_{G,F}$. The descents in the opposite lattice are exactly the ascents in $\scrL_{G,F}$, so the same argument as in part (1) yields
\[
h_k=\#\set{C\in\scrL_{G,F}: C\text{ has $k$ ascents}}.
\]

Finally, the last coefficient $h_{d+1}$ can be computed in terms of the Euler characteristic of the triangulation via the relation
    \[
    h_{d+1} = (-1)^d \bigl(\chi(\Delta_{G,F}) - 1\bigr).
    \] 
    The Euler characteristic of a triangulated ball is 1, so $h_{d+1}=(-1)^d (1 - 1)=0$.
\end{proof}

\begin{remark}
The method of obtaining a shelling order by taking a linear extension of a poset structure on the underlying simplicial complex appears in several instances in the literature \cite{BGMY23, BBBHPSY24, BS24, CeballosPAdrolSarmiento2019}. For example, it was used to compute the $h$-vector of the $(I,J)$-Tamari complex in \cite{CeballosPAdrolSarmiento2019}, whose coefficients are the $(I,J)$- and $\nu$-Narayana numbers. In the context of flow polytopes, the method was used on the planar framed triangulation of the $\nu$-caracol flow polytope in \cite{BGMY23} to show that its $h^*$-polynomial is the~$\nu$-Narayana polynomial.
\end{remark}

\begin{corollary}\label{cor_hPolynomial_desc_asc}
    The $h$-polynomial $h(t)=\sum h_i t^i$ of the framed triangulation $\Delta_{G,F}$ is
    \begin{enumerate}
        \item $h(t)= \sum t^{\desc(C)}$,
        \item $h(t)= \sum t^{\asc(C)}$,
    \end{enumerate}
    where the sum runs over all maximal cliques of the framing lattice $\scrL_{G,F}$.
\end{corollary}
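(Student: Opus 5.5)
This corollary is a repackaging of \Cref{thm_h_vector} into generating-function form, so the plan is to group the maximal cliques according to their descent (resp. ascent) statistic. Recall that the $h$-polynomial is $h(t)=\sum_{k=0}^{d+1} h_k t^k$, where $d=\dim(\calF_G)$. Part~(1) of \Cref{thm_h_vector} says that $h_k=\#\set{C\in\scrL_{G,F}:\desc(C)=k}$. We will substitute this and interchange the order of summation:
\[
h(t)=\sum_{k=0}^{d+1}\#\set{C:\desc(C)=k}\,t^k=\sum_{k=0}^{d+1}\sum_{C:\ \desc(C)=k} t^{\desc(C)}=\sum_{C\in\calC} t^{\desc(C)}.
\]
For the last equality to hold, every maximal clique must be counted exactly once. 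This means we need $0\le \desc(C)\le d+1$ for all $C$. This is automatic, since $\Desc(C)\subseteq C$ and $|C|=\dim(\calF_G)+1=d+1$ by the result of Danilov--Karzanov--Koshevoy. So the inner sets partition $\calC$.

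Part~(2) follows in exactly the same way from part~(2) of \Cref{thm_h_vector}, with $\asc$ in place of $\desc$. The same bound applies, since $\Asc(C)\subseteq C$.

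There is no genuine obstacle here. The only point requiring care is the bookkeeping above: the range of $k$ in the definition of the $h$-polynomial must cover every possible value of $\desc(C)$ and $\asc(C)$, which the bound $|C|=d+1$ guarantees. As a side remark, the vanishing $h_{d+1}=0$ is consistent with this picture. No maximal clique has all of its $d+1$ routes as descents, because a framed triangulation always contains exceptional routes, which can never be rotated. We do not need this fact for the proof, however.
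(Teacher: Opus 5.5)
Your proof is correct and is the paper's argument: the corollary is \Cref{thm_h_vector} regrouped as a generating function, and checking $0\le\desc(C),\asc(C)\le d+1$ via $|C|=d+1$ is the only bookkeeping needed. Your side remark is false, however, and should be dropped: exceptional routes need not exist (in $\oru{3}$ with $e_1<e_2$ in $\In(2)$, $e_3<e_4$ in $\Out(2)$, $e_4<e_3$ in $\In(3)$ and $e_5<e_6$ in $\Out(3)$, every route is incoherent with some other route, e.g.\ $\{e_1,e_3,e_5\}$ with $\{e_1,e_4,e_6\}$ at vertex $3$), so $h_{d+1}=0$ has to come from the Euler characteristic argument in \Cref{thm_h_vector} rather than from exceptional routes, though, as you say, this does not affect the proof of the corollary.
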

\begin{proof}
    This follows directly from~\Cref{thm_h_vector}.
\end{proof}

\begin{remark}
    In the case of the oruga graph $G=\oru{n}$ with its natural plane framing, the $h$-polynomial is the Eulerian polynomial, which enumerates permutations by their number of descents. As we will explain below, however, $h(t)$ is independent of the choice of framing (\Cref{cor_fvector_hvector_framingIndependent}).
\end{remark}

\subsection{Ehrhart theory of flow polytopes}\label{sec_Ehrhart}
Given a lattice polytope \( P \subset \mathbb{R}^d \), its Ehrhart series is
\[
\sum_{m \ge 0} \lvert mP \cap \mathbb{Z}^d \rvert \, t^m
\;=\;
\frac{h^*(P,t)}{(1-t)^{d+1}},
\]
where
\[
h^*(P,t) = h_0^* + h_1^* t + \cdots + h_s^* t^s
\]
is the \(h^*\)-polynomial of \(P\).

For a unimodular triangulation \(\mathcal{T}\) of \(P\), denote by
\[
h(\mathcal{T}, t) = h_0 + h_1 t + \cdots + h_d t^d
\]
the \(h\)-polynomial of \(\mathcal{T}\).

A classical result in Ehrhart theory~\cite[Theorem 10.3]{beck_computing_2015} asserts that if a lattice polytope~$P$ admits a unimodular triangulation $\mathcal{T}$, then
\[
h^*(P,t) = h(\mathcal{T}, t).
\]
Thus any two unimodular triangulations of the same lattice polytope have the same \(h\)-vector, and hence the same \(f\)-vector. Since all framed triangulations of flow polytopes are unimodular~\cite{dkktriangulation}, the following corollary is immediate.

\begin{corollary}\label{cor_fvector_hvector_framingIndependent}
The \(f\)-vector and \(h\)-vector of a framed triangulation are independent of the choice of framing.
\end{corollary}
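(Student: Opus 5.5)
The plan is to derive the statement directly from the Ehrhart-theoretic identity quoted just above, together with the unimodularity result of Danilov, Karzanov and Koshevoy. The framing enters only through the triangulation $\Delta_{G,F}$, while the flow polytope $\calF_G$ depends on $G$ alone. So it suffices to show that both the $h$-vector and the $f$-vector of $\Delta_{G,F}$ are determined by $\calF_G$.

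First, I will record that $\calF_G$ is a lattice polytope. Its vertices are the indicator vectors of routes, which are $0/1$ vectors. Its dimension $d=|E(G)|-|V(G)|+1$ depends only on $G$.

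Second, I will invoke the DKK result: for every framing $F$, the framed triangulation $\Delta_{G,F}$ is a unimodular triangulation of $\calF_G$. Unimodularity should be understood relative to the lattice of the affine span of $\calF_G$ intersected with $\mathbb{Z}^{|E(G)|}$. The Ehrhart series is then computed in that same lattice, so \cite[Theorem 10.3]{beck_computing_2015} applies after identifying the affine span with $\mathbb{R}^d$ via a lattice-preserving affine isomorphism. This gives
\[
h(\Delta_{G,F},t)=h^*(\calF_G,t),
\]
and the right-hand side involves no framing. Hence the $h$-vector of $\Delta_{G,F}$ is independent of $F$.

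Finally, I will pass from $h$ to $f$ using the defining relation \eqref{eq1_f_h}. That relation is an invertible triangular linear transformation between $(f_{-1},\dots,f_d)$ and $(h_0,\dots,h_{d+1})$, with $d$ fixed by $G$. For instance, substituting $t\mapsto$ in \eqref{eq2_f_h} expresses each $f_{i-1}$ as an integer combination of the $h_j$. Therefore the $f$-vector is also framing-independent.

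The only point requiring care is the lattice normalization in the second step, namely that "unimodular" in DKK and the lattice used for $h^*$ agree. I will address this by noting that the affine lattice $\operatorname{aff}(\calF_G)\cap\mathbb{Z}^{E}$ is the natural choice, since integer flows form a saturated lattice because the incidence matrix is totally unimodular. Everything else is immediate.
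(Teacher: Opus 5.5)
Your proposal is correct and follows the paper's argument. The flow polytope $\calF_G$ depends only on $G$, DKK unimodularity together with the Ehrhart identity gives $h(\Delta_{G,F},t)=h^*(\calF_G,t)$, and the invertible $f$--$h$ relation carries framing-independence over to the $f$-vector. Your attention to the lattice normalization (working in $\operatorname{aff}(\calF_G)\cap\mathbb{Z}^{E}$, or equivalently in the projected realization) is a sensible refinement that the paper leaves implicit, and the substitution you intended in \eqref{eq2_f_h} is $t\mapsto t+1$.
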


In contrast to~\Cref{thm_h_vector}, we obtain the following combinatorial interpretation of the \(h^*\)-polynomial of a flow polytope.
\begin{corollary}\label{cor_hStar_polynomial}
    Let $\calF_G$ be the flow polytope of a flow graph $G$ and $d = \dim(\calF_G)$. For any framing $F$ of $G$, the Ehrhart series of $\calF_G$ can be written as
    \[
    \operatorname{Ehr}_{\calF_G}(t) = \frac{h^*(t)}{(1-t)^{d+1}},
    \qquad
    h^*(t) = h_0^* + h_1^* t + \dots + h_d^* t^d,
    \]
    where
    \begin{enumerate}
        \item $h_k^*$ equals the number of elements in the framing lattice $\scrL_{G,F}$ with exactly $k$ down covers.
        \item $h_k^*$ equals the number of elements in the framing lattice $\scrL_{G,F}$ with exactly $k$ up covers.
    \end{enumerate}
\end{corollary}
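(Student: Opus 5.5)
The plan is to combine the Ehrhart-theoretic identity from~\cite[Theorem 10.3]{beck_computing_2015} with the shelling description of the $h$-vector in~\Cref{thm_h_vector}. The framed triangulation $\Delta_{G,F}$ is a unimodular triangulation of the lattice polytope $\calF_G$ by the result of Danilov, Karzanov and Koshevoy. Hence $h^*(\calF_G,t)=h(\Delta_{G,F},t)$, and the Ehrhart series has the stated form $h^*(t)/(1-t)^{d+1}$ with $d=\dim(\calF_G)$.

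First I would check that the indexing conventions match. The $h$-vector in \Cref{thm_h_vector} runs up to $h_{d+1}$, whereas $h^*$ is written only up to degree $d$. This is consistent because \Cref{thm_h_vector} already shows $h_{d+1}=0$, using the Euler characteristic of a triangulated ball. The $h$-polynomial in~\eqref{eq1_f_h} uses the same normalization (the dimension of the complex is $d$ and there are $d+2$ coefficients) as the one appearing in the Ehrhart identity. So I would note that the identification $h^*_k=h_k$ holds coefficientwise for $0\le k\le d$.

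Second, I would substitute the two combinatorial descriptions from \Cref{thm_h_vector}, or equivalently from \Cref{cor_hPolynomial_desc_asc}. By part (1), $h_k$ counts the maximal cliques with exactly $k$ descents. Descents of $C$ are in bijection with the down covers of $C$ in $\scrL_{G,F}$: each descent route gives a distinct clockwise flip, and each down cover arises from exactly one route by the definition of the cover relation. Likewise, part (2) gives the count by ascents, which are exactly the up covers. This yields items (1) and (2).

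There is no real obstacle here. The only point needing care is the descents/down-covers bijection. Distinct routes produce distinct lower covers because $C\setminus\{R\}$ determines $R$. Each facet $C\setminus\{R\}$ of an interior simplex lies in at most one other maximal simplex, so each route has at most one flip. The corollary holds for every framing $F$, consistent with \Cref{cor_fvector_hvector_framingIndependent}.
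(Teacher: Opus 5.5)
Your proposal is correct and follows essentially the same route as the paper: unimodularity of the DKK triangulation gives $h^*(\calF_G,t)=h(\Delta_{G,F},t)$, and \Cref{thm_h_vector} then supplies the descent/ascent (equivalently down-/up-cover) counts. Your extra checks are sound and harmless additions to the paper's one-line argument: that $h_{d+1}=0$ makes the degree bounds match, and that each route has at most one flip because a codimension-one face of a triangulated ball lies in at most two facets.
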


\begin{proof}
    This follows directly from the $h$-vector computation for framed triangulations in~\Cref{thm_h_vector}, and the fact that the $h^*$-polynomial of the flow polytope coincides with the $h$-polynomial of any framed triangulation.
\end{proof}

\begin{remark}
    Special cases of Corollary~\ref{cor_hStar_polynomial} appear in the literature. The case for a family of graphs called full, that admit an ample framing, was given in \cite[Lemma~6.10]{BBBHPSY24}. This was further extended to a family of graphs called rooted framed graphs in~\cite[Proposition~4.12]{BS24}.
\end{remark}

\begin{remark}
    Although the Ehrhart theory of flow polytopes is well studied, our results give a simple combinatorial interpretation of the $h^*$-coefficients. For flow polytopes with arbitrary net-flow $\mathbf{a}$, the Ehrhart polynomial was first computed by Baldoni and Vergne~\cite{baldoni2008} using residue computations, with a purely combinatorial description later given by M\'esz\'aros and Morales \cite{MeszarosMorales}. More precisely,
    the Ehrhart polynomial $\operatorname{ehr}(m)=|mP\cap \mathbb{Z}^d|$ is given by the Kostant partition function $K(\Phi_G^+, m\mathbf{a})$, which counts the number of ways to write the vector $m\mathbf{a}$ as a nonnegative integer linear combination of the positive roots $\Phi_G^+$ in the type $A$ root system associated to the edges of the graph~$G$. In the case of unit flow polytopes, the net flow vector is $\mathbf{a}=\mathbf{e_1}-\mathbf{e_n}$, where $\mathbf{e}_1$ and $\mathbf{e}_n$ are the standard basis vectors associated to the source and sink respectively.
\end{remark}

\subsection{Counting interior faces} 
We now turn our attention to the problem of counting the number of interior faces of framed triangulations of flow polytopes. This will be achieved using a variant of the Dehn-Somerville relations~\cite{CeballosMuhle2022} and the evaluation of the $h$-polynomial of the triangulation at $t+1$.

\begin{proposition}
    Let $(G,F)$ be a framed graph and $d=\dim(\calF_G)$ be the dimension of the flow polytope $\calF_G$. The $h$-polynomial $h(t)= \sum_{i=0}^{d+1} h_i t^i$ of the framed triangulation~$\Delta_{G,F}$ satisfies
    \begin{equation}\label{eq_h_evaluation_interior_faces}
        h(t+1) =
        \sum_{i=0}^{d} f_{d-i}^{int} t^{i},
    \end{equation}
    where $f_i^{int}$ is the number of $i$-dimensional interior faces of $\Delta_{G,F}$.
\end{proposition}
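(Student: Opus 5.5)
We prove the identity $h(t+1)=\sum_{i=0}^{d} f^{int}_{d-i}t^i$ by reducing it to the Dehn–Sommerville-type relation for triangulated balls due to Ceballos and Mühle~\cite{CeballosMuhle2022}. The framed triangulation $\Delta_{G,F}$ is a triangulation of the $d$-dimensional polytope $\calF_G$, so it is a simplicial $d$-ball. We compare two generating functions: the one from the defining relation~\eqref{eq2_f_h}, and the interior face generating function. The latter is obtained from the former by inclusion–exclusion over the boundary sphere.

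\emph{Step 1: rewrite the left side.} Substitute $t\mapsto 1/t$ in~\eqref{eq1_f_h} and multiply by $t^{d+1}$ to get~\eqref{eq2_f_h}. From this, $\sum_i h_i t^{d+1-i}=\sum_i f_{i-1}(t-1)^{d+1-i}$, and replacing $t$ by $t+1$ gives
\[
\sum_{i} h_i (t+1)^{d+1-i}=\sum_{i=0}^{d+1} f_{i-1}\,t^{d+1-i}.
\]
The right side is the ordinary face generating polynomial of $\Delta_{G,F}$ in reversed degree. Our target instead involves $h(t+1)=\sum_i h_i(t+1)^i$, which uses the \emph{reversed} $h$-vector.

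\emph{Step 2: relate the reversed $h$-vector to interior faces.} The key input is the ball version of the Dehn–Sommerville relations: for a simplicial $d$-ball $\Delta$, the $h$-vector of the interior complex satisfies $h^{int}_i=h_{d+1-i}$. Equivalently, the interior faces satisfy $\sum_i f^{int}_{i-1}t^{d+1-i}=\sum_i h_{d+1-i}(t+1)^{d+1-i}$ up to reindexing. We may quote this from~\cite{CeballosMuhle2022}. Alternatively, it can be derived from the shelling of \Cref{prop_shelling}.

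In the shelling, each facet $C_j$ contributes the interval of faces $[R(C_j),C_j]$ as new faces. Using an opposite linear extension, the faces of $C_j$ that contain the ascent set are exactly the ones not lying on the boundary. More precisely, we expect the interior faces to be partitioned as $\bigsqcup_C [\,C\setminus \Asc(C)\,,\,C]$, restricted to faces that cover $G$ in the sense of \Cref{thm_interior_equal_covering}. Proving this partition directly would also prove the count later stated in \Cref{cor_enumeration_interior_faces}.

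\emph{Step 3: extract the count from the partition.} Suppose the interior faces are partitioned as $\bigsqcup_C\{S: C\setminus\Asc(C)\subseteq S\subseteq C\}$ — note the complement here, not the ascent set itself. A facet $C$ has $d+1$ vertices, and $d+1-\asc(C)$ of them are forced. Its block then contributes $t^{\asc(C)}$-type terms after counting faces by codimension: the number of interior faces of dimension $d-i$ in the block is $\binom{\asc(C)}{i}$. Summing over $C$ and using \Cref{cor_hPolynomial_desc_asc}(2) gives
\[
\sum_i f^{int}_{d-i}t^i=\sum_C (1+t)^{\asc(C)}=h(t+1),
\]
which is the claim. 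So the cleanest route is Step 2 in this refined form: for every maximal clique $C$ and every $S$ with $C\setminus\Asc(C)\subseteq S\subseteq C$, the set $S$ covers $G$; and every covering clique arises from exactly one such pair.

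\emph{Main obstacle.} The main obstacle is establishing this partition. Existence and uniqueness should come from the shelling by a linear extension of $\scrL^{\mathrm{op}}_{G,F}$: the faces new at step $j$ are those containing $R(C_j)$. One then argues by reverse (complementary) shelling that a face is interior if and only if the unique facet where it first appears in the reverse order contains it in the prescribed interval. This is the standard fact that for a shellable ball, the reverse shelling restricted to interior faces has restriction sets equal to the complements of the forward restriction sets.

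If this becomes messy, the fallback is to cite the Ceballos–Mühle relation $f^{int}(t)=h^{\mathrm{rev}}(t+1)$ for triangulated balls together with $h_{d+1}=0$ (from \Cref{thm_h_vector}). Reindexing then gives the identity, using that the ascent and descent distributions coincide.
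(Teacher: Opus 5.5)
Your citation route (the first half of Step~2, and your fallback) is exactly the paper's proof. The paper quotes \cite[Corollary~3.2]{CeballosMuhle2022} for a triangulated ball in the form $t^{d+1}h\bigl(\tfrac{t+1}{t}\bigr)=\sum_{i=1}^{d+1}f^{int}_{i-1}t^i$ and substitutes $t\mapsto 1/t$; this is the same identity as your $h^{int}_i=h_{d+1-i}$ after $t\mapsto t+1$. The only bookkeeping is that there is no $t^{d+1}$ term, i.e.\ $f^{int}_{-1}=0$ (equivalently $h_{d+1}=0$). The equality of the ascent and descent distributions plays no role.

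Your preferred shelling route is genuinely different and would give more. The partition of interior faces into blocks $[C\smallsetminus\Asc(C),C]$ is precisely \Cref{prop_bijection_interior_pairs}(2) together with \Cref{cor_interior_faces_as_clique_minus_ascents}(2). The paper obtains these only \emph{after} this proposition, by combining injectivity with the count it provides, and says it could not prove them directly.

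As written, however, all the content sits in the ``standard fact''. The easy half really is elementary. An interior $S$ lies in the block of $C_{\min}(S)$ by \Cref{lem_maps_well_defined}. Conversely, any block containing an interior $S$ is that of $C_{\min}(S)$: $\Desc(C)\subseteq C\smallsetminus\Asc(C)\subseteq S$, so no lower cover of $C$ contains $S$, and then \Cref{lem_Cmin_Cmax} applies.

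The hard half is that every $S$ with $C\smallsetminus\Asc(C)\subseteq S\subseteq C$ covers $G$, and this needs a proof. The correct general statement is: for any shelling $F_1,\dots,F_m$ of a triangulated ball, the interior faces are exactly the $S$ with $F_j\smallsetminus R(F_j)\subseteq S\subseteq F_j$ for some $j$. Applied to a linear extension of $\scrL_{G,F}^{\mathrm{op}}$, where $R=\Asc$, this is your partition. The inclusion you need can be proved topologically:
\begin{itemize}
\item The induced shelling of the link of $S$ in $F_1\cup\dots\cup F_j$ ends with a facet whose restriction face is the whole facet, so that link has nonzero top homology.
\item Top cycles of a subcomplex of the same dimension are cycles of $\mathrm{lk}_{\Delta_{G,F}}(S)$, so the latter is not acyclic.
\item Hence $\mathrm{lk}_{\Delta_{G,F}}(S)$ is not a ball, and $S$ is interior.
\end{itemize}

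Note that this is a statement about a relative shelling of $(\Delta,\partial\Delta)$, not about reversing the shelling. The reverse of a ball shelling need not be a shelling. Even here, where it is one (a linear extension of $\scrL_{G,F}$), its restriction sets are $\Desc(C)$, not $C\smallsetminus\Asc(C)$.

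Finally, delete the sentence claiming that the faces of $C_j$ containing $\Asc(C_j)$ are exactly the non-boundary ones. For the top element of $\scrL_{G,F}$ the ascent set is empty, so every face of it, including boundary faces, contains it.
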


\begin{proof}
    Applying the variant of the Dehn-Sommerville relations in~\cite[Corollary~3.2]{CeballosMuhle2022} to the case of a triangulated topological ball, which is the case for a framed triangulation of a flow polytope, we obtain the following relation.
    \begin{equation}
        t^{d+1}h\left( \frac{t+1}{t}\right) =
        \sum_{i=1}^{d+1} f_{i-1}^{int} t^i.
    \end{equation}
    Evaluating this relation at $\frac{1}{t}$, and then multiplying the result by $t^{d+1}$ yields
    \begin{equation}
        h(t+1) =
        \sum_{i=1}^{d+1} f_{i-1}^{int} t^{d+1-i}
        =
        \sum_{i=0}^{d} f_{i}^{int} t^{d-i}.
    \end{equation}
    This relation is equivalent to the desired equation.
\end{proof}

\begin{example}[Example~\ref{ex_f_h_vector_triangulated_cube} continued]
In our example of the two framed triangulations of the 3-dimensional cube associated with the oruga graph $\oru{3}$, we have $d = 3$, and the $h$-polynomial is
\[h(t)=1+4t+t^2.\]
Its evaluation at $t+1$ is
\begin{align*}
h(t+1) &= 6+6t+t^2\\
&= 
f_3^{int}+
f_2^{int}t+
f_1^{int}t^2+
f_0^{int}t^3 \,.
\end{align*}
Therefore
\[
f_3^{int}=6, \quad
f_2^{int}=6, \quad
f_1^{int}=1, \quad 
f_0^{int}=0 \, .
\]
Geometrically, this implies that both triangulations have 6 interior tetrahedra, 6 interior triangles, 1 interior edge, and no interior vertices. The reader may verify this in~\Cref{fig_cubes_Triang_oruga3}.
\end{example}

\begin{corollary}\label{cor_enumeration_interior_faces}
    The number $f_{d-i}^{int}$ of codimension $i$ interior faces of the framed triangulation $\Delta_{G,F}$ is equal to:
    \begin{enumerate}
        \item the number of pairs $(C,D)$ such that $C$ is a maximal clique and $D\subseteq \Desc(C)$ is a subset of descents of $C$, with $|D|=i$.
        \item the number of pairs $(C,A)$ such that $C$ is a maximal clique and $A\subseteq \Asc(C)$ is a subset of ascents of $C$, with $|A|=i$.
    \end{enumerate}
\end{corollary}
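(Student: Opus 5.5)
The plan is to compare coefficients of $t^i$ on both sides of the identity \eqref{eq_h_evaluation_interior_faces}, using the descent and ascent expressions for the $h$-polynomial. I will do the computation for descents; the ascent case is identical.

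By \Cref{cor_hPolynomial_desc_asc}(1), the $h$-polynomial is a sum over maximal cliques:
\[
h(t)=\sum_{C\in\scrL_{G,F}} t^{\desc(C)} .
\]
Substituting $t+1$ for $t$ and expanding each summand with the binomial theorem gives
\[
h(t+1)=\sum_{C}(1+t)^{\desc(C)}=\sum_{C}\sum_{i\ge 0}\binom{\desc(C)}{i}t^i .
\]

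The binomial coefficient $\binom{\desc(C)}{i}$ is precisely the number of subsets $D\subseteq\Desc(C)$ with $|D|=i$. So the coefficient of $t^i$ in $h(t+1)$ equals the number of pairs $(C,D)$, where $C$ is a maximal clique and $D$ is an $i$-element subset of $\Desc(C)$.

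On the other side, the proposition just proved gives
\[
h(t+1)=\sum_{i=0}^{d} f^{int}_{d-i}\,t^i ,
\]
so the coefficient of $t^i$ is $f^{int}_{d-i}$. Equating coefficients yields part (1). Note that $\desc(C)\le |C|-1=d$, so no powers beyond $t^d$ appear and the index ranges on the two sides agree.

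For part (2), the same computation goes through with $\asc(C)$ in place of $\desc(C)$, starting from \Cref{cor_hPolynomial_desc_asc}(2).

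There is no real obstacle here. The one point to check is that $d-i$ is indeed the dimension of a codimension-$i$ face, which holds because $\dim\Delta_{G,F}=d$. The bound $\desc(C)\le d$ follows because $\desc(C)$ is at most the number of down covers, hence at most $|C|=d+1$; in fact $h_{d+1}=0$ by \Cref{thm_h_vector}, so only exponents up to $d$ occur.
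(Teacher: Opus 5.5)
Your proof is correct and is essentially the paper's argument: you write $h(t+1)=\sum_C (1+t)^{\desc(C)}$ (resp.\ $\sum_C (1+t)^{\asc(C)}$) as a generating function for descent (resp.\ ascent) pairs and compare coefficients of $t^i$ with \eqref{eq_h_evaluation_interior_faces}. Your first claim that $\desc(C)\le |C|-1$ is only justified by your later appeal to $h_{d+1}=0$; the check is not needed anyway, since \eqref{eq_h_evaluation_interior_faces} itself forces the coefficients of $t^i$ for $i>d$ to vanish.
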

\begin{proof}
    By~\Cref{cor_hPolynomial_desc_asc} (1) we have
    \begin{align*}
    h(t+1) = \sum_C (t+1)^{|\Desc(C)|}     = \sum_{\substack{(C,D)\\ D\subseteq \Desc(C)}} t^{|D|},
    \end{align*}
    where the sum runs over all pairs $(C,D)$ such that $C$ is a maximal clique and $D\subseteq \Desc(C)$ is a subset of descents of $C$. Item~(1) then follows from Equation~\eqref{eq_h_evaluation_interior_faces}. Item~(2) follows similarly from~\Cref{cor_hPolynomial_desc_asc} (2) and Equation~\eqref{eq_h_evaluation_interior_faces}.
\end{proof}

\subsection{Bijections between interior faces, descent pairs, and ascent pairs}

We define a \defn{descent pair} of the framing lattice $\scrL_{G,F}$ as a pair $(C,D)$, where $C$ is a maximal clique and $D \subseteq \Desc(C)$ is a subset of descents of $C$. The \defn{size} of $(C,D)$ is defined to be $|D|$. Similarly, an \defn{ascent pair} of $\scrL_{G,F}$ is a pair $(C,A)$, where $C$ is a maximal clique and $A \subseteq \Asc(C)$ is a subset of ascents of $C$. The \defn{size} of $(C,A)$ is defined to be $|A|$.

As shown in~\Cref{cor_enumeration_interior_faces}, the interior faces of the framed triangulation have the same cardinality as the descent pairs and the ascent pairs of the framing lattice.

\[
\begin{array}{@{}ccc@{}}
\text{Number of interior faces} 
& = \text{Number of descent pairs}
& = \text{Number of ascent pairs} \\
\text{$S\in\Delta_{G,F}$ of codimension $i$}
& \text{$(C,D)$ of $\scrL_{G,F}$ of size i}
& \text{$(C,A)$ of $\scrL_{G,F}$ of size i}
\end{array}
\]

\bigskip
\noindent
The purpose of this section is to present explicit bijections between these three sets. For this we will need the following lemma.

\begin{lemma}[{\cite[Corollary~1.2.20 and Corollary~1.2.22]{vonBell_framing_2024}}]
\label{lem_Cmin_Cmax}
    Let $S$ be a set of pairwise coherent routes in a framed graph $(G,F)$. The following hold:
    \begin{enumerate}
        \item there is a unique maximal clique $C_{\min}(S)$ that is smaller in the order $\leq_{\mathrm{rot}}^{\ccw}$ than all the maximal cliques containing $S$.
        \item there is a unique maximal clique $C_{\max}(S)$ that is bigger in the order $\leq_{\mathrm{rot}}^{\ccw}$ than all the maximal cliques containing $S$.
    \end{enumerate}
    The set of maximal cliques containing $S$ is the interval $[C_{\min}(S), C_{\max}(S)]$ of the framing lattice $\scrL_{G,F}$.
\end{lemma}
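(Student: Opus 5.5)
The plan is to show that the set $\calC_S$ of maximal cliques containing $S$ is a nonempty sublattice of $\scrL_{G,F}$ that is also order-convex. A nonempty finite sublattice has a least element $C_{\min}(S)=\bigwedge \calC_S$ and a greatest element $C_{\max}(S)=\bigvee \calC_S$. Convexity then gives $\calC_S=[C_{\min}(S),C_{\max}(S)]$, which yields (1), (2) and the final claim together.

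\emph{Nonemptiness and closure under meets and joins.} Nonemptiness is immediate, since any clique extends to a maximal clique by adding routes greedily. For closure under meets, I would invoke the description of the meet from~\cite[Proposition~1.3.7 and Theorem~1.3.8]{vonBell_framing_2024}, exactly as in the proof of~\Cref{prop_shelling}. That description gives $C\cap C'\subseteq C\wedge C'$ for any two maximal cliques. Hence if $S\subseteq C$ and $S\subseteq C'$, then $S\subseteq C\wedge C'$. The dual statement $C\cap C'\subseteq C\vee C'$ should follow from the same description applied to the opposite framing. Reversing all the orders at every vertex turns ccw rotations into cw rotations, so $\scrL_{G,F}$ is anti-isomorphic to the framing lattice of the reversed framing while the cliques stay the same. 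So $\calC_S$ is closed under $\wedge$ and $\vee$, and the extremal elements exist and lie in $\calC_S$.

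\emph{Convexity.} I must show that $C\le D\le C'$ with $S\subseteq C\cap C'$ forces $S\subseteq D$. It suffices to check that every route $R\in C\cap C'$ lies in $D$. I would argue along a saturated chain $C=D_0\prec D_1\prec\cdots\prec D_m=C'$ through $D$. Suppose some cover $D_{i}\prec D_{i+1}$ removes $R$ by a ccw rotation $R<^{\cw}_v R'$. I would then show that $R$ can never reappear higher in the chain. The tool is the characterization of the order in~\cite[Theorem~1.2.15]{vonBell_framing_2024}, already used in~\Cref{prop_shelling}: if $R\in C''$ and $C''$ lies above a clique from which $R$ was rotated upward, one gets a contradiction. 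Concretely, I would aim to prove that whenever $D_i\le E$ and $R'\in E$ was obtained from $R$ by a ccw rotation, then $R\notin E$. The reason is that $R$ and $R'$ are incoherent, and routes only move counterclockwise as one goes up.

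\emph{Main obstacle.} I expect the convexity step, i.e. this ``no return'' property of routes along increasing chains, to be the main difficulty. It does not follow formally from lattice axioms and needs the route-level description of $\le^{\ccw}_{\mathrm{rot}}$. If the cited theorem does not directly give it, I would try a potential-function argument instead: to each route attach its set of ``crossing positions'' relative to a fixed reference and show that a ccw rotation strictly increases this set for the rotated route. This would make the multiset of routes monotone along chains and rule out re-insertion.
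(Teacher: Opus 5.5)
The paper gives no proof of this lemma; it imports it directly from von Bell--Ceballos (Corollaries~1.2.20 and~1.2.22). Your argument is a different route: you reduce the lemma to two other facts from the same source that the paper also uses as black boxes. These are $C\cap C'\subseteq C\wedge C'$ (Proposition~1.3.7/Theorem~1.3.8) and the order characterization of Theorem~1.2.15, together with a duality argument.

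The reduction is sound. Reversing every $\leq_{\In(v)}$ and $\leq_{\Out(v)}$ preserves (in)coherence, so it preserves the maximal cliques, and it swaps $R<^{\cw}_v R'$ with $R'<^{\cw}_v R$. Hence $\scrL_{G,F^{\mathrm{rev}}}$ is $\scrL_{G,F}^{\mathrm{op}}$, and joins inherit the meet property. A nonempty finite sublattice has a least and a greatest element, and convexity upgrades this to equality with the interval. Compared with the bare citation, this makes transparent that the lemma is a formal consequence of the meet description, duality, and the ``no route of an upper clique is clockwise from a route of a lower clique'' property. One caution: in the source, Section~1.3 comes after these corollaries. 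You should check that the meet description there is not itself derived from $C_{\min}$/$C_{\max}$. If it is, your argument is still fine relative to this paper, but it is not an independent proof of the original result.

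Your ``concrete'' version of the no-return step is misstated. If $R'\in E$, then $R\notin E$ trivially, since $R$ and $R'$ are incoherent. What you actually need is: if $D_{i+1}\le E$, then $R\notin E$. This follows from Theorem~1.2.15 exactly as in the proof of \Cref{prop_shelling}: $R'\in D_{i+1}$, and $R<^{\cw}_v R'$ would make a route of the upper clique clockwise from a route of the lower one.

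In fact you need neither chains nor a potential function. Suppose $C\le D\le C'$, $R\in C\cap C'$ and $R\notin D$. Maximality of $D$ gives some $R''\in D$ incoherent with $R$ at a vertex $v$. Then $C\le D$ forbids $R''<^{\cw}_v R$, and $D\le C'$ forbids $R<^{\cw}_v R''$. But exactly one of these must hold at an incoherent vertex, a contradiction. You should drop the potential-function fallback: monotonicity of a multiset of routes along chains would not by itself prevent a particular route from reappearing.
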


\begin{lemma}\label{lem_maps_well_defined}
    Let $S$ be an interior face of the framed triangulation $\Delta_{G,F}$. Then,
    \begin{enumerate}
        \item $D=C_{\max}(S)\smallsetminus S$ is a subset of descents of $C_{\max}(S)$.
        \item $A=C_{\min}(S)\smallsetminus S$ is a subset of ascents of $C_{\min}(S)$.
    \end{enumerate}
\end{lemma}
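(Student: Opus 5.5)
The plan is to exploit the fact that each route of $C_{\max}(S)$ not in $S$ gives a codimension-one face of $\Delta_{C_{\max}(S)}$ that still contains $S$, and therefore still covers $G$. We prove (1); part (2) is symmetric, with $C_{\min}(S)$, ascents and the reversed order.

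Set $C=C_{\max}(S)$ and fix $R\in D=C\smallsetminus S$. Consider the set $T=C\smallsetminus\{R\}$.

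First, we show that $T$ is an interior codimension-one face. Since $T\subseteq C$, its routes are pairwise coherent. Since $S\subseteq T$ and $S$ covers $G$ (by \Cref{thm_interior_equal_covering}, as $S$ is interior), the set $T$ also covers $G$. Applying \Cref{thm_interior_equal_covering} again, $T$ is an interior face of $\Delta_{G,F}$. It has codimension one, since $|C|=\dim(\calF_G)+1$.

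Next, we use that $\Delta_{G,F}$ triangulates the full-dimensional convex polytope $\calF_G$ (within its affine hull). A codimension-one simplex of such a triangulation lies in exactly two maximal simplices if it is not contained in the boundary, and in exactly one otherwise. This is the pseudomanifold-with-boundary property: near a relative interior point of $\Delta_T$, the polytope lies on both sides of the hyperplane spanned by $\Delta_T$. Since $T$ is interior, there is a second maximal clique $C'\neq C$ with $T\subseteq C'$. We have $C'=(C\smallsetminus\{R\})\cup\{R'\}$ for some route $R'$.

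Then, we decide the direction of the flip. The simplices $\Delta_C$ and $\Delta_{C'}$ are adjacent, and the Hasse diagram of $\scrL_{G,F}$ is the dual graph of $\Delta_{G,F}$ (the theorem of von Bell and Ceballos). Hence either $C\precccwrot C'$ or $C'\precccwrot C$. Since $S\subseteq T\subseteq C'$, \Cref{lem_Cmin_Cmax} places $C'$ in the interval $[C_{\min}(S),C_{\max}(S)]$, so $C'\leq C$. As $C'\neq C$, the cover must be $C'\precccwrot C$. Thus $C'$ is obtained from $C$ by rotating $R$ downwards, i.e.\ $R'<^{\cw}_v R$ for some $v$. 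This means $R\in\Desc(C)$, and since $R\in D$ was arbitrary, $D\subseteq\Desc(C)$.

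For (2) the same argument applies to $C_{\min}(S)$. There the adjacent clique $C'$ satisfies $C'\geq C_{\min}(S)$, which forces an upward rotation, so each route in $A$ is an ascent.

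The only step that is not purely formal is the claim that an interior codimension-one face lies in two facets. This is standard for triangulations of convex polytopes, and I would cite it rather than reprove it; everything else follows directly from \Cref{thm_interior_equal_covering} and \Cref{lem_Cmin_Cmax}.
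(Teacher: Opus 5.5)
Your proof is correct and follows essentially the same route as the paper. The paper argues contrapositively that if $R$ admitted no rotation in $C_{\max}(S)$, then $C_{\max}(S)\smallsetminus\{R\}$, and hence $S$, would be a boundary face; it then uses the maximality of $C_{\max}(S)$ to force the rotation to be downward. You make explicit the two facts the paper uses tacitly, namely that $C\smallsetminus\{R\}$ is interior by the covering criterion and that an interior codimension-one face lies in exactly two facets, so this is a refinement of the same argument rather than a different approach.
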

\begin{proof}
    We prove item (1), the proof of item (2) is analogous.
    
     Let $R \in D = C_{\max}(S) \smallsetminus S$. If $R$ admits no rotation (neither upward nor downward) within $C_{\max}(S)$, then $C_{\max}(S) \smallsetminus R$ is a boundary face. Consequently, $S \subseteq C_{\max}(S) \smallsetminus R$ would also be a boundary face, which is a contradiction. Rotating $R$ in $C_{\max}(S)$ yields another maximal clique containing $S$, so by maximality the rotation must be downward. Thus, $R$ is a descent of $C_{\max}(S)$.
\end{proof}

\begin{proposition}\label{prop_bijection_interior_pairs}
    The following maps are bijections:
    \begin{enumerate}
        \item The map from the set of interior faces of the framed triangulation~$\Delta_{G,F}$ to the set of descent pairs of $\scrL_{G,F}$ given by
        \[
            S \;\longmapsto\; (C,D)
            \quad\text{where}\quad
            \begin{cases}
                C = C_{\max}(S),\\
                D = C \smallsetminus S.
            \end{cases}
        \]
        
        \item The map from the set of interior faces of the framed triangulation~$\Delta_{G,F}$ to the set of ascent pairs of $\scrL_{G,F}$ given by
        \[
            S \;\longmapsto\; (C,A)
            \quad\text{where}\quad
            \begin{cases}
                C = C_{\min}(S),\\
                A = C \smallsetminus S.
            \end{cases}
        \]
    \end{enumerate}
    Moreover, $S$ has codimension $i$ if and only if the corresponding pairs $(C,D)$ and $(C,A)$ have size $i$.
\end{proposition}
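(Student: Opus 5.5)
The plan is to prove (1); (2) follows by the same argument applied to the opposite lattice, exchanging $C_{\max}$ with $C_{\min}$ and descents with ascents. By \Cref{lem_maps_well_defined}, the map $S\mapsto (C_{\max}(S), C_{\max}(S)\smallsetminus S)$ does land in the set of descent pairs. The size statement is immediate: $|C|=d+1$ and $S$ spans a simplex of dimension $|S|-1$, so its codimension is $(d+1)-|S|=|D|$. Injectivity is also immediate, since $S=C\smallsetminus D$ is recovered from the image. Moreover, \Cref{cor_enumeration_interior_faces} shows that for each $i$ the domain and codomain have the same finite cardinality. So injectivity already gives bijectivity. Nevertheless, I would also construct the inverse explicitly, since it is needed later for pure intervals.

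For the inverse, take a descent pair $(C,D)$ and set $S=C\smallsetminus D$. Two things must be checked.

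\textbf{Step (a): $S$ is an interior face.} By \Cref{thm_interior_equal_covering}, this amounts to showing that $S$ covers $G$. Suppose an edge $e$ is not covered by $S$. Since $C$ itself covers $G$ (a maximal clique gives a full-dimensional simplex, hence an interior face), some $R\in D$ contains $e$. I would argue that each descent $R$ in $D$, when rotated down at a vertex $v$ to $R'$, is replaced by a route that agrees with $R$ except near $v$. This suggests arguing geometrically instead: $C\smallsetminus D$ is the intersection of the facets $C\smallsetminus\{R\}$, $R\in D$, of $\Delta_C$. Each such facet is interior, because it is shared with the adjacent simplex $\Delta_{C-R}$. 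However, an intersection of interior facets need not be interior in general, so this is not enough. Instead I would rely on counting: it suffices to prove that $C_{\max}(C\smallsetminus D)=C$, and then injectivity of $(C,D)\mapsto C\smallsetminus D$ together with equal cardinalities finishes the argument.

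\textbf{Step (b): $C_{\max}(C\smallsetminus D)=C$.} By \Cref{lem_Cmin_Cmax}, the maximal cliques containing $S$ form an interval $[C_{\min}(S),C_{\max}(S)]$ that contains $C$. Suppose $C<C_{\max}(S)$. Then there is an up cover $C\prec C'$ inside the interval, which flips some ascent route $R\in C$ while keeping $S$, so $R\in D$. But $R$ is a descent of $C$, and a route cannot be both rotated up and rotated down in the same clique. Equivalently, the unique neighbor of $C$ across the facet $C\smallsetminus\{R\}$ lies below $C$. This contradiction gives $C=C_{\max}(S)$.

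The main obstacle is Step (a) as stated. I will sidestep it by counting instead. The map $\Phi:S\mapsto(C_{\max}(S),C_{\max}(S)\smallsetminus S)$ is injective from interior faces to descent pairs, and the two sets are equinumerous size by size by \Cref{cor_enumeration_interior_faces}. Hence $\Phi$ is a bijection, and Step (b) identifies its inverse as $(C,D)\mapsto C\smallsetminus D$. As a byproduct, every $C\smallsetminus D$ is interior.
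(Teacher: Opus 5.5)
Your argument is correct and is essentially the paper's proof: you get well-definedness from \Cref{lem_maps_well_defined}, injectivity because $S=C\smallsetminus D$ is recovered from the image, bijectivity from the equinumeration in \Cref{cor_enumeration_interior_faces}, and the size statement from $|D|=(d+1)-|S|$. Your Step (b) is correct but not needed, since $S\subseteq C_{\max}(S)$ already forces the inverse to be $(C,D)\mapsto C\smallsetminus D$ once bijectivity is known.
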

\begin{proof}
    The two maps are well defined by~\Cref{lem_maps_well_defined}. They are injective, since $S$ is uniquely determined by its image: in the first case, $S = C \smallsetminus D$, and in the second, $S = C \smallsetminus A$.

    Moreover, by~\Cref{cor_enumeration_interior_faces}, the number of interior faces equals both the number of descent pairs and the number of ascent pairs. Hence, injectivity together with equinumeration implies that the two maps are bijections.

    Finally, $S$ has codimension $i$ when $|C\smallsetminus S|=|D|=|A|=i$.
\end{proof}

As a consequence, we get the following non-trivial result. We were not able to find a simple proof without the equinumerous argument above.

\begin{corollary}\label{cor_interior_faces_as_clique_minus_ascents}
    Let $C\in \scrL_{G,F}$ be a maximal clique. The following hold:
    \begin{enumerate}
        \item If $D\subseteq \Desc(C)$ then $S=C\smallsetminus D$ is an interior face of $\Delta_{G,F}$.
        \item If $A\subseteq \Asc(C)$ then $S=C\smallsetminus A$ is an interior face of $\Delta_{G,F}$.
    \end{enumerate}
\end{corollary}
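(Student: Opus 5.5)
The plan is to read this off the surjectivity half of \Cref{prop_bijection_interior_pairs}. We prove item (2); item (1) is symmetric, using the descent-pair bijection in place of the ascent-pair one. Let $C$ be a maximal clique and $A\subseteq\Asc(C)$. Then $(C,A)$ is an ascent pair of $\scrL_{G,F}$. By \Cref{prop_bijection_interior_pairs}(2), the map $S\mapsto (C_{\min}(S),C_{\min}(S)\smallsetminus S)$ is surjective onto ascent pairs. Hence there is an interior face $S'$ of $\Delta_{G,F}$ with
\[
C_{\min}(S')=C \quad\text{and}\quad C\smallsetminus S'=A .
\]

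It remains to identify $S'$ with $C\smallsetminus A$. Since $S'$ is a face of the simplex $\Delta_{C_{\min}(S')}$, we have $S'\subseteq C_{\min}(S')=C$. Therefore
\[
S'=C\smallsetminus (C\smallsetminus S')=C\smallsetminus A=S .
\]
So $S$ is an interior face, as claimed. For item (1), the same argument applies with $C_{\max}$, descents and \Cref{prop_bijection_interior_pairs}(1).

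The only real content is the surjectivity, which is already established in \Cref{prop_bijection_interior_pairs}. There it comes from injectivity together with the equinumeration of \Cref{cor_enumeration_interior_faces}. The one point to check carefully is that $S'\subseteq C_{\min}(S')$. This holds because $C_{\min}(S')$ is one of the maximal cliques containing $S'$, by \Cref{lem_Cmin_Cmax}: the set of maximal cliques containing $S'$ is the interval $[C_{\min}(S'),C_{\max}(S')]$, whose minimum is itself such a clique. I expect no further obstacle. A direct proof, showing that removing ascents never yields a clique that fails to cover $G$, would be the hard route, and we avoid it.
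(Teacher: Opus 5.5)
Your proposal is correct and is essentially the paper's own argument: take the preimage under the bijection of \Cref{prop_bijection_interior_pairs} and observe that it equals $C\smallsetminus A$. The paper writes this out for descents and says the ascent case is analogous, and it leaves implicit the inclusion $S'\subseteq C_{\min}(S')$ from \Cref{lem_Cmin_Cmax} that you check.
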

\begin{proof}
    By the bijection in~\Cref{prop_bijection_interior_pairs} (1), there exist an interior face $S$ that maps to the descent pair $(C,D)$. Since $S=C\smallsetminus D$, then item (1) follows. Item (2) follows similarly from ~\Cref{prop_bijection_interior_pairs} (2).
\end{proof}
\section{Interior faces and pure intervals}\label{sec_interior_faces_pure_intervals}

A key step in proving that pure intervals give rise to a combinatorial model for framingtopes (\Cref{cor_def_framingtopes_one_three}) is to obtain a more refined understanding of pure intervals. One of the main goals of this section is to establish the following characterization.

\begin{theorem}\label{thm_pure_min_max}
The following hold:
\begin{enumerate}
    \item The map
\[
S \longmapsto I_S=[C_{\min}(S), C_{\max}(S)]
\]
is a bijection from the set of interior faces of the framed triangulation $\Delta_{G,F}$ to the set of pure intervals of the framing lattice $\mathscr{L}_{G,F}$.
\item This bijection satisfies
\[
S\subseteq S' \Leftrightarrow I_{S'}\subseteq I_S.
\]
\end{enumerate}

\end{theorem}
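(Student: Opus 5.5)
The plan is to show that for every interior face $S$, with ascent pair $(C,A)=(C_{\min}(S),C_{\min}(S)\smallsetminus S)$ from \Cref{prop_bijection_interior_pairs}(2), we have
\[
C_{\max}(S)=C+A=\bigvee_{a\in A}(C+a).
\]
Once this identity holds, $I_S$ is exactly the pure interval $[C,C+A]$, and the assignment $S\mapsto I_S$ is the composition of the bijection $S\mapsto (C,A)$ with the map $(C,A)\mapsto [C,C+A]$.

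\textbf{Step 1: the identity.} For each $a\in A$, the clique $C+a=C\smallsetminus\{a\}\cup\{a'\}$ still contains $S$, because $a\notin S$. By \Cref{lem_Cmin_Cmax}, $C+a$ therefore lies in $[C_{\min}(S),C_{\max}(S)]$, and so $C+A\le C_{\max}(S)$. For the reverse inequality it suffices to show $S\subseteq C+A$, since then $C+A$ is a maximal clique containing $S$ and hence lies below $C_{\max}(S)$. To prove $S\subseteq C+A$, I would use the description of the join in \cite{vonBell_framing_2024}, dual to the meet fact used in \Cref{prop_shelling}. That description says a route lying in all the cliques $C+a$ lies in their join. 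Every route of $S$ belongs to every $C+a$, because $S\subseteq C\smallsetminus\{a\}$. This gives $C_{\max}(S)=C+A$.

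\textbf{Step 2: the bijection.} First, every pure interval $[C,C+A]$ arises in this way. By \Cref{cor_interior_faces_as_clique_minus_ascents}(2), $S:=C\smallsetminus A$ is an interior face. I still need $C_{\min}(S)=C$. Certainly $C\supseteq S$, so $C\ge C_{\min}(S)$. Conversely, $C_{\min}(S)\smallsetminus S$ is a set of ascents of $C_{\min}(S)$ of size $|A|$, and the map in \Cref{prop_bijection_interior_pairs}(2) is a bijection. Since $C\smallsetminus S=A\subseteq \Asc(C)$, the pair $(C,A)$ is an ascent pair, so it is the image of some interior face $S''$ with $C_{\min}(S'')=C$ and $S''=C\smallsetminus A=S$. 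Hence $C_{\min}(S)=C$, and Step 1 gives $I_S=[C,C+A]$, proving surjectivity. Injectivity follows because $I_S$ determines $C_{\min}(S)$ and $C_{\max}(S)$, and $S=C_{\min}(S)\cap C_{\max}(S)$. Here the inclusion $\subseteq$ is clear. For $\supseteq$: if $R\in C\cap (C+A)$ with $R\notin S$, then $R=a\in A$. But any route in both $C$ and $C+A$ lies in every clique of the interval, by the meet/join descriptions (every clique between two cliques containing $R$ contains $R$). That would put $a$ in $C+a$, which is false.

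\textbf{Step 3: order compatibility.} By \Cref{lem_Cmin_Cmax}, $I_S$ is precisely the set of maximal cliques containing $S$. If $S\subseteq S'$, every clique containing $S'$ contains $S$, so $I_{S'}\subseteq I_S$. Conversely, if $I_{S'}\subseteq I_S$, then $C_{\min}(S')$ and $C_{\max}(S')$ both contain $S$, so $S\subseteq C_{\min}(S')\cap C_{\max}(S')=S'$ by the intersection identity from Step 2.

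\textbf{Main obstacle.} The step I expect to be hardest is the join fact: that a route common to cliques $D_1,\dots,D_k$ lies in $\bigvee D_i$, together with the convexity statement that a route in both $C$ and $C'\ge C$ lies in every clique between them. Both should follow from \cite[Theorem~1.2.15, Proposition~1.3.7, Theorem~1.3.8]{vonBell_framing_2024}. If the join description there is not directly available, I would instead apply \Cref{lem_Cmin_Cmax} to the clique $S$. It already says that the cliques containing $S$ form an interval, which is closed under joins, and this gives $S\subseteq C+A$ directly.
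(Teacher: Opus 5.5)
Your Step 1 has a genuine gap, and it sits where the real difficulty of the theorem lies. You correctly obtain $C+A\le C_{\max}(S)$ from $C+a\in I_S$ for all $a\in A$. But your argument for the ``reverse inequality'' is to prove $S\subseteq C+A$, so that $C+A$ is a maximal clique containing $S$. That only shows $C+A\in I_S$, that is, $C+A\le C_{\max}(S)$ again. The inequality $C_{\max}(S)\le C+A$ is never addressed. The fallback in your last paragraph (closure of $I_S$ under joins) has the same defect.

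What you need is that the join of the atoms $C+a$ of $I_S$ is its top element, and this is not automatic. In the hexagon of the running example, the interval from the bottom to an element of rank two is a chain of length two, and its unique atom lies strictly below its top. Two things in part (1) rest on $I_S=[C,C+A]$: that $I_S$ is a pure interval at all, and that every pure interval arises. Neither is established by your argument.

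This identity is exactly the paper's Join-$C_{\max}$ Theorem (\Cref{thm_join_max}), and its proof uses two ingredients missing from your proposal.
\begin{itemize}
\item The first is topological (\Cref{lem_nRegular}). Since $S$ is interior of codimension $n=|A|$ in the triangulated ball $\Delta_{G,F}$, its link is a simplicial $(n-1)$-sphere. Hence every clique in $I_S$ has exactly $n$ neighbours (upper plus lower covers) inside $I_S$.
\item The second is McConville's crosscut-simplicial property of meet-semidistributive lattices, applied to $[C,J]$ with $J=\bigvee_{a\in A}(C+a)$. For each lower cover $D$ of $J$ in $[C,J]$, choose some $C+a\not\le D$. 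The chosen atoms join to $J$, so by the crosscut property they must be all $n$ atoms of $[C,J]$. Hence $J$ has at least $n$ lower covers (\Cref{lem_join_ndowncovers}).
\end{itemize}
By $n$-regularity, $J$ then has no upper cover in $I_S$, so $J=C_{\max}(S)$. Some argument of this strength, using interiority of $S$, is needed to close Step 1.

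The rest of your proposal is sound. Identifying $C_{\min}(C\smallsetminus A)=C$ via the ascent-pair bijection is valid. Your proof of $S=C_{\min}(S)\cap C_{\max}(S)$ by convexity (apply \Cref{lem_Cmin_Cmax} to $\{a\}$) is also valid. It only uses $C\le C+a\le C_{\max}(S)$, so injectivity and part (2) hold independently of Step 1. This is a clean alternative to the paper's \Cref{lemma_interior_faces_equal_intersection}. That lemma argues instead that a route common to all of $I_S$ but outside $S$ would be unflippable, which would make $S$ a boundary face.
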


This characterization will be established using a result of independent interest, the Join-$C_{\max}$ Theorem~\ref{thm_join_max}, which provides a method for computing the join of a collection~$C_1,\dots,C_n$ of maximal cliques covering a common element $C$; see~\Cref{fig_Join_Cmax_Lemma} for an illustration. The analog result for the meet is stated in~\Cref{thm_meet_min}.

\subsection{The Join-\texorpdfstring{$C_{\max}$}{Cmax} Theorem}

Recall that a graph is called \defn{$n$-regular} if all of its vertices have degree $n$.

\begin{lemma}\label{lem_nRegular}
    If $S$ is an interior face of codimension $n$ of~$\Delta_{G,F}$, then the Hasse diagram of $[C_{\min}(S),C_{\max}(S)]$ is an $n$-regular oriented graph with a unique source $C_{\min}(S)$ and a unique sink $C_{\max}(S)$.
\end{lemma}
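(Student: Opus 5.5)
The plan is to study the interval $I=[C_{\min}(S),C_{\max}(S)]$ through \Cref{lem_Cmin_Cmax}. By that lemma, $I$ is exactly the set of maximal cliques containing $S$. Its Hasse diagram is therefore the subgraph of the Hasse diagram of $\scrL_{G,F}$ induced on these cliques, with edges oriented by ccw rotation. Since $S$ has codimension $n$, every $C\in I$ satisfies $|C\smallsetminus S|=n$.

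The first step is to show that every vertex $C\in I$ has degree exactly $n$ in this induced graph. An edge at $C$ inside $I$ is a rotation $C\to C'=(C\setminus\{R\})\cup\{R'\}$ with $C'\supseteq S$, which means $R\notin S$. Each route $R\in C$ admits at most one rotation, namely the unique other facet containing the codimension-one face $C\setminus\{R\}$. So the degree of $C$ in $I$ is at most $n$.

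For equality, take $R\in C\smallsetminus S$. The face $C\setminus\{R\}$ contains the interior face $S$, so it covers $G$. By \Cref{thm_interior_equal_covering} it is then an interior codimension-one face. In a triangulated ball, an interior codimension-one face lies in exactly two facets, so $R$ can be rotated. The resulting clique contains $S$ and hence lies in $I$. The map $R\mapsto$ (rotation at $R$) is injective because distinct $R$ give distinct faces $C\setminus\{R\}$. This shows the graph is $n$-regular. The step needing the most care is this use of the pseudomanifold property of the ball: interior ridges have exactly two cofacets. This is standard for triangulations of a convex polytope.

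The second step is to identify sources and sinks. The orientation comes from the order $\leq^{\ccw}_{\mathrm{rot}}$, so the induced directed graph is acyclic. Every element of $I$ lies above $C_{\min}(S)$ in the lattice, and every element below some $C\in I$ but above $C_{\min}$ is in the interval. Hence any $C\neq C_{\min}(S)$ in $I$ has a saturated chain from $C_{\min}(S)$ to $C$ inside $I$. Its last step is a down-edge at $C$ within the Hasse diagram of $I$, since the Hasse diagram of an interval consists of the lattice covers between its elements. So $C$ is not a source, and $C_{\min}(S)$ is the unique source. Symmetrically, $C_{\max}(S)$ is the unique sink.

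Finally, I would note that the edges of the Hasse diagram of the interval coincide with the rotations between cliques containing $S$ counted in the first step. This holds because covers in $\scrL_{G,F}$ are exactly single rotations, and an interval inherits covers. Together with the two steps above, this completes the argument.
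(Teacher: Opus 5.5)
Your proposal is correct and follows essentially the paper's argument: you identify the interval with the set of maximal cliques containing $S$ via \Cref{lem_Cmin_Cmax}, use the ball structure of $\Delta_{G,F}$ to get $n$-regularity, and read off the unique source and sink from the interval structure. The only difference is the level of detail: the paper appeals to the link of an interior face being a simplicial sphere, whereas you use directly that each interior ridge $C\setminus\{R\}\supseteq S$ lies in exactly two facets, and you spell out the source/sink argument more carefully.
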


\begin{proof}
    By~\Cref{lem_Cmin_Cmax}, the interval $[C_{\min}(S),C_{\max}(S)]$ consists precisely of the maximal cliques containing~$S$. Hence, its dual graph agrees with the dual graph of the link of~$S$ in~$\Delta_{G,F}$. Since $S$ is an interior face of codimension~$n$ in a triangulated ball, its link is a simplicial sphere whose facets each have~$n$ vertices. It follows that the corresponding dual graph is $n$-regular. Moreover, the Hasse diagram of $[C_{\min}(S),C_{\max}(S)]$ defines an orientation of this dual graph, with the unique minimal element $C_{\min}(S)$ giving the unique source and the unique maximal element $C_{\max}(S)$ giving the unique sink.
\end{proof}

\begin{figure}[ht]
    \centering
    \includegraphics[]{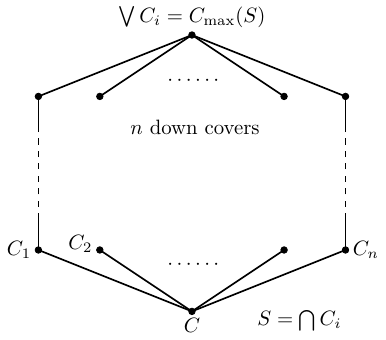}
    \caption{Illustration of the Join-$C_{\max}$ Theorem.}\label{fig_Join_Cmax_Lemma}

\end{figure}

    

\begin{lemma}\label{lem_join_ndowncovers}
    If $C_1,\dots , C_n$ are different maximal cliques covering $C\in \scrL_{G,F}$, then:
    \begin{enumerate}
        \item Their intersection $S=\bigcap C_i$ is an interior face and satisfies $C=C_{\min}(S)$.
        \item Their join $\bigvee C_i\leq C_{\max}(S)$ has $n$ down covers within the interval $[C,\bigvee C_i]$.
    \end{enumerate}
\end{lemma}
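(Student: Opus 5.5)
For item~(1), write $C_i=(C\setminus\set{a_i})\cup\set{a_i'}$, where $a_i\in\Asc(C)$ is the route rotated upwards to pass from $C$ to $C_i$. Since the $C_i$ are distinct covers of $C$, and a route of a maximal clique has at most one flip (the two simplices on either side of a codimension-one interior face), the routes $a_1,\dots,a_n$ are pairwise distinct. Set $A=\set{a_1,\dots,a_n}\subseteq\Asc(C)$.

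Next I would compute $\bigcap_i C_i$. Each $C_i$ contains $C\setminus\set{a_i}$. The new route $a_i'$ is not in $C$, since it is incoherent with $a_i$. It also cannot lie in $C_j$ for $j\neq i$: otherwise $a_i'\in C_j\setminus C=\set{a_j'}$, so $a_i'=a_j'$, and $C_i$, $C_j$ would both equal the unique maximal clique containing the face $(C\setminus\set{a_i,a_j})\cup\set{a_i'}$ other than $C$. To avoid this fine point I would only use the robust inclusion $\bigcap_i C_i\supseteq C\setminus A$. For the reverse inclusion, any route in all $C_i$ ($n\ge 1$) other than a new route must lie in $C$ and avoid each $a_i$. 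The case $n=1$ is trivial, and for $n\ge2$ an $a_i'$ lying in all $C_j$ would force $a_i'\in C$, which is impossible. Hence $S=\bigcap C_i=C\setminus A$, and \Cref{cor_interior_faces_as_clique_minus_ascents}~(2) gives that $S$ is an interior face. To get $C=C_{\min}(S)$, I would invoke the bijection of \Cref{prop_bijection_interior_pairs}~(2). Let $S'$ be the interior face mapping to the ascent pair $(C,A)$; it satisfies $S'=C\setminus A=S$ and $C_{\min}(S')=C$. So $C_{\min}(S)=C$.

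For item~(2), each $C_i$ contains $S$, so by \Cref{lem_Cmin_Cmax} every $C_i$ lies in the interval $[C_{\min}(S),C_{\max}(S)]$. Because this is an interval in a lattice, the join $J=\bigvee C_i$ also lies in it, which gives $J\le C_{\max}(S)$. It remains to count down covers of $J$ inside $[C,J]$, which I expect to be the main obstacle. The natural tool is \Cref{lem_nRegular}: the interval $[C,C_{\max}(S)]$ has an $n$-regular Hasse diagram, and $S$ has codimension exactly $|A|=n$. Every element of $[C,J]\subseteq[C,C_{\max}(S)]$ therefore has total degree $n$ in that diagram. So it suffices to show that no up cover of $J$ inside $[C,C_{\max}(S)]$ is a down cover of $J$ in $[C,J]$, which is automatic, and that all up covers of $J$ in $[C,C_{\max}(S)]$ are absent, i.e.\ that all $n$ neighbours of $J$ are below $J$.

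Here is the plan for this step. A down cover of $J$ in the whole lattice lies above $C$ unless the rotated route is in $S$, and rotations within $[C,C_{\max}(S)]$ move only routes outside $S$. Since $J$ contains $S$ and has exactly $n$ routes outside $S$, each of these routes admits a flip staying inside the link of $S$, and the flips stay in the interval. I would show each such route is a descent of $J$, using the structure of the meet/join from~\cite{vonBell_framing_2024}. The join of the $C_i$ is obtained by rotating all of $a_1,\dots,a_n$ upward as far as they go, so no route outside $S$ can still be rotated upward while remaining below the join. More concretely, suppose some route $R\in J\setminus S$ were an ascent of $J$ with flip staying in $[C,C_{\max}(S)]$. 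I would argue via the semidistributive/polygonal structure that $R$ would then not be forced by any $C_i$, producing an upper bound of the $C_i$ strictly smaller than $J$. This contradicts the definition of the join. Once every one of the $n$ neighbours of $J$ is a down cover lying in $[C,J]$, we get exactly $n$ down covers.
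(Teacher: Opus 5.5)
\textbf{The genuine gap is in part (2).} Your reduction is sound, but it reduces to the claim that $J=\bigvee C_i$ has no up cover in $[C,C_{\max}(S)]$. That claim says exactly that $J$ is the sink of this $n$-regular interval, i.e.\ $J=C_{\max}(S)$. This is the Join-$C_{\max}$ Theorem (\Cref{thm_join_max}), which the paper derives \emph{from} this lemma. So the whole content of (2) lies in the step you only sketch, and the sketch does not go through:
\begin{itemize}
\item ``The join is obtained by rotating $a_1,\dots,a_n$ upward as far as they go'' restates the conclusion rather than proving it.
\item An ascent $R$ of $J$ whose flip stays in the interval yields a clique strictly \emph{above} $J$. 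It cannot produce an upper bound of the $C_i$ strictly below $J$, so no contradiction with the definition of the join arises.
\end{itemize}

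\textbf{The missing ingredient is meet-semidistributivity}, which the paper uses through McConville's crosscut-simplicial property. The argument runs as follows.
\begin{enumerate}
\item Let $D_1,\dots,D_m$ be the down covers of $J$ in $I=[C,J]$. Then $m\le n$ by \Cref{lem_nRegular}.
\item For each $k$, choose $C_{j_k}\not\le D_k$. Then $\bigvee_k C_{j_k}=J$, since this join lies in $I$ but below no $D_k$.
\item The atoms of $I$ are exactly $C_1,\dots,C_n$: an up cover of $C$ inside $I$ contains $S$, hence flips a route of $A$.
\item No proper subset of the atoms of $I$ can join to its top, so $\set{j_1,\dots,j_m}=[n]$ and $m\ge n$.
\end{enumerate}
The fact in step 4 is one line from $\mathrm{SD}_\wedge$. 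If some $C_l$ were not among the $C_{j_k}$, then $C_l\wedge C_{j_k}=C$ for all $k$, hence $C_l\wedge J=C$, contradicting $C<C_l\le J$. Once you have $m=n$, your own reduction also yields $J=C_{\max}(S)$ directly.

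\textbf{Part (1) is fine in substance.} Extracting $C_{\min}(S)=C$ from the surjectivity in \Cref{prop_bijection_interior_pairs}(2) is a valid alternative to the paper's argument. The paper instead notes that every route of $C\setminus S=A$ flips upward, so $C$ has no down cover among the cliques containing $S$ and is therefore their minimum. There are two slips, both easily repaired.
\begin{itemize}
\item The fact $a_i'\notin C_j$ for $j\neq i$ holds simply because $a_i\in C_j$ and $a_i'$ is incoherent with $a_i$. Both of your justifications leave the case $a_i'=a_j'$ open; in particular, $C$ does not contain $(C\setminus\set{a_i,a_j})\cup\set{a_i'}$, so the ``unique clique other than $C$'' argument fails.
\item For $n=1$ the identity $\bigcap_i C_i=C\setminus A$ is not trivial but false, since $\bigcap_i C_i=C_1$. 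The statement has to be read with $S=C\cap\bigcap_i C_i$. The paper's proof also writes $\bigcap C_i=C\setminus A$ without this caveat.
\end{itemize}
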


\begin{proof}
    For each $i \in [n]$, let $R_i\in C_i$ and $R_i^C\in C$ denote the two routes satisfying
    \[
    C_i\smallsetminus\{R_i\}=C\smallsetminus \{R_i^C\}.
    \] 
    In particular, the set $A=\{R_1^C,\dots,R_n^C\}$ is a subset of ascents of $C$, and
    \[
    S=\bigcap C_i = C\smallsetminus A.
    \]
    By~\Cref{cor_interior_faces_as_clique_minus_ascents}(2), we deduce that $S=C\smallsetminus A$ is an interior face. Furthermore, every route in $C\smallsetminus S=A$ can be rotated upward, implying that $C$ is a minimal element among the maximal cliques containing~$S$. Since such a minimal element is unique, it follows that~$C=C_{\min}(S)$. This finishes the proof of part (1).

    For the second part, note that $S\subseteq C_i$ and so $C_i\leq C_{\max}(S)$, for all $i$. As a consequence
    \[
    \bigvee C_i\leq C_{\max}(S).
    \]
    Let $D_1,\dots,D_m$ be the down covers of $\bigvee C_i$ in the interval $I$,
    \[
    I=[C,\bigvee C_i]\subseteq [C,C_{\max}(S)]=[C_{\min}(S),C_{\max}(S)].
    \] 
    We aim to show that $m=n$. The inequality $m\leq n$ follows directly from~\Cref{lem_nRegular}.

    Now, for each down cover $D$ of $\bigvee C_i$ there exists at least one $C_i$ such that $C_i \nleq D$, otherwise $\bigvee C_i\leq D$ which would be a contradiction. Let $j_1,\dots,j_m\in [n]$ be such that
    \[
    C_{j_k}\nleq D_k.
    \]
    We claim that
    \[
    \bigvee_{k=1}^m C_{j_k} = \bigvee_{i=1}^n C_i.
    \]
    The inequality $\leq$ is clear from the fact $\{j_1,\dots, j_m\}\subseteq [n]$. On the other hand, $\bigvee_{k=1}^m C_{j_k}$ cannot be less than or equal to any of $D_1,\dots, D_m$, so equality must hold.

    In order to prove $m\geq n$, we use the \emph{crosscut-simplicial property} established by McConville in~\cite[Theorem~6]{McConville_crosscut_2017} (see also~\cite[Theorem~2]{barnard_canonicaljoincomplex_2019}). This property states that, for any interval $[x,y]$ in a meet-semidistributive lattice, the join of any proper subset of the atoms of $[x,y]$ is strictly smaller than~$y$. Applying this to the interval $I$ in $\scrL_{G,F}$, which is meet-semidistributive by~\cite{vonBell_framing_2024}, we conclude that the set $\{j_1,\dots,j_m\}$ must contain at least $n$ distinct atoms. Hence, $m\geq n$.
\end{proof}

\begin{theorem}[The Join-$C_{\max}$ Theorem]\label{thm_join_max}
    If $C_1,\dots , C_n$ are different maximal cliques covering $C\in \scrL_{G,F}$ and $S=\bigcap C_i$, then
    \[
    C_{\max}(S) = \bigvee C_i.
    \]
\end{theorem}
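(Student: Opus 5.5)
Set $J=\bigvee C_i$. By \Cref{lem_join_ndowncovers}(2) we already know $J\leq C_{\max}(S)$ and that $J$ has exactly $n$ down covers inside $I=[C,J]$. The plan is to prove that $J$ contains $S$ and has no ascent outside $S$. Then $J$ is a maximal element among the maximal cliques containing $S$, and uniqueness in \Cref{lem_Cmin_Cmax} gives $J=C_{\max}(S)$.

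The first step is to show $S\subseteq J$. Since $C\leq J\leq C_{\max}(S)$ and the set of maximal cliques containing $S$ is exactly the interval $[C_{\min}(S),C_{\max}(S)]=[C,C_{\max}(S)]$ (\Cref{lem_Cmin_Cmax} together with \Cref{lem_join_ndowncovers}(1)), $J$ lies in this interval and hence contains $S$. Consequently every element of $I$ contains $S$, and every cover relation inside $I$ rotates a route of some clique that is not in $S$.

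The second step counts the down covers of $J$ inside $I$. Each such down cover $D$ contains $S$ and is obtained from $J$ by rotating a route of $J\smallsetminus S$ downward. Since $|J\smallsetminus S|=|C\smallsetminus S|=n$ (maximal cliques all have $d+1$ elements), there are at most $n$ such rotations. \Cref{lem_join_ndowncovers}(2) says there are exactly $n$. Therefore every route in $J\smallsetminus S$ is a descent of $J$, and rotating it downward stays in $I$. In particular no route of $J\smallsetminus S$ is an ascent, since each route in a maximal clique admits at most one flip, namely the other simplex across the corresponding facet.

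The third step concludes. Any upper cover $J'$ of $J$ containing $S$ would arise from rotating upward a route of $J\smallsetminus S$, which is impossible by the second step. Thus $J$ is a maximal element of the set of cliques containing $S$. Because this set is the interval $[C,C_{\max}(S)]$ with unique top $C_{\max}(S)$, and $J$ is in the interval, $J$ must equal $C_{\max}(S)$. One way to see this is to take a saturated chain from $J$ up to $C_{\max}(S)$ inside the interval; its first step would be an upward rotation of a route outside $S$, which we have excluded.

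The main obstacle is the second step. It uses the fact that each route of a maximal clique has a unique flip, so a route cannot be both an ascent and a descent. This follows from the triangulation being a pseudomanifold with boundary: each codimension-one face lies in at most two facets. One must also check that rotations within $I$ really preserve $S$. If needed, one can instead argue via \Cref{lem_nRegular}: the vertex $J$ has degree $n$ in the Hasse diagram of $[C,C_{\max}(S)]$, and all $n$ of its edges are already down edges inside $I$, so $J$ is a sink and therefore equals the unique sink $C_{\max}(S)$.
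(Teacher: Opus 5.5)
Your proof is correct and takes essentially the same approach as the paper. The paper combines the $n$ down covers of $\bigvee C_i$ from \Cref{lem_join_ndowncovers}(2) with the $n$-regularity of the Hasse diagram of $[C_{\min}(S),C_{\max}(S)]$ from \Cref{lem_nRegular} to conclude that $\bigvee C_i$ is the unique sink, which is exactly your closing alternative; your main line just proves that degree bound directly, using $|J\smallsetminus S|=n$ and uniqueness of the flip across each codimension-one face, instead of citing \Cref{lem_nRegular}.
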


\begin{proof}
    Since $S\subseteq C_i$, we have $C_i\leq C_{\max}(S)$ for all~$i$. Therefore,
    \[
    \bigvee C_i \leq C_{\max}(S).
    \]
    
    On the other hand, by~\Cref{lem_nRegular}, the Hasse diagram of $[C_{\min}(S),C_{\max}(S)]$ is an oriented $n$-regular graph with unique sink~$C_{\max}(S)$. By~\Cref{lem_join_ndowncovers}(2), the join $\bigvee C_i$ has $n$ down-covers within the interval $[C,\bigvee C_i]$. Hence, $\bigvee C_i$ must coincide with the unique sink of the interval, and therefore
    \[
    \bigvee C_i = C_{\max}(S).
    \]
\end{proof}

\subsection{The Meet-\texorpdfstring{$C_{\min}$}{Cmin}  Theorem}

Using similar arguments as in the previous section for the reverse lattice, we get analog statements for the meet operation. We state the results for completeness.

\begin{lemma}\label{lem_meet_nupcovers}
    If $C_1,\dots , C_n$ are different maximal cliques covered by $C\in \scrL_{G,F}$, then:
    \begin{enumerate}
        \item Their intersection $S=\bigcap C_i$ is an interior face and satisfies $C=C_{\max}(S)$.
        \item Their meet $\bigwedge C_i\geq C_{\min}(S)$ has $n$ up covers within the interval $[\bigwedge C_i,C]$.
    \end{enumerate}
\end{lemma}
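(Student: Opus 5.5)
The plan is to dualize the proof of \Cref{lem_join_ndowncovers}, running every step in the opposite lattice $\scrL_{G,F}^{\mathrm{op}}$. There covers become up covers, joins become meets, ascents become descents, and $C_{\min}$ and $C_{\max}$ swap roles.

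For part (1), for each $i\in[n]$ let $R_i\in C_i$ and $R_i^C\in C$ be the routes with $C_i\smallsetminus\{R_i\}=C\smallsetminus\{R_i^C\}$. Since $C_i\prec C$, the clique $C_i$ is obtained from $C$ by rotating $R_i^C$ downward. So $D=\{R_1^C,\dots,R_n^C\}\subseteq\Desc(C)$, and these routes are distinct because the $C_i$ are distinct. Then
\[
S=\bigcap C_i=C\smallsetminus D,
\]
and \Cref{cor_interior_faces_as_clique_minus_ascents}(1) shows that $S$ is an interior face. Every route of $C\smallsetminus S=D$ can be rotated downward, and each such rotation produces one of the $C_i$, which still contains $S$. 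Hence $C$ has no up cover that contains $S$: an up cover of $C$ containing $S$ would have to replace a route of $D$ by an upward rotation, which is impossible for a descent. The cliques containing $S$ form the interval $[C_{\min}(S),C_{\max}(S)]$ by \Cref{lem_Cmin_Cmax}. So $C$ is a maximal element of this interval with no up covers inside it, and therefore $C=C_{\max}(S)$.

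For part (2), first note that $S\subseteq C_i$ gives $C_{\min}(S)\le C_i$ for all $i$, and hence $C_{\min}(S)\le\bigwedge C_i$. Let $U_1,\dots,U_m$ be the up covers of $\bigwedge C_i$ inside $I=[\bigwedge C_i,C]\subseteq[C_{\min}(S),C_{\max}(S)]$. \Cref{lem_nRegular} applies because $S$ has codimension $n$, and it gives $m\le n$. For each $U_k$, some $C_{j_k}$ satisfies $C_{j_k}\ngeq U_k$, since otherwise $\bigwedge C_i\ge U_k$. The meet $\bigwedge_k C_{j_k}$ lies above $\bigwedge C_i$ but above none of the $U_k$, so
\[
\bigwedge_k C_{j_k}=\bigwedge_i C_i .
\]
The coatoms of $I$ are exactly $C_1,\dots,C_n$. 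The dual crosscut-simplicial property (McConville~\cite{McConville_crosscut_2017}) holds for join-semidistributive lattices, and $\scrL_{G,F}$ is semidistributive. It states that the meet of any proper subset of the coatoms of an interval is strictly above its bottom. Therefore $\{j_1,\dots,j_m\}$ must be all of $[n]$, which gives $m\ge n$.

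The only delicate point is making sure the dual form of the crosscut property is available. It is, since $\scrL_{G,F}^{\mathrm{op}}$ is again semidistributive and the property for $\scrL_{G,F}^{\mathrm{op}}$ is exactly the statement we need. All other steps are routine dualizations.
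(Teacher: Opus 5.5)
Your proposal is correct and is the argument the paper intends. The paper gives no separate proof of this lemma; it only says it follows by running the proof of \Cref{lem_join_ndowncovers} in the reverse lattice, and your dualization (descents for ascents, coatoms for atoms, and the crosscut property on the join-semidistributive side) carries this out faithfully. One small caveat, shared with the paper: the identity $\bigcap_i C_i = C\smallsetminus D$ requires $n\ge 2$ (for $n=1$ the intersection is $C_1$ itself, so $C_{\max}(S)=C_1\neq C$), and part~(1) should therefore be read with $S=C\cap\bigcap_i C_i$, or with $n\geq 2$.
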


\begin{theorem}[The Meet-$C_{\min}$ Theorem]\label{thm_meet_min}
    If $C_1,\dots , C_n$ are different maximal cliques covered by $C\in \scrL_{G,F}$ and $S=\bigcap C_i$, then
    \[
    C_{\min}(S) = \bigwedge C_i.
    \]
\end{theorem}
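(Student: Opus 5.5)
The plan is to dualize the proof of the Join-$C_{\max}$ Theorem~\ref{thm_join_max}, working in the opposite lattice $\scrL_{G,F}^{\mathrm{op}}$. The two ingredients needed there survive this reversal. First, \Cref{lem_nRegular} is symmetric under reversing orientation: the Hasse diagram of $[C_{\min}(S),C_{\max}(S)]$ is $n$-regular with a unique source and a unique sink. Second, the crosscut-simplicial property of McConville also holds for join-semidistributive lattices, and $\scrL_{G,F}$ is semidistributive, hence in particular join-semidistributive.

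The first step is to establish \Cref{lem_meet_nupcovers}. Part~(1) goes as follows. For each $i$, let $R_i\in C_i$ and $R_i^C\in C$ be the routes with $C_i\smallsetminus\{R_i\}=C\smallsetminus\{R_i^C\}$. Then $D=\{R_1^C,\dots,R_n^C\}\subseteq \Desc(C)$ and $S=\bigcap C_i=C\smallsetminus D$. By \Cref{cor_interior_faces_as_clique_minus_ascents}(1), $S$ is an interior face. The routes of $C\smallsetminus S$ are exactly $D$, and every one of them rotates downward. So no route of $C$ outside $S$ can be rotated upward while keeping $S$, which makes $C$ maximal among the cliques containing $S$. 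Uniqueness in \Cref{lem_Cmin_Cmax} then gives $C=C_{\max}(S)$.

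Part~(2) starts from $C_{\min}(S)\le C_i$ for all $i$, which gives $C_{\min}(S)\le\bigwedge C_i$. Let $U_1,\dots,U_m$ be the up covers of $\bigwedge C_i$ inside $J=[\bigwedge C_i,C]\subseteq[C_{\min}(S),C_{\max}(S)]$. Regularity from \Cref{lem_nRegular} gives $m\le n$. For each $U_k$ some $C_{j_k}\not\ge U_k$, since otherwise $\bigwedge C_i\ge U_k$. It follows that $\bigwedge_k C_{j_k}$ lies below no $U_k$, and so $\bigwedge_k C_{j_k}=\bigwedge C_i$. The $C_i$ are exactly the coatoms of $J$. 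The dual crosscut-simplicial property, applied to $J$, says that the meet of a proper subset of coatoms is strictly above the bottom. Hence $\{j_1,\dots,j_m\}=[n]$, which forces $m\ge n$.

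The theorem then follows. The interval $[C_{\min}(S),C_{\max}(S)]$ has an $n$-regular Hasse diagram with unique source $C_{\min}(S)$. Because the graph is $n$-regular, an element of the interval that has $n$ up covers inside the subinterval $J$ has all its covers inside the interval pointing upward. Such an element has no down covers in the interval and is therefore a source. We know $\bigwedge C_i\ge C_{\min}(S)$ and $\bigwedge C_i$ has $n$ up covers in $J$, so $\bigwedge C_i$ is a source, and by uniqueness of the source $\bigwedge C_i=C_{\min}(S)$.

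The main obstacle is justifying the dual crosscut-simplicial statement. The plan is to argue by duality: the opposite lattice of a semidistributive lattice is again semidistributive, so McConville's theorem applied to $J^{\mathrm{op}}$ inside $\scrL_{G,F}^{\mathrm{op}}$ turns atoms into coatoms and joins into meets, which is exactly the statement used above.
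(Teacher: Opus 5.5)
Your proposal is correct and is exactly the paper's approach: the paper obtains \Cref{lem_meet_nupcovers} and the Meet-$C_{\min}$ Theorem by rerunning the proofs of \Cref{lem_join_ndowncovers} and \Cref{thm_join_max} in the reverse lattice, including McConville's crosscut-simplicial property applied to the (again semidistributive) opposite lattice, as you do. One wording slip: you mean that $\bigwedge_k C_{j_k}$ lies \emph{above} no $U_k$, i.e.\ $\bigwedge_k C_{j_k}\not\geq U_k$ for every $k$, and that is what forces $\bigwedge_k C_{j_k}=\bigwedge C_i$.
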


\subsection{Pure intervals}\label{subsec_pure_intervals}
In this section, we present three equivalent characterizations of pure intervals:
\begin{itemize}
\item in terms of ascents,
\item in terms of descents, and
\item in terms of interior faces.
\end{itemize}

The ascent characterization is the original definition: pure intervals are intervals of the form $[C,C+A]$, where $C$ is a maximal clique and $A$ is a subset of ascents of $C$.

To describe the characterization in terms of descents, let $D$ be a subset of descents of a maximal clique $C$. For each $d\in D$, let
\[
C-d := C\setminus{d}\cup{d'}
\]
denote the maximal clique covered by $C$, where $d'$ is the decreasing flip of $d$. We then define
\[
C-D := \bigwedge_{d\in D}(C-d),
\]
which gives rise to the interval $[C-D,C]$.

The next proposition relates both constructions to the characterization in terms of interior faces. The full characterization is then stated in~\Cref{cor_pureintervals_equivalence}.

\begin{proposition}\label{prop_fromPuretoCminCmax}
    Let $C\in \scrL_{G,F}$ be a maximal clique. The following hold:
    \begin{enumerate}
        \item If $D\subseteq \Desc(C)$, then $S=C\smallsetminus D$ is an interior face and
        \[
        [C-D,C]=[C_{\min}(S),C_{\max}(S)].
        \]
        \item If $A\subseteq \Asc(C)$, then $S=C\smallsetminus A$ is an interior face and
        \[
        [C,C+A]=[C_{\min}(S),C_{\max}(S)].
        \]
    \end{enumerate}  
\end{proposition}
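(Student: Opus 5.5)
We prove item (2); item (1) follows by the same argument applied to the opposite lattice, using the Meet-$C_{\min}$ Theorem~\ref{thm_meet_min} in place of the Join-$C_{\max}$ Theorem~\ref{thm_join_max}.

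Let $A=\{a_1,\dots,a_n\}\subseteq \Asc(C)$ and $S=C\smallsetminus A$. By Corollary~\ref{cor_interior_faces_as_clique_minus_ascents}(2), $S$ is an interior face. The case $A=\emptyset$ is immediate: then $S=C$ is a maximal clique, so $C_{\min}(S)=C_{\max}(S)=C$, and also $C+\emptyset=C$, interpreting the empty join as $C$ (or taking the interval to be $[C,C]$ by convention). So assume $n\ge 1$.

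For each $i$, set $C_i=C+a_i=C\setminus\{a_i\}\cup\{a_i'\}$. These are $n$ distinct maximal cliques covering $C$. They are distinct because the removed routes $a_i$ are distinct, and two upper covers obtained by removing different routes differ. The key step is to verify that $\bigcap_i C_i=S$. Certainly $S\subseteq C_i$ for every $i$. Conversely, $a_i\notin C_i$, so no $a_i$ lies in the intersection. We also need that each new route $a_i'$ does not lie in all the $C_j$. For $n\ge 2$ this holds because $a_i'\notin C$, while $C_j\subseteq C\cup\{a_j'\}$ for each $j$; so $a_i'\in C_j$ forces $a_i'=a_j'$. If $a_i'=a_j'$ for some $i\neq j$, then $C_i$ and $C_j$ would both contain $S\cup\{a_i'\}$ together with one of $a_j$ or $a_i$. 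The routes of $C_i\cap C_j$ would then form the set $(C\smallsetminus\{a_i,a_j\})\cup\{a_i'\}$, which has $|C|-1$ elements and spans a codimension-one face. That face lies in exactly two facets, $C_i$ and $C_j$, yet it is also contained in the facet obtained by rotating from either of them. This is the point to check carefully; it should follow from the fact that a codimension-one face of a triangulated ball lies in at most two facets. For $n=1$ the equality $\bigcap_i C_i=S$ is false as stated, since $C_1$ itself contains $a_1'$. In that case we argue directly: $S=C\smallsetminus\{a_1\}$ is an interior ridge contained in exactly the two facets $C$ and $C+a_1$, so by Lemma~\ref{lem_Cmin_Cmax} we get $[C_{\min}(S),C_{\max}(S)]=[C,C+a_1]$.

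For $n\ge 2$, suppose instead we define $S'=\bigcap C_i$ directly, without the cardinality argument above. By Lemma~\ref{lem_join_ndowncovers}(1), $S'$ is an interior face with $C=C_{\min}(S')$, and by the Join-$C_{\max}$ Theorem~\ref{thm_join_max}, $C_{\max}(S')=\bigvee C_i=C+A$. Lemma~\ref{lem_join_ndowncovers} also shows that $S'=C\smallsetminus A'$, where $A'$ is the set of routes of $C$ rotated in passing to the $C_i$; here $A'=A$, so $S'=S$ exactly. This gives $[C,C+A]=[C_{\min}(S),C_{\max}(S)]$, and the detour through distinctness of the $a_i'$ becomes unnecessary. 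The main subtlety is therefore the $n=1$ case, which has to be treated separately as above, together with the convention for $A=\emptyset$.
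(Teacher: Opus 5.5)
Your proof is correct and follows the paper's route. You write $S$ as the intersection of the upper covers $C+a_i$, apply \Cref{lem_join_ndowncovers}(1) to get $C=C_{\min}(S)$, and apply the Join-$C_{\max}$ Theorem~\ref{thm_join_max} to get $C_{\max}(S)=\bigvee_i(C+a_i)=C+A$, handling item (1) dually.

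Your separate treatment of $|A|\le 1$ improves on the paper. For $|A|=1$ the paper's identity $S=\bigcap_i C_i$ is false, since $\bigcap_{i=1}^{1}C_i=C_1\ni a_1'$. For $A=\emptyset$, $C+\emptyset=C$ holds only by convention.

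One correction concerns your attempted argument for $a_i'\neq a_j'$. The two-facet property of ridges cannot produce a contradiction: the ridge $(C\smallsetminus\{a_i,a_j\})\cup\{a'\}$ would lie in exactly the two facets $C_i$ and $C_j$, which is perfectly consistent. The right reason is maximality of $C$. If $a_i'=a_j'=a'$, then $a'$ is coherent with $C\smallsetminus\{a_i\}$ (as $C_i$ is a clique) and with $C\smallsetminus\{a_j\}$ (as $C_j$ is a clique). Hence $a'$ is coherent with all of $C$, and $C\cup\{a'\}$ would be a larger clique. You should include this line rather than cite the lemma, because the paper's proof of \Cref{lem_join_ndowncovers} only asserts $\bigcap_i C_i=C\smallsetminus A$ without proving it.
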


\begin{proof}
We prove (2). By~\Cref{cor_interior_faces_as_clique_minus_ascents}(2), the set $S=C\smallsetminus A$ is an interior face.

Let $A={a_1,\ldots,a_n}$ and, for each $i$, let $C_i=C+a_i$ be the maximal clique covering $C$ obtained by the increasing flip of $a_i$. Then
\[
S=\bigcap_{i=1}^n C_i.
\]
Hence, by~\Cref{lem_join_ndowncovers},
\[
C=C_{\min}(S).
\]
Moreover,
\[
C+A=\bigvee_{i=1}^n C_i=C_{\max}(S),
\]
where the second equality follows from~\Cref{thm_join_max}. This proves (2).

Statement (1) is proved similarly using~\Cref{cor_interior_faces_as_clique_minus_ascents}(1), \Cref{lem_meet_nupcovers}, and~\Cref{thm_meet_min}.
\end{proof}

\begin{lemma}\label{lemma_interior_faces_equal_intersection}
    Let $I_S=[C_{\min}(S),C_{\max}(S)]$ be the interval associated to an interior face~$S$ of~$\Delta_{G,F}$. Then
    \[
    S=\bigcap_{C\in I_S} C.
    \]
\end{lemma}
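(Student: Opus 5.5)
The inclusion $S\subseteq\bigcap_{C\in I_S}C$ is immediate. By \Cref{lem_Cmin_Cmax}, $I_S$ is exactly the set of maximal cliques containing $S$, so every $C\in I_S$ contains $S$. The work is in the reverse inclusion. Here the plan is to exhibit a small family of cliques in $I_S$ whose intersection is already $S$, and the natural candidates come from the ascent structure at $C_{\min}(S)$.

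Let $C=C_{\min}(S)$ and $A=C\smallsetminus S$. By \Cref{lem_maps_well_defined}(2), $A$ is a subset of ascents of $C$. Write $A=\{a_1,\dots,a_n\}$, and for each $i$ let $C_i=C+a_i=C\smallsetminus\{a_i\}\cup\{a_i'\}$ be the clique covering $C$ obtained by the increasing flip of $a_i$. Each $C_i$ contains $C\smallsetminus\{a_i\}\supseteq S$, so $C_i\in I_S$. The clique $C$ itself also lies in $I_S$.

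The key computation is
\[
C\cap\bigcap_{i=1}^n C_i=\bigcap_{i=1}^n C_i=C\smallsetminus A=S.
\]
To verify it, take a route $R$ in the intersection. Then $R\in C$. For each $i$, we have $a_i\notin C_i$: the flip replaces $a_i$ by $a_i'\neq a_i$, and $a_i$ cannot reappear because $C_i$ differs from $C$ in exactly one route. So $R\neq a_i$ for every $i$, which gives $R\in C\smallsetminus A=S$. This is the same identity already used in the proof of \Cref{lem_join_ndowncovers}. Combining,
\[
S\subseteq\bigcap_{C'\in I_S}C'\subseteq C\cap\bigcap_i C_i=S,
\]
which gives equality.

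The only delicate point is that $A$ consists of genuine ascents, so that each $C_i$ exists and lies in $I_S$. This is precisely what \Cref{lem_maps_well_defined} provides, and that lemma relies on interiority. If $A=\emptyset$, then $S=C$ is a maximal clique, $I_S=\{C\}$, and the statement is trivial.
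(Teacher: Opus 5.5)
Your proof is correct and uses the same mechanism as the paper's: interiority makes every route of a clique in $I_S$ that lies outside $S$ flippable, and the flip stays in $I_S$ while removing that route. The paper runs this by contradiction at an arbitrary $C_1\in I_S$, whereas you fix $C=C_{\min}(S)$ and import the flippability (as ascents) from \Cref{lem_maps_well_defined}(2); one harmless slip is that the middle term of your display $C\cap\bigcap_i C_i=\bigcap_i C_i$ is wrong when $n=1$ (then $\bigcap_i C_i=C_1$, which contains $a_1'\notin C$), but your verification and final chain only use $C\cap\bigcap_i C_i=S$, so simply drop it.
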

\begin{proof}
The interval $I_S=[C_{\min}(S), C_{\max}(S)]$ consists precisely of the maximal cliques containing $S$. Hence
\[
S\subseteq \bigcap_{C\in I_S} C.
\]
We only need to show that $\bigcap_{C\in I_S} C \subseteq S$. Suppose this is not true; then there is a route $R\in (\bigcap_{C\in I_S} C) \setminus S$. Choose any $C_1\in I_S$. Since $R\in\bigcap_{C\in I_S} C$, we have $R\in C_1\setminus S$.

The route $R$ is not flippable in $C_1$; otherwise, there would be a $C_2$ such that $C_2=C_1 \setminus\set{R}\cup\set{R'}$ for a route $R'$, and $S\subseteq C_2$, which means that $C_2\in I_S$, but $R\notin C_2$, implying that $R\notin \bigcap_{C\in I_S} C$.

However, if $R$ is not flippable, then $C_1\setminus\set{R}$ is a boundary face, so $S\subseteq C_1\setminus\set{R}$ is also in the boundary, contradicting that $S$ is an interior face.
\end{proof}

\begin{remark}
    If $S$ is not interior,~\Cref{lemma_interior_faces_equal_intersection} does not necessarily hold. For instance, if $S=\emptyset$, then $I_S$ is equal to the entire lattice and $\bigcap_{C\in I_S} C$ is the set of exceptional routes. This intersection is not necessarily empty.
\end{remark}

The following result provides three equivalent characterizations of pure intervals.

\begin{corollary}[Pure intervals]\label{cor_pureintervals_equivalence}
The following collections of intervals coincide:
\begin{enumerate}
\item Intervals of the form $[C_{\min}(S),C_{\max}(S)]$, where $S$ is an interior face of $\Delta_{G,F}$.
\item Intervals of the form $[C,C+A]$, where $C$ is a maximal clique and $A\subseteq C$ is a subset of ascents of $C$.
\item Intervals of the form $[C-D,C]$, where $C$ is a maximal clique and $D\subseteq C$ is a subset of descents of $C$.
\end{enumerate}
\end{corollary}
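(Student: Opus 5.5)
We will show that (2) and (3) each coincide with (1); it then follows that (2) and (3) coincide with each other. The main tools are \Cref{prop_fromPuretoCminCmax} and the bijections of \Cref{prop_bijection_interior_pairs}.

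First, (2) $\subseteq$ (1) and (3) $\subseteq$ (1) are immediate. Take an interval $[C,C+A]$ with $A\subseteq\Asc(C)$. By \Cref{prop_fromPuretoCminCmax}(2), $S=C\smallsetminus A$ is an interior face and $[C,C+A]=[C_{\min}(S),C_{\max}(S)]$. In the same way, an interval $[C-D,C]$ with $D\subseteq \Desc(C)$ equals $[C_{\min}(S),C_{\max}(S)]$ for the interior face $S=C\smallsetminus D$, by \Cref{prop_fromPuretoCminCmax}(1).

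For the reverse inclusions, fix an interior face $S$. By \Cref{lem_maps_well_defined}(2), $A:=C_{\min}(S)\smallsetminus S$ is a subset of ascents of $C_{\min}(S)$. Moreover $C_{\min}(S)\smallsetminus A=S$, since $S\subseteq C_{\min}(S)$ by \Cref{lem_Cmin_Cmax} (the clique $C_{\min}(S)$ is one of the maximal cliques containing $S$). Applying \Cref{prop_fromPuretoCminCmax}(2) to the pair $(C_{\min}(S),A)$ gives
\[
[C_{\min}(S),C_{\min}(S)+A]=[C_{\min}(S),C_{\max}(S)],
\]
so the interval in (1) lies in (2). Symmetrically, $D:=C_{\max}(S)\smallsetminus S$ is a subset of descents of $C_{\max}(S)$ by \Cref{lem_maps_well_defined}(1), and \Cref{prop_fromPuretoCminCmax}(1) gives $[C_{\max}(S)-D,C_{\max}(S)]=[C_{\min}(S),C_{\max}(S)]$, so the interval lies in (3).

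There is no real obstacle, because the substantive work (the Join-$C_{\max}$ and Meet-$C_{\min}$ theorems, and the interiority statement \Cref{cor_interior_faces_as_clique_minus_ascents}) is already contained in \Cref{prop_fromPuretoCminCmax}. The only point to check is that $S\subseteq C_{\min}(S)$ and $S\subseteq C_{\max}(S)$. This holds by \Cref{lem_Cmin_Cmax}, since the set of maximal cliques containing $S$ is exactly the interval $[C_{\min}(S),C_{\max}(S)]$, so both endpoints contain $S$. This guarantees that removing $A$ (respectively $D$) returns exactly $S$.
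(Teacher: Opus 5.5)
Your proof is correct and is essentially the paper's argument: both inclusions go through \Cref{prop_fromPuretoCminCmax}, with the converse obtained by taking $C=C_{\min}(S)$ and $A=C\smallsetminus S$, and dually $C_{\max}(S)$ and $D=C_{\max}(S)\smallsetminus S$. You are slightly more explicit than the paper, since you cite \Cref{lem_maps_well_defined} to get $A\subseteq\Asc(C_{\min}(S))$, which the paper uses implicitly, and you correctly note that the uniqueness statement of \Cref{lemma_interior_faces_equal_intersection}, which the paper invokes, is not needed for this corollary.
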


\begin{proof}
We prove the equivalence of (1) and (2).

By~\Cref{prop_fromPuretoCminCmax}, every interval $[C,C+A]$ is of the form
\[
[C,C+A]=[C_{\min}(S),C_{\max}(S)],
\]
where $S=C\smallsetminus A$ is an interior face.

Conversely, let
\[
I_S=[C_{\min}(S),C_{\max}(S)]
\]
for an interior face $S$.

By~\Cref{lemma_interior_faces_equal_intersection}, the interior face $S$ is uniquely determined by $I_S$. Setting
\[
C=C_{\min}(S)
\qquad\text{and}\qquad
A=C\smallsetminus S,
\]
\Cref{prop_fromPuretoCminCmax} yields
\[
I_S=[C,C+A].
\]

The equivalence of (1) and (3) is proved analogously.
\end{proof}

We are finally ready to prove~\Cref{thm_pure_min_max}.

\begin{proof}[Proof of~\Cref{thm_pure_min_max}]
By~\Cref{cor_pureintervals_equivalence}, every pure interval is of the form
\[
I_S=[C_{\min}(S),C_{\max}(S)]
\]
for some interior face $S$ of $\Delta_{G,F}$. Thus the map
\[
S \longmapsto I_S
\]
is surjective onto the set of pure intervals.

To prove injectivity, let $S$ and $S'$ be interior faces such that $I_S=I_{S'}$. By~\Cref{lemma_interior_faces_equal_intersection},
\[
S=\bigcap_{C\in I_S} C
=\bigcap_{C\in I_{S'}} C
=S',
\]
and hence the map is injective. This proves~(1).

For~(2), recall that $I_S$ is precisely the set of maximal cliques containing $S$. Therefore,
\[
S\subseteq S'
\]
implies that every maximal clique containing $S'$ also contains $S$, and hence
\[
I_{S'}\subseteq I_S.
\]

Conversely, if
\[
I_{S'}\subseteq I_S,
\]
then
\[
S=\bigcap_{C\in I_S} C
\subseteq
\bigcap_{C\in I_{S'}} C
=S',
\]
where the first and last equalities follow from~\Cref{lemma_interior_faces_equal_intersection}. Thus
\[
S\subseteq S',
\]
which completes the proof.
\end{proof}

\subsection{Proof of Corollary \ref{cor_def_framingtopes_one_three}}
Now we are ready to show that the first and third definitions of framingtopes (\Cref{def_framingtope_asDual,def_framingtope_pureIntervals}) are equivalent:

\begin{itemize}
    \item The complex of interior faces $I$ of~$\Delta_{G,F}$, ordered by reverse inclusion.
    \item The collection of pure intervals of~$\mathscr{L}_{G,F}$, ordered by containment.
\end{itemize}

The bijection
\[
S \longmapsto I_S=[C_{\min}(S), C_{\max}(S)]
\]
between interior faces and pure intervals in~\Cref{cor_def_framingtopes_one_three} satisfies
\[
S\subseteq S' \Leftrightarrow I_{S'}\subseteq I_S.
\]
The result follows.
\qed
\section{Tropical framingtopes: a geometric realization}\label{sec_tropical_framingtope}
Having established three combinatorial descriptions of framingtopes, a natural question arises: \textit{can we obtain a geometric realization?} The answer is affirmative, and the tools to achieve it come from two sources, the Cayley trick and tropical geometry.

We now explain how the tropical realization arises. The argument has three steps. First, the framed triangulation is identified, via the Cayley trick, with a fine mixed subdivision of a Minkowski sum of lower-dimensional flow polytopes. Second, tropical duality identifies the cells of the corresponding tropical arrangement with the faces of this mixed subdivision. Finally, bounded tropical cells correspond precisely to interior faces of the original framed triangulation. Thus the bounded complex of the tropical arrangement is the desired framingtope.

This process is illustrated for two examples in~\Cref{fig_tropical}.

\begin{figure}[ht]
    \centering
    \includegraphics[]{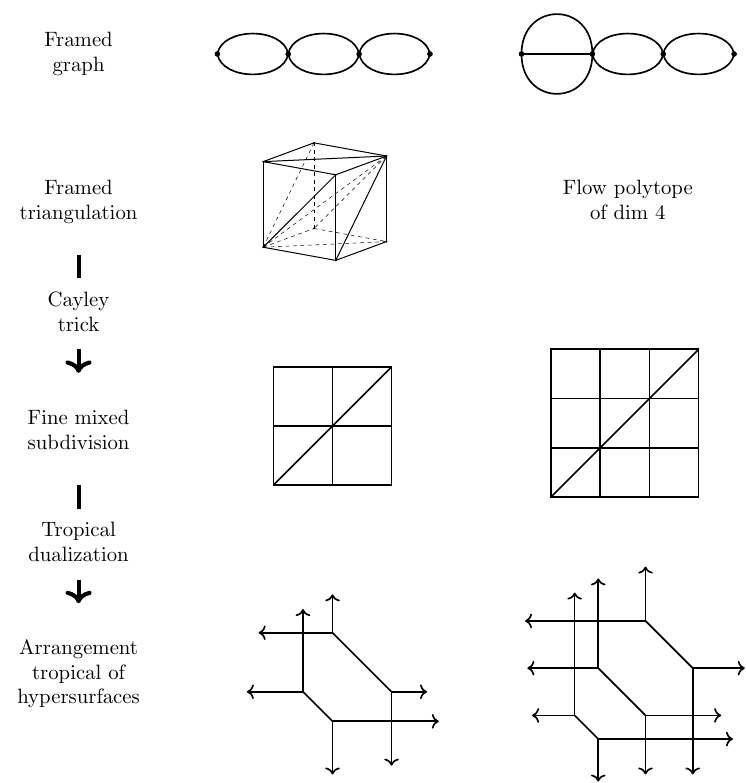}
    \caption{The tropical Cayley trick applied to framed triangulations of flow polytopes.}
    \label{fig_tropical}

\end{figure}

We begin by recalling the notion of inner normal cones.

\subsection{Normal inner cone and normal fan}

Let $P\subseteq\mathbb{R}^d$ be a polytope, not necessarily full-dimensional, and let $Q$ be a nonempty face of $P$ (possibly equal to $P$). The \defn{inner normal cone} of $Q$ in $P$ is:

\begin{equation}\label{eq_inner_normal_cone}
NC_P^{\mathrm{in}}(Q) := \set{\mathbf{x}\in\mathbb{R}^d : \langle\mathbf{x},q\rangle \leq \langle\mathbf{x},p\rangle \text{ for all } q\in Q \text{ and all } p\in P}.    
\end{equation}
In particular, $NC_P^{\mathrm{in}}(Q)$ is a cone containing $0$ for every face $Q$. Setting
\[
V_P = \mathrm{span}\set{p-q : p,q\in P}
\]
to be the space generated by $P$, the cone corresponding to $Q=P$ is its orthogonal complement:
\[
NC_P^{\mathrm{in}}(P) = V_P^\perp.
\]

If $P$ is full-dimensional, then $NC_P^{\mathrm{in}}(P)=\{0\}$. Otherwise, $NC_P^{\mathrm{in}}(P)$ is a linear subspace contained in the inner normal cone of every face $Q$ of $P$.

The \defn{inner normal fan} of $P$ is the collection of all inner normal cones $NC_P^{\mathrm{in}}(Q)$, where $Q$ ranges over the faces of $P$. It is dual to the face poset of $P$ in the sense that, for any two faces $Q_1$ and $Q_2$ of $P$,
\[
Q_1 \subseteq Q_2
\quad\Longleftrightarrow\quad
NC_P^{\mathrm{in}}(Q_2)\subseteq NC_P^{\mathrm{in}}(Q_1).
\]

The notion of inner normal cones extends naturally to faces of a subdivision $\mathcal{S}$ of $P$: for any face $Q\in\mathcal{S}$, the cone $NC_P^{\mathrm{in}}(Q)$ is defined exactly as in~\eqref{eq_inner_normal_cone}.

When $Q\in\mathcal{S}$ is an interior face of the subdivision, $NC_P^{\mathrm{in}}(Q) = V_P^\perp$. When $B\in\mathcal{S}$ is a boundary face, $NC_P^{\mathrm{in}}(B)$ is spanned by $V_P^\perp$ together with the inward-pointing normal vectors of $B$ in $P$.

\begin{example}\label{example_pentagon_cones}
Consider the pentagon $P\subset\mathbb{R}^2$ with vertices
\[
a=(0,0),\quad
b=(0,1),\quad
c=(1,2),\quad
d=(2,1),\quad
e=(1,0),
\]
subdivided as in~\Cref{fig_pentagon_normal_cones}, and take $Q_1=\operatorname{conv}\{b,c\}$, $Q_2=\operatorname{conv}\{c,e\}$, $Q_3=\operatorname{conv}\{d\}$, $Q_4=\operatorname{conv}\{a,c,e\}$. Here $NC_P^{\mathrm{in}}(P)=NC_P^{\mathrm{in}}(Q_2)=NC_P^{\mathrm{in}}(Q_4)=\set{(0,0)}$; $NC_P^{\mathrm{in}}(Q_1)=\set{\alpha(0,-1): \alpha \geq 0}$ and $NC_P^{\mathrm{in}}(Q_3)=\set{\alpha(-1,-1) + \beta(-1,1): \alpha,\beta \geq 0}$.

\begin{figure}[ht]
    \centering
    \includegraphics[]{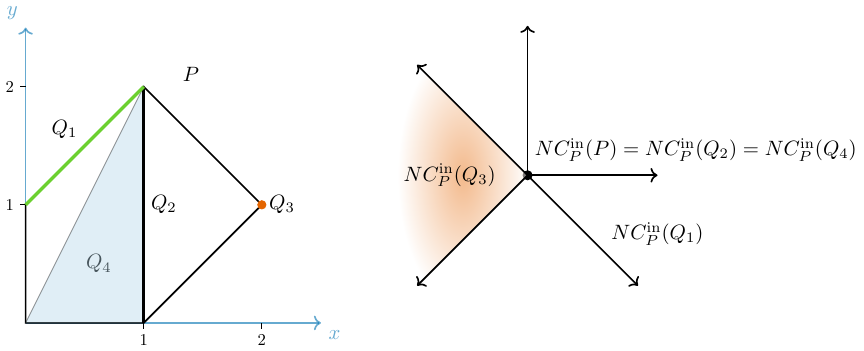}
    \caption{Inner normal cones of faces of a subdivided pentagon.}\label{fig_pentagon_normal_cones}

\end{figure}

\end{example}

\subsection{Tropical dualization}\label{section: troplicaldual}
Tropical geometry provides a natural framework for dualizing regular polyhedral subdivisions. The goal is to realize a regular subdivision as the dual of a polyhedral complex induced by a tropical hypersurface. We refer the reader o~\cite{joswig_essentials_tropical_2021,maclagan_strumfels_tropicalGeometry} for comprehensive introductions to tropical geometry and tropical combinatorics.

We work with the \defn{tropical semiring} $\mathbb{T}:=(\mathbb{R}\cup\{\infty\},\oplus,\otimes)$, where tropical addition and multiplication are defined by $x\oplus y = \min\{x,y\}$ (the minimum of $x$ and $y$) and $x\otimes y = x+y$. The \defn{tropical projective space} is
\[
\mathbb{TP}^m = \mathbb{T}^{m+1}/\mathbb{R}(1,\ldots,1),
\]
the tropical analogue of classical projective space.

Let $\mathcal{R}=\{R_1,\ldots,R_\ell\}$ be a point configuration in $\mathbb{R}^d$ (in our case, vertices of the flow polytope). A \defn{height function} $\h:\mathcal{R}\to\mathbb{R}$, with $\h_i = \h(R_i)$, induces a subdivision $\mathcal{S}^{\h}$ whose faces are the projections of the upper faces of the lifted configuration. Concretely, we lift each vertex $R_i$ to $(h_i,R_i)\in\mathbb{R}^{d+1}$, and the subdivision $\mathcal{S}^{\h}$ is the projection of the upper envelope of this lifted point set onto $\mathbb{R}^d$.

The height function $h$ is called \defn{admissible} for a subdivision $\mathcal{S}$ if the induced subdivision is $\mathcal{S}^h=\mathcal{S}$. In such case, the subdivision $\mathcal{S}$ is called \defn{regular}.

\begin{example}\label{exam_lift_cube}
    Let $P = \operatorname{conv}\{0,e_1,e_2,e_1+e_2\}$ be the unit square in $\mathbb{R}^2$. The height function $\h(0)=1$, $\h(e_1)=\h(e_2)=2$, $\h(e_1+e_2)=4$, induces the triangulation by the diagonal from $0$ to $e_1+e_2$. \Cref{fig_lift_square} shows the lift and its projection.



\begin{figure}[ht]
    \centering
    \includegraphics[]{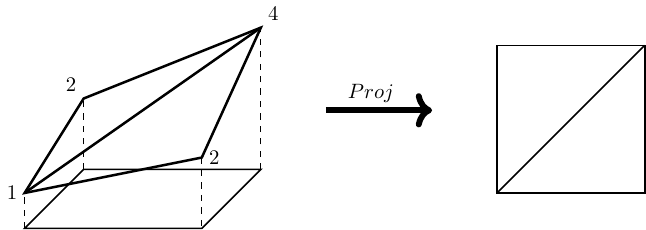}
    \caption{Projection of the lifted square.}
    \label{fig_lift_square}

\end{figure}

\end{example}

Given a height function $\h$, the subdivision $\mathcal{S}^{\h}$ is associated with the \defn{tropical hypersurface}
\[
T^{\h}(\mathbf{x}) = \bigoplus_{i=1}^{\ell} \h_i \otimes 
\mathbf{x}^{R_i} = \min\{-\h(R_i) + \langle R_i,\mathbf{x}\rangle 
: i\in[\ell]\},
\]
where $\mathbf{x}\in\mathbb{R}^d$ and $\langle\cdot,\cdot\rangle$ is the standard inner product. The hypersurface itself is the set of points where this minimum is attained at least twice.

This is the tropical analogue of taking a polynomial in $\mathbb{R}^d$ and setting it equal to~$0$. The lifted points $(\h(R_i), R_i)\in\mathbb{R}^{d+1}$ define a piecewise linear function, and we minimize the linear functional $(-1,\mathbf{x})$ over this lifted configuration (The first entry is~$-1$ because we minimize over vectors pointing downward in the lifted polytope.). The tropical hypersurface~$T^{\h}(\mathbf{x})$ decomposes~$\mathbb{R}^d$ into regions; we denote this \defn{tropical cell decomposition} by~$\mathcal{T}^{\h}$.

The following proposition summarizes the standard duality between regular subdivisions and the associated polyhedral complex of a tropical hypersurface (cf.\ \cite[§1.2, Theorem~1.13]{joswig_essentials_tropical_2021} and \cite[Chapter~3]{maclagan_strumfels_tropicalGeometry}).

\begin{proposition}[{cf.\ \cite{joswig_essentials_tropical_2021,maclagan_strumfels_tropicalGeometry}}]\label{pro: tropical_map}
For any polytope $P\subset\mathbb{R}^d$ and any height function~$\h$, the regular subdivision $\mathcal{S}^{\h}$ of $P$ is dual to the decomposition $\mathcal{T}^{\h}$.

More precisely, there is a containment order-reversing bijection
\[
\varphi:\mathcal{S}^{\h}\longrightarrow\mathcal{T}^{\h}
\]
that sends each $k$-dimensional face of $\mathcal{S}^{\h}$ to a $(d-k)$-dimensional cell of $\mathcal{T}^{\h}$.

For a face $Q\in\mathcal{S}^{\h}$, the corresponding cell is
\[
\varphi(Q)=
\left\{
\mathbf{x}\in\mathbb{TP}^d:
-\h(q)+\langle q,\mathbf{x}\rangle
\le
-\h(p)+\langle p,\mathbf{x}\rangle :
\forall q\in\operatorname{Ver}(Q)
\text{ and } p\in\operatorname{Ver}(P)
\right\},
\]
where $\operatorname{Ver}(\cdot)$ denotes the vertex set. We write $\tilde{Q}:=\varphi(Q)$.
\end{proposition}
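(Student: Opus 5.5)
We would prove this by the standard \emph{argmin} analysis of the piecewise-linear function $T^{\h}$, working with the lifted configuration $\widehat{\mathcal R}=\{(\h(R_i),R_i)\}\subset\mathbb{R}^{d+1}$. Throughout we assume, as holds for flow polytopes (whose points are $0/1$ vectors), that every point $R_i$ of the configuration is a vertex of $P$.

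For $\mathbf{x}\in\mathbb{R}^d$ let $M(\mathbf{x})\subseteq[\ell]$ be the set of indices attaining the minimum in $T^{\h}(\mathbf{x})=\min_i\{-\h(R_i)+\langle R_i,\mathbf{x}\rangle\}$. Minimizing $-\h(R_i)+\langle R_i,\mathbf{x}\rangle$ is the same as maximizing the linear functional $(1,-\mathbf{x})$ over $\widehat{\mathcal R}$. Since this functional has positive first coordinate, the maximizers $\{(\h(R_i),R_i):i\in M(\mathbf{x})\}$ form an upper face of $\operatorname{conv}\widehat{\mathcal R}$. Projecting to $\mathbb{R}^d$, the points $\{R_i: i\in M(\mathbf{x})\}$ are the vertex set of a face $Q_{\mathbf{x}}\in\mathcal{S}^{\h}$. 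Conversely, every upper face is exposed by some functional of the form $(1,-\mathbf{x})$ after scaling. Hence every face $Q\in\mathcal{S}^{\h}$ arises as $Q_{\mathbf{x}}$ for some $\mathbf{x}$, and the open cell $\{\mathbf{x}:M(\mathbf{x})=\operatorname{Ver}(Q)\}$ is nonempty. The closure of this open cell is exactly the set $\varphi(Q)$ in the statement. Indeed, $\varphi(Q)$ is cut out by the linear inequalities in the statement, and these force the terms indexed by $\operatorname{Ver}(Q)$ to be equal and minimal. So $\varphi(Q)$ is a polyhedron, and its relative interior consists of the points with $M(\mathbf{x})=\operatorname{Ver}(Q)$ exactly.

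Next we establish the poset structure. The regions of constant $M(\mathbf{x})$ partition $\mathbb{R}^d$, and by the previous paragraph each such region corresponds to exactly one face of $\mathcal{S}^{\h}$. This gives a bijection between faces of $\mathcal{S}^{\h}$ and cells of $\mathcal{T}^{\h}$, after passing to $\mathbb{TP}^d$ and the lineality discussed below. For order reversal, the implication $Q_1\subseteq Q_2\Rightarrow\varphi(Q_2)\subseteq\varphi(Q_1)$ is immediate, since $\varphi(Q_2)$ satisfies more equalities. Conversely, we use that the faces of $\varphi(Q_1)$ are obtained by forcing additional indices into the argmin, so they are the closures of open cells whose argmin set contains $\operatorname{Ver}(Q_1)$. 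Thus if $\varphi(Q_2)\subseteq\varphi(Q_1)$, then $\varphi(Q_2)$ is a face of $\varphi(Q_1)$ and $\operatorname{Ver}(Q_1)\subseteq\operatorname{Ver}(Q_2)$. It follows that $\varphi(Q_1)\cap\varphi(Q_2)=\varphi(Q_1\vee Q_2)$ whenever the faces lie in a common face, so $\mathcal{T}^{\h}$ is a polyhedral complex.

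The step we expect to be the main obstacle is the dimension count together with the bookkeeping of lineality. On the open cell of $Q$, the conditions $-\h(q)+\langle q,\mathbf{x}\rangle=-\h(q')+\langle q',\mathbf{x}\rangle$ for $q,q'\in\operatorname{Ver}(Q)$ say that $\mathbf{x}$ is fixed, up to a translate, on the directions $q-q'$, which span $V_Q$. These are $\dim Q=k$ independent linear conditions, and the strict inequalities are open, so the cell has dimension $d-k$ in $\mathbb{R}^d$. When $P$ is not full-dimensional, every cell contains the translation space $V_P^\perp$ as lineality. Accordingly we would state and check the count modulo $V_P^\perp$, working in $\mathbb{R}^d/V_P^\perp$; this space is $\mathbb{TP}^{d}$-like, since for flow polytopes the all-ones-type directions lie in $V_P^\perp$. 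In this setting the vertex $Q=P$ (when $P$ is a face) gives the empty intersection, and interior faces of $\mathcal{S}^{\h}$ give cells whose recession cone is $NC^{\mathrm{in}}_P(Q)=V_P^\perp$. We would verify this last identification carefully, because it is what makes the bounded cells later match the interior faces.
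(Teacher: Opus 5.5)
The paper gives no proof of its own here (it defers to Joswig and Maclagan--Sturmfels), and your argmin/upper-face argument is the standard proof from those sources and is correct; it is also the same reasoning, via functionals of the form $(-1,\mathbf{x})$, that the paper itself uses in the proof of \Cref{thm_Cayley_tropicalArrangement}. Two small slips in your last paragraph. First, the count $d-k$ already holds in $\mathbb{R}^d$, as you show, whereas passing to $\mathbb{R}^d/V_P^\perp$ would change it to $\dim P-k$, which agrees with $d-k$ only for full-dimensional $P$. Second, when $P$ itself is a face of $\mathcal{S}^{\h}$, its dual cell is a nonempty translate of $V_P^\perp$, not an empty intersection.
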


The bijection tells us that the cells of $\mathcal{T}^{\h}$ are obtained by solving systems of linear inequalities (and equalities), which are minimized at the face $Q$. When $P$ is full-dimensional, a facet of $\mathcal{S}^{\h}$ (a codimension-zero face) maps to a unique point, and every vertex of $\mathcal{S}^{\h}$ maps to a full-dimensional region.


\begin{example}\label{example: trop_square}
Taking $P$ and $\h$ as in \Cref{exam_lift_cube}, the associated tropical hypersurface is
\[
T^{\h}(\mathbf{x})=\min\{-1,\,-2+x,\,-2+y,\,-4+x+y\}.
\]
The resulting tropical cell decomposition is shown in \Cref{fig_dual_square}.

Consider the shaded triangle $Q$, whose vertex set is
\[
\operatorname{Ver}(Q)=\{0,e_2,e_1+e_2\}.
\]
The corresponding cell $\widetilde{Q}=\varphi(Q)$ consists of all points $(x,y)$ satisfying
\[
-1=-2+y=-4+x+y\le -2+x.
\]
This system has the unique solution $(x,y)=(2,1)$, and therefore
\[
\widetilde{Q}=\{(2,1)\}.
\]

\begin{figure}[ht]
    \centering
    \includegraphics{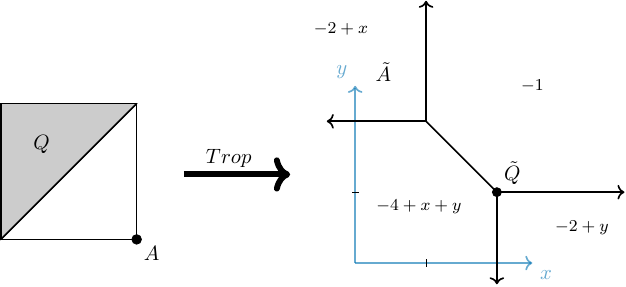}
    \caption{Dual tropicalization of a square.}
    \label{fig_dual_square}
    
\end{figure}

\end{example}

\subsection{Faces of the tropical cell decomposition}\label{sec_tropical_cells}

We now give an explicit description of the cell $\widetilde{Q}\in\mathcal{T}^{\h}$ corresponding to a face $Q\in\mathcal{S}^{\h}$.

Let $C\in\mathcal{S}^{\h}$ be a facet. The corresponding cell $\widetilde{C}=\varphi(C)$ is determined by the system of equations
\[
-\h_q+\langle q,\mathbf{x}\rangle
=
-\h_{q'}+\langle q',\mathbf{x}\rangle,
\qquad
q,q'\in\operatorname{Ver}(C).
\]
This system has a unique solution modulo the lineality space $V_P^\perp$; we denote a representative by $\widetilde{C}^\circ$. Thus
\[
\widetilde{C}=\widetilde{C}^\circ+V_P^\perp.
\]

More generally, every cell $\widetilde{Q}$ is defined by the inequalities
\[
-\h_q+\langle q,\mathbf{x}\rangle
\le
-\h_p+\langle p,\mathbf{x}\rangle,
\qquad
q\in\operatorname{Ver}(Q),\;
p\in\operatorname{Ver}(P).
\]
Ignoring the height values yields the system
\[
\langle q,\mathbf{x}\rangle
\le
\langle p,\mathbf{x}\rangle,
\]
which defines the inner normal cone $NC_P^{\mathrm{in}}(Q)$. Thus, the height function translates the bounded part of the cell while leaving its recession cone unchanged. The following classical result therefore gives the desired description.

\begin{lemma}[Minkowski--Weyl Theorem, {\cite[Theorem~1.3]{ziegler_lectures_1995}}]
A set $A\subseteq\mathbb{R}^d$ is a polyhedron if and only if there exists a polytope $B$ and a finitely generated cone $D$ such that
\[
A=B+D.
\]
\end{lemma}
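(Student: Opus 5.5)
The statement is the Minkowski--Weyl theorem. It is classical and cited from Ziegler, so the natural plan is to recall its standard proof via Fourier--Motzkin elimination and polarity, following Ziegler's Lectures, Chapter~1.

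For the forward direction, I take a polyhedron $A=\{\mathbf{x}:M\mathbf{x}\le \mathbf{b}\}$ and homogenize it to the cone
\[
\widehat{A}=\{(\mathbf{x},t): M\mathbf{x}-t\mathbf{b}\le 0,\ t\ge 0\}\subseteq\mathbb{R}^{d+1},
\]
so that $A$ is the slice $\widehat{A}\cap\{t=1\}$. The key step is the cone version of the theorem: every $H$-cone $\{\mathbf{y}: N\mathbf{y}\le 0\}$ is finitely generated. I prove this by duality. A finitely generated cone $\operatorname{cone}(Y)$ is the projection of $\{(\mathbf{y},\lambda):\mathbf{y}=Y\lambda,\ \lambda\ge 0\}$. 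Eliminating the $\lambda$-variables one at a time by Fourier--Motzkin elimination shows that this projection is again an $H$-cone. Hence every finitely generated cone is an $H$-cone.

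To go the other way I use polarity. The polar of $\{\mathbf{y}:N\mathbf{y}\le 0\}$ is $\operatorname{cone}(\text{rows of }N)$, which is finitely generated. By the previous step this polar is an $H$-cone $\{\mathbf{z}:Z\mathbf{z}\le 0\}$, and taking polars again (bipolar theorem, via Farkas' lemma) gives $\{\mathbf{y}:N\mathbf{y}\le0\}=\operatorname{cone}(\text{rows of }Z)$. With the cone version in hand, I write $\widehat{A}=\operatorname{cone}\{(\mathbf{v}_i,1),(\mathbf{w}_j,0)\}$ after rescaling generators with $t>0$. Intersecting with $t=1$ gives $A=\operatorname{conv}\{\mathbf{v}_i\}+\operatorname{cone}\{\mathbf{w}_j\}=B+D$.

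For the converse, given $B=\operatorname{conv}(V)$ and $D=\operatorname{cone}(W)$, the homogenized set $\operatorname{cone}\{(\mathbf{v},1),(\mathbf{w},0)\}$ is finitely generated. It is therefore an $H$-cone by the Fourier--Motzkin step. Slicing at $t=1$ gives an inequality description of $B+D$, so $B+D$ is a polyhedron.

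The main obstacle is the finitely-generated-to-$H$ step, namely verifying that Fourier--Motzkin elimination preserves polyhedrality. I would handle it by the standard pairing of positive and negative coefficient rows. I expect the rest to be routine bookkeeping, including the degenerate cases of an empty $A$ and of $t=0$ generators. In the paper itself it suffices to cite \cite[Theorem~1.3]{ziegler_lectures_1995}. We then apply the theorem with $D=NC_P^{\mathrm{in}}(Q)$ as the recession cone of $\widetilde{Q}$, and take $B$ to be the bounded part coming from the heights.
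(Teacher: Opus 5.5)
Your argument is correct and is the standard proof of the cited result: Fourier--Motzkin elimination shows that finitely generated cones are $H$-cones, polarity gives the converse, and homogenization followed by slicing at $t=1$ passes between cones and polyhedra. The paper gives no proof of its own and simply invokes \cite[Theorem~1.3]{ziegler_lectures_1995}, and the Farkas/bipolar input in your polarity step even follows from your Fourier--Motzkin step (a point outside a finitely generated cone is cut off by one of the cone's defining linear inequalities), so your argument is self-contained.
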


The cone $D$ is called the \defn{recession cone} of $A$. The \defn{lineality space} of $A$ is the largest linear subspace contained in $A$ (or equivalently in $D$). In our setting, the recession cone of $\widetilde{Q}$ is $NC_P^{\mathrm{in}}(Q)$ and its lineality space is $V_P^\perp$, while its bounded part is the convex hull of the points $\widetilde{C}^\circ$, where $C$ ranges over the facets of $\mathcal{S}^{\h}$ containing $Q$. This yields the following theorem.

\begin{theorem}\label{thm_trop_cells}
The correspondence
\[
\varphi:\mathcal{S}^{\h}\longrightarrow\mathcal{T}^{\h}
\]
sends each face $Q$ to the polyhedron
\[
\widetilde{Q}
=
\operatorname{conv}\{\widetilde{C}^\circ:Q\subseteq C\}
+
NC_P^{\mathrm{in}}(Q).
\]
Moreover, its lineality space is
\[
V_P^\perp \subseteq NC_P^{\mathrm{in}}(Q).
\]
\end{theorem}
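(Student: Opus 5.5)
The plan is to realize $\widetilde{Q}$ as an intersection of closed cells of the tropical decomposition and to split it into a recession part and a bounded part. By \Cref{pro: tropical_map}, $\widetilde{Q}$ is the polyhedron defined by the inequalities $-\h_q+\langle q,\mathbf{x}\rangle\le -\h_p+\langle p,\mathbf{x}\rangle$ for $q\in\operatorname{Ver}(Q)$ and $p\in\operatorname{Ver}(P)$. The homogeneous version of this system is exactly the definition~\eqref{eq_inner_normal_cone} of $NC_P^{\mathrm{in}}(Q)$. Hence the recession cone of $\widetilde{Q}$ is $NC_P^{\mathrm{in}}(Q)$, since the recession cone of $\{\mathbf{x}: A\mathbf{x}\le b\}$ is $\{\mathbf{x}: A\mathbf{x}\le 0\}$ whenever the polyhedron is nonempty. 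Nonemptiness follows because $Q$ is a face of the regular subdivision: some linear functional $(-1,\mathbf{x})$ is minimized on the lifted face. The lineality space of $\widetilde{Q}$ is the lineality space of this cone. Vectors $\mathbf{x}$ with both $\pm\mathbf{x}$ in the cone satisfy $\langle q-p,\mathbf{x}\rangle=0$ for all $q\in Q$ and $p\in P$, which forces orthogonality to $V_P$. Conversely, $V_P^\perp$ lies in every inner normal cone. This gives the second claim.

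For the bounded part, I would apply Minkowski--Weyl: $\widetilde{Q}=B+NC_P^{\mathrm{in}}(Q)$, where $B$ is the convex hull of the vertices of $\widetilde{Q}$ modulo the lineality space. The task is then to identify these vertices with the points $\widetilde{C}^\circ$ for facets $C\supseteq Q$.

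First, by order reversal in \Cref{pro: tropical_map}, each facet $C\supseteq Q$ gives a minimal cell $\widetilde{C}=\widetilde{C}^\circ+V_P^\perp\subseteq\widetilde{Q}$. Second, every minimal face of $\widetilde{Q}$ (modulo lineality) is itself a cell of $\mathcal{T}^{\h}$ contained in $\widetilde{Q}$, because faces of a cell are cells. That cell has dimension equal to $\dim V_P^\perp$, so under the dimension count of $\varphi$ it corresponds to a top-dimensional face $C$ of $\mathcal{S}^{\h}$, and order reversal gives $Q\subseteq C$. Thus the minimal faces of $\widetilde{Q}$ are exactly the $\widetilde{C}$ with $C\supseteq Q$, and $B=\operatorname{conv}\{\widetilde{C}^\circ\}$ up to lineality. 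Conversely, each $\widetilde{C}^\circ$ with $C\supseteq Q$ lies in $\widetilde{Q}$, so the convex hull is contained in $\widetilde{Q}$ and no extra points are added.

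I expect the main obstacle to be the dimension bookkeeping when $P$ is not full-dimensional. The duality statement counts dimensions in $\mathbb{R}^d$ while the cells carry the lineality space $V_P^\perp$, so I would work in the quotient $\mathbb{R}^d/V_P^\perp\cong V_P^*$. There $P$ is full-dimensional, minimal cells are points, and the standard full-dimensional duality applies verbatim. Pulling back then adds $V_P^\perp$ to every cell.
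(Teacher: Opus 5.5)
Your proposal is correct and follows essentially the same route as the paper. Both arguments homogenize the defining inequalities of $\widetilde{Q}$ to identify its recession cone with $NC_P^{\mathrm{in}}(Q)$, with lineality space $V_P^\perp$, and then apply Minkowski--Weyl with the bounded part spanned by the points $\widetilde{C}^\circ$ for facets $C\supseteq Q$. Your identification of the minimal faces of $\widetilde{Q}$ with the cells $\widetilde{C}$, via the dimension count and order reversal of $\varphi$, justifies the bounded-part claim that the paper only asserts.
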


\begin{remark}
If $P$ is not full-dimensional, then $NC_P^{\mathrm{in}}(Q)$ contains the non-trivial lineality space $V_P^\perp$. In particular, every cell $\widetilde{Q}$ contains the translate $\widetilde{C}^\circ+V_P^\perp$ of each point $\widetilde{C}^\circ$ appearing in the convex hull above.
\end{remark}

\begin{example}\label{exam_pentagon}
Let $P$ be the pentagon as in~\Cref{example_pentagon_cones} with vertices
\[
a=(0,0),\quad
b=(0,1),\quad
c=(1,2),\quad
d=(2,1),\quad
e=(1,0),
\]
and let the height function $\h$ be given by
\[
\h(a)=0,\qquad
\h(b)=1,\qquad
\h(c)=5,\qquad
\h(d)=2,\qquad
\h(e)=1.
\]
The induced regular subdivision $\mathcal{S}^h$ is shown on the left of~\Cref{fig_pentagon_normal_cones}. Let
\[
Q_1=\operatorname{conv}\{b,c\},\qquad
Q_2=\operatorname{conv}\{c,e\},\qquad
Q_3=\{d\},\qquad
Q_4=\operatorname{conv}\{a,c,e\}.
\]
The tropical hypersurface is
\[
T^{\h}(\mathbf{x})=\min\set{-\h(i)+\langle i,\textbf{x}\rangle : i\in \set{a,b,c,d,e}},
\]
which amounts to finding $\mathbf{x}=(x,y)$ such that the minimum is attained at least twice in the set of values
\[
\set{0,
-1+y,
-5+x+2y,
-2+2x+y,
-1+x}.
\]

The solution is shown in~\Cref{fig_dual_pentagon}. To obtain the image $\tilde{Q}_4$ of $Q_4$, we can use the polynomials $0$, $-5+x+2y$, and $-1+x$ associated with $a,c,e$, to find that the only $\mathbf{x}$ minimizing these three polynomials is~$(1,2)$. The image $\tilde{Q}_2$ is the segment from $(1,2)$ to $(-1,2)$, and the image $\tilde{Q}_3$ is the point $(-1,2)$ plus the cone
\[
NC_P^{\text{in}}(Q_3)=\set{\alpha(-1,1)+\beta(-1,-1) : \alpha,\beta \geq 0}.
\]

\begin{figure}[ht]
    \centering
    \includegraphics[]{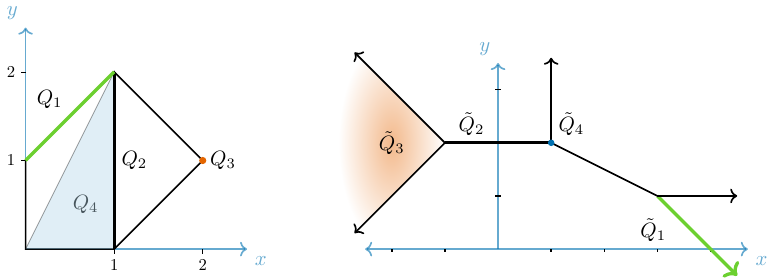}
    \caption{The tropical hypersurface $T^{\h}(\mathbf{x})$ corresponding to the subdivided pentagon.}
    \label{fig_dual_pentagon}
\end{figure}

Here, for example, $\tilde Q_3$ minimizes $-2+2x+y$.
\end{example}

\subsection{Cayley trick}

The Cayley trick relates triangulations of a Cayley embedding to mixed subdivisions of a Minkowski sum, thereby reducing the ambient dimension of the problem. In our setting, it allows us to study framed triangulations of flow polytopes via fine mixed subdivisions. The material in this section follows~\cite{HuberRambauSantos_CayleyTrick_2000}.

Given polytopes $P_1,\ldots,P_k\subseteq\mathbb{R}^d$, their \defn{Minkowski sum} is
\[
P_1+\cdots+P_k
:=
\{x_1+\cdots+x_k:x_i\in P_i\}.
\]

A \defn{Minkowski cell} of a mixed subdivision is a Minkowski sum $B=B_1+\cdots+B_k$ where each $B_i$ is the convex hull of a subset of the vertices of $P_i$; the maximal cells are full-dimensional.


A \defn{mixed subdivision} of $P_1+\cdots+P_k$ is a collection of Minkowski cells whose union is $P_1+\cdots+P_k$ and such that any two cells $B=B_1+\cdots+B_k$ and $B'=B'_1+\cdots+B'_k$ intersect properly; that is, $B_i\cap B'_i$ is a face of both $B_i$ and $B'_i$ for every $i$.

A \defn{fine mixed subdivision} is a mixed subdivision that is minimal with respect to containment of the summands.

Given the standard basis $e_1,\ldots,e_k$ of $\mathbb{R}^k$, the \defn{Cayley embedding} of $P_1,\ldots,P_k$ is the polytope
\[
C(P_1,\ldots,P_k)
:=
\operatorname{conv}\bigl((\{e_1\}\times P_1)\cup\cdots\cup(\{e_k\}\times P_k)\bigr)
\subseteq
\mathbb{R}^k\times\mathbb{R}^d.
\]

\begin{proposition}[{\cite[Section~5]{Sturmfels1994}; \cite{HuberRambauSantos_CayleyTrick_2000}}]\label{prop: subdivision_are_triangulations}
Let $P_1,\ldots,P_k$ be polytopes in $\mathbb{R}^d$. Then the polytopal subdivisions (respectively, triangulations) of the Cayley embedding $C(P_1,\ldots,P_k)$ are in bijection with the mixed subdivisions (respectively, fine mixed subdivisions) of the Minkowski sum $P_1+\cdots+P_k$.
\end{proposition}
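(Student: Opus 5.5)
We prove the Cayley trick by identifying cells of the Minkowski sum with fibres of the Cayley polytope over a fixed point of the simplex. Write $\Delta_{k-1}=\operatorname{conv}\{e_1,\dots,e_k\}$ and let $\pi\colon\mathbb{R}^k\times\mathbb{R}^d\to\mathbb{R}^k$ be the projection, so $\pi(C(P_1,\ldots,P_k))=\Delta_{k-1}$. The first step is the fibre identity
\[
\pi^{-1}\bigl(\tfrac1k(e_1+\cdots+e_k)\bigr)\cap C(P_1,\ldots,P_k)=\Bigl\{\tfrac1k\textstyle\sum e_i\Bigr\}\times \tfrac1k(P_1+\cdots+P_k).
\]
To see this, write a point of the Cayley polytope as a convex combination $\sum_i\lambda_i(e_i,x_i)$ with $x_i\in P_i$. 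Its first coordinate is $(\lambda_1,\dots,\lambda_k)$, so it lies in the fibre exactly when every $\lambda_i=1/k$. The same computation applies to any sub-configuration $B\subseteq C(P_1,\ldots,P_k)$ spanned by vertices. If we put $B_i=\{x:(e_i,x)\in B\}$, then the slice of $\operatorname{conv}(B)$ at the barycenter is $\tfrac1k(\operatorname{conv}B_1+\cdots+\operatorname{conv}B_k)$, provided every $B_i$ is nonempty.

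The map from subdivisions to mixed subdivisions is then defined by slicing. Given a polyhedral subdivision $\mathcal{S}$ of the Cayley polytope, consider the cells that meet the barycentric fibre; these are exactly the cells with all $B_i\neq\emptyset$. Their slices, rescaled by $k$, are Minkowski cells $\operatorname{conv}B_1+\cdots+\operatorname{conv}B_k$. They cover $P_1+\cdots+P_k$ because $\mathcal{S}$ covers the Cayley polytope. For proper intersection, take two cells $B,B'$ of $\mathcal{S}$. Their intersection is a common face, and any face of a Cayley-type cell is again of Cayley type, namely the convex hull of $\{e_i\}\times F_i$ with $F_i$ a face of $\operatorname{conv}B_i$. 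It follows that $\operatorname{conv}B_i\cap\operatorname{conv}B'_i$ is a face of both.

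The inverse map goes the other way. Given a mixed subdivision with cells $\sum_i \operatorname{conv}B_i$, we form the Cayley cells $\operatorname{conv}\bigl(\bigcup_i\{e_i\}\times B_i\bigr)$ and take all of their faces. We must show these cells cover $C(P_1,\ldots,P_k)$ and intersect properly. For covering, every point with all $\lambda_i>0$ lies in a generic fibre $\lambda$. That fibre is an affinely rescaled Minkowski sum $\lambda_1P_1+\cdots+\lambda_kP_k$, which is subdivided by the same combinatorial cells $\lambda_1\operatorname{conv}B_1+\cdots$; this holds because proper intersection of summands does not depend on the positive weights. Points with some $\lambda_i=0$ are handled by closure. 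Proper intersection follows from the summandwise face condition.

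The main obstacle is this reconstruction step: showing that a subdivision of one fibre determines a subdivision of every fibre $\lambda$ in the open simplex, and that it does so consistently. I would rely on the summandwise proper-intersection condition, since it is exactly what makes the weighted sums subdivide. The two maps are inverse to each other because a Cayley cell is determined by its tuple $(B_1,\dots,B_k)$.

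For the statement about triangulations, note that a Cayley cell is a simplex exactly when $\sum_i(|B_i|-1)=\dim$ and each $B_i$ is affinely independent, with the joint independence this requires. These are exactly the minimal, i.e.\ fine, mixed cells, since refining a Cayley cell corresponds to shrinking its summands. This matches the statement in \cite{HuberRambauSantos_CayleyTrick_2000}.
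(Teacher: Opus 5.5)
The paper does not prove this statement. It cites Sturmfels and Huber--Rambau--Santos and only recalls the ``one-picture'' slicing idea. Your forward direction is exactly that idea and is sound. In fact, since the common face of two Cayley cells is again a Cayley cell, the slicing gives more than you state: two sliced cells meet in a common face whose summand decomposition is the same in both cells.

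The gap is in the converse. Your claim is that every fibre $\lambda_1P_1+\cdots+\lambda_kP_k$ is subdivided by the cells $\lambda_1\operatorname{conv}B_1+\cdots+\lambda_k\operatorname{conv}B_k$, ``because proper intersection of summands does not depend on the positive weights''. That claim is the whole content of the Cayley trick, and the summandwise condition cannot deliver it: it says nothing about covering, and it does not even rule out overlap.

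Here is a counterexample. Take $k=2$, $P_1=P_2=[0,1]$, and the two cells $X=[0,1]+\{1\}$ and $Y=[0,1]+\{0\}$.
\begin{itemize}
\item They cover $P_1+P_2=[0,2]$ and meet only in the point $1$.
\item Their summands meet in faces: $X_1\cap Y_1=[0,1]$ and $X_2\cap Y_2=\emptyset$.
\item In the fibre over $(\lambda_1,\lambda_2)$, however, the weighted cells are $[\lambda_2,1]$ and $[0,\lambda_1]$. These overlap when $\lambda_1>\tfrac12$ and leave a gap when $\lambda_1<\tfrac12$.
\item Equivalently, the Cayley triangles $\operatorname{conv}\{(e_1,0),(e_1,1),(e_2,1)\}$ and $\operatorname{conv}\{(e_1,0),(e_1,1),(e_2,0)\}$ overlap in a $2$-dimensional region, and both miss the point $(\tfrac14e_1+\tfrac34e_2,\tfrac12)$ of the square $C(P_1,P_2)$.
\end{itemize}
So your sentence ``proper intersection follows from the summandwise face condition'' is false, and so is the covering claim. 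This also shows that the summandwise definition of mixed subdivision recalled in the paper is too weak for the bijection as stated.

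Two things are missing.
\begin{enumerate}
\item[(i)] The labelled compatibility condition of Huber--Rambau--Santos: two mixed cells must meet in a common face whose summand decomposition, read off in either cell, is the same. In the example, the point $1$ is $\{0\}+\{1\}$ in $X$ but $\{1\}+\{0\}$ in $Y$.
\item[(ii)] An actual covering argument for $C(P_1,\ldots,P_k)$. One way to supply it is a wall-crossing argument.
\begin{itemize}
\item A facet of a full-dimensional Cayley cell either has an empty summand, or it slices to a boundary facet of $P_1+\cdots+P_k$; in both cases it lies in $\partial C(P_1,\ldots,P_k)$.
\item Otherwise it slices to an interior wall of the mixed subdivision. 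By (i), it is then a facet of exactly one other Cayley cell, lying on the opposite side.
\item Hence the number of cells containing a generic point is locally constant on the interior of $C(P_1,\ldots,P_k)$, which stays connected after removing the codimension-$2$ skeleton.
\item This number equals $1$ near a generic point of the barycentric fibre. That gives covering and interior-disjointness at once.
\end{itemize}
\end{enumerate}
Only after (i) and (ii) are in place do your closure argument for the faces $\lambda_i=0$ and your claim that the two maps are mutually inverse go through.
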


The bijection in \Cref{prop: subdivision_are_triangulations} can be described explicitly as follows.

Let $\mathcal{S}$ be a subdivision of $C(P_1,\ldots,P_k)$. Every face of $\mathcal{S}$ containing at least one vertex from each copy $\{e_i\}\times P_i$ is of the form
\[
Q=
\operatorname{conv}\bigl((\{e_1\}\times Q_1)\cup\cdots\cup(\{e_k\}\times Q_k)\bigr),
\]
where each $Q_i$ is a non-empty face of $P_i$. To such a face $Q$ we associate the mixed cell
\[
M(Q):=Q_1+\cdots+Q_k.
\]
The corresponding mixed subdivision, denoted by $\mathcal{M}(\mathcal{S})$, consists of the cells $M(Q)$, where $Q$ ranges over all faces of $\mathcal{S}$ containing at least one vertex from each copy $\{e_i\}\times P_i$.

\begin{proposition}
\label{prop_Cayley_mixedsubdivision}
(\cite{HuberRambauSantos_CayleyTrick_2000}) Let $\mathcal{S}$ be a subdivision of $C(P_1,\ldots,P_k)$, and let $\mathcal{M}(\mathcal{S})$ be the corresponding mixed subdivision of $P_1+\cdots+P_k$. Then the map
\[
Q\longmapsto M(Q)
\]
is an order-preserving bijection between the faces of $\mathcal{S}$ containing at least one vertex from each copy $\{e_i\}\times P_i$ and the faces of $\mathcal{M}(\mathcal{S})$. Moreover, under this correspondence, interior faces of $\mathcal{S}$ correspond to interior faces of $\mathcal{M}(\mathcal{S})$.
\end{proposition}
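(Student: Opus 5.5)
We need to prove Proposition \ref{prop_Cayley_mixedsubdivision}: the map $Q\mapsto M(Q)$ is an order-preserving bijection between faces of $\mathcal{S}$ meeting every copy $\{e_i\}\times P_i$ and faces of $\mathcal{M}(\mathcal{S})$, and this bijection matches interior faces with interior faces.

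The plan is to use the slicing description of the Cayley embedding. Consider the affine slice $\Lambda=\{(\lambda,x):\lambda=(\tfrac1k,\dots,\tfrac1k)\}$ of $\mathbb{R}^k\times\mathbb{R}^d$. For any choice of subsets $Q_i\subseteq P_i$, a direct computation with convex combinations gives
\[
\operatorname{conv}\Bigl(\bigcup_i \{e_i\}\times Q_i\Bigr)\cap\Lambda=\bigl\{(\lambda,\tfrac1k x): x\in Q_1+\cdots+Q_k\bigr\}.
\]
The key point is that a point of the convex hull lies in $\Lambda$ exactly when the total weight on each copy equals $1/k$. After rescaling by $k$, the slice of $C(P_1,\ldots,P_k)$ is therefore $P_1+\cdots+P_k$, and the slice of a cell $Q$ is $M(Q)$. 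Since every cell of $\mathcal{S}$ lies in $C(P_1,\ldots,P_k)$ and $\Lambda$ meets the relative interior of that polytope, intersecting the subdivision $\mathcal{S}$ with $\Lambda$ yields a polyhedral subdivision of $\Lambda\cap C$. Its cells are exactly the nonempty slices $Q\cap\Lambda$, and these are nonempty precisely when $Q$ meets every copy. Proper intersection of the slices follows from $(Q\cap\Lambda)\cap(Q'\cap\Lambda)=(Q\cap Q')\cap\Lambda$ together with the fact that $Q\cap Q'$ is the Cayley cell built on the pieces $Q_i\cap Q'_i$. This also gives the summand-wise compatibility required in the definition of a mixed subdivision.

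For the bijection, the argument runs as follows.
\begin{itemize}
\item Surjectivity holds by the definition of $\mathcal{M}(\mathcal{S})$.
\item Injectivity: $M(Q)$ determines $Q$. From $Q_1+\cdots+Q_k$ we cannot directly read off the $Q_i$ in general, but the mixed cell is recorded as the tuple $(Q_1,\ldots,Q_k)$, which determines $Q$. Alternatively, one can argue geometrically. Distinct faces of $\mathcal{S}$ meeting all copies have distinct relative interiors, and their relative interiors meet $\Lambda$ because the barycentric weights can be adjusted inside the relative interior. So the slices have disjoint relative interiors and are distinct.
\item Order preservation: $Q\subseteq Q'$ implies $Q_i\subseteq Q'_i$ for all $i$, hence $M(Q)\subseteq M(Q')$. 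Conversely, slice-containment of faces forces containment, since a face of the sliced complex is the slice of the smallest face of $\mathcal{S}$ containing it.
\end{itemize}

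For interiority, a face $Q$ meeting all copies lies in $\partial C(P_1,\ldots,P_k)$ iff it lies in a proper face $F$ of $C$. The faces of $C$ meeting every copy are exactly Cayley embeddings $C(F_1,\ldots,F_k)$ of faces $F_i\subseteq P_i$ that maximize a common linear functional. Their slices are exactly the faces $F_1+\cdots+F_k$ of the Minkowski sum, and proper faces correspond to proper faces. Faces of $C$ missing some copy lie in a facet $\{\lambda_i=0\}$, but a cell meeting all copies cannot lie in such a facet. Hence $Q$ is a boundary face iff $M(Q)$ lies in a proper face of $P_1+\cdots+P_k$.

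I expect this last step to be the main obstacle: it requires correctly identifying the faces of the Cayley polytope with pairs (functional, faces maximizing it). I would handle it by taking an inner normal $(\mu,y)$ of the face $F$ and observing that its minimizers on each copy are exactly the minimizers of $y$ on $P_i$, provided all copies are hit.
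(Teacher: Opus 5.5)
Your proposal is correct and follows the route the paper indicates. The paper does not prove this proposition itself; it cites Huber--Rambau--Santos and points to their one-picture proof, which slices the Cayley embedding $C(P_1,\ldots,P_k)$ by the hyperplane $\lambda=(\tfrac1k,\dots,\tfrac1k)$ to recover the mixed subdivision, exactly as you do, and your identification of the faces of $C(P_1,\ldots,P_k)$ meeting every copy with faces of $P_1+\cdots+P_k$ correctly supplies the interior/boundary correspondence left to the reference (the only cosmetic fix is to write $\operatorname{conv}Q_1+\cdots+\operatorname{conv}Q_k$ when the $Q_i$ are vertex subsets).
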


A geometric way to see this is through the one-picture proof of \cite{HuberRambauSantos_CayleyTrick_2000}: the Cayley embedding contains the triangulated polytopes as parallel slices, and intersecting with a suitable hyperplane recovers the fine mixed subdivision (cf.~\Cref{fig_mix_sub_exmaple}).

\begin{example}\label{ex_Cayley_cube_oruga3}
Consider the unit squares
\[
P_1=P_2=\operatorname{conv}\{(0,0),(1,0),(0,1),(1,1)\},
\]
and let
\[
K=C(P_1,P_2)
\]
be their Cayley embedding. Then $K$ is a $3$-dimensional cube in $\mathbb{R}^4$ with vertices
\[
(1,0,0,0),(1,0,1,0),(1,0,0,1),(1,0,1,1),
\]
\[
(0,1,0,0),(0,1,1,0),(0,1,0,1),(0,1,1,1).
\]
The first four vertices belong to the copy $\{e_1\}\times P_1$, while the last four belong to $\{e_2\}\times P_2$. Geometrically, these are the left and right facets of the cube, respectively.

Assign the following heights to the vertices of $K$:
\[
h(1,0,0,0)=1,\quad
h(1,0,1,0)=2,\quad
h(1,0,0,1)=2,\quad
h(1,0,1,1)=4,
\]
\[
h(0,1,0,0)=2,\quad
h(0,1,1,0)=4,\quad
h(0,1,0,1)=4,\quad
h(0,1,1,1)=8.
\]
The induced regular subdivision of $K$ is shown in \Cref{fig_height_cube}.

\begin{figure}[ht]
    \centering
    \includegraphics{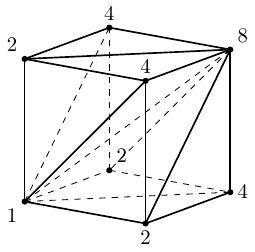}
    \caption{A regular subdivision of the cube $K=C(P_1,P_2)$ together with the height function~$\h$.}
    \label{fig_height_cube}
\end{figure}

Applying the Cayley trick transforms this regular subdivision into the fine mixed subdivision of the Minkowski sum $P_1+P_2$ shown in \Cref{fig_mix_sub_exmaple}.

\begin{figure}[ht]
    \centering
    \includegraphics[]{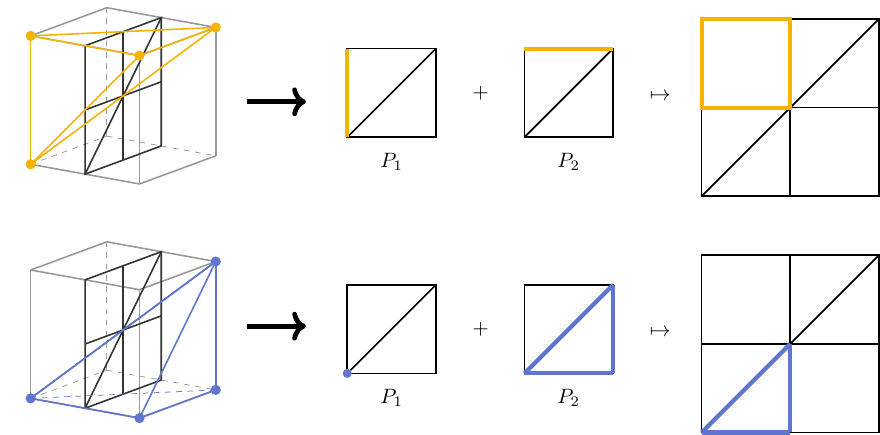}
    \caption{The Cayley trick applied to a regular subdivision of the cube.}
    \label{fig_mix_sub_exmaple}
\end{figure}

Geometrically, the Cayley trick replaces the subdivision of the Cayley embedding by its ``cross-section'' through the Minkowski sum. Each cell of the mixed subdivision records how the corresponding cell of the subdivision decomposes into summands from~$P_1$ and~$P_2$.
\end{example}


\subsection{Tropical Cayley trick}

The Cayley trick and tropical duality are closely related, as explained in \cite{santos2005cayley,Joswig2017,joswig_essentials_tropical_2021,d2023realizing}. We now describe this correspondence explicitly.

Let $P_1,\ldots,P_k$ be polytopes in $\mathbb{R}^d$, and let
\[
C(P_1,\ldots,P_k)\subseteq \mathbb{R}^k\times\mathbb{R}^d
\]
be their Cayley embedding. A height function
\[
h:\operatorname{Ver}(C(P_1,\ldots,P_k))\longrightarrow\mathbb{R}
\]
induces a regular subdivision $\mathcal{S}^h$ of the Cayley embedding. The same height function also determines an arrangement
\[
\mathsf{T}^h=\{T_i^h\}_{i=1}^k
\]
of tropical hypersurfaces, where
\[
T_i^h(\mathbf{x})
=
\min\bigl\{-h(\{e_i\}\times R)+\langle R,\mathbf{x}\rangle :
R\in\operatorname{Ver}(P_i)\bigr\},
\]
for $\mathbf{x}\in\mathbb{R}^d$ and $i\in[k]$.

Each tropical hypersurface $T_i^h$ induces a polyhedral decomposition $\mathcal{T}_i^h$ of $\mathbb{R}^d$. The common refinement of these decompositions is the tropical cell decomposition $\mathscr{T}^h$ of the arrangement $\mathsf{T}^h$. Its cells are precisely the intersections
\[
\tilde Q=\bigcap_{i=1}^k \tilde Q_i,
\]
where $\tilde Q_i$ is a cell of $\mathcal{T}_i^h$ for each $i\in[k]$.

Every face of $\mathcal{S}^h$ containing at least one vertex from each copy $\{e_i\}\times P_i$ is of the form
\[
Q=
\operatorname{conv}\bigl((\{e_1\}\times Q_1)\cup\cdots\cup(\{e_k\}\times Q_k)\bigr),
\]
where each $Q_i$ is a non-empty face of $P_i$. For each $i\in[k]$, the cell $\psi_i(Q_i)$ of $\mathcal{T}_i^h$ is defined as follows\footnote{This is the same cell as $\varphi_i(Q_i)$ from~\Cref{pro: tropical_map}. We use the notation $\psi_i$ here to emphasize that the height function is defined on the vertices of the face $\{e_i\}\times P_i$ of the Cayley embedding rather than on $P_i$ itself. The resulting cells, however, are identical.}
\[
\left\{
\mathbf{x}\in\mathbb{TP}^d:
\begin{aligned}
&-\h(\{e_i\}\times q)+\langle q,\mathbf{x}\rangle
\le
-\h(\{e_i\}\times p)+\langle p,\mathbf{x}\rangle \\
&\forall\, q\in\operatorname{Ver}(Q_i),\;
p\in\operatorname{Ver}(P_i)
\end{aligned}
\right\}.
\]
We then associate to $Q$ the cell
\[
\psi(Q)=
\bigcap_{i=1}^k \psi_i(Q_i).
\]
For convenience, we write
\[
\tilde Q_i:=\psi_i(Q_i)
\qquad\text{and}\qquad
\tilde Q:=\psi(Q).
\]

The following~\Cref{thm_Cayley_tropicalArrangement} is the corresponding statement to~\Cref{pro: tropical_map} after applying tropical duality simultaneously to the \(k\) Cayley summands.

\begin{theorem}\label{thm_Cayley_tropicalArrangement}
Let $\mathcal{S}^h$ be the regular subdivision of $C(P_1,\ldots,P_k)$ induced by a height function $h$, and let $\mathscr{T}^h$ be the tropical cell decomposition of the corresponding arrangement of tropical hypersurfaces. Then the map
\[
Q\longmapsto\psi(Q)
\]
is an order-reversing bijection between the faces of $\mathcal{S}^h$ containing at least one vertex from each copy $\{e_i\}\times P_i$ and the cells of $\mathscr{T}^h$.
\end{theorem}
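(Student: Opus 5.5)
We reduce \Cref{thm_Cayley_tropicalArrangement} to the single-polytope duality of \Cref{pro: tropical_map}, applied to the Cayley embedding $P:=C(P_1,\ldots,P_k)$ itself. The tropical hypersurface of $P$ with height $h$ is
\[
T^h(\mathbf{y},\mathbf{x})=\min_{i\in[k]}\ \min_{R\in\operatorname{Ver}(P_i)}\bigl\{-h(\{e_i\}\times R)+y_i+\langle R,\mathbf{x}\rangle\bigr\}
=\min_{i\in[k]}\bigl(y_i+T_i^h(\mathbf{x})\bigr),
\]
for $(\mathbf{y},\mathbf{x})\in\mathbb{R}^k\times\mathbb{R}^d$. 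By \Cref{pro: tropical_map}, faces $Q$ of $\mathcal{S}^h$ correspond order-reversingly to cells $\varphi(Q)$ of $\mathcal{T}^h$. The cell $\varphi(Q)$ is the set of $(\mathbf{y},\mathbf{x})$ at which the minimum above is attained exactly on the vertices of $Q$ (and possibly more), in the closed sense given by the inequalities.

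The first step is to characterize the faces $Q$ that meet every copy $\{e_i\}\times P_i$. For such a face, write $Q=\operatorname{conv}\bigcup_i(\{e_i\}\times Q_i)$. We show that $(\mathbf{y},\mathbf{x})\in\varphi(Q)$ if and only if two conditions hold: the values $y_i+T_i^h(\mathbf{x})$ all coincide, and for each $i$ the inner minimum $T_i^h(\mathbf{x})$ is attained on all of $\operatorname{Ver}(Q_i)$, i.e.\ $\mathbf{x}\in\psi_i(Q_i)$. This is a direct unpacking of the defining inequalities. Comparing $q\in\{e_i\}\times Q_i$ with $p\in\{e_i\}\times P_i$ gives the $\psi_i$ condition. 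Comparing across copies forces $y_i+T^h_i(\mathbf{x})\le y_j+T^h_j(\mathbf{x})$ for all $i,j$, since every copy contains a vertex of $Q$.

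The second step projects to $\mathbf{x}$. The conditions on $\mathbf{y}$ are solvable for any $\mathbf{x}$ by setting $y_i=-T_i^h(\mathbf{x})+c$, and the solution is unique up to the common shift $c$. Hence the projection $\pi(\varphi(Q))$ equals $\bigcap_i\psi_i(Q_i)=\psi(Q)$, and $\varphi(Q)$ is $\psi(Q)$ lifted along the line $\mathbb{R}(1,\ldots,1,0)$, which lies in the lineality space. Consequently $\pi$ restricted to these cells is an inclusion-preserving bijection onto its image, and $Q\mapsto\psi(Q)$ inherits order-reversal and injectivity from $\varphi$.

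The third step, which is the main obstacle, is surjectivity: every cell of $\mathscr{T}^h$ should arise this way. Take a cell $\bigcap_i\tilde Q_i$ of the common refinement, chosen to be nonempty, and a point $\mathbf{x}$ in its relative interior. Define $y_i=-T_i^h(\mathbf{x})$. Then the minimizing set of $T^h$ at $(\mathbf{y},\mathbf{x})$ is $\bigcup_i\{e_i\}\times(\text{minimizers of }T_i^h(\mathbf{x}))$. Let $Q$ be the face of $\mathcal{S}^h$ whose dual cell contains $(\mathbf{y},\mathbf{x})$ in its relative interior; by construction $Q$ meets every copy. Its components $Q_i$ are exactly the faces whose dual cells $\tilde Q_i$ contain $\mathbf{x}$ in their relative interiors, so $\psi(Q)$ is the given cell.

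Care is needed to check two things. First, the minimizers of $T_i^h$ at $\mathbf{x}$ must form the vertex set of a face $Q_i$ of the induced subdivision of $P_i$, which follows because $\{e_i\}\times P_i$ is a face of $P$ and $\mathcal{S}^h$ restricts to it. Second, $\psi(Q)$ must be a cell of $\mathscr{T}^h$ and not merely a union of cells. For this second point we use that intersections of relatively open cells from the refinements are relatively open, so that matching relative interiors identifies the cells.
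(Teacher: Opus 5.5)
Your proposal is correct and takes essentially the same route as the paper. Both arguments read the faces of $\mathcal{S}^h$ as the minimizers of the lifted functional $(-1,\mathbf{y},\mathbf{x})$ on the Cayley embedding, choose the $y_i$ to equalize the per-copy minima (unique up to adding a multiple of $(1,\ldots,1)$), and pick $\mathbf{x}$ in a relative interior to get surjectivity. Your description of $\varphi(Q)$ as the lift of $\psi(Q)$ via $y_i=c-T_i^h(\mathbf{x})$ gives injectivity and order-reversal somewhat more cleanly than the paper's direct argument. Choosing $\mathbf{x}$ in the relative interior of the intersection, rather than of each $\tilde Q_i$, also avoids a small issue in the paper's surjectivity step about how a cell is represented as an intersection.
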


\begin{proof}
The faces of $\mathcal{S}^h$ are the projections of the upper faces of the lifted Cayley embedding of $C(P_1,\dots,P_k)$. Equivalently, they are the faces minimizing linear functionals of the form
\[
(-1,\mathbf{y},\mathbf{x}),
\]
where $\mathbf{y}=(y_1,\dots,y_k)\in\mathbb{R}^k$ and $\mathbf{x}\in\mathbb{R}^d$. Thus, a face $Q$ consists of all vertices $\{e_i\}\times R$, with $R\in\operatorname{Ver}(P_i)$, for which
\begin{equation}\label{eq_minimizedfaces}
-h(\{e_i\}\times R)+y_i+\langle R,\mathbf{x}\rangle
\end{equation}
is minimized over all $R\in\operatorname{Ver}(P_i)$ and $i\in[k]$.

Suppose that $Q$ contains at least one vertex from each copy $\{e_i\}\times P_i$, and write
\[
Q=
\operatorname{conv}\bigl((\{e_1\}\times Q_1)\cup\cdots\cup(\{e_k\}\times Q_k)\bigr).
\]
Fix $i\in[k]$. Since the term $y_i$ in~\eqref{eq_minimizedfaces} is constant on the copy $\{e_i\}\times P_i$, the face $Q_i$ is precisely the face of $P_i$ minimizing the linear functional
\[
-h(\{e_i\}\times R)+\langle R,\mathbf{x}\rangle,
\qquad
R\in\operatorname{Ver}(P_i).
\]
Hence,
\[
\mathbf{x}\in\tilde Q_i=\psi_i(Q_i),
\]
so each $\tilde Q_i$ is non-empty. Therefore,
\[
\mathbf{x}\in\bigcap_{i=1}^k\psi_i(Q_i)
=\psi(Q),
\]
and thus $\psi(Q)$ is a non-empty cell of $\mathscr{T}^h$. This proves that the map
\[
Q\longmapsto\psi(Q)
\]
is well defined.

Conversely, let
\[
\tilde Q=\bigcap_{i=1}^k\tilde Q_i
\]
be a non-empty cell of $\mathscr{T}^h$, where each $\tilde Q_i$ is a non-empty cell of $\mathcal{T}_i^h$. Choose any $\mathbf{x}\in\tilde Q$ in the relative interior of each $\tilde Q_i$. For each $i\in[k]$, the cell $\tilde Q_i$ corresponds to the face~$Q_i$ of $P_i$ minimizing
\[
-h(\{e_i\}\times R)+\langle R,\mathbf{x}\rangle,
\qquad
R\in\operatorname{Ver}(P_i).
\]
The minimum values need not coincide for different $i$. However, choosing and adding constants $y_1,\dots,y_k$ so that these minima become equal, and setting $\mathbf{y}=(y_1,\dots,y_k)$, the upper face minimizing $(-1,\mathbf{y},\mathbf{x})$ projects precisely to
\[
Q=
\operatorname{conv}\bigl((\{e_1\}\times Q_1)\cup\cdots\cup(\{e_k\}\times Q_k)\bigr).
\]
Consequently,
\[
\psi(Q)
=\bigcap_{i=1}^k\psi_i(Q_i)
=\bigcap_{i=1}^k\tilde Q_i
=\tilde Q.
\]
Therefore every non-empty cell of $\mathscr{T}^h$ is the image of a face of $\mathcal{S}^h$ containing at least one vertex from each copy $\{e_i\}\times P_i$. Thus, the map $\psi$ is surjective.

To prove injectivity, observe that, for a fixed $\mathbf{x}$, the vector $\mathbf{y}$ is uniquely determined up to adding a multiple of $(1,\dots,1)$. Adding the same constant to all coordinates of $\mathbf{y}$ adds the same constant to every value of the linear functional \eqref{eq_minimizedfaces}, and therefore does not change its minimizing face. Hence the face $Q$ is uniquely determined by the cell $\tilde Q=\psi(Q)$, showing that $\psi$ is injective.

Finally, suppose
\[
Q=
\operatorname{conv}\bigl((\{e_1\}\times Q_1)\cup\cdots\cup(\{e_k\}\times Q_k)\bigr)
\subseteq
Q'=
\operatorname{conv}\bigl((\{e_1\}\times Q_1')\cup\cdots\cup(\{e_k\}\times Q_k')\bigr).
\]
Then $Q_i\subseteq Q_i'$ for every $i\in[k]$. Hence every point of $\psi_i(Q_i')$ satisfies the defining inequalities for $\psi_i(Q_i)$, and therefore
\[
\psi_i(Q_i')\subseteq\psi_i(Q_i).
\]
Intersecting over all $i$ gives
\[
\psi(Q')
=
\bigcap_{i=1}^k\psi_i(Q_i')
\subseteq
\bigcap_{i=1}^k\psi_i(Q_i)
=
\psi(Q),
\]
showing that $\psi$ is order reversing.
\end{proof}

The following corollary appears in~\cite[Corollary~4.9]{joswig_essentials_tropical_2021}; see also~\cite[Theorem~5.3]{d2023realizing}. When $P_1,\dots,P_k$ are all equal to the simplex $\Delta_{d-1}$, the Cayley embedding is the product of simplices \(\Delta_{k-1}\times\Delta_{d-1}\), the corresponding tropical hypersurfaces are tropical hyperplanes, and the associated mixed subdivisions are mixed subdivisions of the dilated simplex~\(k\Delta_{d-1}\). Thus, the corollary recovers the classical duality between mixed subdivisions of \(k\Delta_{d-1}\) and arrangements of tropical hyperplanes~\cite{santos2005cayley,develin_tropicalConvexity_2004,fink_stiefel_2015}.

\begin{corollary}[{cf.\ \cite[Corollary 4.9]{joswig_essentials_tropical_2021}, \cite[Theorem 5.3]{d2023realizing}}]
\label{cor_tropicalCayleyMixedSubdivision}
Let $\mathcal{S}^h$ be the regular subdivision of $C(P_1,\ldots,P_k)$ induced by a height function $h$. Then the tropical cell decomposition~$\mathscr{T}^h$ of the corresponding arrangement of tropical hypersurfaces is dual to the mixed subdivision~$\mathcal{M}(\mathcal{S}^h)$.
\end{corollary}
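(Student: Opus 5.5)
The plan is to compose two dualities already established in the paper: the tropical Cayley correspondence of \Cref{thm_Cayley_tropicalArrangement} and the Cayley trick of \Cref{prop_Cayley_mixedsubdivision}. Write $\mathcal{F}$ for the family of faces of $\mathcal{S}^h$ that contain at least one vertex from each copy $\{e_i\}\times P_i$. Both results are statements about this same family:
\begin{itemize}
\item \Cref{thm_Cayley_tropicalArrangement} gives an order-reversing bijection $Q\mapsto\psi(Q)$ from $\mathcal{F}$ onto the cells of $\mathscr{T}^h$.
\item \Cref{prop_Cayley_mixedsubdivision}, applied to $\mathcal{S}=\mathcal{S}^h$, gives an order-preserving bijection $Q\mapsto M(Q)$ from $\mathcal{F}$ onto the faces of $\mathcal{M}(\mathcal{S}^h)$.
\end{itemize}
The proof therefore consists of defining $\Phi:=\psi\circ M^{-1}$ from the faces of $\mathcal{M}(\mathcal{S}^h)$ to the cells of $\mathscr{T}^h$ and checking its properties.

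First, $\Phi$ is a bijection, being a composition of bijections. Second, it is order-reversing: if $M(Q)\subseteq M(Q')$, then $Q\subseteq Q'$ because $M$ is order-preserving with an order-preserving inverse, and hence $\psi(Q')\subseteq\psi(Q)$.

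The inverse $M^{-1}$ must also be order-preserving, not merely $M$. I would justify this directly. A mixed cell $Q_1+\cdots+Q_k$ determines its summands $Q_i$, since each $Q_i$ is the face of $Q_1+\cdots+Q_k$ selected in $P_i$ by the fixed labeling. An inclusion of cells $Q_1+\cdots+Q_k\subseteq Q'_1+\cdots+Q'_k$ in a proper mixed subdivision forces $Q_i\subseteq Q'_i$ for each $i$, and this gives $Q\subseteq Q'$.

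For the dimension count that makes ``dual'' precise, I would record that if $\dim M(Q)=j$ then $\dim\psi(Q)=d-j$ modulo the common lineality space. Here $d$ is the dimension of the Minkowski sum, which is the relevant ambient dimension after quotienting by the lineality space $\bigcap_i V_{P_i}^\perp$. This follows from \Cref{thm_trop_cells} applied summand-wise, intersecting the cells $\tilde Q_i$.

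The main obstacle I expect is the well-definedness of the order-preserving inverse, that is, ensuring that faces of $\mathcal{M}(\mathcal{S}^h)$ correspond to faces of $\mathcal{S}^h$ meeting every copy with the summands recovered uniquely. If the labeled-cell argument above proves delicate, I would instead cite the one-picture proof of \cite{HuberRambauSantos_CayleyTrick_2000}: slicing at $(\tfrac1k,\dots,\tfrac1k)\times\mathbb{R}^d$ is an incidence-preserving bijection on the faces in $\mathcal{F}$.
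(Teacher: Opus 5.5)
Your proposal is correct and is the same argument as the paper's proof. The paper also composes the order-preserving bijection $Q\mapsto M(Q)$ from \Cref{prop_Cayley_mixedsubdivision} with the order-reversing bijection $Q\mapsto\psi(Q)$ from \Cref{thm_Cayley_tropicalArrangement} on the common family of faces of $\mathcal{S}^h$ meeting every copy $\{e_i\}\times P_i$. The paper treats the order-preservation of $M^{-1}$ as part of the cited Cayley trick and gives no dimension count, since ``dual'' here only means an order-reversing bijection. Your extra remarks are therefore optional; if you keep the dimension claim, it is more cleanly justified via normal cones of the lifted Cayley polytope than by intersecting the cells $\tilde Q_i$.
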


\begin{proof}
By Proposition~\ref{prop_Cayley_mixedsubdivision}, the map
\[
Q\longmapsto M(Q)
\]
is an order-preserving bijection between the faces of $\mathcal{S}^h$ containing at least one vertex from each copy $\{e_i\}\times P_i$ and the faces of the mixed subdivision $\mathcal{M}(\mathcal{S}^h)$.

By Theorem~\ref{thm_Cayley_tropicalArrangement}, the map
\[
Q\longmapsto\psi(Q)
\]
is an order-reversing bijection between the same collection of faces and the cells of $\mathscr{T}^h$.

Composing these two bijections yields an order-reversing bijection between the faces of $\mathcal{M}(\mathcal{S}^h)$ and the cells of $\mathscr{T}^h$. Hence $\mathscr{T}^h$ is dual to the mixed subdivision $\mathcal{M}(\mathcal{S}^h)$.
\end{proof}

As in~\Cref{thm_trop_cells}, the cells of the tropical cell decomposition $\mathscr{T}^h$ admit the following explicit description.

\begin{theorem}\label{thm_trop_cells_arrangement}
The correspondence
\[
\psi:\mathcal{S}^h\longrightarrow\mathscr{T}^h
\]
sends each face
\[
Q=
\operatorname{conv}\bigl((\{e_1\}\times Q_1)\cup\cdots\cup(\{e_k\}\times Q_k)\bigr)
\]
to the polyhedron
\[
\widetilde{Q}
=
\operatorname{conv}\{\widetilde{C}^\circ:Q\subseteq C\}
+
\bigcap_{i=1}^k NC_{P_i}^{\mathrm{in}}(Q_i),
\]
where $C$ ranges over the facets of $\mathcal{S}^h$ containing $Q$, and $\widetilde{C}^\circ$ is any representative point of $\psi(C)$. Moreover, its lineality space is
\[
\bigcap_{i=1}^k V_{P_i}^{\perp}= V_{P_1+\cdots+P_k}^\perp
\subseteq
\bigcap_{i=1}^k NC_{P_i}^{\mathrm{in}}(Q_i).
\]
\end{theorem}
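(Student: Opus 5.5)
We prove the statement by working one Cayley summand at a time, using \Cref{thm_trop_cells}, and then intersecting. Write $Q=\operatorname{conv}\bigl(\bigcup_i \{e_i\}\times Q_i\bigr)$ and $\tilde Q=\bigcap_i\tilde Q_i$ as in \Cref{thm_Cayley_tropicalArrangement}. Each $\tilde Q_i$ is a polyhedron cut out by the inequalities
\[
-\h(\{e_i\}\times q)+\langle q,\mathbf{x}\rangle\le -\h(\{e_i\}\times p)+\langle p,\mathbf{x}\rangle .
\]
So $\tilde Q$ is the polyhedron defined by the union of these inequality systems over all $i$.

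The first step is to compute the recession cone. The recession cone of a nonempty polyhedron $\{\mathbf{x}: A\mathbf{x}\le b\}$ is $\{\mathbf{x}: A\mathbf{x}\le 0\}$. Dropping the heights therefore shows that the recession cone of $\tilde Q$ is
\[
\bigcap_i\{\mathbf{x}:\langle q,\mathbf{x}\rangle\le\langle p,\mathbf{x}\rangle,\ q\in Q_i,\ p\in P_i\}=\bigcap_i NC^{\mathrm{in}}_{P_i}(Q_i).
\]
Nonemptiness of $\tilde Q$ was established in \Cref{thm_Cayley_tropicalArrangement}. Similarly, the lineality space is the largest subspace in this cone, namely $\bigcap_i V_{P_i}^\perp$. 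Since $V_{P_1+\cdots+P_k}=\sum_i V_{P_i}$, taking orthogonal complements gives $V_{P_1+\cdots+P_k}^\perp=\bigcap_i V_{P_i}^\perp$. Each $V_{P_i}^\perp$ lies in $NC^{\mathrm{in}}_{P_i}(Q_i)$, so this space is contained in the cone.

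The second step identifies the bounded part using Minkowski--Weyl: $\tilde Q=B+D$, where $B$ is the convex hull of the minimal faces of $\tilde Q$ (these are points modulo the lineality space). By the order-reversing bijection of \Cref{thm_Cayley_tropicalArrangement}, the minimal faces of $\tilde Q$ are exactly the cells $\psi(C)$ for faces $C\supseteq Q$ of $\mathcal{S}^h$ that are maximal among faces containing a vertex from each copy. These are the facets of $\mathcal{S}^h$: every facet of the subdivision of the full-dimensional Cayley polytope meets every copy, since a full-dimensional simplex or cell must involve all $e_i$ directions. For such a facet $C$, the cell $\psi(C)$ is, by dimension counting through the duality, a single point modulo the lineality space. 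We denote a representative of it by $\tilde C^\circ$.

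The third step concludes. Since the faces of $\tilde Q$ are exactly the $\psi(Q')$ with $Q'\supseteq Q$ (order reversal), the minimal faces of $\tilde Q$ are the translates $\tilde C^\circ+V^\perp_{P_1+\cdots+P_k}$ with $C\supseteq Q$ a facet. Minkowski--Weyl for polyhedra with lineality (decompose along $L=\bigcap_i V_{P_i}^\perp$ and its complement, where the polyhedron becomes pointed) then gives
\[
\tilde Q=\operatorname{conv}\{\tilde C^\circ\}+\bigcap_i NC^{\mathrm{in}}_{P_i}(Q_i).
\]
The main obstacle is the second step: justifying that the facets of $\mathcal{S}^h$ containing $Q$ give exactly the minimal faces of $\tilde Q$, and that each $\psi(C)$ is a point modulo lineality. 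I would argue this by noting that $\psi(C)$ is the solution set of the equalities among the heights of the vertices of $C$. Because $C$ affinely spans the Cayley polytope, these equalities determine $\mathbf{x}$, together with the auxiliary $\mathbf{y}$ from the proof of \Cref{thm_Cayley_tropicalArrangement}, uniquely modulo the lineality.
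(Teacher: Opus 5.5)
Your proposal is correct and follows essentially the same route as the paper. Like the paper, you write $\widetilde Q=\bigcap_i\psi_i(Q_i)$, get the recession cone and lineality space by intersecting the summandwise ones (equivalently, by dropping the heights), and identify the bounded part with $\operatorname{conv}\{\widetilde C^\circ\}$ over facets $C\supseteq Q$; your justification of that last step (minimal faces of $\widetilde Q$ correspond, via the order-reversing bijection, to facets containing $Q$, each a single point modulo $\bigcap_i V_{P_i}^\perp$ because $C$ affinely spans the Cayley polytope, followed by Minkowski--Weyl with lineality) is more detailed than the paper, which simply asserts it.
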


\begin{proof}
By definition,
\[
\widetilde{Q}=\psi(Q)=\bigcap_{i=1}^k\psi_i(Q_i).
\]
By~\Cref{thm_trop_cells}, the recession cone of $\psi_i(Q_i)$ is $NC_{P_i}^{\mathrm{in}}(Q_i)$. Since the recession cone of an intersection of polyhedra is the intersection of their recession cones, it follows that the recession cone of $\widetilde{Q}$ is
\[
\bigcap_{i=1}^k NC_{P_i}^{\mathrm{in}}(Q_i).
\]

The lineality space of $\widetilde{Q}$ is the largest linear subspace of its recession cone. Since the lineality space of an intersection of polyhedra is the intersection of their lineality spaces, and the lineality space of $NC_{P_i}^{\mathrm{in}}(Q_i)$ is $V_{P_i}^{\perp}$, we obtain
\[
\bigcap_{i=1}^k V_{P_i}^{\perp} = V_{P_1+\cdots+P_k}^{\perp}.
\]

Finally, after quotienting by the lineality space, the bounded part of $\widetilde{Q}$ has vertices~$\widetilde{C}^{\circ}$, where $C$ ranges over the facets of $\mathcal{S}^h$ containing $Q$, and $\widetilde{C}^{\circ}$ is any representative point of $\psi(C)$. Therefore,
\[
\widetilde{Q}
=
\operatorname{conv}\{\widetilde{C}^{\circ}:Q\subseteq C\}
+
\bigcap_{i=1}^k NC_{P_i}^{\mathrm{in}}(Q_i).
\qedhere
\]
\end{proof}

\begin{corollary}
The lineality space of the tropical cell decomposition $\mathscr{T}^h$ is the orthogonal complement
\[
V_{P_1+\cdots+P_k}^{\perp}.
\]
\end{corollary}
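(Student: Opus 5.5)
The plan is to read the statement off \Cref{thm_trop_cells_arrangement} applied to the cells of $\mathscr{T}^h$ with the smallest possible recession cone. Every cell $\widetilde{Q}$ of $\mathscr{T}^h$ contains its own recession cone $\bigcap_{i} NC_{P_i}^{\mathrm{in}}(Q_i)$. By \Cref{thm_trop_cells_arrangement}, every such cone contains $V_{P_1+\cdots+P_k}^\perp$. So the whole arrangement, and hence its common refinement $\mathscr{T}^h$, is invariant under translation by this subspace. Concretely, for $\mathbf{v}\in V_{P_1+\cdots+P_k}^\perp=\bigcap_i V_{P_i}^\perp$ I would check directly that $\langle R,\mathbf{x}+\mathbf{v}\rangle-\langle R,\mathbf{x}\rangle=\langle R,\mathbf{v}\rangle$ is constant as $R$ ranges over $\operatorname{Ver}(P_i)$. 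This holds because $\mathbf{v}$ is orthogonal to all differences of vertices of $P_i$. Consequently, each $T_i^h$ has the same minimizing vertices at $\mathbf{x}$ and at $\mathbf{x}+\mathbf{v}$. Thus every cell of every $\mathcal{T}_i^h$, and hence every cell of $\mathscr{T}^h$, is invariant under $V_{P_1+\cdots+P_k}^\perp$. This gives the containment: the lineality space of the decomposition contains $V_{P_1+\cdots+P_k}^\perp$.

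For the reverse containment, I would exhibit a cell whose lineality space is exactly $V_{P_1+\cdots+P_k}^\perp$. Take any facet $C$ of $\mathcal{S}^h$. By \Cref{prop_Cayley_mixedsubdivision}, it corresponds to a full-dimensional cell $M(C)=C_1+\cdots+C_k$ of the mixed subdivision, and this cell is interior. Its summands therefore span $V_{P_1+\cdots+P_k}$. The recession cone $\bigcap_i NC_{P_i}^{\mathrm{in}}(C_i)$ consists of vectors $\mathbf{x}$ for which each $C_i$ lies in the minimizing face of $\langle\cdot,\mathbf{x}\rangle$ on $P_i$. Since $C_1+\cdots+C_k$ is full-dimensional in $P_1+\cdots+P_k$ and meets the interior, a functional minimized on all of it must be constant on $P_1+\cdots+P_k$. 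Such a functional lies in $V_{P_1+\cdots+P_k}^\perp$. Equivalently, as in \Cref{sec_tropical_cells}, $\psi(C)=\widetilde{C}^\circ+V_{P_1+\cdots+P_k}^\perp$. Hence no larger subspace fixes every cell, and the lineality space of $\mathscr{T}^h$ is exactly $V_{P_1+\cdots+P_k}^\perp$.

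The identification $\bigcap_i V_{P_i}^\perp=V_{P_1+\cdots+P_k}^\perp$ is immediate, since $V_{P_1+\cdots+P_k}=\sum_i V_{P_i}$.

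The main obstacle is the second step: justifying that the recession cone of the cell dual to a facet is no larger than $V^\perp$. The cleanest route is to use that a full-dimensional mixed cell is a subset of $P_1+\cdots+P_k$ whose affine span is the whole ambient affine span. A linear functional minimized over all of $P_1+\cdots+P_k$ at every point of that cell is then constant on that affine span, and so lies in $V^\perp$.
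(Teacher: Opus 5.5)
Your proof is correct, but it is organized differently from the paper's. The paper reads the result directly off the ``Moreover'' clause of \Cref{thm_trop_cells_arrangement}: \emph{every} cell $\widetilde{Q}$ already has lineality space exactly $\bigcap_i V_{P_i}^\perp=V_{P_1+\cdots+P_k}^\perp$. This holds because the lineality space of $NC_{P_i}^{\mathrm{in}}(Q_i)$ is $V_{P_i}^\perp$ for any nonempty $Q_i$: if both $\mathbf{x}$ and $-\mathbf{x}$ lie in it, then $\langle\cdot,\mathbf{x}\rangle$ attains both its minimum and its maximum over $P_i$ on $Q_i$, hence is constant on $P_i$. Lineality spaces also commute with intersections. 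So the step you flag as the main obstacle is not really one: the upper bound holds uniformly for all cells by this short argument, and there is no need to single out a cell dual to a facet.

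Your route does buy two things. First, it checks the lower bound directly from the tropical polynomials via translation invariance, so that part is self-contained. Second, for facet-dual cells it gives the stronger fact that the entire recession cone, not just its lineality space, equals $V_{P_1+\cdots+P_k}^\perp$, so these cells are points modulo the lineality space. What it gives up is the statement that all cells share the same lineality space.

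If you keep the facet argument, say explicitly why a facet $C$ of $\mathcal{S}^h$ meets every copy $\{e_i\}\times P_i$, so that $M(C)$ is defined. Otherwise $C$ would lie in the proper face of the Cayley embedding where the $i$-th coordinate of $\mathbb{R}^k$ vanishes, and could not be full-dimensional. You can also drop the phrase ``meets the interior'': constancy of $\langle\cdot,\mathbf{x}\rangle$ on each $C_i$ already forces $\mathbf{x}\perp\sum_i V_{C_i}=V_{P_1+\cdots+P_k}$.
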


\begin{proof}
The claim follows immediately from \Cref{thm_trop_cells_arrangement}, since every cell of $\mathscr{T}^h$ has lineality space $V_{P_1+\cdots+P_k}^{\perp}$.
\end{proof}

\begin{corollary}\label{cor_inteorior_vs_tropBounded}
If $P_1+\cdots+P_k$ is full-dimensional, then:
\begin{enumerate}
    \item the bounded cells of $\mathscr{T}^h$ are dual to the interior faces of the mixed subdivision $\mathcal{M}(\mathcal{S}^h)$; \item the complex of interior faces of $\mathcal{S}^h$ is dual to the complex of bounded cells of $\mathscr{T}^h$.
\end{enumerate}
\end{corollary}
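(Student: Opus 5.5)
We compose the duality of \Cref{cor_tropicalCayleyMixedSubdivision} with the interior/boundary correspondence of \Cref{prop_Cayley_mixedsubdivision}, and then identify boundedness through the recession cones in \Cref{thm_trop_cells_arrangement}.

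For (1), let $M=M(Q)=Q_1+\cdots+Q_k$ be a cell of $\mathcal{M}(\mathcal{S}^h)$ with dual cell $\widetilde{Q}=\psi(Q)$. By \Cref{thm_trop_cells_arrangement}, $\widetilde{Q}$ is a polytope plus the cone $\bigcap_i NC^{\mathrm{in}}_{P_i}(Q_i)$, so $\widetilde{Q}$ is bounded exactly when this cone is $\{0\}$. Full-dimensionality of $P=P_1+\cdots+P_k$ gives $V_P^\perp=0$, so the lineality space vanishes and "bounded" has its honest meaning. The key claim is
\[
\bigcap_{i=1}^k NC^{\mathrm{in}}_{P_i}(Q_i)=NC^{\mathrm{in}}_{P}(M).
\]
Here the right-hand side is the set of $\mathbf{x}$ for which $\langle \mathbf{x},\cdot\rangle$ is minimized on $P$ at every point of $M$. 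To prove it, I would use that a sum $\sum q_i$ minimizes $\langle\mathbf{x},\cdot\rangle$ over $P$ if and only if each $q_i$ minimizes it over $P_i$, since the minimum over a Minkowski sum is the sum of the minima.

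Given the claim, $NC^{\mathrm{in}}_P(M)$ equals $V_P^\perp=\{0\}$ when $M$ is an interior cell of the subdivision. When $M$ lies in a proper face of $P$, the cone contains a nonzero inner normal of that face. Hence $\widetilde{Q}$ is bounded if and only if $M$ is interior, and the order-reversing bijection of \Cref{cor_tropicalCayleyMixedSubdivision} restricts to an order-reversing bijection between interior mixed cells and bounded tropical cells.

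For (2), I would restrict the bijection $Q\mapsto M(Q)$ of \Cref{prop_Cayley_mixedsubdivision}, which sends interior faces of $\mathcal{S}^h$ to interior faces of $\mathcal{M}(\mathcal{S}^h)$, and compose it with (1). One point must be checked: every interior face of $\mathcal{S}^h$ contains a vertex from each copy $\{e_i\}\times P_i$, so that it lies in the domain of $M$. A face missing the copy $i$ lies in the facet of $C(P_1,\dots,P_k)$ cut out by the coordinate $y_i=0$ in the $\mathbb{R}^k$-factor. That facet is part of the boundary, so such a face is not interior.

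The converse direction, that boundary faces of the Cayley subdivision meeting all copies map to boundary mixed cells, is exactly the last sentence of \Cref{prop_Cayley_mixedsubdivision}, which I will take as given. Bounded cells are closed under taking faces: a face of a bounded cell is a larger cell in the reversed order, and faces of interior faces need not be interior, but the reverse closure is what is needed. So the bijection respects the complex structures, and (2) follows.

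The main obstacle I expect is the normal cone identity for Minkowski sums. Care is needed because the $Q_i$ are cells of a subdivision rather than faces of $P_i$, so $NC^{\mathrm{in}}_{P_i}(Q_i)$ must be read through the definition \eqref{eq_inner_normal_cone} for subdivision cells. The termwise-minimization argument still works verbatim, since it only uses the points of each $Q_i$.
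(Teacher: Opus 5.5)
Your proposal is correct and follows the same route as the paper: you compose the duality of \Cref{cor_tropicalCayleyMixedSubdivision} with the interior-face correspondence of \Cref{prop_Cayley_mixedsubdivision}, and you use full-dimensionality to make the lineality space vanish. Your normal-cone identity $\bigcap_i NC^{\mathrm{in}}_{P_i}(Q_i)=NC^{\mathrm{in}}_P(M)$ and your check that every interior face of $\mathcal{S}^h$ meets every copy spell out steps that the paper's one-line ``hence'' leaves implicit (a minor nit: the face $\{y_i=0\}$ of the Cayley polytope need not be a facet, but being a proper face is all you use).
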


\begin{proof}
By \Cref{cor_tropicalCayleyMixedSubdivision}, the cells of $\mathscr{T}^h$ are dual to the faces of the mixed subdivision~$\mathcal{M}(\mathcal{S}^h)$.

Since $P_1+\cdots+P_k$ is full-dimensional, the lineality space of $\mathscr{T}^h$ is
\[
V_{P_1+\cdots+P_k}^{\perp}=\{0\}.
\]
Hence the bounded cells of $\mathscr{T}^h$ are precisely the cells dual to the interior faces of $\mathcal{M}(\mathcal{S}^h)$. This proves~(1).

Finally, by \Cref{prop_Cayley_mixedsubdivision}, the complexes of interior faces of $\mathcal{S}^h$ and $\mathcal{M}(\mathcal{S}^h)$ are isomorphic. Therefore the complex of interior faces of $\mathcal{S}^h$ is dual to the complex of bounded cells of $\mathscr{T}^h$, proving~(2).
\end{proof}

\begin{remark}
If $P_1+\cdots+P_k$ is not full-dimensional, then every cell of the tropical cell decomposition $\mathscr{T}^h$ is unbounded, since each contains the lineality space $V_{P_1+\cdots+P_k}^{\perp}$.
\end{remark}

\begin{example}\label{ex_tropical_cube}
Consider the unit squares
\[
P_1=P_2=\operatorname{conv}\{(0,0),(1,0),(0,1),(1,1)\},
\]
together with the height function on the vertices of the Cayley embedding $K=C(P_1,P_2)$ from \Cref{ex_Cayley_cube_oruga3}. Applying the tropical Cayley trick to the resulting regular subdivision yields an arrangement consisting of the two tropical hypersurfaces
\[
T_1^h = \min\{-1,-2+x,-2+y,-4+x+y\},
\]
and
\[
T_2^h = \min\{-2,-4+x,-4+y,-8+x+y\}.
\]

The corresponding tropical cell decomposition is depicted in \Cref{fig_tropical_minkowski_sum}. By \Cref{cor_tropicalCayleyMixedSubdivision}, it is dual to the fine mixed subdivision of $P_1+P_2$ shown in \Cref{fig_mix_sub_exmaple}. In particular, vertices, edges, and two-dimensional cells of the tropical decomposition correspond to cells of complementary dimension in the mixed subdivision.

\begin{figure}[ht]
    \centering
    
    \includegraphics[]{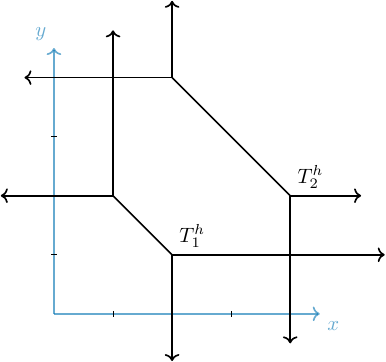}
    \caption{The arrangement of tropical hypersurfaces associated with the regular subdivision of the cube from \Cref{ex_Cayley_cube_oruga3}.}
    \label{fig_tropical_minkowski_sum}
\end{figure}
\end{example}

\subsection{The tropical framingtope}

We now apply the tools developed above to construct a geometric realization of the framingtope. The main intuitive idea is to express the framed triangulation of a flow polytope $\calF_G$ as a fine mixed subdivision and then pass to its dual tropical cell decomposition. This construction relies on a decomposition of every flow polytope as a Cayley embedding of smaller flow polytopes, which we now describe.

Let $G$ be a flow graph, and denote by $e_{i,k}$ the $k$-th outgoing edge of vertex $i+1$, ordered from bottom to top, with $k=0$ for the bottommost edge. Let be the source edges of $G$:
\[
\mathcal{O}_0=\{e_{0,0},\ldots,e_{0,j_0}\}.
\]
For $i\in\{0,\dots,j_0\}$, let $\hat G_i$ denote the smallest subgraph of $G$ containing every route that starts with the source edge $e_{0,i}$. Then the flow polytope $\calF_{\hat G_i}$ can be written as
\[
\calF_{\hat G_i} = \{e_{0,i}\}\times\calF_{G_i},
\]
where $\calF_{G_i}$ is the projection of $\calF_{\hat G_i}$ obtained by deleting the coordinate corresponding to the source edge $e_{0,i}$. Equivalently, $\calF_{G_i}$ is the flow polytope of the graph $G_i$ obtained from~$\hat G_i$ by deleting the edge $e_{0,i}$.

Moreover, the flow polytope of $G$ is the Cayley embedding
\begin{equation}\label{eq_Cayley_flowPolytope}
\calF_G = C(\calF_{G_0},\ldots,\calF_{G_{j_0}}).
\end{equation}
Each $\calF_{\hat G_i}$ is a face of $\calF_G$. A framing $F$ of $G$ naturally induces a framing on both $\hat G_i$ and $G_i$, which we also denote by $F$. Likewise, the framed triangulation $\Delta_{\hat G_i,F}$ is the restriction of $\Delta_{G,F}$ to the face $\calF_{\hat G_i}$, and may be naturally identified with $\Delta_{G_i,F}$.

By~\Cref{prop: subdivision_are_triangulations}, the framed triangulation $\Delta_{G,F}$ can therefore be viewed, via the Cayley trick, as the corresponding fine mixed subdivision of
\begin{equation}
\calF_{G_0}+\cdots+\calF_{G_{j_0}}.    
\end{equation}
This viewpoint is the key to our construction of the framingtope. It allows us to apply the tropical Cayley trick to the framed triangulation while working in a lower-dimensional ambient space.

\begin{example}\label{example: multioruga_prisma}
Let $(G,F)$ be the framed graph in~\Cref{fig_multioruga_prisma} with the framing induced by the drawing. The flow polytopes $\mathcal{F}_{G_0}$, $\mathcal{F}_{G_1}$, and $\mathcal{F}_{G_2}$ are all squares. The fine mixed subdivision associated with the framed triangulation of $\mathcal{F}_G$ is shown in~\Cref{fig_mixed_prisma}.


\begin{figure}[ht]
    \centering
    \includegraphics[]{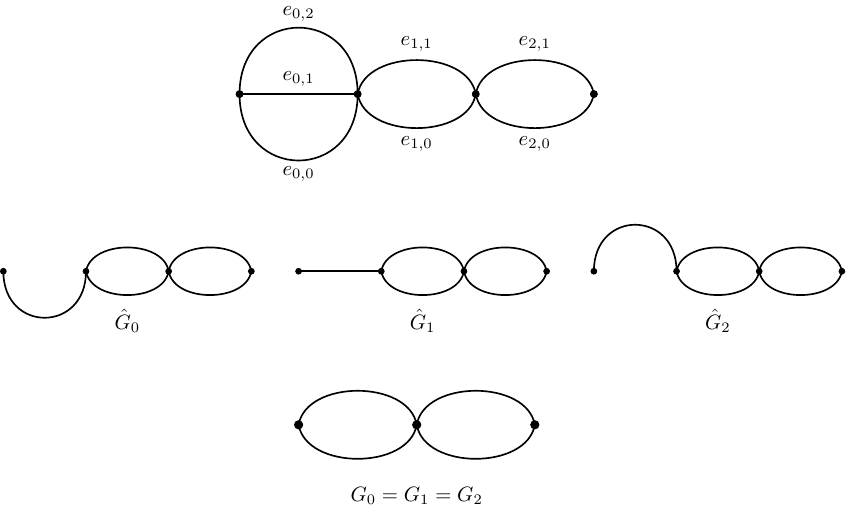}
    \caption{A graph $G$ and its induced subgraphs $\hat G_0$, $\hat G_1$, and $\hat G_2$. In this example, deleting the initial edge of each $\hat G_i$ yields the same graph, namely $G_0=G_1=G_2$.}
    \label{fig_multioruga_prisma}
\end{figure}

\begin{figure}[ht]
    \centering
    \includegraphics[]{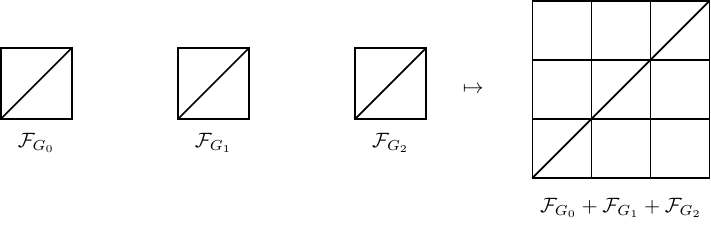}
    \caption{Fine mixed subdivision associated with the framed triangulation $\Delta_{G,F}$ of $\calF_{G}$.}
    \label{fig_mixed_prisma}
\end{figure}

Note that $\mathcal{F}_G$ is a 4-dimensional polytope living in a 7-dimensional space. However, using the Cayley trick, we can study its triangulation in a 2-dimensional space.
\end{example}

To tropicalize, we would in principle need to work in $\mathbb{R}^7$ since $\mathcal{F}_G\subset\mathbb{R}^7$. However, the Cayley trick allows us to work in the dimension of the individual pieces $\mathcal{F}_{G_i}$, and we can further reduce to the intrinsic dimension via the following projection.

For each $i=0,\dots,n-1$, label the outgoing edges of vertex $i+1$ in $G$ by
\[
e_{i,0},e_{i,1},\dots,e_{i,j_i},
\]
from bottom to top, as illustrated in~\Cref{fig_multioruga_prisma}.

Recall that two polytopes $P$ and $Q$ are \defn{integrally equivalent} if there exists an affine lattice isomorphism between them.

\begin{proposition}[Projected realization of a flow polytope]
\label{prop: projection_flowpolytopes}
Let $\mathcal{F}_G$ be the flow polytope associated with the graph $G$ and denote by $f(e_{i,j})$ the flow associated to the edge $e_{i,j}$. Let
\[
\phi:\mathbb{R}^{E(G)}
\longrightarrow
\mathbb{R}^{|E(G)|-|V(G)|+1}
\]
be the projection obtained by deleting the coordinates
\[
f(e_{0,0}),\,f(e_{1,0}),\,\ldots,\,f(e_{n-2,0}).
\]
Then $\mathcal{F}_G$ is integrally equivalent to $\phi(\mathcal{F}_G)$.
\end{proposition}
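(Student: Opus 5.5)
The plan is to show that $\phi$ restricted to the affine hull of $\mathcal{F}_G$ is injective and that it maps the lattice points of that affine hull bijectively onto $\mathbb{Z}^{|E(G)|-|V(G)|+1}$. This gives an affine lattice isomorphism between $\mathcal{F}_G$ and $\phi(\mathcal{F}_G)$. The affine hull is contained in the affine subspace $L\subseteq\mathbb{R}^{E(G)}$ cut out by the $n$ flow-conservation equations, one per vertex. One of these equations is redundant, since the sum of all of them gives $0=0$. So $L$ has dimension $|E(G)|-(n-1)=|E(G)|-|V(G)|+1$, which equals $\dim\mathcal{F}_G$. Hence $L$ is exactly the affine hull of $\mathcal{F}_G$.

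The key step is to show that the deleted coordinates are determined by the remaining ones. I do this by induction along the vertex order $1,2,\dots,n-1$. At vertex $i+1$, for $i=0,\dots,n-2$, the conservation equation reads
\[
f(e_{i,0}) = u_{i+1} + \sum_{e\in\In(i+1)} f(e) - \sum_{k\geq 1} f(e_{i,k}).
\]
Every incoming edge of $i+1$ has tail $\le i$, since edges go from smaller to larger vertices. Any incoming edge that is itself a deleted coordinate $e_{j,0}$ has $j<i$, so it has already been expressed by induction. The outgoing edges $e_{i,k}$ with $k\ge1$ are kept coordinates. Thus each $f(e_{i,0})$ is an integer affine combination of the kept coordinates, with constant terms coming from $u_{i+1}\in\{0,1\}$. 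This requires that every vertex $1,\dots,n-1$ has at least one outgoing edge, which holds because $n$ is the unique sink. The conservation equation at the sink $n$ is the redundant one, so it is never used.

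This yields an inverse map $\sigma:\mathbb{R}^{|E(G)|-|V(G)|+1}\to L$ with $\phi\circ\sigma=\mathrm{id}$, defined by inserting the recursively computed values. Since every point of $L$ satisfies these same recursions, $\sigma\circ\phi|_L=\mathrm{id}_L$. Both maps are affine with integer coefficients and integer constant terms. Therefore $\phi$ sends $L\cap\mathbb{Z}^{E(G)}$ onto $\mathbb{Z}^{|E(G)|-|V(G)|+1}$, and $\sigma$ sends integer points back to integer points. So $\phi|_L$ is an affine lattice isomorphism carrying $\mathcal{F}_G$ onto $\phi(\mathcal{F}_G)$.

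The main point to check carefully is that the recursion is well-founded: each deleted coordinate must be expressed only through kept coordinates and previously eliminated ones. The ordering of the vertices and the acyclicity of $G$ (edges increase) guarantee this, together with the requirement that the kept set has the right size. The count works out: we keep $|E(G)|-(n-1)$ coordinates, which matches the dimension.
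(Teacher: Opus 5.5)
Your proof is correct and follows the same route as the paper: you eliminate each deleted coordinate $f(e_{i,0})$ using flow conservation at vertex $i+1$, and conclude that $\phi$ is an affine bijection on $\mathcal{F}_G$ that preserves the lattice. Your version is more careful than the paper's in two places. You make the recursion along the vertex order explicit, which is needed because the incoming edges at vertex $i+1$ may themselves be deleted coordinates $e_{j,0}$ with $j<i$. You also check that the inverse map $\sigma$ is integral, which the paper's remark that $\phi$ is a coordinate projection does not by itself guarantee.
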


\begin{proof}
Let $G$ have $n$ vertices. By definition, the map
\[
\phi:\mathbb{R}^{E(G)}
\longrightarrow
\mathbb{R}^{|E(G)|-n+1}
\]
forgets the coordinates corresponding to the edges $e_{0,0},e_{1,0},\ldots,e_{n-2,0}$.

To prove that $\phi$ restricts to a bijection on $\mathcal{F}_G$, it suffices to show that the omitted coordinates are uniquely determined by the remaining ones. Let $v_i$ be a vertex of $G$, and let
\[
\mathcal{O}_i'=\mathcal{O}_i\setminus\{e_{i,0}\}
\]
be the set of outgoing edges of $v_i$ other than $e_{i,0}$. By flow conservation at $v_i$, we have
\[
f(e_{i,0}) = \sum_{e\in\mathcal{I}_i}f(e) - \sum_{e\in\mathcal{O}_i'}f(e),
\]
for $i\neq 0$. At the source vertex,
\[
f(e_{0,0}) = 1-\sum_{e\in\mathcal{O}_0'}f(e),
\]
since the total outgoing flow is equal to $1$.

Hence every omitted coordinate is uniquely determined by the remaining coordinates. Therefore $\phi$ induces an affine bijection between $\mathcal{F}_G$ and $\phi(\mathcal{F}_G)$. Since $\phi$ is a coordinate projection, this bijection preserves the integer lattice, and thus $\mathcal{F}_G$ and $\phi(\mathcal{F}_G)$ are integrally equivalent.
\end{proof}

\begin{remark}
By \Cref{prop: projection_flowpolytopes}, we may identify the flow polytope $\mathcal{F}_G$ with its projected realization $\phi(\mathcal{F}_G)$ and therefore omit the coordinates corresponding to the edges $e_{0,0},e_{1,0},\ldots,e_{n-2,0}$. Consequently, the tropical dualization can be performed on~$\phi(\mathcal{F}_G)$, whose ambient space has the same dimension as the flow polytope.
\end{remark}

We now turn to admissible height functions for framed triangulations. Recall that a height function $\h$ assigns a real number to each route of $G$, and that $\h$ is admissible for the framed triangulation $\Delta_{G,F}$ if the regular triangulation induced by $\h$ coincides with~$\Delta_{G,F}$. The following \Cref{lemma_admisible_heigth_function} provides a simple criterion for admissibility in terms of the framing.

\begin{definition}\label{def_top_bot}
Let $(G,F)$ be a framed graph. We say that two routes $R_1$ and $R_2$ are \defn{incoherent along exactly one path} $P\subseteq R_1\cap R_2$ if $R_1$ and $R_2$ are incoherent at every vertex $v\in P$ and nowhere else.

If $R_1<^{\mathrm{cw}}_v R_2$, we define their \defn{top part} and \defn{bottom part}, respectively, by
\[
\Top(R_1,R_2):=R_1\,v\,R_2,
\qquad
\Bot(R_1,R_2):=R_2\,v\,R_1.
\]
Thus, $\Top(R_1,R_2)$ follows $R_1$ up to $v$ and then continues along $R_2$, whereas $\Bot(R_1,R_2)$ follows $R_2$ up to $v$ and then continues along $R_1$. If $R_2<^{\mathrm{cw}}_v R_1$, we instead define
\[
\Top(R_1,R_2):=R_2\,v\,R_1,
\qquad
\Bot(R_1,R_2):=R_1\,v\,R_2.
\]
\end{definition}

\begin{remark}
The previous definition is natural in view of \cite[Lemma~1.1.10]{vonBell_framing_2024}, which shows that if $C_1$ and $C_2$ are adjacent maximal cliques satisfying $C_2=(C_1\setminus R_1)\cup R_2$, then $R_1$ and $R_2$ are incoherent along exactly one path.
\end{remark}

\begin{lemma}[cf.~{\cite[Lemmas~5.4 and~5.5]{d2023realizing}}]
\label{lemma_admisible_heigth_function}
Let $(G,F)$ be a framed graph. A height function $\h:R(G)\to\mathbb{R}$ is admissible for the framed triangulation $\Delta_{G,F}$ if and only if, for every pair of routes $R_1$ and $R_2$ that are incoherent along exactly one path we have
\[
\h(\Top(R_1,R_2)) +\h(\Bot(R_1,R_2)) > \h(R_1)+\h(R_2).
\]
\end{lemma}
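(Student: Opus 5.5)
We will use the standard local criterion for regularity: a height function $\h$ induces a given triangulation $\Delta$ exactly when, for every interior codimension-one face, the lifts of the two adjacent facets form a strictly convex fold. Here the lift is the upper envelope, so the fold should be concave from below. Since the upper envelope of a lift is a concave piecewise-linear function, this local convexity across all interior walls is equivalent to global coherence. We apply this to $\Delta=\Delta_{G,F}$. By the adjacency description in Section~\ref{section: framinglat}, the interior walls are exactly the faces $\Delta_{C\cap C'}$ with $C'=(C\setminus R_1)\cup R_2$. By \cite[Lemma~1.1.10]{vonBell_framing_2024}, $R_1,R_2$ are then incoherent along exactly one path $P$.

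The key combinatorial step is to identify the affine dependence (circuit) supported on $C\cup C'$. We claim that $\Top(R_1,R_2)$ and $\Bot(R_1,R_2)$ belong to $C\cap C'$, and that
\[
\mathbf V(R_1)+\mathbf V(R_2)=\mathbf V(\Top(R_1,R_2))+\mathbf V(\Bot(R_1,R_2)).
\]
The identity is immediate edgewise, since swapping tails at $v$ preserves edge multiplicities. Membership requires checking that $\Top$ and $\Bot$ are coherent with every route of $C\cap C'$ and with both $R_i$, so that maximality of $C$ and $C'$ forces them in. This is exactly the content of \cite[Lemmas~5.4, 5.5]{d2023realizing}, which we follow. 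Because the triangulation is unimodular, this is the unique circuit: $\{R_1,R_2\}$ versus $\{\Top,\Bot\}$.

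Local convexity across the wall then reads as follows. Lifting the four points, the segment between the lifts of $\Top$ and $\Bot$ lies on the upper envelope exactly when its midpoint height exceeds that of the segment between the lifts of $R_1$ and $R_2$. That is,
\[
\h(\Top)+\h(\Bot)>\h(R_1)+\h(R_2).
\]

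For the ``only if'' direction, an admissible $\h$ satisfies this inequality at every wall, hence for every pair arising from adjacent cliques. We must also handle pairs incoherent along exactly one path that are not produced by a flip. The plan is to show that any such pair $R_1,R_2$ can be completed to adjacent maximal cliques. To do this, take a maximal clique containing $\Top$, $\Bot$ and $R_1$, and flip $R_1$. We expect the flip to yield $R_2$ by the uniqueness of circuits, though this is only plausibly true and needs checking. For the ``if'' direction, the inequalities give local convexity at every interior wall, so $\h$ induces $\Delta_{G,F}$.

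The main obstacle is the completion step, namely that every singly-incoherent pair genuinely is a flip pair, together with the verification that $\Top$ and $\Bot$ are coherent with everything in $C\cap C'$. For the latter we will argue vertex by vertex using the definitions of $\leq_{\scrI(v)}$ and $\leq_{\scrO(v)}$. The prefix ordering of $\Top$ and $\Bot$ is inherited from $R_1$ or $R_2$ on either side of $P$, so any crossing with a route of $C\cap C'$ would produce a crossing with $R_1$ or with $R_2$.
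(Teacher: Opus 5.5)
Your ``if'' direction is sound. It is the standard local folding criterion; every route is a vertex of $\Delta_{G,F}$, so there are no unused points to check. Your identification of the wall circuit $\mathbf{V}(R_1)+\mathbf{V}(R_2)=\mathbf{V}(\Top(R_1,R_2))+\mathbf{V}(\Bot(R_1,R_2))$ with $\Top,\Bot\in C\cap C'$ is also correct. Its uniqueness comes from $|C\cup C'|=\dim\calF_G+2$, not from unimodularity.

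The gap is in the ``only if'' direction, and the completion step is not merely unverified: it is false. Take $G=\oru{3}$ with the framing of \Cref{Example:Dual_Cube}, and let $R_1=\{e_1,e_4,e_6\}$ and $R_2=\{e_2,e_3,e_5\}$. They are incoherent at vertex $2$ and coherent at vertex $3$, hence incoherent along exactly one path, with $\Top(R_1,R_2)=\{e_1,e_3,e_5\}$ and $\Bot(R_1,R_2)=\{e_2,e_4,e_6\}$. By \Cref{ex_weak_order}, each maximal clique contains exactly one route with $k$ top edges for every $k$. So a flip always exchanges two routes with the same number of top edges. Since $R_1$ has one top edge and $R_2$ has two, they are never a flip pair. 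Concretely, the two maximal cliques containing $R_1$ flip it to $\{e_2,e_3,e_6\}$ and to $\{e_2,e_4,e_5\}$. The inequality the lemma demands for this pair is therefore not a wall inequality at all. It is the sum of the wall inequalities for the flips $\{e_1,e_4,e_6\}\leftrightarrow\{e_2,e_3,e_6\}$ and $\{e_1,e_3,e_6\}\leftrightarrow\{e_2,e_3,e_5\}$.

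The fix is to argue globally, which makes completion unnecessary.
\begin{enumerate}
\item If $\h$ is admissible, the upper envelope is the graph of a concave function $g$ on $\calF_G$. It is affine on each simplex of $\Delta_{G,F}$ and satisfies $g(R)=\h(R)$ for every route $R$.
\item Your vertex-by-vertex argument also shows that $\Top(R_1,R_2)$ and $\Bot(R_1,R_2)$ are coherent with each other. Their common vertices are those of $R_1$ and $R_2$. Off $P$ they inherit the coherent comparison pattern of $R_1,R_2$, and on $P$ they are ordered the same way on the incoming and outgoing sides.
\item Hence $[\Top,\Bot]$ is an edge of $\Delta_{G,F}$. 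At the common midpoint $m=\frac12(\mathbf{V}(R_1)+\mathbf{V}(R_2))$ this gives $g(m)=\frac12(\h(\Top(R_1,R_2))+\h(\Bot(R_1,R_2)))$.
\item Concavity gives $g(m)\ge\frac12(\h(R_1)+\h(R_2))$.
\item If equality held, the lifted segment from $R_1$ to $R_2$ would lie on the upper boundary. Then $R_1$ and $R_2$ would be vertices of a common cell of the induced subdivision, i.e.\ of a common maximal clique, contradicting their incoherence.
\end{enumerate}
This yields the strict inequality for every pair incoherent along exactly one path at once. The paper itself does not prove the lemma; it defers to \cite[Lemmas~5.4 and~5.5]{d2023realizing}.
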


\begin{definition}[Arrangement of tropical hypersurfaces for framingtopes]
\label{def_framingtopes_arrangement} Given an admissible height function $\h$ for the framed triangulation $\Delta_{G,F}$, the tropical Cayley trick determines an arrangement
\[
\mathsf{T}^h=\{T_i^h\}_{i=0}^{j_0}
\]
of tropical hypersurfaces associated with the framed triangulations $\Delta_{G_i,F}$. They are given explicitly by
\begin{equation}\label{eq_framingtope_trop_hypersurfaces}
T_i^h(\mathbf{x}) = \min\bigl\{ -h(e_{0,i}R)+\langle\vec{R},\mathbf{x}\rangle : R\in R(G_i) \bigr\}.
\end{equation}
For each $i=0,\dots,j_0$, the minimum ranges over all routes of $G_i$, and $e_{0,i}R$ denotes the route of $G$ obtained by prepending the edge $e_{0,i}$ to $R$.

The variable
\[
\mathbf{x}=(x_{i,j})_{(i,j)\in I}
\]
is indexed by
\[
I=\{(i,j):1\le i\le n-2,\;1\le j\le j_i\},
\]
so there is one coordinate for each edge $e_{i,j}$ with $1\le i\le n-2$ and $1\le j\le j_i$. Thus, $\mathbf{x}$ records precisely the coordinates that remain after omitting the source edges ($i\neq 0$) and projecting each flow polytope $\mathcal{F}_{G_i}$ by forgetting its distinguished edge ($j\neq 0$).

Finally, $\vec{R}\in\{0,1\}^{I}$ denotes the incidence vector of the route $R$: its $(i,j)$-coordinate is equal to $1$ if $R$ uses the edge $e_{i,j}$, and $0$ otherwise. Since $(i,j)\in I$ requires $i,j\ge1$, neither the source edges nor the distinguished edges $e_{i,0}$ contribute to $\vec{R}$.

We denote by $\mathscr{T}^h$ the tropical cell decomposition of the arrangement $\mathsf{T}^h$.
\end{definition}

\begin{example}\label{ex_tropicalization_prisma}

Let $(G,F)$ be the framed graph from \Cref{example: multioruga_prisma}. Consider the height function $\h(R)$ defined as the number of routes weakly below $R$, including $R$ itself. For example, the bottommost route has height $1$, since no other route lies weakly below it, while the topmost route has height $12$, since all $12$ possible routes lie weakly below it. As shown below (see \Cref{prop_ABBE_height_admissible,prop_plane_framings_are_ABBE}), this height function is admissible for the framed triangulation $\Delta_{G,F}$.

In this case, the index set is
\[
I=\{(1,1),(2,1)\},
\]
so there are only two variables:
\[
x:=x_{1,1}, \qquad y:=x_{2,1},
\]
corresponding to the edges $e_{1,1}$ and $e_{2,1}$, respectively. The tropical hypersurfaces associated with the restricted triangulations of $\mathcal{F}_{G_0}$, $\mathcal{F}_{G_1}$, and $\mathcal{F}_{G_2}$ are
\[
T_{0}^h=\min\{-1,-2+x,-2+y,-4+x+y\},
\]
\[
T_{1}^h=\min\{-2,-4+x,-4+y,-8+x+y\},
\]
\[
T_{2}^h=\min\{-3,-6+x,-6+y,-12+x+y\}.
\]

The corresponding arrangement of tropical hypersurfaces is shown in \Cref{fig_Trop_3_squares}.



\begin{figure}[ht]
\centering
    \includegraphics[]{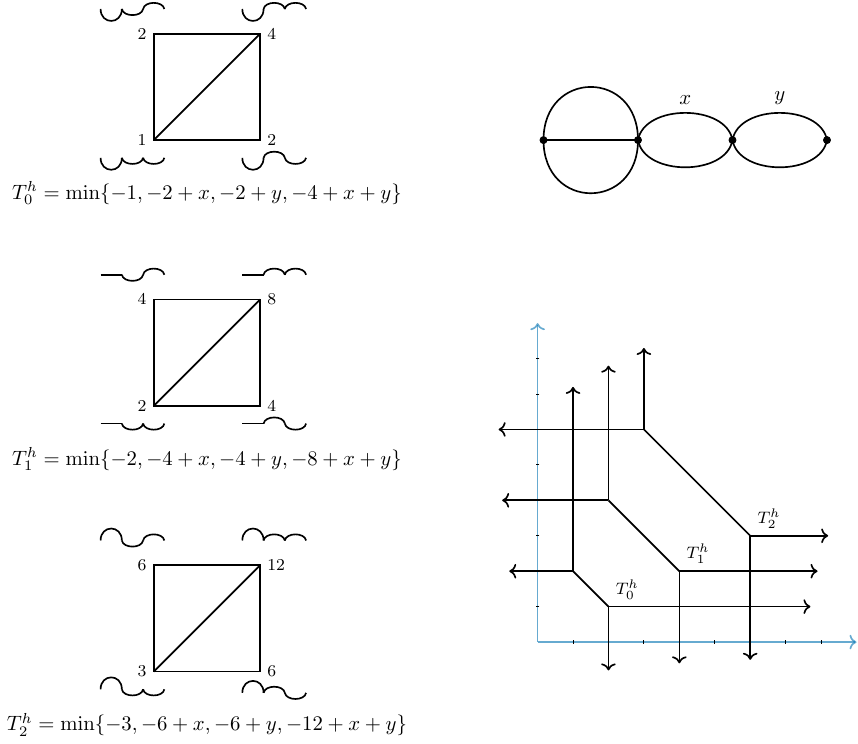}
    \caption{The arrangement of tropical hypersurfaces associated with the framed graph $(G,F)$ from \Cref{example: multioruga_prisma}.}
    \label{fig_Trop_3_squares}
\end{figure}

\end{example}

\begin{theorem}\label{thm_trop_realization_cells}
Let $\h$ be an admissible height function for the framed triangulation~$\Delta_{G,F}$, and let $\mathscr{T}^h$ be the tropical cell decomposition of the corresponding arrangement of tropical hypersurfaces~\eqref{eq_framingtope_trop_hypersurfaces} in~\Cref{def_framingtopes_arrangement}. Then:
\begin{enumerate}
    \item the cells of $\mathscr{T}^h$ are in bijection with the cliques whose routes contain a route starting with each of the source edges
    \[
    e_{0,0},e_{0,1},\ldots,e_{0,j_0};
    \]
    \item the complex of bounded cells of $\mathscr{T}^h$ is dual to the complex of interior faces of~$\Delta_{G,F}$.
\end{enumerate}
\end{theorem}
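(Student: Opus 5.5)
The plan is to combine the Cayley decomposition \eqref{eq_Cayley_flowPolytope} with the general tropical Cayley results, namely \Cref{thm_Cayley_tropicalArrangement} and \Cref{cor_inteorior_vs_tropBounded}. First, by \Cref{prop: projection_flowpolytopes}, I will replace each $\calF_G$ and each $\calF_{G_i}$ by its projected realization. Under this identification the arrangement $\mathsf{T}^h$ of \Cref{def_framingtopes_arrangement} is exactly the arrangement attached by the tropical Cayley trick to the height function $h$ on the vertices $\{e_{0,i}\}\times\vec R$ of $C(\calF_{G_0},\dots,\calF_{G_{j_0}})$. Two checks are needed here. The deleted coordinate $f(e_{0,0})$ plays the role of the Cayley coordinate, since the Cayley embedding is affinely and unimodularly equivalent after dropping one of the $e_i$ coordinates. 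The forgotten coordinates $e_{i,0}$ are affine functions of the kept ones, so deleting them changes the inner products $\langle \vec R,\mathbf x\rangle$ only by a reparametrization of $\mathbf x$.

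Since $h$ is admissible, the regular subdivision $\mathcal S^h$ of the Cayley embedding is $\Delta_{G,F}$. Its faces are the cliques, that is, sets of pairwise coherent routes. A clique contains a vertex of the copy $\{e_{0,i}\}\times\calF_{G_i}$ exactly when it contains a route starting with the source edge $e_{0,i}$. Part~(1) then follows directly from \Cref{thm_Cayley_tropicalArrangement}, composed with the identification of faces of $\Delta_{G,F}$ with cliques.

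For part~(2), I will apply \Cref{cor_inteorior_vs_tropBounded}(2). This needs $\calF_{G_0}+\cdots+\calF_{G_{j_0}}$ to be full-dimensional in the reduced ambient space indexed by $I$. I would argue this as follows. The Cayley embedding $\calF_G$ has dimension $(\text{ambient}) = j_0+|I|$, by the dimension count $|E|-|V|+1$ and \Cref{prop: projection_flowpolytopes}. For a Cayley embedding, $\dim C(P_0,\dots,P_{k}) = k+\dim(P_0+\cdots+P_k)$. Hence the Minkowski sum has dimension $|I|$, which is full.

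Two gaps remain. First, I must make sure every interior face of $\Delta_{G,F}$ meets every copy, so that it lies in the domain of the bijection. This holds by \Cref{thm_interior_equal_covering}: an interior face covers $G$, so in particular it uses every source edge. Second, \Cref{prop_Cayley_mixedsubdivision} matches interior faces of $\mathcal S^h$ with interior faces of the mixed subdivision. Combining these with \Cref{cor_inteorior_vs_tropBounded} gives the order-reversing isomorphism between the interior faces of $\Delta_{G,F}$ and the bounded cells of $\mathscr T^h$.

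I expect the main obstacle to be the first step. It requires a careful verification that the projected coordinates of \Cref{def_framingtopes_arrangement} really realize the Cayley structure, with the correct lattice and the lineality space killed. In particular, I need the Minkowski sum to be full-dimensional in $\mathbb R^{I}$, and not merely of the right dimension up to an affine shift. I would first handle this by the explicit dimension count above, and verify the Cayley dimension formula directly from the affine independence of the $e_i$ coordinates.
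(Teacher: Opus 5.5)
Your proposal is correct and follows the paper's proof. Part~(1) comes from \Cref{thm_Cayley_tropicalArrangement}, by identifying the faces of $\Delta_{G,F}$ that meet the copy $\{e_{0,i}\}\times\calF_{G_i}$ with the cliques containing a route that starts with $e_{0,i}$, and part~(2) comes from \Cref{cor_inteorior_vs_tropBounded} once $\calF_{G_0}+\cdots+\calF_{G_{j_0}}$ is shown to be full-dimensional in $\mathbb{R}^I$. The differences are minor: you get full-dimensionality from the dimension count $\dim\calF_G=j_0+|I|$ combined with $\dim C(P_0,\dots,P_{j_0})=j_0+\dim(P_0+\cdots+P_{j_0})$ rather than the paper's edge-by-edge argument, and you state explicitly via \Cref{thm_interior_equal_covering} that interior faces meet every Cayley copy, which the paper leaves implicit.
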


\begin{proof}
By \Cref{thm_Cayley_tropicalArrangement}, the cells of $\mathscr{T}^h$ are in bijection with the faces of $\Delta_{G,F}$ containing at least one vertex from each copy in the Cayley embedding
\[
C(\mathcal{F}_{G_0},\ldots,\mathcal{F}_{G_{j_0}}).
\]
Equivalently, they correspond to the faces of the framed triangulation that contain at least one vertex from each of the flow polytopes $\mathcal{F}_{\hat G_i}$. In terms of the graph $G$, this condition means that the corresponding cliques contain routes starting with each of the source edges
\[
e_{0,0},e_{0,1},\ldots,e_{0,j_0}.
\]
This proves the first statement.

For the second statement, we use the projected realizations of the flow polytopes $\mathcal{F}_{G_i}$ introduced above. With these realizations, the Minkowski sum
\[
\mathcal{F}_{G_0}+\cdots+\mathcal{F}_{G_{j_0}}
\]
is full-dimensional. Indeed, the dimension of the underlying space is the number of edges of the form $e_{i,j}$ with $i\neq 0$ (non-source edges) and $j\neq 0$ (non-bottom-most outgoing edges). Every such an edge belongs to at least one of the graphs $G_i$, and the projected flow polytope $\mathcal{F}_{G_i}$ is full dimensional on its underlying space.

Hence, we can apply \Cref{cor_inteorior_vs_tropBounded}, which implies that the bounded cells of $\mathscr{T}^h$ are dual to the interior faces of the mixed subdivision associated with the Cayley embedding
\[
C(\mathcal{F}_{G_0},\ldots,\mathcal{F}_{G_{j_0}}).
\]
By the Cayley trick, the interior faces of this mixed subdivision are in bijection with the interior faces of $\Delta_{G,F}$. Therefore, the complex of bounded cells of $\mathscr{T}^h$ is dual to the complex of interior faces of $\Delta_{G,F}$.
\end{proof}

This result motivates the following definition.

\begin{definition}[Tropical framingtopes: geometric realization]
\label{def_framingtope_asTropical}
Let $(G,F)$ be a framed graph, and let $\h$ be an admissible height function for the framed triangulation $\Delta_{G,F}$. The \defn{tropical framingtope} $\mathscr{P}_{G,F}^h$ is the polytopal complex formed by the bounded cells of the tropical cell decomposition $\mathscr{T}^h$ of the corresponding arrangement of tropical hypersurfaces~\eqref{eq_framingtope_trop_hypersurfaces} in~\Cref{def_framingtopes_arrangement}.
\end{definition}

\begin{corollary}
\label{cor.geometric_realization}
Let $(G,F)$ be a framed graph, and let $\h$ be an admissible height function for the framed triangulation $\Delta_{G,F}$. Then the framingtope $\mathscr{P}_{G,F}$ is geometrically realized by the polytopal complex $\mathscr{P}_{G,F}^h$.
\end{corollary}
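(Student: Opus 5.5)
The plan is to combine the duality in \Cref{thm_trop_realization_cells}(2) with the first definition of the framingtope (\Cref{def_framingtope_asDual}).

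First, I would make precise what "geometrically realized" means. It means there is an isomorphism of posets between $\mathscr{P}_{G,F}$ and the face poset of the polytopal complex $\mathscr{P}_{G,F}^h$. Here $\mathscr{P}_{G,F}$ is the poset of interior faces of $\Delta_{G,F}$ ordered by reverse inclusion. The face poset of $\mathscr{P}_{G,F}^h$ consists of its bounded cells ordered by inclusion.

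Second, I would check that the set of bounded cells of $\mathscr{T}^h$ really is a polytopal complex.
\begin{itemize}
\item By \Cref{thm_trop_cells_arrangement}, each cell is a polyhedron.
\item When the Minkowski sum $\mathcal{F}_{G_0}+\cdots+\mathcal{F}_{G_{j_0}}$ is full-dimensional, a cell is bounded exactly when its recession cone $\bigcap_i NC^{\mathrm{in}}_{\mathcal{F}_{G_i}}(Q_i)$ is $\{0\}$. Its bounded cells are therefore polytopes $\operatorname{conv}\{\widetilde{C}^\circ : Q\subseteq C\}$.
\item The full-dimensionality hypothesis holds because we use the projected realizations of \Cref{prop: projection_flowpolytopes}, as in the proof of \Cref{thm_trop_realization_cells}.
\item Faces of a bounded cell are again cells of $\mathscr{T}^h$, since $\mathscr{T}^h$ is a polyhedral decomposition. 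They are bounded, because a face of a polytope is a polytope.
\item Hence the bounded cells are closed under taking faces, and any two of them meet in a common face. So they form a polytopal complex.
\end{itemize}

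Third, I would invoke \Cref{thm_trop_realization_cells}(2). It gives an order-reversing bijection $Q\mapsto\psi(Q)$ from interior faces of $\Delta_{G,F}$ to bounded cells of $\mathscr{T}^h$. Composing it with the reverse-inclusion order on interior faces gives an order-preserving bijection $\mathscr{P}_{G,F}\to$ (bounded cells, $\subseteq$). This is the desired isomorphism. By \Cref{thm.three_equivalent_definitions}, it also realizes the covering-route and pure-interval descriptions.

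As a consistency check, a maximal clique $C$ should correspond to a vertex $\widetilde{C}^\circ$ of $\mathscr{P}^h_{G,F}$. Every maximal clique is interior, since it is full-dimensional, and it maps to a $0$-dimensional cell.

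The main obstacle is making sure the bijection is one of posets with the correct face structure, and not merely a bijection of sets. The delicate point is that \Cref{thm_Cayley_tropicalArrangement} only records that $\psi$ reverses inclusion. One also needs the converse: $\psi(Q')\subseteq\psi(Q)$ should imply $Q\subseteq Q'$.

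I would get the converse from the explicit description $\psi(Q)=\bigcap_i\psi_i(Q_i)$. A point in the relative interior of $\psi(Q')$ selects the minimizing face $Q'$. Since that point lies in $\psi(Q)$, every vertex of $Q$ attains the minimum there, so every vertex of $Q$ is a vertex of $Q'$. This gives $Q\subseteq Q'$.

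Dimension-complementarity, with a codimension-$i$ interior face going to an $i$-dimensional cell, then follows from \Cref{pro: tropical_map} applied summandwise through the Cayley trick.
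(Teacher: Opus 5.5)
Your proposal is correct and follows essentially the paper's own argument. The paper gives no separate proof: it treats the corollary as immediate from \Cref{thm_trop_realization_cells}(2) (the bounded cells of $\mathscr{T}^h$ are dual to the interior faces of $\Delta_{G,F}$) combined with \Cref{def_framingtope_asDual}. Your extra checks --- that the bounded cells form a polytopal complex, and that $\psi$ reflects as well as reverses inclusion --- only make explicit what the paper leaves implicit. The dimension count is forced by the poset isomorphism in any case, so your rather loose ``summandwise'' justification of it is not needed.
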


\begin{example}\label{ex_trop_framingtope_multi211}
In \Cref{ex_tropicalization_prisma}, the tropical framingtope is obtained by restricting the tropical cell decomposition to its bounded cells; see \Cref{fig_trop_max_clique}. The vertices of this realization correspond to the maximal cliques of the framed graph. Their coordinates can be computed directly from the tropical hypersurfaces in \eqref{eq_framingtope_trop_hypersurfaces}.

For the maximal clique shown in \Cref{fig_trop_max_clique}, the corresponding vertex is determined by the minimizing equalities
\[
-1=-2+y,\qquad
-4+y,\qquad
-6+y=-12+x+y,
\]
coming from the tropical hypersurfaces $T_0^h$, $T_1^h$, and $T_2^h$, respectively. Solving these equations yields the vertex
\[
(x,y)=(6,1).
\]


\begin{figure}[ht]
    \includegraphics[]{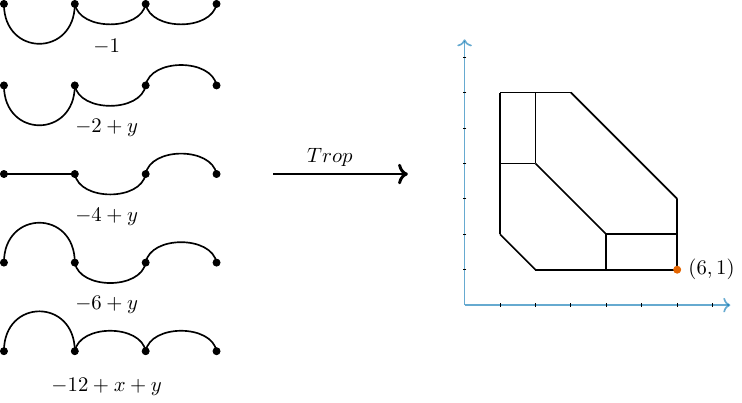}
    \caption{A maximal clique and its corresponding vertex in the tropical framingtope.}
    \label{fig_trop_max_clique}
\end{figure}
\end{example}

As a final consequence, we obtain the following.

\begin{corollary}
Let $(G,F)$ be a framed graph, and let $\h$ be an admissible height function for the framed triangulation $\Delta_{G,F}$. Then the Hasse diagram of the framing lattice $\mathscr{L}_{G,F}$ is the $1$-skeleton of the tropical framingtope $\mathscr{P}_{G,F}^h$.
\end{corollary}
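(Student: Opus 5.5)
The plan is to read off the statement from Corollary~\ref{cor.geometric_realization} together with the known description of the Hasse diagram as the dual graph of $\Delta_{G,F}$. By Theorem~\ref{thm_trop_realization_cells}(2), the complex of bounded cells of $\mathscr{T}^h$, which is $\mathscr{P}_{G,F}^h$, is dual to the complex of interior faces of $\Delta_{G,F}$. This duality is an order-reversing bijection that sends a face of codimension $k$ in $\Delta_{G,F}$ to a bounded cell of dimension $k$. The dimension count comes from Theorem~\ref{thm_Cayley_tropicalArrangement} and the full-dimensionality argument in the proof of Theorem~\ref{thm_trop_realization_cells}. So the $0$-cells of $\mathscr{P}_{G,F}^h$ correspond to interior faces of codimension $0$. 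These are exactly the maximal simplices $\Delta_C$, because every top-dimensional simplex of a triangulation of a full-dimensional ball is interior. Maximal simplices are in turn indexed by the maximal cliques $C\in\calC$, which are the elements of $\scrL_{G,F}$.

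Next I would identify the edges. The $1$-cells correspond to interior faces of codimension $1$. In a triangulated ball, a codimension-$1$ face is interior exactly when it lies in two maximal simplices. I would justify this with Theorem~\ref{thm_interior_equal_covering} and the link argument of Lemma~\ref{lem_nRegular}: the link of an interior codimension-$1$ face is a $0$-sphere, that is, two points. Such a face is $\Delta_{C\cap C'}$ with $C'=(C\setminus R)\cup R'$. By order reversal, the $1$-cell dual to it contains exactly the two $0$-cells dual to $\Delta_C$ and $\Delta_{C'}$. Because it is a bounded $1$-dimensional polyhedron, it is the segment joining them; Theorem~\ref{thm_trop_cells_arrangement} gives it as $\operatorname{conv}\{\widetilde C^\circ,\widetilde{C'}^\circ\}$ with trivial recession cone. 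Conversely, every pair of adjacent maximal simplices shares an interior codimension-$1$ face, since a shared facet of two simplices cannot lie on the boundary.

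Hence the edge graph of $\mathscr{P}_{G,F}^h$ is the dual graph of $\Delta_{G,F}$. By the von Bell--Ceballos theorem, that dual graph is the Hasse diagram of $\scrL_{G,F}$: adjacent simplices differ by one rotation, which is a cover $C\precccwrot C'$ or its reverse. Equivalently, using the third description from Theorem~\ref{thm.three_equivalent_definitions}, edges are pure intervals $[C,C+a]$ with $|A|=1$, and these are precisely the cover relations.

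The only real obstacle is the claim that distinct maximal cliques give distinct points $\widetilde C^\circ$, so the edges are genuine nondegenerate segments. This follows from the bijection and dimension statement in Theorem~\ref{thm_Cayley_tropicalArrangement}, and that is the point I would check carefully.
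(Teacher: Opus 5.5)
Your proposal is correct and is essentially the paper's argument: the corollary is stated there, without a written proof, as an immediate consequence of Corollary~\ref{cor.geometric_realization} (bounded cells of $\mathscr{T}^h$ are dual to the interior faces of $\Delta_{G,F}$) together with the von Bell--Ceballos theorem that the dual graph of $\Delta_{G,F}$ is the Hasse diagram of $\scrL_{G,F}$; your identification of vertices with maximal simplices and of edges with interior codimension-one faces, i.e.\ adjacent pairs, is exactly the verification the paper leaves implicit. One small correction of attribution: Theorem~\ref{thm_Cayley_tropicalArrangement} does not itself state the dimension count, but you do not need it, because the order anti-isomorphism already matches codimension with dimension, since the interior faces form an upper set of the face poset and the bounded cells form a polyhedral subcomplex.
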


\section{Framingtopes for plane framed graphs}

The previous section provides tropical geometric realizations $\mathscr{P}_{G,F}^h$ of framingtopes that depend on an admissible height function $h$ for the framed triangulation $\Delta_{G,F}$. While admissible height functions for arbitrary framed graphs $(G,F)$ were constructed in~\cite{dkktriangulation}, simpler choices are possible for special families of examples. In this section, we illustrate our constructions for the family of plane framed graphs. The resulting realizations are more convenient and, in particular, can be chosen to have vertices with integer coordinates.

\begin{definition}\label{def_plane_framed_graph}
A \defn{plane framed graph} is a framed graph $(G,F)$ such that $G$ is a flow graph with a crossing-free embedding in the plane, whose vertices are located from left to right in increasing order, whose edges are oriented monotonically from left to right, and whose framing $F$ is induced by the embedding. See~\Cref{fig_plane_framed_graph}.
\end{definition}

\begin{figure}[ht]
\centering
\includegraphics[]{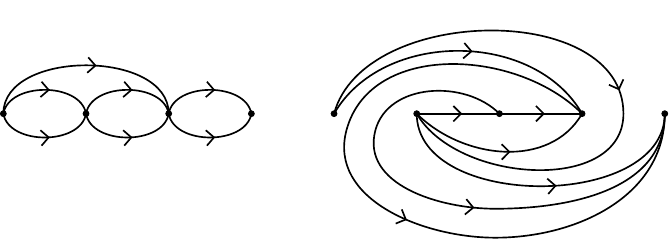}
    \caption{Example of a plane framed graph (left) and a plane graph that is not a plane framed graph (right), since its edges are not oriented monotonically from left to right.}
    \label{fig_plane_framed_graph}
\end{figure}

Plane framed graphs form a particularly important family in the study of flow polytopes. M\'esz\'aros, Morales, and Striker showed that flow polytopes of plane graphs\footnote{We include the condition that the edges are oriented monotonically from left to right, which removes an ambiguity in the construction, cf.~\Cref{fig_plane_framed_graph}.} are integrally equivalent to certain order polytopes, and established a correspondence between their framed triangulation and Stanley's triangulation of order polytopes in terms of linear extensions \cite{mezarosmoralesstriker}.

Let $(G,F)$ be a plane framed graph on the vertex set $[n]$. For each $i=0,\dots,n-1$, label the outgoing edges of vertex $i+1$ in $G$ as above by
\[
e_{i,0},e_{i,1},\dots,e_{i,j_i},
\]
ordered from bottom to top, with $e_{i,0}$ denoting the bottommost edge. The graph $G$ decomposes the plane into regions. Every bounded region is bounded by two paths that start at the same vertex $i$. We call such a bounded region a \defn{bubble}, and label it by~$p_{i,j}$, where \(e_{i,j}\) is the upper boundary path's first edge (so, $j\neq0$). See \Cref{fig_graph_poset} for an example.

\begin{figure}[ht]
\centering
\includegraphics[]{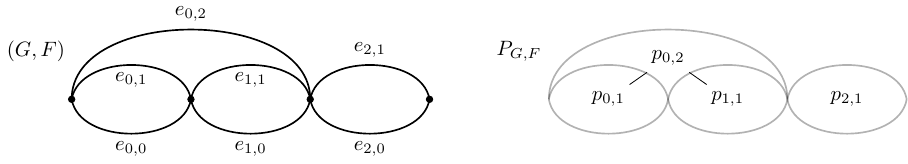}
\caption{A plane framed graph and its associated poset.}
\label{fig_graph_poset}
\end{figure}

Let $P_{G,F}$ denote the set of bubbles. For two bubbles $p,q\in P_{G,F}$, we write
\[
p\lessdot q
\]
if $p$ and $q$ share an edge and $p$ lies below $q$. The \defn{bubble poset} $(P_{G,F},\leq)$ is the poset on~$P_{G,F}$ obtained by taking the transitive closure of this relation.\footnote{We remark that $\lessdot$ is not necessarily the cover relation of the poset, as there may be bubbles $p,q,r\in P_{G,F}$ such that $ p<q<r, $ while both $p\lessdot q$ and $p\lessdot r$.}

Under these conventions, each route $R$ of $G$ determines the order ideal $I_R$ consisting of the bubbles lying below it. This gives a bijective correspondence between the routes of $G$ and the order ideals of the poset $P_{G,F}$. Moreover, maximal cliques correspond to maximal chains of order ideals
\[
I_0\subset I_1\subset \dots \subset I_d,
\]
or, equivalently, to linear extensions $q_1<q_2<\dots<q_d$ of the poset $P_{G,F}$, where $q_j$ is the unique element of $I_j\setminus I_{j-1}$. In other words, maximal cliques can be constructed by starting with the bottommost route and recursively constructing the next route by enlarging the corresponding order ideal by one bubble at a time.

\begin{figure}[ht]
    \centering
    \includegraphics[]{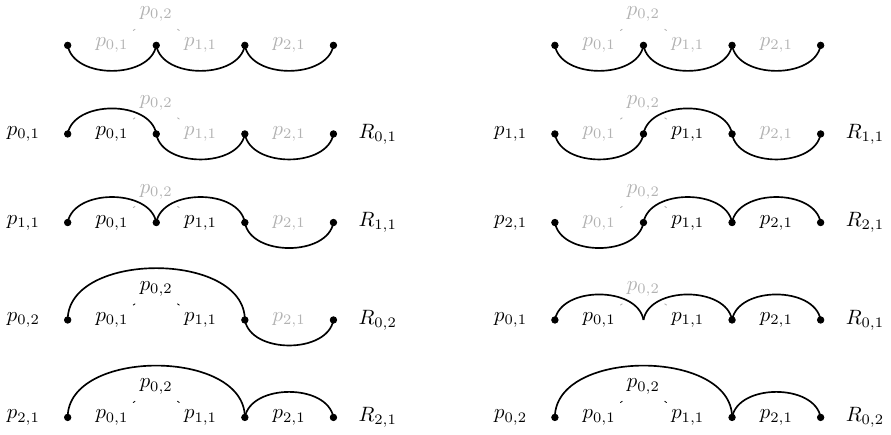}
    \caption{Two maximal cliques and their corresponding linear extensions.}
    \label{fig_two_linear_intervals}
\end{figure}
\Cref{fig_two_linear_intervals} illustrates two examples of maximal cliques, corresponding to the linear extensions
\[
p_{0,1}<p_{1,1}<p_{0,2}<p_{2,1},
\]
and
\[
p_{1,1}<p_{2,1}<p_{0,1}<p_{0,2},
\]
respectively.

For a maximal clique $C$ and an edge $e_{i,j}$ with $j\neq 0$, we denote by $R_{i,j}(C)$ the route of $C$ whose corresponding order ideal contains $p_{i,j}$ and all elements preceding it in the associated linear extension, see~\Cref{fig_two_linear_intervals} for several examples. Equivalently, when constructing $C$ by adding bubbles from bottom to top, $R_{i,j}(C)$ is the first route containing the edge $e_{i,j}$. We denote by $R_{i,j}^{\downarrow}(C)$ the route immediately preceding it. When the maximal clique is clear from the context, we often write $R_{i,j}=R_{i,j}(C)$ and $R_{i,j}^{\downarrow}=R_{i,j}^{\downarrow}(C)$, omitting $C$ for simplicity.

\subsection{Vertex coordinates}
\label{sec_vertex_coordinates}

Our next goal is to give a simple description of the vertex coordinates of the tropical framingtope $\mathscr{P}_{G,F}^h$. Let $\mathbf{x}=\mathbf{x}(C)$ be the vertex corresponding to a maximal clique $C$.

Recall that the coordinates
\[
\mathbf{x}=(x_{i,j})_{(i,j)\in I}
\]
are indexed by
\[
I=\{(i,j):1\leq i\leq n-2,\;1\leq j\leq j_i\},
\]
so that there is one coordinate for each edge $e_{i,j}$ with $1\leq i\leq n-2$ and $1\leq j\leq j_i$.

For a route $R$, let $I_R$ denote the set of indices $(k,l)\in I$ such that $e_{k,l}\in R$. For $i\neq 0$, the routes $R_{i,j}=R_{i,j}(C)$ and $R_{i,j}^{\downarrow}=R_{i,j}^{\downarrow}(C)$ share the same source edge. Hence, the corresponding linear forms in \eqref{eq_framingtope_trop_hypersurfaces} attain the minimum simultaneously, and therefore satisfy
\begin{equation}
-h(R_{i,j})+
\sum_{(k,l)\in I_{R_{i,j}}}x_{k,l}
=
-h(R_{i,j}^{\downarrow})+
\sum_{(k,l)\in I_{R_{i,j}^{\downarrow}}}x_{k,l}.
\label{eq_bubble}
\end{equation}

In this equation, the variables $x_{k,l}$ corresponding to edges in the intersection $R_{i,j}\cap R_{i,j}^{\downarrow}$ cancel. The remaining variables correspond to the two bounding paths of the bubble~$p_{i,j}$: on the left-hand side, only the variables corresponding to the top bounding path remain, while on the right-hand side, those corresponding to the bottom bounding path remain. Since all but the first edge of the top bounding path are of the form $e_{\star,0}$, the left-hand side reduces to $x_{i,j}$; see \Cref{fig_general_ordered}.

\begin{figure}[ht]
\centering
\includegraphics[scale=1.15]{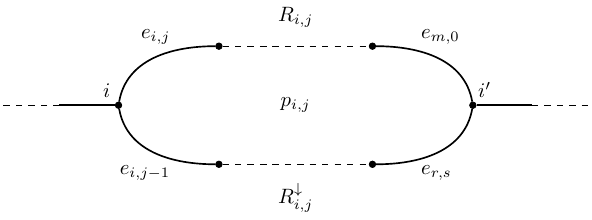}
\caption{The local structure of the routes $R_{i,j}$ and $R_{i,j}^{\downarrow}$ within the bubble $p_{i,j}$.}
\label{fig_general_ordered}
\end{figure}

Let $D(p_{i,j})$ denote the set of indices $(k,l)$ with $l\neq 0$ corresponding to edges of the bottom bounding path of the bubble $p_{i,j}$. Then we obtain the following.

\begin{lemma}\label{lem_coordinates_recursion}
Let $(G,F)$ be a plane framed graph and let $C$ be a maximal clique. Then
\[
x_{i,j}
=
h(R_{i,j})-h(R_{i,j}^{\downarrow})
+\sum_{(k,l)\in D(p_{i,j})}x_{k,l}.
\]
\end{lemma}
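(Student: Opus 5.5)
The plan is to derive the identity directly from the fact that the vertex $\mathbf{x}(C)$ of the tropical framingtope is the point $\widetilde{C}^\circ$ dual to the facet $\Delta_C$. The first step uses \Cref{thm_Cayley_tropicalArrangement} and the explicit description of $\psi(C)$. For each $i$, the point $\mathbf{x}(C)$ lies in $\psi_i(C_i)$, where $C_i$ is the set of routes of $C$ starting with the source edge $e_{0,i}$. Consequently, the linear forms $-h(e_{0,i}R)+\langle \vec{R},\mathbf{x}\rangle$ attain the minimum of $T_i^h$ simultaneously for all routes of $C$ with source edge $e_{0,i}$. In particular, any two routes of $C$ sharing the same source edge give equal values of their linear forms.

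The second step is to check that $R_{i,j}$ and $R_{i,j}^{\downarrow}$ share a source edge. In the linear-extension description, $R_{i,j}$ arises from $R_{i,j}^{\downarrow}$ by adding the single bubble $p_{i,j}$ to the order ideal. Geometrically, the route is altered only along the boundary of that bubble: the bottom bounding path is replaced by the top bounding path. Both bounding paths start at vertex $i+1$. When $i\ge 1$ this vertex is not the source, so the two routes agree on their initial segment, and in particular on the source edge. This gives equation~\eqref{eq_bubble}. The case $i=0$ has to be excluded or handled separately, since then the source edges differ; this is consistent with the coordinate index set $I$ requiring $i\ge1$.

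The third step is the cancellation. Outside the bubble boundary, the two routes coincide edge by edge, so those variables cancel from both sides of~\eqref{eq_bubble}. On the left-hand side, what remains are the indices of the top bounding path. Its first edge is $e_{i,j}$ with $j\neq0$, and I must argue that every later edge of that path is a bottommost outgoing edge $e_{\star,0}$. This follows from planarity and the definition of a bubble as a bounded face: if some later vertex on the upper boundary had an outgoing edge below the one used, that edge would enter the face, contradicting that the region is a face. The same kind of argument shows that the bottom path's first edge is $e_{i,j-1}$. On the right-hand side, only indices $(k,l)$ with $l\neq0$ on the bottom path survive, which is exactly $D(p_{i,j})$. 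Rearranging then gives the formula.

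I expect the main obstacle to be making the planar claims rigorous. These are that consecutive routes in $C$ differ exactly along the boundary of the added bubble, and that the interior vertices of the upper path only use $e_{\star,0}$ edges. I would handle both through the face structure of the plane embedding together with the bijection between routes and order ideals.
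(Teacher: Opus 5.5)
Your proposal is correct and follows essentially the paper's argument. The paper likewise obtains \eqref{eq_bubble} because $R_{i,j}$ and $R_{i,j}^{\downarrow}$ share a source edge for $i\neq 0$, cancels the common edges, and uses that every edge of the top bounding path after $e_{i,j}$ is of the form $e_{\star,0}$; you additionally justify the planar facts (consecutive routes differ exactly along the bubble boundary, and the upper boundary uses only bottommost outgoing edges), which the paper simply asserts in the discussion preceding the lemma.
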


\begin{proof}
This follows from~\eqref{eq_bubble} and the discussion above.
\end{proof}

\begin{example}
Let $(G,F)$ be the plane framed graph from~\Cref{fig_graph_poset}. In this case, the index set is
\[
I=\{(1,1),(2,1)\},
\]
so there are only two variables,
\[
x_{1,1}, \qquad x_{2,1},
\]
corresponding to the edges $e_{1,1}$ and $e_{2,1}$, respectively.

Let $C$ be the maximal clique on the left of \Cref{fig_two_linear_intervals}. In this case, we have
\[
R_{1,1}^{\downarrow}=R_{0,1},
\qquad
D(p_{1,1})=\emptyset,
\]
and
\[
R_{2,1}^{\downarrow}=R_{0,2},
\qquad
D(p_{2,1})=\emptyset.
\]
Therefore,
\[
x_{1,1}=h(R_{1,1})-h(R_{0,1}),
\]
and
\[
x_{2,1}=h(R_{2,1})-h(R_{0,2}).
\]
\end{example}

The previous example was relatively simple because the sets $D(p_{i,j})$ were empty. When this is not the case, we apply \Cref{lem_coordinates_recursion} recursively, removing bubbles below $p_{i,j}$ one at a time and thereby eliminating the new variables $x_{k,l}$ that appear at each step. This process terminates when no further variables appear. We illustrate this procedure in the next example.

\begin{figure}[ht]
    \centering
    \includegraphics[scale=1.2]{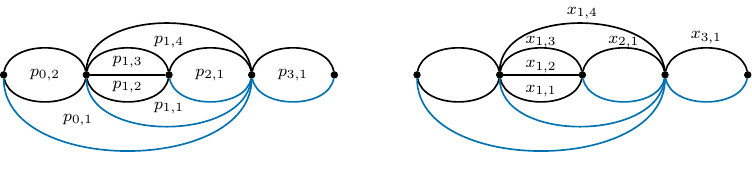}
    \caption{A plane framed graph $(G,F)$ with edges of the form $e_{\star,0}$ highlighted in blue.}
    \label{fig_example_coordinates_ordered}
\end{figure}

\begin{example}\label{ex_coordinates_recursion}
Let $(G,F)$ be the plane framed graph in \Cref{fig_example_coordinates_ordered}, and let $\mathbf{x}=\mathbf{x}(C)$ be the vertex corresponding to a maximal clique $C$. As above, write $R_{i,j}=R_{i,j}(C)$ and $R_{i,j}^{\downarrow}=R_{i,j}^{\downarrow}(C)$. Independently of the choice of $C$, we can compute the coordinate $x_{1,4}$ recursively as follows:
\[
x_{1,4}
= h(R_{1,4})-h(R_{1,4}^{\downarrow}) + x_{1,3}+x_{2,1},
\]
where
\[
x_{1,3}
= h(R_{1,3})-h(R_{1,3}^{\downarrow}) + x_{1,2},
\qquad
x_{2,1}
= h(R_{2,1})-h(R_{2,1}^{\downarrow}),
\]
and
\[
x_{1,2}
= h(R_{1,2})-h(R_{1,2}^{\downarrow}) + x_{1,1},
\qquad
x_{1,1}
= h(R_{1,1})-h(R_{1,1}^{\downarrow}).
\]
Combining these equations, we obtain
\[
\begin{aligned}
x_{1,4}
={}&\bigl(h(R_{1,4})-h(R_{1,4}^{\downarrow})\bigr)
+\bigl(h(R_{1,3})-h(R_{1,3}^{\downarrow})\bigr)\\
&+\bigl(h(R_{1,2})-h(R_{1,2}^{\downarrow})\bigr)
+\bigl(h(R_{1,1})-h(R_{1,1}^{\downarrow})\bigr)\\
&+\bigl(h(R_{2,1})-h(R_{2,1}^{\downarrow})\bigr).
\end{aligned}
\]
More concisely, $x_{1,4}$ is the sum of $h(R_{k,l})-h(R_{k,l}^{\downarrow})$ over all indices $(k,l)$ such that $p_{k,l}\leq p_{1,4}$ and $p_{k,l}$ can be reached from $p_{1,4}$ by moving downward without crossing an edge of the form $e_{\star,0}$. Thus, $p_{0,1}$ is disregarded, even though $p_{0,1}\leq p_{1,4}$. The edges of the form $e_{\star,0}$ are highlighted in blue in \Cref{fig_example_coordinates_ordered}.
\end{example}

The preceding example suggests the following general description of the coordinates.

\begin{theorem}\label{thm_vertex_coordinates}
Let $(G,F)$ be a plane framed graph and let $\mathbf{x}=\mathbf{x}(C)$ be the vertex corresponding to a maximal clique $C$. Then
\[
x_{i,j}
=
\sum_{(k,l)\in\overline{D}(p_{i,j})}
\bigl(h(R_{k,l})-h(R_{k,l}^{\downarrow})\bigr),
\]
where $\overline{D}(p_{i,j})$ is the set of indices $(k,l)$ such that $p_{k,l}\leq p_{i,j}$ and $p_{k,l}$ can be reached from~$p_{i,j}$ by moving downward without crossing an edge of the form $e_{\star,0}$.
\end{theorem}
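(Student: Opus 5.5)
We unroll the recursion of \Cref{lem_coordinates_recursion} by strong induction on the bubble poset $P_{G,F}$, following the pattern of \Cref{ex_coordinates_recursion}. Fix the maximal clique $C$. For a bubble $p_{i,j}$ with $i\geq 1$, define $\overline{D}(p_{i,j})$ recursively as $\{(i,j)\}\cup\bigcup_{(k,l)\in D(p_{i,j})}\overline{D}(p_{k,l})$. We will show that this set is the one described in the statement and that it is a disjoint union. Once that is established, the formula follows at once from \Cref{lem_coordinates_recursion} by induction: for minimal bubbles $D(p_{i,j})=\emptyset$ and the base case holds. Indeed, each $(k,l)\in D(p_{i,j})$ indexes an edge $e_{k,l}$ on the bottom boundary of $p_{i,j}$ with $l\neq 0$. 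This edge is the first edge of the top boundary path of the bubble $p_{k,l}$ directly below it. Hence $p_{k,l}\lessdot p_{i,j}$ and $p_{k,l}<p_{i,j}$, so the induction is well founded.

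Step one is to check that $D(p_{i,j})$ only involves coordinates in $I$, i.e.\ $k\geq 1$. The bottom boundary path of $p_{i,j}$ starts at vertex $i+1$, and $i\geq 1$ for indices in $I$, so none of its edges are source edges. This also justifies the cancellation leading to \eqref{eq_bubble}: $R_{i,j}$ and $R^{\downarrow}_{i,j}$ share their source edge. The only exception would be a bubble based at the source, which has no coordinate.

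Step two, the geometric identification, is the main point. We must show that the recursively generated set equals the set of bubbles reachable from $p_{i,j}$ by downward moves across edges of the form $e_{k,l}$ with $l\neq 0$. A downward move from a bubble $q$ crosses an edge of its bottom boundary into the bubble directly beneath it. By planarity, the bubble beneath an edge $e$ is the one whose top boundary contains $e$. If $e=e_{k,l}$ with $l\neq 0$, that bubble is exactly $p_{k,l}$, since $e_{k,l}$ is the first edge of the top boundary of $p_{k,l}$. All other edges of a top boundary path are of the form $e_{\star,0}$, as noted before \Cref{lem_coordinates_recursion}. Thus one step of the recursion is precisely one allowed downward move, and iterating gives the reachable set. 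This is also a subset of $\{p_{k,l}\le p_{i,j}\}$.

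Step three, disjointness, is the expected obstacle. The recursion adds the coordinates $x_{k,l}$ for $(k,l)\in D(p_{i,j})$, so a bubble reachable along two different downward routes would be counted twice. We need a uniqueness statement: every bubble in $\overline{D}(p_{i,j})$ is reached by a unique chain of allowed downward moves. The argument would use planarity. Suppose two distinct chains reach the same $p_{k,l}$. They would then bound a region between them, and the corresponding edges $e_{\star,l}$ would all have to emanate upward from the top boundary of $p_{k,l}$. But that top boundary has only one non-bottommost edge, namely its first edge $e_{k,l}$; all its other edges are of the form $e_{\star,0}$. So each bubble can be entered from above by an allowed move in only one way, across $e_{k,l}$ from the unique bubble lying above that edge. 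Therefore the reachable bubbles form a tree rooted at $p_{i,j}$, the union is disjoint, and the recursion yields each term $h(R_{k,l})-h(R_{k,l}^{\downarrow})$ exactly once.
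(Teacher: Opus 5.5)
Your proposal is correct and follows the same route as the paper: you unroll the recursion of \Cref{lem_coordinates_recursion} until you reach bubbles with $D(p_{k,l})=\emptyset$, and you identify the indices that appear with the set $\overline{D}(p_{i,j})$ of bubbles reachable by downward moves. Your Step three, that each bubble $p_{k,l}$ can be entered by an allowed move only across its first top edge $e_{k,l}$, so the reachable bubbles form a tree and no term is counted twice, makes explicit a point the paper's proof only asserts when it says each index ``contributes exactly'' one term.
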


\begin{proof}
The proof follows the recursive procedure described in \Cref{ex_coordinates_recursion}. Starting with $x_{i,j}$, apply \Cref{lem_coordinates_recursion} recursively to each variable that appears on the right-hand side. Every variable $x_{k,l}$ with $(k,l)\in\overline{D}(p_{i,j})$ appears at some stage of this procedure. The process terminates when the bottom bounding paths of the remaining bubbles consist entirely of edges of the form $e_{\star,0}$. For these bubbles, $D(p_{k,l})=\emptyset$, so no further variables appear. Consequently, each $(k,l)\in\overline{D}(p_{i,j})$ contributes exactly
\[
h(R_{k,l})-h(R_{k,l}^{\downarrow}),
\]
which gives the stated formula.
\end{proof}

\subsection{A nice height function and explicit realizations}
\label{sec_nice_height}

\Cref{thm_vertex_coordinates} expresses the vertex coordinates in terms of an admissible height function $\h$, but we have not yet specified a particular choice of $\h$. In this subsection, we identify a particularly simple and natural one: the function that counts the routes lying weakly below a given route. To ensure that this counting function is well-defined and admissible, we introduce a class of framed graphs for which these properties hold.

Recall from~\Cref{def_top_bot} that two routes $R_1$ and $R_2$ are \emph{incoherent along exactly one path} $P\subseteq R_1\cap R_2$ if $R_1$ and $R_2$ are incoherent at every vertex $v\in P$ and nowhere else, and that for such routes their top $\Top(R_1,R_2)$ and bottom $\Bot(R_1,R_2)$ parts are well defined.

\begin{definition}[ABBE graph]
\label{def_ABBE}
Let $(G,F)$ be a framed graph. Assume that $G$ is embedded in the plane with vertices located from left to right in increasing order, with all edges oriented monotonically from left to right, and with the framing $F$ induced by the drawing. We say that $(G,F)$ is an \defn{ABBE graph} (\textbf{AB}ove-\textbf{BE}low) if, for any two routes $R_1$ and $R_2$ that are incoherent along exactly one path, the route $\Bot(R_1,R_2)$ lies \defn{geometrically below} $\Top(R_1,R_2)$. That is, on every vertical line intersecting both routes, the intersection point with $\Bot(R_1,R_2)$ lies below the intersection point with $\Top(R_1,R_2)$. See~\Cref{fig_ABBE} for an illustration.
\end{definition}

\begin{figure}[ht]
    \centering
    \includegraphics[]{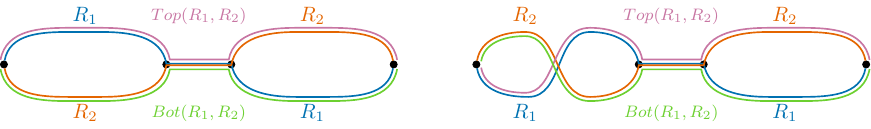}
    \caption{Illustration of the ABBE graph property. In the first example (left one), $\Bot(R_1,R_2)$ is geometrically below $\Top(R_1,R_2)$, whereas in the second example (right one) it is not.}
    \label{fig_ABBE}
\end{figure}

\begin{proposition}[Route-counting height function]
\label{prop_ABBE_height_admissible}
Let $(G,F)$ be an ABBE graph, and let $\h(R)$ denote the number of routes $R'$ of $G$ lying geometrically below $R$, including $R$ itself. Then $\h$ is an admissible height function for the framed triangulation $\Delta_{G,F}$.
\end{proposition}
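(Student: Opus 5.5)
We will verify the criterion of \Cref{lemma_admisible_heigth_function}. Fix routes $R_1,R_2$ incoherent along exactly one path $P$, and write $T=\Top(R_1,R_2)$ and $B=\Bot(R_1,R_2)$. For a route $R$, let $\mathcal{B}(R)$ denote the set of routes lying geometrically below $R$, including $R$ itself, so that $\h(R)=|\mathcal{B}(R)|$. We must show
\[
|\mathcal{B}(T)|+|\mathcal{B}(B)|>|\mathcal{B}(R_1)|+|\mathcal{B}(R_2)|.
\]
The idea is the standard min/max argument. Pointwise on each vertical line, the heights of $T$ and $B$ are the maximum and minimum of the heights of $R_1$ and $R_2$. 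We then compare the sets $\mathcal{B}$ by inclusion--exclusion.

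The first step is to establish this pointwise description. On every vertical line meeting both routes, the pair $\{T,B\}$ occupies the same points as $\{R_1,R_2\}$, since the four routes use the same edges. The ABBE hypothesis then forces $B$ to take the lower point and $T$ the upper point. Being ``geometrically below'' is a pointwise condition on vertical lines, so we obtain
\[
\mathcal{B}(B)=\mathcal{B}(R_1)\cap\mathcal{B}(R_2)\quad\text{and}\quad \mathcal{B}(T)\supseteq\mathcal{B}(R_1)\cup\mathcal{B}(R_2).
\]

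Some care is needed on vertical lines meeting only some of the routes. All routes run from the source to the sink, so every vertical line strictly between them meets each route. Because edges are monotone, each route meets such a line in a single point. This makes the pointwise comparison well defined and gives the two relations above. Inclusion--exclusion then yields the weak inequality $\h(T)+\h(B)\ge \h(R_1)+\h(R_2)$.

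The main obstacle is strictness. For this we exhibit a route in $\mathcal{B}(T)\setminus(\mathcal{B}(R_1)\cup\mathcal{B}(R_2))$. The natural candidate is $T$ itself. Since $R_1$ and $R_2$ are genuinely incoherent, they cross, so $T\neq R_1,R_2$. On one side of the crossing path $P$, the route $T$ agrees with $R_1$ and lies strictly above $R_2$ on some vertical line. On the other side, it agrees with $R_2$ and lies strictly above $R_1$. Hence $T$ is below neither $R_1$ nor $R_2$, and therefore $T\notin \mathcal{B}(R_1)\cup\mathcal{B}(R_2)$.

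To make this rigorous, we check that incoherence at the vertices of $P$, with the framing induced by the drawing, means that the routes enter $P$ in one vertical order and leave in the opposite order. This uses the definition of $<_v^{\cw}$ together with planarity of the drawing near $P$. The resulting strict inequality completes the verification of \Cref{lemma_admisible_heigth_function}.
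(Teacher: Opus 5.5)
Your proposal is correct and follows essentially the same argument as the paper. Both proofs use an inclusion--exclusion count over the routes lying below the four routes: the ABBE condition makes $\Bot(R_1,R_2)$ and $\Top(R_1,R_2)$ the pointwise lower and upper envelopes of $R_1,R_2$, and $\Top(R_1,R_2)$ itself is the extra route giving the strict inequality. Your version makes explicit the identities $\mathcal{B}(\Bot(R_1,R_2))=\mathcal{B}(R_1)\cap\mathcal{B}(R_2)$ and $\mathcal{B}(\Top(R_1,R_2))\supseteq\mathcal{B}(R_1)\cup\mathcal{B}(R_2)$, which the paper's region-counting uses only implicitly.
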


\begin{proof}
By \Cref{lemma_admisible_heigth_function}, it suffices to show that, for any two routes $R_1$ and $R_2$ that are incoherent along exactly one path,
\[
\h(\Top(R_1,R_2))+\h(\Bot(R_1,R_2))
>
\h(R_1)+\h(R_2).
\]
Both sides count routes of $G$ according to the regions in which they lie relative to the four routes involved. By the ABBE condition, every route lying geometrically below $\Bot(R_1,R_2)$ is counted twice on both sides. Thus, it suffices to compare the routes in the region between $\Bot(R_1,R_2)$ and $\Top(R_1,R_2)$.

In this region, every route below $R_1$ or $R_2$ is counted exactly once on the right-hand side. On the other hand, every such route is counted exactly once by $\h(\Top(R_1,R_2))$. In addition, $\Top(R_1,R_2)$ itself is counted by $\h(\Top(R_1,R_2))$, but by neither $\h(R_1)$ nor $\h(R_2)$. Hence the left-hand side counts at least one more route than the right-hand side, proving the desired inequality.
\end{proof}

The following proposition shows that the ABBE condition is automatically satisfied for plane framed graphs, so the preceding construction applies in this setting.

\begin{proposition}\label{prop_plane_framings_are_ABBE}
    Plane framed graphs are ABBE graphs.
\end{proposition}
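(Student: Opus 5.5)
We must show that in a plane framed graph, whenever two routes $R_1,R_2$ are incoherent along exactly one path $P$, the route $\Bot(R_1,R_2)$ lies geometrically below $\Top(R_1,R_2)$. The approach is to exploit the fact that in a crossing-free monotone embedding the framing order $\leq_{\scrI(v)}$, $\leq_{\scrO(v)}$ is exactly the vertical order of paths near $v$. Incoherence along $P$ is then the same as $R_1$ and $R_2$ crossing through $P$. So, say with $R_1<^{\cw}_v R_2$, the path $R_1$ arrives at $P$ from above and leaves below, while $R_2$ does the opposite.

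First I set up the comparison of routes. In a plane framed graph every route is the graph of a continuous function on the interval $[x(s),x(t)]$, since the edges are monotone curves. For two routes $A,B$, on each vertical line either they coincide or one is strictly above the other. Planarity ensures that the sign of $A-B$ can change only across a common subpath, at whose endpoints the routes touch or cross.

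Second, I translate incoherence into this picture. Let $P=[u,w]$ be the maximal common subpath where the routes are incoherent. Just before $u$ (if $u\neq s$), the routes arrive along distinct incoming edges of $u$. Since $\leq_{\In(u)}$ is the top-to-bottom order, and $Rv$ versus $R'v$ is decided at the first divergence vertex $u$, the relation $R_1v\leq_{\scrI(v)}R_2v$ says exactly that $R_1$ lies above $R_2$ immediately to the left of $u$. Similarly, $vR_2\leq_{\scrO(v)}vR_1$ says $R_2$ lies above $R_1$ immediately to the right of $w$.

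Third, I handle the left and right portions. Because $R_1,R_2$ are coherent at all other common vertices, they never cross elsewhere; they only touch at common subpaths without switching sides. Hence $R_1\ge R_2$ on every vertical line left of $P$, and $R_2\ge R_1$ on every vertical line right of $P$. Then $\Top=R_1vR_2$ is pointwise the upper envelope $\max(R_1,R_2)$, and $\Bot=R_2vR_1$ is $\min(R_1,R_2)$. Consequently, on each vertical line meeting both routes, $\Bot$ is weakly below $\Top$, which is the required notion of geometrically below.

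The main obstacle is the third step: justifying rigorously that coherence at every other common vertex forbids a side switch. Here I would argue from the first divergence vertex. Since all subpaths entering or leaving a vertex are compared at their first divergence, which is a genuine vertex of a planar embedding, the local top/bottom order at each common vertex agrees with the global vertical order of the two curves just before and after it. Coherence at such a vertex then says the curves enter and leave on the same side, so the side can only switch along $P$. I would also check the degenerate cases where $u=s$ or $w=t$; these do not arise, since incoherence requires $P$ to consist of inner vertices.
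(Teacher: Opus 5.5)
Your proposal is correct and follows the same idea as the paper, whose proof is a one-line observation: if $\Bot(R_1,R_2)$ were not below $\Top(R_1,R_2)$, the routes would have to cross somewhere outside $P$, which planarity (together with coherence at the other common vertices) rules out. Your argument makes this rigorous by viewing routes as graphs of functions. You note that the sign of $R_1-R_2$ can change only across a maximal common subpath, and that the framing there determines whether it does, so $\Top$ and $\Bot$ are the pointwise maximum and minimum of the two routes.
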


\begin{proof}
It suffices to show that if $R_1$ and $R_2$ are two routes that are incoherent along exactly one path $P$, then $\Bot(R_1,R_2)$ lies geometrically below $\Top(R_1,R_2)$. Otherwise, the two routes would have to cross along an edge outside $P$, contradicting the planarity condition.
\end{proof}

\begin{example}
Let us consider an example of a graph with a natural framing that is not an ABBE graph.

Let $G$ be the graph shown in \Cref{fig_amply_framed}, with the natural framing induced by the drawing, and let $R_1$ and $R_2$ be the two routes shown in the figure. The routes $R_1$ and $R_2$ are incoherent along exactly one path, consisting in this case of a single vertex marked in red in the figure. However, $\Bot(R_1,R_2)$ is not geometrically below $\Top(R_1,R_2)$.

For the route-counting height function $\h$, we have
\[
\h(R_1)=6,\qquad
\h(R_2)=2,\qquad
\h(\Top(R_1,R_2))=4,\qquad
\h(\Bot(R_1,R_2))=3.
\]
Thus,
\[
\h(\Top(R_1,R_2))+\h(\Bot(R_1,R_2))
=7
<8
=\h(R_1)+\h(R_2),
\]
so the admissibility criterion in \Cref{lemma_admisible_heigth_function} is violated. In particular, the route-counting height function is not admissible for this framed triangulation.

\begin{figure}[ht]
    \centering
    \includegraphics[scale=0.8]{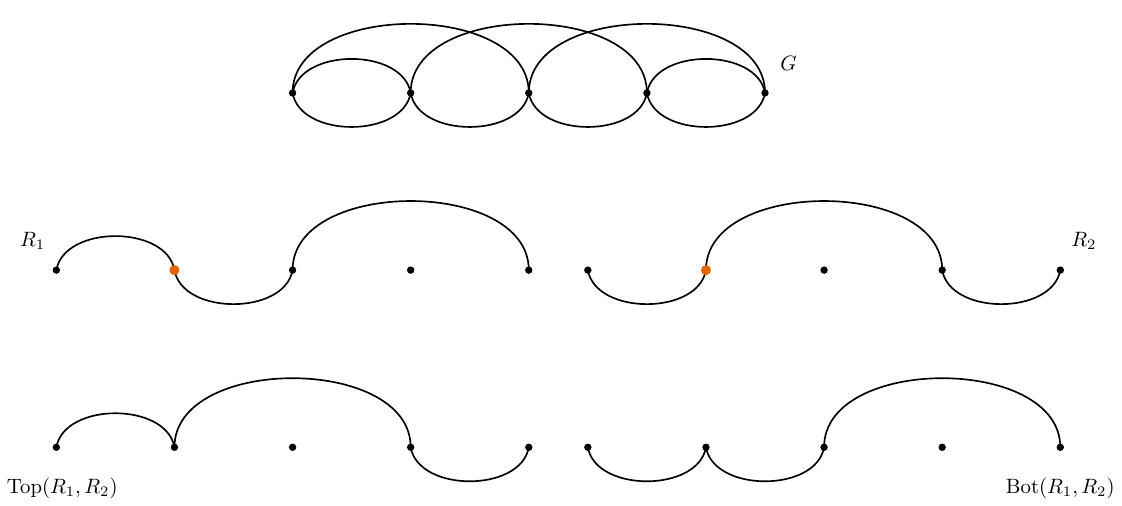}
    \caption{A non-ABBE graph with routes $R_1$, $R_2$, $\Top(R_1,R_2)$, and~$\Bot(R_1,R_2)$.}
    \label{fig_amply_framed}
\end{figure}
\end{example}

\begin{remark}[Cross Tamari lattices]
In~\cite[Section~2.4.3]{vonBell_framing_2024}, a generalization of the caracol graph in connection with cross-Tamari lattices was considered. The resulting framed cross-Tamari caracol graphs are not plane framed graphs in general. However, it is easy to see that they are all ABBE graphs. Hence, the tropical geometric realizations of framingtopes obtained from the simple route-counting function $\h$ in~\Cref{prop_ABBE_height_admissible} apply to this subfamily of examples. In particular, they provide the first known geometric realizations of the Hasse diagrams of cross-Tamari lattices as edge graphs of polytopal complexes. Moreover, these realizations have particularly simple vertex coordinates, which are integers and are obtained directly from the route-counting height function.

\begin{figure}[ht]
    \centering
    \includegraphics[scale=1.2]{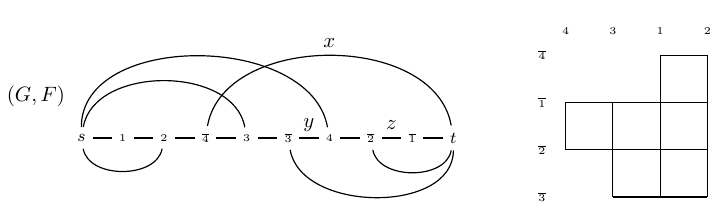}
    \caption{Example of a framed cross-Tamari caracol graph and its corresponding cross Tamari grid.}
    \label{fig_example_croostamari}
\end{figure}

\Cref{fig_example_croostamari} shows an example of a framed cross-Tamari caracol graph, reproduced from~\cite[Figure~44]{vonBell_framing_2024}. The corresponding framingtope is shown in~\Cref{fig_cross_framingtope}.

\begin{figure}[ht]
    \centering
    \includegraphics[scale=0.98]{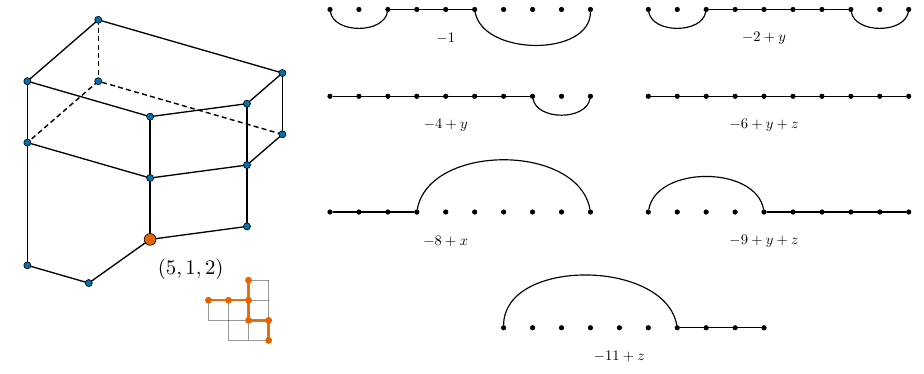}
    \caption{The framingtope of the framed cross-Tamari caracol graph in~\Cref{fig_example_croostamari}, associated to the route-counting function in~\Cref{prop_ABBE_height_admissible}, with one of its maximal cliques.}
    \label{fig_cross_framingtope}
\end{figure}

It is determined by the arrangement of tropical hypersurfaces
\[
\begin{aligned}
T_{0}^h &= \min\{-1,-2+y,-3+y+z,-4+x\},\\
T_{1}^h &= \min\{-2,-4+y,-6+y+z,-8+x\},\\
T_{2}^h &= \min\{-3,-6+y,-9+y+z\},\\
T_{3}^h &= \min\{-7,-11+z\}.
\end{aligned}
\]
It has 14 vertices with coordinates:
\[
(6,3,4),
(3,3,4),
(6,3,3),
(3,3,3),
(6,2,4),
(5,1,4),
(3,1,4),\]
\[
(6,2,3),
(5,1,3),
(3,1,3),
(6,2,2),
(5,1,2),
(4,1,1),
(3,1,1).
\]

We computed these coordinates using the grid model of cross Tamari lattices from~\cite[Section~2.4.3]{vonBell_framing_2024}. In this model, routes correspond to lattice points, and the height function of a lattice point is given by the number of lattice points in the grid lying weakly south-east of it. Each pair of lattice points in the same column determines an equality that the coordinates must satisfy. Solving these equalities for each cross Tamari tree yields the corresponding vertex coordinates.
\end{remark}

\subsection{The multioruga case}\label{sec_multipermutations}

In this section, we illustrate our construction for a special class of plane framed graphs, called \defn{multioruga graphs}, which were studied in~\cite{vonBell_framing_2024}. Their framing lattices provide a natural generalization of the classical weak order on permutations to multipermutations.

Fix $\mathbf{K}:=(k_0,\ldots,k_n)\in\mathbb{N}^{n+1}$. The \defn{multioruga graph} $\oru{\mathbf{K}}$ is the graph with $n+2$ vertices $v_0,\ldots,v_{n+1}$ and $k_0+\cdots+k_n+n$ edges, with $k_i+1$ edges from $v_i$ to $v_{i+1}$ for each $i\in\{0,\ldots,n\}$. This graph admits a natural crossing-free embedding in the plane, as illustrated in \Cref{fig_multioruga_11223}, and we consider the framing~$F$ induced by this embedding. For each $i\in\{0,\ldots,n\}$, we label the outgoing edges of $v_i$ by
\[
e_{i,0},e_{i,1},\dots,e_{i,k_i},
\]
ordered from bottom to top, as shown in~\Cref{fig_multioruga_11223}.

\begin{figure}[ht]
    \centering
    \includegraphics[]{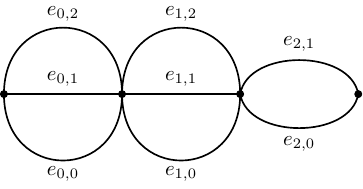}
    \caption{The multioruga graph $\oru{\mathbf{K}}$ for $\mathbf{K}=(2,2,1)$.}
    \label{fig_multioruga_11223}
\end{figure}

Our goal is to give an explicit description of the coordinates of the corresponding framingtope using the route-counting height function from \Cref{prop_ABBE_height_admissible}. For this purpose, we use the combinatorial description of the maximal cliques in terms of multipermutations given in~\cite{vonBell_framing_2024}, which we now recall.

Let $S(\mathbf{K})$ be the multiset containing the number $i$ exactly $k_i$ times, for $i\in\{0,\ldots,n\}$. The \defn{multipermutations of type $\mathbf{K}$} are the permutations of the multiset $S(\mathbf{K})$. We denote the set of all such multipermutations by $\mathfrak{S}_{\mathbf{K}}$ and identify its elements with words. For $\mathbf{K}=(2,1,1)$, we have
\[
\mathfrak{S}_{\mathbf{K}}
=
\{0012,0021,0102,0120,0201,0210,
1002,1020,1200,2001,2010,2100\}.
\]

Multipermutations of type $\mathbf{K}$ are in bijection with the maximal cliques of the multioruga graph $\oru{\mathbf{K}}$ with its natural framing. Moreover, the corresponding framing lattice is the \defn{weak order} on $\mathfrak{S}_{\mathbf{K}}$, defined as the transitive closure of the relation generated by adjacent transpositions

$$
\cdots ij\cdots \;\longrightarrow\; \cdots ji\cdots
\qquad\text{for } i<j.
$$

\begin{figure}[ht]
    \centering
    \includegraphics[]{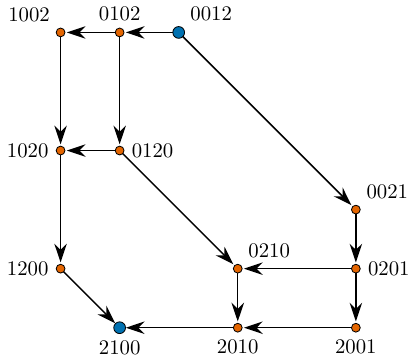}
    \caption{The Hasse diagram of the weak order on$\mathfrak{S}_{(2,1,1)}$. It is obtained as an oriented edge graph of the framingtope of the multioruga graph $\oru{(2,1,1)}$ in \Cref{fig_Trop_3_squares,fig_trop_max_clique}.}
    \label{fig_weak_order_211}
\end{figure}

\Cref{fig_weak_order_211} shows an example of the weak order on $\mathfrak{S}_{(2,1,1)}$, which is the framing lattice of $\oru{(2,1,1)}$ with its natural framing. Its Hasse diagram can be obtained by orienting the edges of the corresponding framingtope in \Cref{fig_Trop_3_squares,fig_trop_max_clique}.

The bijection between multipermutations and maximal cliques can be described as follows. For $\delta\in\mathfrak{S}_{\mathbf{K}}$, the associated maximal clique $C_\delta$ is constructed recursively by reading the word $\delta$ from left to right. We start with the bottom route
\[
[e_{0,0},e_{1,0},\ldots,e_{n,0}]
\]
and add one route for each letter of $\delta$. Suppose that the $j$-th letter of $\delta$ is $l$. Then the associated route is obtained from the previous one by replacing the edge $e_{l,\star}$ with $e_{l,\star+1}$, where $\star$ is the number of occurrences of $l$ among the first $j-1$ letters of $\delta$. The set of all resulting routes is the maximal clique $C_\delta$ associated with $\delta$. \Cref{fig_max_clique_12001} shows an example for $\mathbf{K}=(2,2,1)$ and $\delta=12001$.

\begin{figure}[ht]
   \centering
    \includegraphics[scale=0.7]{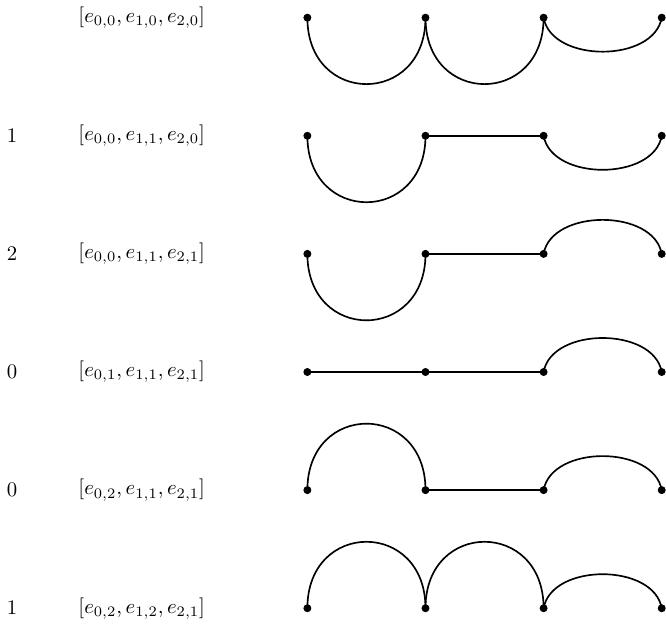}
    \caption{The maximal clique $C_\delta$ associated with $\delta=12001$. Reading~$\delta$ from left to right, each letter determines the edge that is replaced to obtain the next route.} \label{fig_max_clique_12001}
\end{figure}

For $i\in\{0,\dots,n\}$ and $j\in\{1,\dots,k_i\}$, the route $R_{i,j}=R_{i,j}(C_\delta)$, defined as the first route in $C_\delta$ containing the edge $e_{i,j}$, is precisely the route corresponding to the $j$th occurrence of the letter $i$ in $\delta$. For $k\in \{0,\dots,n\}$, define
\begin{equation}\label{eq_counting_letters}
a_{i,j}^k=a_{i,j}^k(\delta)
=
\#\{\text{letters $k$ occurring up to the $j$th letter $i$ in $\delta$}\}.
\end{equation}
Then
\[
R_{i,j}
=
[e_{0,a_{i,j}^0},e_{1,a_{i,j}^1},\dots,e_{n,a_{i,j}^n}].
\]
This allows us to explicitly evaluate the route-counting function from \Cref{prop_ABBE_height_admissible} for $R_{i,j}$ and its predecessor.

\begin{lemma}\label{lem_h_multioruga}
Let $C_\delta$ be the maximal clique associated to a multipermutation $\delta$. Let $R_{i,j}=R_{i,j}(C_\delta)$ and $R_{i,j}^{\downarrow}=R_{i,j}^{\downarrow}(C_\delta)$, and let $a_{i,j}^k=a_{i,j}^k(\delta)$ be as in \eqref{eq_counting_letters}. Then, for $i\in\{0,\dots,n\}$ and $j\in\{1,\dots,k_i\}$, the route-counting function $\h$ from \Cref{prop_ABBE_height_admissible} satisfies
\[
\h(R_{i,j})=\prod_{k=0}^n (a_{i,j}^k+1),
\]
and
\[
\h(R_{i,j}^{\downarrow})
=a_{i,j}^i\prod_{k\neq i}(a_{i,j}^k+1).
\]
\end{lemma}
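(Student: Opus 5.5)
The plan is to reduce the route-counting function on $\oru{\mathbf{K}}$ to a product over the $n+1$ independent ``columns'' of parallel edges. A route of $\oru{\mathbf{K}}$ is exactly a choice of one edge $e_{k,b_k}$ between $v_k$ and $v_{k+1}$ for each $k\in\{0,\dots,n\}$, so routes are in bijection with tuples $(b_0,\dots,b_n)$ with $0\le b_k\le k_k$. Because the embedding stacks the parallel edges from bottom to top, any vertical line through the open strip between $v_k$ and $v_{k+1}$ meets a route $[e_{0,b_0},\dots,e_{n,b_n}]$ at a point determined only by $b_k$, and this point increases with $b_k$. All routes pass through the common vertices $v_k$. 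Hence, under the paper's definition of ``geometrically below'', a route $R'=(b'_k)$ lies geometrically below $R=(b_k)$ if and only if $b'_k\le b_k$ for all $k$. With this characterization,
\[
\h\bigl([e_{0,b_0},\dots,e_{n,b_n}]\bigr)=\prod_{k=0}^n (b_k+1).
\]

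Next I plug in the explicit form of the routes recalled just before the lemma. Since $R_{i,j}=[e_{0,a^0_{i,j}},\dots,e_{n,a^n_{i,j}}]$, the first formula follows immediately. For the predecessor $R^{\downarrow}_{i,j}$, I use the recursive construction of $C_\delta$. The route $R_{i,j}$ is produced at the position of the $j$th occurrence of the letter $i$ in $\delta$. The route immediately preceding it in the chain is the one produced by the previous letter, or the bottom route if this letter comes first. It is obtained from $R_{i,j}$ by undoing the last replacement, namely $e_{i,j-1}\mapsto e_{i,j}$.

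In the notation \eqref{eq_counting_letters}, the letter counts up to and including the $j$th $i$ are $a^k_{i,j}$, and one has $a^i_{i,j}=j$. The counts just before that letter are therefore $a^k_{i,j}$ for $k\ne i$ and $a^i_{i,j}-1=j-1$ for $k=i$. Thus
\[
R^{\downarrow}_{i,j}=[e_{0,a^0_{i,j}},\dots,e_{i,a^i_{i,j}-1},\dots,e_{n,a^n_{i,j}}].
\]
Applying the product formula gives $\h(R^{\downarrow}_{i,j})=a^i_{i,j}\prod_{k\neq i}(a^k_{i,j}+1)$, as claimed.

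The main point needing care is the first step: justifying that ``geometrically below'' reduces to coordinatewise comparison of edge indices. This requires checking the definition on vertical lines at the vertices $v_k$, where every route meets the line at the same point, so these lines impose no condition. It also requires the definition to be read as weak inequality, so that $R$ itself is counted as the paper stipulates. One must also confirm that $R^{\downarrow}_{i,j}$, defined via the linear extension and bubble order, agrees with the previous route in the word construction. For multiorugas this holds because the bubbles are exactly $p_{i,j}$, and adding the bubble $p_{i,j}$ corresponds to the letter $i$ occurring for the $j$th time.
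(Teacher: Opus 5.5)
Your proposal is correct and follows essentially the same argument as the paper: identify the routes geometrically below $R_{i,j}$ with tuples $(b_0,\dots,b_n)$ satisfying $0\le b_k\le a_{i,j}^k$ and count them as a product, then observe that $R_{i,j}^{\downarrow}$ differs from $R_{i,j}$ only by lowering the $i$th edge index by one. Your extra checks are details the paper leaves implicit: reading ``geometrically below'' as a weak inequality, $a_{i,j}^i=j$, and agreement of $R_{i,j}^{\downarrow}$ with the previous route in the word construction.
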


\begin{proof}
The routes that lie geometrically below $R_{i,j}$ are precisely those of the form
\[
[e_{0,b_0},e_{1,b_1},\dots,e_{n,b_n}]
\]
where
\[
0\leq b_k\leq a_{i,j}^k
\qquad\text{for all }k\in\{0,\dots,n\}.
\]
Thus, for each $k$, there are $a_{i,j}^k+1$ possible choices for $b_k$, and the first formula follows.

Furthermore, $R_{i,j}^{\downarrow}$ is obtained from $R_{i,j}$ by replacing the edge $e_{i,a_{i,j}^i}$ by $e_{i,a_{i,j}^i-1}$. Hence, the routes lying geometrically below $R_{i,j}^{\downarrow}$ are obtained in the same way as those below~$R_{i,j}$, except that the index $b_i$ has only $a_{i,j}^i$ possible choices. Therefore,
\[
\h(R_{i,j}^{\downarrow})
=a_{i,j}^i\prod_{k\neq i}(a_{i,j}^k+1),
\]
as desired.
\end{proof}

\begin{corollary}\label{cor_h_difference_multioruga}
Under the same assumptions as in~\Cref{lem_h_multioruga}, we have
\[
\h(R_{i,j})-\h(R_{i,j}^{\downarrow})
=
\prod_{k\neq i}(a_{i,j}^k+1).
\]
\end{corollary}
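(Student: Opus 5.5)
The corollary follows by subtracting the two formulas of \Cref{lem_h_multioruga}. Both expressions share the factor $P:=\prod_{k\neq i}(a_{i,j}^k+1)$. The first formula gives $\h(R_{i,j})=(a_{i,j}^i+1)\,P$, after separating the $k=i$ factor from the product. The second formula gives $\h(R_{i,j}^{\downarrow})=a_{i,j}^i\,P$. Subtracting,
\[
\h(R_{i,j})-\h(R_{i,j}^{\downarrow})=\bigl((a_{i,j}^i+1)-a_{i,j}^i\bigr)P=P,
\]
which is the claimed identity.

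The one point that needs checking is that the second formula of \Cref{lem_h_multioruga} is correct for every index. In particular, $R_{i,j}^{\downarrow}$ must be exactly $R_{i,j}$ with $e_{i,a_{i,j}^i}$ replaced by $e_{i,a_{i,j}^i-1}$, and $a_{i,j}^i$ must be at least $1$.

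Both facts follow from the recursive construction of $C_\delta$. The route $R_{i,j}$ is created when the $j$th letter $i$ of $\delta$ is read, and that step only increments the $i$-th edge index of the previous route. Since $R_{i,j}$ comes from the $j$th occurrence of $i$, we have $a_{i,j}^i=j\geq 1$, so the index $a_{i,j}^i-1$ is nonnegative.

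Granting the lemma, the argument is pure algebra, so I do not expect any real obstacle.
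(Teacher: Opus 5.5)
Your proposal is correct and matches the paper's proof, which also just subtracts the two formulas of \Cref{lem_h_multioruga} after factoring out $\prod_{k\neq i}(a_{i,j}^k+1)$. Your extra check that $a_{i,j}^i=j\geq 1$, so that $R_{i,j}^{\downarrow}$ differs from $R_{i,j}$ only in the $i$-th edge index, is a harmless sanity check on the lemma's hypotheses and is not needed for the corollary itself.
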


\begin{proof}
The result follows immediately by subtracting the two equations in \Cref{lem_h_multioruga}.
\end{proof}

The next corollary gives a simple description of the vertex coordinates of the framingtope for the multioruga graph.

\begin{corollary}\label{cor_multioruga_coordinates}
Let $\h$ be the route-counting function from~\Cref{prop_ABBE_height_admissible} for the multioruga graph $\oru{(k_0,k_1,\dots,k_n)}$ with its natural framing. The corresponding framingtope has one vertex for each multipermutation $\delta$, with coordinates $\mathbf{x}(\delta)=(x_{i,j})$, where $i\in\{1,\dots,n\}$ and $j\in\{1,\dots,k_i\}$, given by
\[
x_{i,j}
=
\sum_{p=1}^{j}
\prod_{k\neq i}(a_{i,p}^k+1),
\]
where $a_{i,j}^k=a_{i,j}^k(\delta)$ is defined as in \eqref{eq_counting_letters}.
\end{corollary}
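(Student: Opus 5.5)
The plan is to combine \Cref{thm_vertex_coordinates} with \Cref{cor_h_difference_multioruga}. The only task is to identify the set $\overline{D}(p_{i,j})$ in the multioruga graph $\oru{\mathbf{K}}$. Since the multioruga graph is plane framed, \Cref{prop_plane_framings_are_ABBE} and \Cref{prop_ABBE_height_admissible} show that the route-counting $\h$ is admissible. \Cref{cor.geometric_realization} then gives one vertex $\mathbf{x}(C_\delta)$ for each maximal clique, hence, through the bijection recalled above, one vertex for each multipermutation $\delta$.

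First, I will describe the bubbles. Between $v_i$ and $v_{i+1}$ there are $k_i+1$ parallel edges $e_{i,0},\dots,e_{i,k_i}$. The bounded regions are therefore the bubbles $p_{i,j}$ for $1\le j\le k_i$. Each $p_{i,j}$ is bounded above by the single edge $e_{i,j}$ and below by the single edge $e_{i,j-1}$. Hence $D(p_{i,j})=\{(i,j-1)\}$ when $j\ge 2$, and $D(p_{i,1})=\emptyset$, because the bottom boundary $e_{i,0}$ has second index $0$.

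Next, I will compute $\overline{D}(p_{i,j})$. Moving downward from $p_{i,j}$ without crossing an edge of the form $e_{\star,0}$ passes through $p_{i,j-1},\dots,p_{i,1}$ and stops at $e_{i,0}$. Bubbles in other columns $i'\neq i$ cannot be reached, because distinct columns only touch at the vertices $v_{i+1}$ and share no edge. So $\overline{D}(p_{i,j})=\{(i,1),\dots,(i,j)\}$. Equivalently, applying \Cref{lem_coordinates_recursion} repeatedly gives $x_{i,j}=\h(R_{i,j})-\h(R_{i,j}^{\downarrow})+x_{i,j-1}$ with $x_{i,0}:=0$, and telescoping gives
\[
x_{i,j}=\sum_{p=1}^{j}\bigl(\h(R_{i,p})-\h(R_{i,p}^{\downarrow})\bigr).
\]

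Finally, I will substitute \Cref{cor_h_difference_multioruga}, which says each summand equals $\prod_{k\neq i}(a_{i,p}^k+1)$. This yields the stated formula. The coordinates range over $i\in\{1,\dots,n\}$ because the index set $I$ of \Cref{def_framingtopes_arrangement} excludes the source edges, that is, $i=0$.

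The only point needing care is the main obstacle. The general description of $R_{i,j}(C)$ and $R_{i,j}^{\downarrow}(C)$ for plane graphs, via linear extensions of the bubble poset, must agree with the multipermutation description in which $R_{i,j}$ corresponds to the $j$-th occurrence of $i$ in $\delta$. Here the bubble poset is a disjoint union of chains $p_{i,1}<\dots<p_{i,k_i}$. Its linear extensions are exactly the multipermutations, with the $j$-th letter $i$ adding $p_{i,j}$. So the first route containing $e_{i,j}$ is the one created at that letter, and its predecessor differs from it exactly in the edge $e_{i,j}$ versus $e_{i,j-1}$. This is consistent with the paper's statements preceding \Cref{lem_h_multioruga}, and I will simply invoke it.
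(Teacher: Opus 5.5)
Your proposal is correct and follows the paper's proof, which simply combines \Cref{thm_vertex_coordinates} with \Cref{cor_h_difference_multioruga}. Your explicit identification $\overline{D}(p_{i,j})=\{(i,1),\dots,(i,j)\}$ is the detail the paper leaves implicit: each column of parallel edges forms a chain of bubbles cut off below by $e_{i,0}$, and the telescoping via \Cref{lem_coordinates_recursion} then gives the sum.
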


\begin{proof}
This follows from the vertex coordinate description in~\Cref{thm_vertex_coordinates} together with \Cref{cor_h_difference_multioruga}.
\end{proof}

\begin{example}
    Let $\delta=12001$ be a multipermutation in $\mathfrak{S}_{\mathbf{K}}$ for $\mathbf{K}=(2,2,1)$. Applying \Cref{cor_multioruga_coordinates}, we compute the corresponding coordinate
    \[
        \mathbf{x}(\delta)=(x_{1,1},x_{1,2},x_{2,1}).
    \]
    The first entry is
    \[
        x_{1,1}=(0+1)(0+1)=1,
    \]
    since there are zero $0$'s and zero $2$'s to the left of the first $1$ in $\delta$.

    For the second entry, we have
    \[
        x_{1,2}=(0+1)(0+1)+(2+1)(1+1)=7.
    \]
    The first term is precisely $x_{1,1}$, while the second term accounts for the two $0$'s and one $1$ to the left of the second $1$.

    Finally,
    \[
        x_{2,1}=(0+1)(1+1)=2,
    \]
    since there are zero $0$'s and one $1$ to the left of the first $2$.

    Thus, the coordinate of $\delta$ in the tropical realization of the framingtope is
    \[
        \mathbf{x}(\delta)=(1,7,2).
    \]
    Computing the coordinates for all multipermutations and drawing a segment between the coordinates corresponding to multipermutations that are adjacent in the weak order on $\mathfrak{S}_{\mathbf{K}}$, we obtain \Cref{fig_framingtope_11223}.

    \begin{figure}[ht]
        \centering
        
        \includegraphics[scale=0.6]{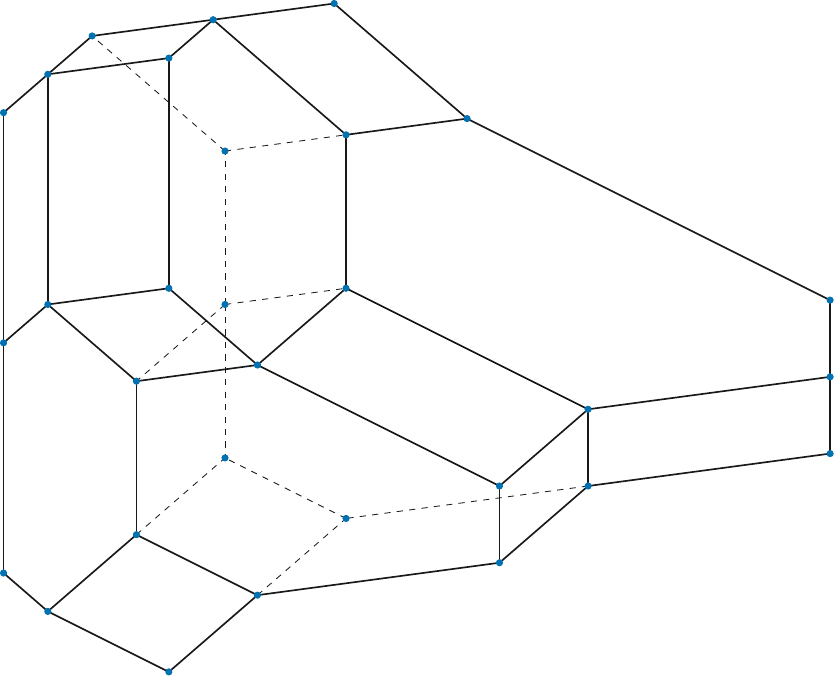}
        \caption{Bounded $1$-skeleton of the tropical framingtope of $\oru{\mathbf{K}}$ with the natural framing, for $\mathbf{K}=(2,2,1)$.}
        \label{fig_framingtope_11223}
    \end{figure}
\end{example}

\begin{remark}
    The oruga case corresponds to the multioruga graph $\oru{(1,\dots,1)}$ where each letter from $0$ to $n$ appears exactly once. In this scenario, its framingtope is a tropical realization of the permutahedron. For a permutation $\pi$ of the set $\set{0, \ldots, n}$, the corresponding coordinate $\mathbf{x}=(x_1,\ldots,x_n)$ is given by $x_i=2^{\pi^{-1}(i)}$, where $\pi^{-1}(i)$ denotes the number of letters preceding $i$ in the permutation $\pi$. Hence, the framingtope is the image of the polytope whose vertices are all permutations of $(1,2,4,\dots,2^n)$ under the projection $\mathbb{R}^{n+1} \rightarrow \mathbb{R}^n$ given by $(x_0,x_1\dots,x_n) \rightarrow (x_1,\dots,x_n)$.
\end{remark}

The multioruga examples illustrate how the tropical construction of framingtopes can lead to explicit and particularly simple geometric realizations. In the case of the ordinary oruga, the resulting coordinates recover a tropical realization of the permutahedron, while for general multioruga graphs they provide analogous realizations for the weak orders on multipermutations. Thus, the route-counting height function not only gives a uniform tropical construction of framingtopes for ABBE graphs, but also produces concrete integer coordinates in relevant families of examples.

\printbibliography 

\end{document}